\newtheorem{theorem}[subsection]{Theorem}
\newtheorem{lemma}[subsection]{Lemma}
\newtheorem{question}[subsection]{Question}
\theoremstyle{definition}
\newtheorem{definition}[subsubsection]{Definition}
\newtheorem{remark}[subsection]{Remark}
\newcommand{\dist}{\mathrm{dist}}
\newcommand{\haus}{\mathcal{H}}
\newcommand{\spt}{\mathrm{spt}}
\newcommand{\Ric}{\mathrm{Ric}}
\newcommand{\reg}{\mathrm{reg}}
\newcommand{\graph}{\mathrm{graph}}
\newcommand{\eps}{\epsilon}
\newcommand{\sing}{\mathrm{sing}}
\newcommand{\cF}{\mathcal{F}}
\newcommand{\R}{\mathbb{R}}
\newcommand{\bC}{\mathbf{C}}
\newcommand{\del}{\partial}
\newcommand{\cM}{\mathcal{M}}
\newcommand{\bY}{\mathbf{Y}}
\newcommand{\cS}{\mathcal{S}}
\newcommand{\cI}{\mathcal{I}}
\newcommand{\cV}{\mathcal{V}}
\newcommand{\cA}{\mathcal{A}}
\newcommand{\cC}{\mathcal{C}}
\newcommand{\cISV}{\mathcal{ISV}}
\newcommand{\N}{\mathbb{N}}
\newcommand{\cE}{\mathcal{E}}
\newcommand{\cR}{\mathcal{R}}
\newcommand{\Lip}{\mathrm{Lip}}
\newcommand{\cG}{\mathcal{G}}
\newcommand{\mindex}{\mathrm{index}}
\newcommand{\geucl}{g_{\mathrm{eucl}}}
\title[Degeneration of $7$-dimensional minimal surfaces]{Degeneration of $7$-dimensional minimal hypersurfaces which are stable or have bounded index}
\author{Nick Edelen}
\address{Department of Mathematics, University of Notre Dame, Notre Dame, IN, 46556}
\email{nedelen@nd.edu}
\begin{document}

\begin{abstract}
A $7$-dimensional area-minimizing embedded hypersurface $M^7$ will in general have a discrete singular set, and the same is true if $M$ is locally stable provided $\haus^6(\sing M) = 0$.  We show that if $M_i^7$ is a sequence of 7D minimal hypersurfaces which are minimizing, stable, or have bounded index, then $M_i \to M$ can limit to a singular $M^7$ with only very controlled geometry, topology, and singular set.  We show one can always ``parameterize'' a subsequence $i'$ with controlled bi-Lipschitz maps $\phi_{i'}$ taking $\phi_{i'}(M_{1'}) = M_{i'}$.  As a consequence, we prove the space of smooth, closed, embedded minimal hypersurfaces $M$ in a closed Riemannian $8$-manifold $(N^8, g)$ with a priori bounds $\haus^7(M) \leq \Lambda$ and $\mindex(M) \leq I$ divides into finitely-many diffeomorphism types, and this finiteness continues to hold if one allows the metric $g$ to vary, or $M$ to be singular.
\end{abstract}

\maketitle

\section{Introduction}

We are interested in the space of $7$-dimensional embedded minimal hypersurfaces which are area-minimizing, stable, or have bounded index.

An $n$-dimensional area-minimizing hypersurface $M^n$ will in general have a $(n-7)$-dimensional singular set $\overline{M} \setminus M$, where by $\overline{M}$ we denote set-theoretic closure.  The same regularity holds if $M$ is locally stable for the area-functional \cite{ScSi}, \cite{neshan}, provided one knows a priori that $\haus^{n-1}(\overline{M} \setminus M) = 0$ (that is, $M$ has no junctions modeled on half-planes meeting along an edge).  A significant goal in minimal surface theory is to understand the nature of the singular set, and the structure of $M$ nearby.

Deep work of \cite{Simon2}, \cite{NaVa} have shown that the singular set of $M^n$ as above is in fact countably-$(n-7)$-rectifiable, with locally-finite $(n-7)$-dimensional area.  Rectifiability is the best one can hope for, as \cite{simon:sing} has constructed remarkable examples of stable, embedded minimal $(n \geq 8)$-dimensional hypersurfaces in smooth manifolds $(\R^{n+1}, g)$ with singular set being any arbitrary preassigned closed subset of $\{0^8\} \times \R^{n-7}$.  Moreover, at scale $1$ these minimal surfaces can be made to look arbitrarily varifold-close to $(\text{Simons' cone}) \times \R^{n-7}$, and the metric $g$ made to look arbitrarily smoothly close to Euclidean.

The examples of \cite{simon:sing} suggest there is little hope, for smooth metrics at least, of understanding the fine-scale structure of an area-minimizing hypersurface $M^n$ from only the knowledge of its tangent cones or its behavior at large scales.  Said differently, if $M_i$ is a sequence of area-minimizing $(n \geq 8)$-dimensional hypersurfaces varifold-converging to some $M$, then one cannot hope to understand the regular or singular structure of $M_i$ based on $M$.

On the other hand, we shall show in this paper that these questions have satisfactory answers in the ``edge case'' when $n = 7$.  (If $n \leq 6$ then area-minimizing $M_i, M$ are all regular, and convergence $M_i \to M$ is always smooth.)  In dimension $n=7$, the singular set is discrete, and any tangent cone has a smooth link.  Profound works of \cite{AllAlm}, \cite{simon} imply that near any such isolated singular point, $M$ is a $C^1$ perturbation of its tangent cone, giving a very precise fine-scale picture of $M$ at every point.

The Hardt-Simon foliation \cite{HaSi}, \cite{BDG} gives examples of smooth, entire, area-minimizing hypersurfaces in $\R^8$ which are asymptotic to singular cones.  So, somewhat analogous to the $n \geq 8$ case, two area-minimizing $7$-hypersurfaces that look very close at scale $1$ can have very different regular or singular structure at scale $\eps$.  However, in sharp contrast to the higher-dimensional case, when $M^7$ resembles a Simons cone $\bC^{3,3}$ or $\bC^{2,4}$ one can still give a precise description of $M$: \cite{me-luca} (see also \cite{SiSo}) has shown that, besides the cone itself, leaves of the foliation are the only other possible pictures for $M$.  In other words, if $M_i^7$ limit to a singular $M^7$, then near any singularity of $M$ modeled on a minimizing quadratic cone, the $M_i$ are locally $C^{1,\alpha}$ perturbations of either the cone, or a leaf of the foliation.

Our goal is to prove a similar result for arbitrary stable $7$-dimensional singularities.  Precisely, we answer the following question:
\begin{question}\label{q:main}
Let $g_i \to g$ be a sequence of metrics on $B_1(0) \subset \R^8$, and $M_i^7$ be a sequence of stable embedded minimal hypersurfaces in $(B_1, g_i)$ with discrete singular set $\overline{M_i} \setminus M_i$, which converge as varifolds in $B_1$ to some minimal surface $M$ in $(B_1, g)$.  How can the geometry and topology of the $M_i$ degenerate near a singular point of $M$?
\end{question}

Near any regular point of $M$, the sheeting theorem of \cite{ScSi} implies the $M_i$ converge smoothly with possible multiplicity, and the a priori bound of \cite{NaVa} implies that only finitely-many singular points of $M_i$ can collapse into a singularity of $M$.  So the interesting behavior is in the regular structure of the $M_i$ as it collapses into a singular point of $M$ (we mention that our results do not rely on the estimates of \cite{NaVa}, but rather obtain \cite{NaVa}'s bound in dimension $7$ as a corollary of our main theorem).

Aside from being interesting in its own right, understanding how a sequence of minimal surfaces $M_i$ can behave as they approach a possibly singular limit is important for several reasons.  If one wants to answer quantitative or global questions about moduli spaces of minimal surfaces (e.g. how many diffeomorphism types are there, which geometric quantitaties are bounded, what sort of compactness theory holds), one needs to know how the geometry of the $M_i$ can degenerate along a limit.  If one wants to answer questions about the singular set (e.g. bounding the dimension or measure of certain strata), one needs to know how these singularities can behave when passing to limits.

Even if one is attempting to understand the behavior of a \emph{fixed} minimal surface $M^n$ near some singular point, if a tangent cone $\bC$ at that point itself has more than one singularity, then one is confronted with variants of Question \ref{q:main}.  For example, if $n = 8$ and for some sequence of dilations $r_i \to 0$, the rescaled $M_i := (M - p)/r_i$ converge to $\bC = \bC^{3,3}\times \R$, then near any point in $\bC$ away from the origin, the $M_i$ will resemble a largely arbitrary sequence of minimal surfaces converging to $\bC$.

\vspace{3mm}

We answer Question \ref{q:main} in Theorem \ref{thm:main3}, where we show that the geometry, topology, and singular set of the $M_i$ is precisely controlled near any singular limit point of $M$.  In fact we answer Question \ref{q:main} assuming the $M_i$ are only locally stable, but with a uniform bound on the index of the stability operator.  It turns out the techniques required to deal with stable singularities extend with relatively little effort to deal with points of index concentration.

We prove Theorem \ref{thm:main3} by demonstrating a ``cone-decomposition'' for the $M_i$, analogous to the bubble-tree decompositions for harmonic maps \cite{SaUhl}, \cite{parker}, or ``neck-decompositions'' for GH-limits of spaces with non-negative Ricci \cite{cheeger-naber}, \cite{naber-jiang}: we show there is a finite, predetermined pool of ``smooth models'' (entire minimal surfaces in $\R^8$ which are minimizing, stable, or having bounded index), so that every $M_i$ breaks up into a controlled number of annular ``cone regions'' wherein $M_i$ is a multi-graph over some fixed stable cone, and a controlled number of ``smooth regions'' wherein $M_i$ is a (scaled-down) multi-graph over one of the smooth models.

Unlike the finiteness theorems of \cite{ckm}, \cite{cheeger-naber}, we have no classification of tangent cone beyond being stable and with smooth cross section, and in particular we cannot assume our cones are ``integrable through rotations.''  A key technical issue we must overcome therefore is demonstrating that the cone regions do not ``rotate'' through families of possibly pathological cones.  We do this in Sections \ref{sec:cone-region}, \ref{sec:decay}, using the Lojasiewicz-Simon inequality of \cite{simon}.

Additionally, we show that, up to passing to a subsequence, we can find bi-Lipschitz maps $\phi_i$ on the ambient space (e.g. the unit ball $B_1$), which ``parameterize'' the $M_i$ in the sense that $\phi_i(M_1) = M_i$, and which have uniformly bounded Lipschitz constant.  Due to the presense of multiplicity, the construction of these maps (Section \ref{sec:param}) turns out to be fairly technical.

A consequence of our theorem is the following finitness result for minimal hypersurface in an $8$-dimensional closed manifold. See Theorem \ref{thm:main2} for a more precise version, which additionally allows the metric to change and gives quantitative bounds on the diffeomorphisms/bi-Lipschitz maps.
\begin{theorem}\label{thm:main1}
Let $(N^8, g)$ be a closed, $8$-dimensional Riemannian manifold with $C^3$ metric $g$, and take $\Lambda, I \geq 0$.  There is a number $K(N, g, I, \Lambda)$ so that every $C^2$, closed, embedded minimal hypersurface $M \subset (N, g)$ having $\haus^7(M) \leq \Lambda$ and $\mindex(M) \leq I$ fits into one of at most $K$ diffeomorphism classes.  If $M$ is not closed but $\overline{M} \setminus M$ is discrete, then $\overline{M}$ fits into one of at most $K$ bi-Lipschitz equivalence classes.
\end{theorem}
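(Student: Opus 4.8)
The plan is to deduce Theorem~\ref{thm:main1} from the local degeneration/parameterization statement (Theorem~\ref{thm:main3}) by a covering and contradiction-compactness argument. Suppose the finiteness fails: then there is a sequence $M_i \subset (N^8,g)$ of closed embedded minimal hypersurfaces with $\haus^7(M_i) \leq \Lambda$, $\mindex(M_i) \leq I$, and with $M_i$ pairwise non-diffeomorphic (in the non-closed case, pairwise non-bi-Lipschitz). By the monotonicity formula and the area bound, together with the index bound, one first passes to a subsequence so that $M_i$ converges as varifolds to some minimal (possibly singular) limit $M$ in $(N,g)$; standard compactness for stable/bounded-index minimal hypersurfaces in dimension $7$ (Schoen--Simon, plus the Naber--Valtorta or the present paper's own estimates) guarantees $\overline{M}\setminus M$ is discrete and that only finitely many singular points of $M$ arise, say $p_1,\dots,p_L$, with $L$ bounded in terms of $\Lambda$ alone.

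Next I would cover $N$ by finitely many balls: away from the singular set $\{p_1,\dots,p_L\}$, the sheeting theorem of \cite{ScSi} gives that $M_i \to M$ smoothly with some integer multiplicity, so on each such regular ball the convergence is $C^\infty$ (in particular for $i$ large $M_i$ restricted there is diffeomorphic, indeed ambiently isotopic via a smooth map, to a fixed multi-sheeted model). On a small ball $B_\rho(p_\ell)$ around each singular point, I apply Theorem~\ref{thm:main3}: after passing to a further subsequence there are uniformly bi-Lipschitz maps $\phi_i$ of $B_\rho(p_\ell)$ with $\phi_i(M_{1}\cap B_\rho(p_\ell)) = M_i \cap B_\rho(p_\ell)$, and in the closed/smooth-$N$ case one upgrades these (as in Theorem~\ref{thm:main2}) to diffeomorphisms when the $M_i$ happen to be smooth. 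Gluing the ambient diffeomorphisms/bi-Lipschitz maps from the regular balls (which can be taken to be the identity near $\partial B_\rho(p_\ell)$ after interpolation, using a partition of unity on the ambient space) with those from the singular balls produces, for all large $i$ along the subsequence, a global bi-Lipschitz (resp.\ diffeomorphism) map $\Phi_i : \overline{M_{1}} \to \overline{M_i}$. This contradicts the assumption that the $M_i$ were pairwise non-diffeomorphic (resp.\ non-bi-Lipschitz), and since any sequence has such a subsequence, only finitely many equivalence classes occur, yielding the bound $K = K(N,g,I,\Lambda)$.

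The main obstacle is the gluing step: the bi-Lipschitz parameterizations supplied by Theorem~\ref{thm:main3} are only controlled on the singular balls, and matching them to the smooth sheeting picture on the overlap annuli $B_\rho(p_\ell)\setminus B_{\rho/2}(p_\ell)$ requires that both descriptions agree there up to a controlled ambient isotopy. Concretely one wants the cone-decomposition's outermost ``cone region'' to be a multi-graph over a fixed stable cone $\bC$, with the number of sheets and the homeomorphism type of $\bC\cap(B_\rho\setminus B_{\rho/2})$ fixed along the subsequence; then the regular-region and singular-region maps can be forced to coincide on the overlap by composing with a controlled ambient isotopy of the multi-graph structure. Handling multiplicity here is exactly the technical difficulty flagged in Section~\ref{sec:param}, so I would lean on that construction as a black box. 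A secondary subtlety is the non-smooth limit case: when $\overline{M}\setminus M$ is merely discrete rather than $M$ closed, one only gets bi-Lipschitz rather than diffeomorphism type, which is why the theorem is phrased with the two separate conclusions. Finally, one must check the argument is uniform as the metric varies, but since $C^3$ (hence $C^2$) convergence of metrics is compact on $N$ after normalization, the same subsequence argument applies with $g_i \to g$, giving the stated metric-independence.
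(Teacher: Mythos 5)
Your overall strategy --- contradiction plus varifold compactness, local analysis near the finitely many singular/index-concentration points, smooth sheeting elsewhere, then gluing --- is the same route the paper takes in Section~\ref{sec:global} (Theorem~\ref{thm:main1} is the special case $\cG=\{g\}$ of Theorem~\ref{thm:main2}). The gap is in the gluing step, which is the crux of the whole argument and which your proposal resolves with a mechanism that fails. Because the limit varifold can have multiplicity, the local parameterizations are only $C^0$-perturbations of the identity (their $C^1$ norms depend on how closely the sheets of $M_i$ are squeezed together, which is not a priori controlled), so interpolating the singular-ball map and the regular-region map on an overlap annulus ``using a partition of unity on the ambient space'' need not produce an injective map and need not carry $M_1$ to $M_i$. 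The paper's resolution is structural rather than soft: Theorem~\ref{thm:param} is engineered so that on the outer annulus of each singular ball the map is a $(\bC,\eps)$-map --- the identity outside a conical neighborhood of $\bC$, moving points only along flowlines of the fixed vector field $X_\bC$ --- and the regular-region map is built (via the normal exponential map of the limit sheets and the monotone rearrangement of Lemma~\ref{lem:monof}) to preserve flowlines of a $C^1$-nearby vector field; only then can the two be interpolated fiberwise along flowlines via Lemma~\ref{lem:glue}. Note that Theorem~\ref{thm:main3}, which you invoke as a black box, does not record this boundary structure among its conclusions, so the gluing cannot be completed from it alone; one must use conclusions (\ref{item:param4})--(\ref{item:param5}) of Theorem~\ref{thm:param} directly.

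A secondary issue: on the regular part you propose ball-by-ball diffeomorphisms from the sheeting theorem, but making these local sheet-matchings agree on overlaps of regular balls runs into exactly the same multiplicity problem. The paper sidesteps this by building a single global map on $N$ minus the singular balls, working in one tubular neighborhood of each component $S_s$ of $\reg V$ and permuting all the graphs at once with the function $f_{\{a_j\},\{b_j\}}$ of Lemma~\ref{lem:monof}. With these two repairs your outline becomes the paper's proof.
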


The only known area-minimizing hypercones in $\R^8$ are the Simons' cones $\bC^{2,4}$ and $\bC^{3,3}$, so for area-minimizing $M_i^7$ it is conceivable that Question \ref{q:main} can be entirely addressed using the methods of \cite{me-luca}.  On the other hand, the Simons' cone $\bC^{1,5} \subset \R^8$ is known to be stable but not area-minimizing (it is minimizing on only one side), and more generally \cite{me-luca} cannot handle higher-multiplicity singularity models.  To compare and contrast: in \cite{me-luca} we address only multiplicity-one area-minimizing quadratic cones, but we do so in any dimension, and we can give a priori $C^{1,\alpha}$ bounds. In this paper, we can deal with any stable hypercone with possible multiplicity, but only for $n = 7$, and we can only give a priori $C^1$ bounds on our maps.

In general there is not much progress on variants of Question \ref{q:main} for singular surfaces, in large part because (as illustrated by \cite{simon:sing}'s examples) the singular or regular nature of $M_i$ can drastically change in the limit.  A notable exception is when singularities of $M$ are modeled on a union of three half-planes.  This kind of singularity does not typically arise from the ``usual'' minimization via integral currents or sets of locally-finite-perimeter, but does arise naturally in so-called $(\mathbf{M}, \eps, \delta)$-minimizers.  \cite{Simon1} has shown that if $M_i \to M$, and every singularity of $M$ has this type, then the $M_i$  are $C^{1,\alpha}$ perturbations of $M$.  These ``$\bY$-type'' singularities are very special because they enforce the same singular structure on $M_i$.  In a similar vein, \cite{CoEdSp} showed the same regularity holds if $M$ has ``tetrahedral-type'' singularities, and the $M_i$ secretly have an orientation structure attached to them.

All the above mentioned results (like those in this paper) are local, allowing for $M_i$ to be minimal in only a unit ball $(B_1, g_i)$, for some sequence of metric $g_i$ close to Euclidean.  Let us mention that \cite{wang1} has proved a \emph{global} isolation theorem for certain $M$ in a (fixed) closed $(N^{n+1}, g)$.  Analogous to \cite{Wh2}'s results for strictly stable smooth minimal surfaces, \cite{wang1} showed that if $M$ is strictly stable and has only isolated singularities modelled on strictly-minimizing smooth cones, then $M$ is area-minimizing in some neighborhood $U$, and in fact is the unique multiplicity-one minimal surface in $(U, g)$, so if $M_i \to M$ are all minimal in $(N, g)$ then one must have $M_i = M$ for $i >> 1$.  (If $M$ is not strictly stable, or some singularity is not strictly minimizing, then this isolation can fail).

\vspace{3mm}

The past several years has seen significant study of the behavior of $(n\leq 6)$-dimensional embedded minimal hypersurfaces with bounded index (see e.g. \cite{Sh}, \cite{ckm}, \cite{BuSh}, \cite{carlotto}, \cite{li:index}, \cite{song}), motivated largely by various min-max constructions.  In this case, if $M_i \to M$ and the $M_i$ have uniformly bounded index $I$, then $M$ is entirely smooth, but convergence may fail to be smooth at a collection of at most $I$ points of ``index concentration.''  Question \ref{q:main} in this setting has been very satisfactorily answered by \cite{ckm}, \cite{BuSh}, using variants of a bubble-tree type decomposition.  \cite{ckm}'s approach is well-adapted for $M_i$ without area bounds, while \cite{BuSh} gives additional information on total curvature and curvature concentration.  Most of the methods in these works rely very heavily on the planar nature of singularities and smoothness of the limit.  Our theorems specialize to the $n \leq 6$ case, and give an alternate method of proof to several of the results in \cite{ckm}, \cite{BuSh}, but we emphasize that the real interest in this work is for singular $M$.

Singular $7$-dimensional minimal hypersurfaces with bounded index arise naturally in both Almgren-Pitts (\cite{li1}) and Allen-Cahn (\cite{hiesmayr}, \cite{gaspar}) min-max constructions.  By carefully understanding the behavior of the min-max surface and sweepout near its singular points, \cite{lov} and \cite{li-wang} have recently shown that for a generic metric on a closed $N^8$ there is a smooth, closed, embedded minimal hypersurface.  In a similar vein, there are numerous perturbative existence results, which state that given a suitable minimizing or stable $M^7 \subset (N^8, g)$, then by slightly perturbing the boundary of $M$ (\cite{HaSi}) or the ambient metric $g$ (\cite{Smale}, \cite{wang1}), one can obtain a smooth $7$-dimensional minimizing or stable hypersurface.

\cite{White_b2} has shown that for a closed manifold $N$, the space of ordered pairs $(g, M)$ consisting of $C^k$ metrics $g$ on $N$, and $C^{j,\alpha}$ minimal surfaces $M$ in $(N, g)$, can be given a Banach manifold structure, and moreover that the projection $(g, M) \to g$ is Fredholm with index $0$.  \cite{White_b2}'s theorem implies some remarkable genericity and finiteness results, for example: for a generic ``bumpy'' $C^3$ metric $g$ on $N^7$ (in the sense of Baire) there exist only finitely-many stable hypersurfaces $M^6 \subset (N,g)$ with $\haus^6(M) \leq \Lambda$.  White's ``bumpy'' metric theorem is a major ingredient in many generic min-max results.

A natural question garnering recent interest is whether a similar manifold structure holds on the spaces of pairs of metrics and \emph{possibly singular} minimal surfaces.  \cite{wang1} has made some headway on this question, by developing a robust linear theory for Jacobi fields on minimal hypersurfaces with isolated singularities, and using this to show that for certain $M^n \subset (N^{n+1}, g)$ (having bounded index, only strongly-isolated, strictly-minimizing singularities, and a few other technical conditions), every $C^4$ metric peturbation $g'$ admits a minimal surface $M' \subset (N, g')$ which is close to $M$.

One can view our Theorem \ref{thm:main2} as a step towards the manifold-structure problem from the opposite perspective, by showing that nearby metric-surface pairs $(g', M'^7)$ with controlled index in a closed $N^8$ can be ``parameterized'' using finitely-many model surfaces.  It would be very interesting if bumpiness results similar to those of \cite{White_b2} held in the singular setting also, e.g. if for a generic $C^3$ metric $g$ on a closed $N^8$, there are only finitely-many area-minimizing or stable hypersurfaces $M^7 \subset (N,g)$ satisfying $\haus^7(M) \leq \Lambda$ and $\dim(\sing M) = 0$.

\vspace{3mm}

Finally, let us mention that the basic ideas that go into proving the main Theorem \ref{thm:diffeo} are fairly general, relying only on good compactness and partial regularity theorems, a monotonicity formula, and the existence of a Lojasiewicz-Simon-type inequality for singular models.  One could likely adapt the method of this paper to prove (for example) that there are only finitely-many isotopy classes of smooth energy-minimizing maps $(M^3, g_M) \to (N^n, g_N)$ having energy $ \leq \Lambda$, where $M^3$ is a closed $3$-dimensional manifold with smooth metric $g_M$, and $N^n$ a closed $n$-dimensional manifold with analytic metric $g_N$.

\vspace{3mm}

\textbf{Acknowledgements:}  I am indebted to Otis Chodosh, Luca Spolaor, Gabor Szekelyhidi, and Brian White for many useful discussions, and to Andrew Putman for patiently answering my naive questions about mapping class groups.  Part of this work was conceived while visiting Stanford University and Big Basin State Park.  I thank them both for their hospitality, and wish Big Basin a speedy recovery.

\section{Main theorems} \label{sec:thm}


To more easily state our results we require a little notation.  Let $(N^8, g)$ a Riemannian manifold with $C^3$ metric $g$.  Since many of our results involve changing the metric, we will typically explicitly specify the metric dependence.  Let us write $\haus^7_g$ for the $7$-dimensional Hausdorff measure in $(N, g)$, $\Ric_g$ for the Ricci curvature, $B_r^g(a)$ for the open geodesic ball centered at $a$, and $d_{H, g}$ for the Hausdorff distance.  If $\phi : N \to N$, we write $\Lip_g(\phi)$ for the Lipschitz constant of $\phi$ w.r.t. $g$.  If $U \subset N$, then $\overline{U}$ denotes set-theoretic closure.

If $N = U \subset \R^8$, all measurements, derivatives, norms, etc. will be taken w.r.t. the Euclidean metric $\geucl$ unless explicitly stated otherwise.  So, if $N \subset \R^8$, $\haus^7$ will always denote the Euclidean Hausdorff measure, $B_r(a)$ the Euclidean open $r$-ball centered at $a$, $A_{R, r}(a) = B_R(a) \setminus \overline{B_r(a)}$ for the open annulus, $d_H$ the Euclidean Hausdorff distance, and $\Lip(\phi)$ for the Lipschitz constant w.r.t. $\geucl$.  Similarly, if $M$ is a $C^1$ embedded submanifold of $\R^8$, we write $T^\perp M$ for the Euclidean normal bundle of $M$, $u : M \to M^\perp$ for a section of this normal bundle, and $\nabla$ the connection induced by the Euclidean connection on $T^\perp M$.

If $M$ is a $C^1$ embedded minimal hypersurface of $(N, g)$, define $\reg M$ to be the set of points $x \in \overline{M}$ having the property that in some open neighborhood $U$ of $x$, $\overline{M} \cap U$ is a closed (as sets) embedded submanifold of $U$.  Define $\sing M = \overline{M} \setminus \reg M$.  Of course $\sing M \subset \overline{M} \setminus M$, but there could be points in $\overline{M} \setminus M$ that ``secretly'' are regular points, and we want to make the notion of singular set canonical.  There is no loss in always assuming that $M = \reg M$.

Let us write $T^{\perp_g} M$ for the normal bundle of $M$ with respect to $g$, and $u : M \to M^{\perp_g}$ for a section of this normal bundle, and $\nabla^g$ the connection induced by $g$ on the normal bundle.  For $M$ being $C^2$, write $A_{M,g}$ for the second fundamental form of $M \subset (N, g)$.  We define $\mindex(M, N, g)$ to be the maximal dimension of subspace $\subset C^1_c(M, M^{\perp_g})$ on which the quadratic form
\begin{equation}\label{eqn:Q}
Q_{M, g}(u, u) = \int_M |\nabla^g u|^2 - |A_{M, g}|^2 |u|^2 - \Ric_g(u, u) d\haus^7_g 
\end{equation}
is strictly negative definite.  If $M$ is stable for the area functional in $(N, g)$, then $Q_{M, g} \geq 0$.

Define $\cM_7(N^8, g)$ to be the space of $C^2$ embedded minimal hypersurfaces in $(N, g)$ with the property that $\overline{M} \setminus M$ consists of a discrete set of points.  It follows by work of \cite{ScSi}, \cite{neshan} that if $M$ is any $C^2$ embedded minimal hypersurface in $(N, g)$ satisfying $\mindex(M, N, g) < \infty$ and $\haus^6(\overline{M} \setminus M) = 0$, then $M \in \cM_7(N, g)$.  See Section \ref{sec:prelim} for more details.  We define $\cC$ to be the set of smooth (away from $0$), stable minimal hypercones in $\R^8$.

If $M$ is a $C^1$ $n$-dimensional embedded hypersurface in $\R^{n+1}$, we define $\theta_M(a, r) = \omega_n^{-1} r^{-n} \haus^n(M \cap B_r(a))$ for the density ratio of $M$.  More generally, if $V$ is an $n$-varifold with mass measure $\mu_V$, then $\theta_V(a, r) = \omega_n^{-1} r^{-n} \mu_V(B_r(a))$.  If $M = \bC$ is a cone, then we write $\theta_\bC(0) = \theta_\bC(0, 1)$.  If $M$ is a $C^1$ embedded hypersurface in $(N^8, g)$, write $[M]_g$ for the $7$-varifold obtained by integrating over $M$, i.e. with mass measure $\mu_{[M]_g} = \haus^7_g \llcorner M$.

\vspace{3mm}

Our main result is local, and has two parts: first, Theorem \ref{thm:diffeo}, which builds a ``cone decomposition'' for a minimal surface $M$ with bounded index and area sufficiently nearby singular cone; and second, Theorem \ref{thm:param}, which ``parameterizes'' minimal surfaces admitting cone decompositions.  Together, they say:

\begin{theorem}\label{thm:main3}
Given $\Lambda, I \geq 0$, and $\sigma, \beta \in (0, \frac{1}{10^3 (I+1)})$, then there is a finite collection of  ``$(\Lambda, \beta, \sigma)$-smooth models'' $\cS$ (Definition \ref{def:model}), and constants $K$, $\delta > 0$, all depending only on $(\Lambda, I, \sigma, \beta)$, so that the following holds.

Take $\bC \in \cC$ so that $\theta_\bC(0) \leq \Lambda$.  Let $g_i, g$ be $C^3$ metrics on $B_1(0)$ such that $|g_i  - \geucl|_{C^3(B_1)} \leq \delta$, $g_i \to g$ in $C^2(B_1)$, and let $M_i \in \cM_7(B_1, g_i)$ be a sequence of minimal surfaces satisfying
\[
d_H( M_i \cap B_1 , \bC \cap B_1) \leq \delta, \quad \haus^7_{g_i}(M_i \cap B_1) \leq \Lambda, \quad \mindex(M_i, B_1, g_i) \leq I .
\]
Then, after passing to a subsequence, we can find a radius $r \in (1-200 \sigma (I+1), 1)$, so that every $M_i \cap (B_r(0), g_i)$ admits a ``$(\Lambda, \beta, \cS, K)$-cone decomposition'' (Definition \ref{def:decomp}).

Moreover, passing to a further subsequence, we can find bi-Lipschitz maps $\phi_i : B_r \to B_r$ and a constant $C$ so that:
\begin{enumerate}
\item \label{item:main3-1} $\phi_i( \overline{M_1} \cap B_r) =  \overline{M_i} \cap B_r$, and $\phi_i(\sing M_1 \cap B_r) = \sing M_i \cap B_r = \sing M_i \cap B_{(1-\sigma)r}$;
\item \label{item:main3-2} $\phi_i$ restricts to a $C^2$ diffeomorphism $B_r \setminus \sing M_1 \to B_r \setminus \sing M_i$;
\item \label{item:main3-3} $\Lip(\phi_i) \leq C$;
\item \label{item:main3-4} there is a stationary integral varifold $V$ in $(B_1, g)$ of the form $V = m_1 [M_1']_g + \ldots + m_k [M_k']_g$, for $M_i' \in \cM_7(B_1, g)$ and $m_i \in \N$, so that $[M_i]_{g_i} \to V$ as varifolds in $B_1$, and in $C^2$ on compact subsets of $B_1 \setminus (\sing V \cup \cI)$ for some set $\cI$ of at most $I$ points;
\item \label{item:main3-5} there is a Lipschitz $\phi_\infty : B_r \to B_r$ such that $\phi_\infty(\overline{M_1} \cap B_r) = \spt V \cap B_r$ and $\phi_i \to \phi_\infty$ in $C^\alpha(B_r)$ for all $\alpha \in (0, 1)$.
\end{enumerate}
\end{theorem}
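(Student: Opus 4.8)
The plan is to prove Theorem \ref{thm:main3} in two essentially independent pieces, reflecting the structure announced in the text: first establish the existence of a $(\Lambda,\beta,\cS,K)$-cone decomposition for each $M_i$ on a good subball (this is the content of the cited Theorem \ref{thm:diffeo}), and then feed that decomposition into the parameterization machinery (Theorem \ref{thm:param}) to produce the maps $\phi_i$. For the cone decomposition, I would argue by a contradiction/compactness scheme combined with a quantitative stratification. Fix $\bC\in\cC$ with $\theta_\bC(0)\le\Lambda$. Using the monotonicity formula and upper semicontinuity of density, the hypothesis $d_H(M_i\cap B_1,\bC\cap B_1)\le\delta$ with $\delta$ small, together with $\haus^7_{g_i}(M_i\cap B_1)\le\Lambda$, forces $\theta_{M_i}(0,1)$ to be close to $m\,\theta_\bC(0)$ for some integer $m$ with $m\theta_\bC(0)\le\Lambda$; the Allard/Schoen--Simon sheeting theorem and the Naber--Valtorta style $\epsilon$-regularity give that at most $I$ "bad" (index-concentration or high-multiplicity-transition) scales can occur along any descending chain of balls, since each such scale costs a definite drop in density or a unit of index. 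One then runs a stopping-time / pigeonhole argument down dyadic scales: at scales where $M_i$ stays graphically close to a dilate of some stable cone $\bC'\in\cC$ (not necessarily $\bC$), one is in a "cone region"; at the finitely many scales where graphicality over any cone fails, one blows up, extracts via the compactness theory for bounded-index stable hypersurfaces an entire minimal surface in $\R^8$ which is stable or has index $\le I$, and this limit — after truncation — is by definition one of the "smooth models" in $\cS$. Finiteness of $\cS$ comes from a further compactness argument: the set of such entire models with the relevant a priori bounds is compact modulo scaling/translation in the local topology, so finitely many $(\Lambda,\beta,\sigma)$-neighborhoods cover it. The radius $r\in(1-200\sigma(I+1),1)$ is chosen by a pigeonhole among $O(I)$ annular shells to avoid the finitely many bad scales landing on $\partial B_r$.

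The second piece is the construction of $\phi_i$. Having a cone decomposition, each $M_i$ is assembled from a controlled number of annular pieces — cone regions where $M_i$ is a multi-graph $\{x+u(x)\nu : x\in \bC'\}$ over a fixed stable cone with small $C^{1}$ norm, and smooth regions where (after rescaling) $M_i$ is a small multi-graph over a smooth model. I would build $\phi_i$ region by region and glue. On a single cone region, both $M_1$ and $M_i$ are multi-graphs over the same $\bC'$ with graph data $u_1,u_i$; the parameterization is obtained by pushing along $\nu$ from the $u_1$-graph to the $u_i$-graph and extending this off $M_1$ by a cutoff to an ambient diffeomorphism (this is where multiplicity is delicate — one must handle the fact that distinct sheets may merge at the core of the annulus, which is why the construction tracks the sheet-labelling and uses the conical structure to interpolate the sheet positions continuously down to the vertex). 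On smooth regions the same is done at the rescaled level and then scaled back. The gluing across the (controlled number of) region boundaries is done with partitions of unity subordinate to the annular decomposition; the key point making $\Lip(\phi_i)\le C$ uniform is that on each annulus $A_{2\rho,\rho}$ the graph data has $C^1$ norm $\le\beta$ independent of $\rho$, so the naive scaling-invariant estimate for the gluing holds with a constant depending only on the combinatorial complexity $K$ and on $\beta$. Properties \eqref{item:main3-1}–\eqref{item:main3-2} — that $\phi_i$ matches closures, matches singular sets, and is a $C^2$ diffeomorphism off $\sing M_1$ — follow from tracking that the vertices of the cone regions are exactly where $\sing M_i$ concentrates (here the bound $\sing M_i\cap B_r=\sing M_i\cap B_{(1-\sigma)r}$ is arranged by choosing $r$ so that no singular point sits in the outer shell), and that away from those vertices everything is a genuine graph deformation.

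Properties \eqref{item:main3-4}–\eqref{item:main3-5} and the passage to $\phi_\infty$ are comparatively soft. Varifold compactness gives a subsequential limit $V$ of $[M_i]_{g_i}$; the Schoen--Simon sheeting theorem identifies $V$ away from its singular set and the at-most-$I$ points $\cI$ of index concentration as a finite integer combination $\sum m_j[M_j']_g$ of smooth minimal sheets, and the partial regularity theory (using $\haus^6(\sing)=0$, hence discreteness in dimension $7$) puts each $M_j'\in\cM_7(B_1,g)$. Running the $\phi_i$ construction "with $M_i$ replaced by the limit configuration" — i.e. taking the limit of the graph data, which exists in $C^{1,\alpha}_{loc}$ off the vertices and in $C^\alpha$ up to the vertices by the Łojasiewicz--Simon decay (Section \ref{sec:decay}) that pins the cone regions against rotation — yields a Lipschitz $\phi_\infty$ with $\phi_\infty(\overline{M_1}\cap B_r)=\spt V\cap B_r$, and the same decay estimates give $\phi_i\to\phi_\infty$ in $C^\alpha$ for every $\alpha<1$ (one loses the endpoint because the graph maps only converge in $C^{1,\alpha}_{loc}$, not $C^1$, near the conical vertices).

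The step I expect to be the genuine obstacle is controlling the cone regions against rotation through a possibly pathological one-parameter family of stable cones: because $\cC$ is not assumed to consist of integrable cones, a priori the axis/orientation of the cone $\bC'$ that $M_i$ is graphical over could drift logarithmically across the many scales of an annular region, which would destroy both the uniform $C^1$ graph bound and the $C^\alpha$ convergence of $\phi_i$. The resolution — and the technical heart of the paper — is the Łojasiewicz--Simon inequality for singular cones from \cite{simon}, applied in Sections \ref{sec:cone-region}–\ref{sec:decay}, which forces the scale-derivative of the cone parameters to be integrable and hence yields a definite limiting cone with a power-rate (or at worst slowly-varying) approach; everything else — the stopping-time argument, the compactness producing $\cS$, the gluing estimates — is routine once this rigidity is in hand.
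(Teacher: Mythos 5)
Your overall architecture is the same as the paper's: the first paragraph of the statement is exactly Theorem \ref{thm:diffeo} (applied after pinning down the multiplicity $m$ by a soft contradiction argument using Theorem \ref{thm:cones} and Lemma \ref{lem:index-compact}), the maps $\phi_i$ come from Theorem \ref{thm:param}, items \eqref{item:main3-4}--\eqref{item:main3-5} are Lemma \ref{lem:index-compact} plus Arzel\`a--Ascoli for uniformly Lipschitz maps, and you correctly identify the Lojasiewicz--Simon rigidity (Theorem \ref{thm:scone}) as the essential input that keeps the cone regions from rotating. But there is a genuine gap in your gluing step, and it is precisely the point the paper flags as the ``key half-idea.''

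You propose to glue the region-by-region parameterizations ``with partitions of unity subordinate to the annular decomposition,'' claiming the ``naive scaling-invariant estimate for the gluing holds with a constant depending only on the combinatorial complexity $K$ and on $\beta$.'' This fails for two reasons. First, because of multiplicity the sheet-matching maps are only $C^0$-perturbations of the identity: the map taking the $m$ sheets of $M_1$ to the $m$ sheets of $M_i$ over a common cone or smooth model has $|D\phi|$ controlled only in terms of the \emph{minimal sheet separation of the reference surface} $M_1$ (see Lemma \ref{lem:monof} and the constants $c(\Lambda,\sigma,M_1,\cdot)$ throughout Section \ref{sec:param}); no bound depending only on $(K,\beta)$ is possible, which is why the theorem fixes a reference $M_1$ after passing to a subsequence and why $C$ is not claimed to depend only on $(\Lambda,I,\beta,\sigma)$. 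Second, and more seriously, a convex-combination/partition-of-unity interpolation between two diffeomorphisms that are only $C^0$-close to each other need not be injective, so the glued map need not be a diffeomorphism at all. The paper's resolution is structural: every regional parameterization is built to preserve the flowlines of a fixed vector field ($X_\bC$ or its analogue on a smooth model) and to equal the identity outside a thin conical neighborhood --- this is the ``$(\bC,\eps)$-map'' condition --- so that two such maps on overlapping annuli can be isotoped together \emph{along the flowlines} (Lemma \ref{lem:glue}), where the one-dimensional interpolation of Lemma \ref{lem:monof} is monotone and hence invertible. Relatedly, your sketch does not address the compatibility of the gluings at the two ends of a single cone region (the isotopy issue discussed at length in Section \ref{sec:outline}); this compatibility is exactly what the combination of Theorem \ref{thm:scone} (no rotation) and the flowline-preserving structure delivers. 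Without replacing the partition-of-unity step by something with this structure, the construction of $\phi_i$ does not go through.
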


\begin{remark}
In certain circumstances (when singularities of the $M_i$ are ``integrable through rotations'') then one can arrange each $\phi_i$ to be globally $C^{1,\alpha}$ for some $\alpha > 0$.  Ditto for Theorems \ref{thm:main2}, \ref{thm:main4}.  See Remarks \ref{rem:c1alpha}, \ref{rem:cone-c1alpha}.
\end{remark}

\begin{remark}
Theorem \ref{thm:main3} (and Theorems \ref{thm:main2}, \ref{thm:main4}) also hold for smooth, $n$-dimensional minimal hypersurfaces, when $2 \leq n \leq 6$.  In this case we additionally obtain the curvature bounds
\begin{equation}\label{eqn:rem-tot-A}
\int_{M_i \cap B_{r}} |A_{M_i, g_i}|^n d\haus^n_{g_i} \leq C
\end{equation}
for $C$ depending only on $(\Lambda, I, \beta, \sigma)$ (recovering the total curvature bound of \cite{BuSh}).  \eqref{eqn:rem-tot-A} is a consequence of a more general Dini-type estimate, see Remark \ref{rem:dini}
\end{remark}

There are two key ideas that go into proving Theorem \ref{thm:main3} (or, rather, Theorem \ref{thm:diffeo}), both heavily indebted to the Lojasiewicz-Simon inequality \cite{simon}.  The first is that the set of densities for smooth, embedded, stable minimal cones in $\R^8$ forms a discrete set.  This allows us to prove Theorem \ref{thm:diffeo} by induction on $\Lambda$, reducing to one of the base cases of either $\bC$ being a multiplicity-one plane (in which case the result follows by Allard's theorem \cite{All}), or $\bC$ being a plane-with-multiplicity and the $M_i$ being stable (in which case the result follows by Schoen-Simon \cite{ScSi}).

The second key idea is captured in Theorem \ref{thm:scone}, where we show that if a minimal surface $M$ in $B_R(a)$ has very small density drop in some annulus $B_R(a) \setminus B_\rho(a)$, then in this annulus $M$ is graphical over some fixed cone, \emph{irrespective} of what $M$ looks like inside $B_\rho(a)$.  We need this estimate to rule out the cones ``rotating'' in annular regions obtained from Theorem \ref{thm:diffeo}.  To prove Theorem \ref{thm:scone} we adapt the decay/growth estimates of \cite{simon}, which loosely say that a non-conical minimal graph over a cone must quantitatively decay or growth either polynomially or logarithmically from one scale to the next.

The third, let's say key half-idea, involved in Theorem \ref{thm:main3} is in constructing the parameterizations $\phi_i$ (Theorem \ref{thm:param}).  The construction turns out to be fairly technical, because the presence of multiplicity means that the $\phi_i$ will only be $C^0$ perturbations of the identity, rather than $C^1$ perturbations.  So one cannot use ``naive'' gluing methods to piece together the maps, but must instead use very strongly the exact structure of the $\phi_i$.


\vspace{3mm}

Theorem \ref{thm:main3} implies the following global finiteness theorem in a closed Riemannian $8$-manifold.
\begin{theorem}\label{thm:main2}
Let $(N^{8}, g)$ be a closed $8$-dimensional Riemannian manifold with $C^3$ metric $g$.  Given $\Lambda, I \geq 0$, there is a number $\delta(N, g, \Lambda, I) > 0$ so that the following holds.

Let $\cG$ be any set of $C^3$ metrics on $N$ satisfying $|g - g'|_{C^3(N, g)} \leq \delta$ for every $g' \in \cG$.  Then we can find a constant $C(N, g, \Lambda, I, \cG)$, and \emph{finite} collections of metrics $\{g_v\}_v \subset \cG$, and minimal surfaces $\{M_v \in \cM_7(N, g_v) \}_v$, with the properties:
\[
\haus^7_{g_v}(M_v) \leq \Lambda, \quad \mindex(M_v, N, g_v) \leq I,
\]
so that if $M \in \cM_7(N, g')$ for some $g' \in \cG$ satisfies $\haus^7_{g'}(M) \leq \Lambda$, $\mindex(M, N, g') \leq I$, then there is a $v$ and a bi-Lipschitz mapping $\phi : (N, g) \to (N, g)$ such that:
\begin{enumerate}
\item \label{item:main2-1} $\phi(\overline{M_v}) = \overline{M}$, $\phi(\sing M_v) = \sing M$;
\item \label{item:main2-2} $\phi$ restricts to a $C^2$ diffeomorphism $N \setminus \sing M_v \to N\setminus \sing M$;
\item \label{item:main2-3} $\Lip_g(\phi) \leq C$.
\end{enumerate}
If $M$ is area-minimizing, then one can assume $M_v$ is area-minimizing too.
\end{theorem}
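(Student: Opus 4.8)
The plan is to derive Theorem \ref{thm:main2} from the local Theorem \ref{thm:main3} by a covering-and-compactness argument on the closed manifold $(N^8, g)$. First I would fix $\Lambda, I$ and suppose, for contradiction, that no such finite collection exists: then one can find a sequence $g'_j \in \cG$ and $M_j \in \cM_7(N, g'_j)$ with $\haus^7_{g'_j}(M_j) \leq \Lambda$, $\mindex(M_j, N, g'_j) \leq I$, such that the $M_j$ are pairwise non-equivalent (in the bi-Lipschitz sense of conclusions \eqref{item:main2-1}--\eqref{item:main2-3}). Since $|g'_j - g|_{C^3} \leq \delta$ for all $j$, by Arzela--Ascoli we may pass to a subsequence so that $g'_j \to g_\infty$ in $C^2(N)$ for some $C^2$ metric $g_\infty$ with $|g_\infty - g|_{C^2} \leq \delta$. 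By the compactness theory for stable/bounded-index minimal hypersurfaces with area bounds (\cite{ScSi}, \cite{NaVa}, and the partial regularity sketched in Section \ref{sec:prelim}), the varifolds $[M_j]_{g'_j}$ subconverge to a stationary integral varifold $V_\infty$ in $(N, g_\infty)$ with $\mu_{V_\infty}(N) \leq \Lambda$, whose support has discrete singular set, and the convergence is $C^2$ away from $\sing V_\infty$ and at most $I$ further points of index concentration.

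Next I would cover $N$ by finitely many $g$-geodesic balls on which the local picture applies. Away from $\sing V_\infty$ and the index-concentration set, Allard's theorem \cite{All} and the Schoen--Simon sheeting theorem \cite{ScSi} already give that $M_j$ is, on such a ball, a bounded number of graphs over a piece of $\spt V_\infty$, with uniform $C^{1,\alpha}$ (indeed $C^2$) control. Near each singular or index-concentration point $p$, after rescaling to unit scale and choosing a tangent cone $\bC_p \in \cC$ with $\theta_{\bC_p}(0) \leq \Lambda$, the hypotheses of Theorem \ref{thm:main3} are met for $j$ large (the Hausdorff-closeness $d_H(M_j \cap B_1, \bC_p \cap B_1) \leq \delta$ and the metric closeness both follow from varifold convergence plus Allard-type regularity at the link of the cone, once we shrink the radius appropriately and use $\sigma, \beta$ small depending on $I$). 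Thus on a slightly smaller ball $B_r(p)$ we obtain a $(\Lambda, \beta, \cS, K)$-cone decomposition and, for a further subsequence, bi-Lipschitz maps $\phi_{j,p} : B_r(p) \to B_r(p)$ with $\phi_{j,p}(\overline{M_1'} \cap B_r) = \overline{M_j} \cap B_r$, $\Lip(\phi_{j,p}) \leq C$, mapping singular sets to singular sets and restricting to $C^2$ diffeomorphisms off the singular set, where $M_1'$ is the (fixed, subsequence-independent) first surface in the cone-decomposition pool $\cS$. Crucially, $\cS$ and $K$ depend only on $(\Lambda, I, \sigma, \beta)$, so there are only finitely many possibilities for this local model.

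Then I would glue the local maps into a global bi-Lipschitz diffeomorphism of $N$. Choose a finite good cover $\{U_a\}$ of $N$ subordinate to the above dichotomy (regular charts where $M_j$ is a graph-union over $\spt V_\infty$, and singular/index charts $B_r(p)$ as above), together with a partition of unity. On overlaps of two charts, both $M_j$ and the reference surface are controlled multi-graphs over the same piece of $V_\infty$ with uniform estimates, so the transition between $\phi_{j,a}$ and $\phi_{j,b}$ is a uniformly small perturbation and can be interpolated — this is exactly the technical gluing handled in the construction of the parameterizations (Section \ref{sec:param}, Theorem \ref{thm:param}), where the point is that because of multiplicity the $\phi_j$ are only $C^0$-close to the identity, so one must glue using the precise normal-graph structure rather than naive convex combinations. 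The upshot is a single bi-Lipschitz $\phi_j : (N, g) \to (N, g)$ with $\Lip_g(\phi_j) \leq C$, satisfying $\phi_j(\overline{M_1'}) = \overline{M_j}$, $\phi_j(\sing M_1') = \sing M_j$, and restricting to a $C^2$ diffeomorphism off $\sing M_1'$ — where again $M_1'$ ranges over a finite pool determined by the finitely many charts and the finite model set $\cS$. Hence all but finitely many $M_j$ are bi-Lipschitz equivalent to one of these models, contradicting pairwise non-equivalence. For the area-minimizing addendum, one notes that area-minimizing surfaces converge to area-minimizing varifolds and that the cone-decomposition's smooth models inherit minimality from being blow-up/blow-down limits of minimizers, so the reference surfaces $M_v$ can be taken minimizing; since $\cG$ consists of metrics $\delta$-close to $g$, only finitely many metrics $g_v$ actually need to appear (choose one representative per equivalence class).

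I expect the main obstacle to be the gluing step: patching the locally-defined bi-Lipschitz maps $\phi_{j,a}$ into a globally-defined one while (i) keeping the Lipschitz constant uniformly bounded, (ii) preserving the exact image $\overline{M_j}$ and the exact singular set, and (iii) maintaining the $C^2$-diffeomorphism property on the regular part. The difficulty is precisely that multiplicity forces these maps to be merely $C^0$-small perturbations of the identity, so on overlaps one cannot average them; one must instead track how each sheet of the multi-graph is moved and interpolate sheet-by-sheet using the cone/graph structure from the decomposition. This is carried out in Theorems \ref{thm:diffeo} and \ref{thm:param}, and the present proof amounts to organizing their outputs over a finite cover and appealing to the finiteness of the model pool $\cS$ together with standard compactness to close the contradiction.
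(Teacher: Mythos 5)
Your proposal is correct and follows essentially the same route as the paper: a compactness/contradiction argument in which a putative infinite family of pairwise-inequivalent surfaces converges to a limit varifold $V$, Theorem \ref{thm:diffeo} and Theorem \ref{thm:param} supply uniformly bi-Lipschitz local parameterizations in small balls about $\sing V \cup \cI$ (with the reference surface being the first member of the subsequence rather than a member of the model pool $\cS$), and these are matched to a large-scale map over $\reg V$. The one point where your description diverges from what actually works is the regular-part gluing: rather than patching chart-by-chart maps with a partition of unity (which fails for exactly the multiplicity reason you flag), the paper builds a single global map $\psi_j$ by moving sheets along the fibers of the normal bundle of $\reg V$ (via Lemma \ref{lem:monof}), and then joins $\psi_j$ to the ball maps $\phi_{\alpha j}$ on annuli using Lemma \ref{lem:glue}, which is applicable precisely because Theorem \ref{thm:param}(\ref{item:param4}) guarantees the ball maps preserve the flowlines of a vector field $C^1$-close to $X_{\bC_\alpha}$ on the outer annulus.
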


\begin{remark}\label{rem:main2}
\cite{wang1} has shown that if $(N^8, g)$ is closed and $g$ is $C^4$, then given any $\eps > 0$ and any $M \in \cM_7(N, g)$ having bounded index which is orientable, ``non-degenerate,'' and with the property that every singular point is modelled on a strictly stable and strictly minimizing cone (e.g. a minimizing Simons' cone), then there is a $C^4$ neighborhood of metrics $\cG \ni g$ so that for each $g' \in \cG$, one can find an $M' \in \cM_7(N, g')$ with bounded index satisfying $d_{H, g}(M, M') \leq \eps$.
\end{remark}

We also have a corresponding finiteness result in Euclidean space.
\begin{theorem}\label{thm:main4}
Given $\Lambda, I \geq 0$, we can find a finite collection $\{M_v\}_v \subset \cM_7(\R^8, \geucl)$, and an increasing function $C_{\Lambda, I} : \R \to \R$, with the properties
\begin{equation}\label{eqn:main4-hyp}
\theta_{M_v}(0, \infty) \leq \Lambda, \quad \mindex(M_v, \R^8, \geucl) \leq I,
\end{equation}
so that given any other $M \in \cM_7(\R^8, \geucl)$ satisfying \eqref{eqn:main4-hyp}, we can find a number $\lambda > 0$, a locally bi-Lipschitz map $\phi : \R^8 \to \R^8$, and a $v$, so that
\begin{gather}
\phi(\overline{M_v}) = \lambda  \overline{M}, \quad \phi(\sing M_v) = \lambda \sing M_v, \label{thm:main4-concl1} \\
\phi|_{\R^8 \setminus \sing M_v} \text{ is a $C^2$ diffeomorphism}, \label{thm:main4-concl2} \\
\text{and } \Lip(\phi|_{B_r(0)}) \leq C_{\Lambda, I}(r) \text{ for every $r$} . \label{thm:main4-concl3}
\end{gather}
If $M$ is area-minimzing (resp. is an area-minimizing boundary), we can assume $M_v$ is area-minimizing (resp. is an area-minimizing boundary).  If the tangent cone of $M$ at infinity is multiplicity-one (e.g. as when $M$ is an area-minimizing boundary), then $\Lip(\phi) \leq C = C_{\Lambda, I}(10)$ on all of $\R^8$.
\end{theorem}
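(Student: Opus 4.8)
The plan is to bootstrap the local Theorem \ref{thm:main3} to all of $\R^8$ by a blow-down analysis at infinity, using the decay estimate of Theorem \ref{thm:scone} to pin down the geometry of $M$ outside a large ball. \textbf{Normalization at infinity:} given $M\in\cM_7(\R^8,\geucl)$ satisfying \eqref{eqn:main4-hyp}, monotonicity gives $\theta_M(0,r)\uparrow\theta_M(0,\infty)\le\Lambda$, and a blow-down $M/R_j\to\bC_M$ ($R_j\to\infty$) produces a minimal cone with $\theta_{\bC_M}(0)=\theta_M(0,\infty)\le\Lambda$. Since $\mindex(M)\le I$, a direction on which $Q_{\bC_M}$ is negative, supported in a fixed annulus, would after rescaling give disjointly-supported negative directions of $Q_M$ at infinitely many scales, which is impossible; hence $\bC_M$ is stable, and by \cite{ScSi} together with dimension reduction $\bC_M\in\cC$. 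I would then fix small constants $\delta,\delta'$ depending only on $(\Lambda,I)$ and compatible with Theorems \ref{thm:main3} and \ref{thm:scone}, pick $\lambda=1/R_j$ for $j$ large so that $d_H(\lambda M\cap B_1,\bC_M\cap B_1)\le\delta$ and $\theta_{\lambda M}(0,1)\ge\theta_{\bC_M}(0)-\delta'$, and replace $M$ by $\lambda M$.

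\textbf{Exterior rigidity:} because $\theta$ is monotone and bounded above by $\theta_{\bC_M}(0)=\theta_M(0,\infty)$, the density drop of $M$ in every annulus $B_\rho(0)\setminus B_1(0)$ is at most $\delta'$. For $\delta'$ small I would apply Theorem \ref{thm:scone} on $B_\rho\setminus B_1$ for every $\rho$ to conclude that on all of $\R^8\setminus B_1$ the surface $M$ is a multigraph over one fixed stable cone with quantitatively decaying slope. That cone must coincide with $\bC_M$, so the tangent cone of $M$ at infinity is unique, and $M$ is $C\delta'$-close to $\bC_M$ in $B_\rho$ for every $\rho$.

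\textbf{Globalizing the decomposition and parameterization:} since every constant in Theorem \ref{thm:main3} depends only on $(\Lambda,I,\sigma,\beta)$ and its conclusions are scale-invariant, I would apply it at every dyadic scale $\rho$ to get cone decompositions of $M\cap B_{r\rho}$ built from the single finite pool $\cS$ with at most $K$ regions, then patch with the exterior multigraph over $\bC_M$ from the previous step to obtain a cone decomposition of $M$ on all of $\R^8$ with boundedly many regions. Now, given any sequence $\{M_i\}$ of normalized surfaces satisfying \eqref{eqn:main4-hyp}, I pass to a subsequence along which $\bC_{M_i}\to\bC\in\cC$, along which $\theta_{\bC_{M_i}}(0)$ is constant (discreteness of cone densities, the first key idea behind Theorem \ref{thm:main3}), and along which the global cone decompositions have a fixed combinatorial type. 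Theorem \ref{thm:param}, in its scale-$\rho$ forms, then yields on each $B_{r\rho}$ bi-Lipschitz maps $\phi_i^\rho$ with $\phi_i^\rho(\overline{M_1}\cap B_{r\rho})=\overline{M_i}\cap B_{r\rho}$, carrying $\sing M_1$ onto $\sing M_i$ there, restricting to $C^2$ diffeomorphisms off the (discrete) singular sets, with $\Lip(\phi_i^\rho)\le C(\Lambda,I)$; reconciling these across $\rho\to\infty$ through the exterior structure (as in Section \ref{sec:param}) and diagonalizing produces a locally bi-Lipschitz $\phi_i:\R^8\to\R^8$ with $\phi_i(\overline{M_1})=\overline{M_i}$, $\phi_i(\sing M_1)=\sing M_i$, a $C^2$ diffeomorphism on $\R^8\setminus\sing M_1$, and $\Lip(\phi_i|_{B_r})\le C_{\Lambda,I}(r)$ for a fixed increasing function; when the tangent cone at infinity is multiplicity-one, $M_i$ is a single-sheeted graph outside a large ball and no distortion accumulates, so $\Lip(\phi_i)\le C$ globally. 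Hence no infinite family obeying \eqref{eqn:main4-hyp} is pairwise inequivalent; taking $\{M_v\}$ to be representatives of the finitely many classes and undoing the normalization gives \eqref{thm:main4-concl1}--\eqref{thm:main4-concl3}. Area-minimality, and being an area-minimizing boundary, are scale-invariant and stable under the convergence used above, so the representatives inherit them when the whole family does.

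\textbf{Expected main obstacle:} I expect the crux to be the exterior rigidity together with the cross-scale gluing. A priori the cone approximating $M$ at large scales could rotate through a possibly pathological continuous family of stable cones as $\rho\to\infty$, and because of higher multiplicity at infinity the sheets of $M_1$ and $M_i$ may pinch together at different polynomial rates, so that no single bi-Lipschitz map on $\R^8$ with scale-independent constant need exist — this is exactly what forces the $r$-dependence of $C_{\Lambda,I}(r)$ and makes the global construction delicate. The non-rotation is handled by the Lojasiewicz--Simon-based decay of Theorem \ref{thm:scone}, and the gluing by the construction underlying Theorem \ref{thm:param}.
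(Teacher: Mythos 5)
Your proposal follows essentially the same route as the paper: blow down to a stable tangent cone at infinity, use the small density drop plus Theorem \ref{thm:scone} to show the exterior of a normalized ball $B_{R_M}$ is a single strong-cone region over a fixed cone, run the cone decomposition (Theorem \ref{thm:diffeo}) and parameterization (Theorem \ref{thm:param}) inside, and glue to an exterior cone-map via Lemma \ref{lem:glue} within a compactness-contradiction argument. The only cosmetic difference is that the paper parameterizes the entire exterior $A_{\infty,1/4}$ with one application of Lemma \ref{lem:cone-map} rather than patching dyadic scales, which slightly simplifies your cross-scale reconciliation step.
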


\begin{remark}\label{rem:main4}
\cite{chan} (see also \cite{white:deg}) has shown that if $\bC^7 \in \cC$ is strictly-minimizing, then for every ``slowly-decaying Jacobi field'' $v$ of $\bC$ there is a area-minimizing hypersurface $M \subset \R^8$ which is asymptotic to $\graph_\bC(v)$.  In particular, if $I = \mindex(\bC \cap \del B_1, S^7, g_{\mathrm{round}})$, this gives an $I$-parameter family of distinct, area-minimizing hypersurfaces asymptotic to $\bC$.  Theorem \ref{thm:main4} implies that this family has bounded geometry and topology.
\end{remark}

Lastly, let us mention that our smallness estimate of Theorem \ref{thm:scone} allows us to sharpen the Corollary \cite[Corollary 3.4]{me-luca}.  For a minimal surface $M$ in a non-Euclidean metric, in \cite{me-luca} we could not rule out the possibility that at different radii $M$ resembled different rotations of the Simons' cone (which was still sufficient to characterize singular nature of $M$).  Though we still cannot get the effective $C^{1,\alpha}$ estimates of \cite[Theorem 3.1]{me-luca} in Euclidean space, here we can at least show the cones do not rotate.  See \cite{me-luca} for more details on background and notation.  The proof of Theorem \ref{thm:main5} (given in Section \ref{sec:simons}) is something of a baby version of the proof of our main Theorem \ref{thm:diffeo}, and in fact it might be helpful for the reader to read this proof first.
\begin{theorem}\label{thm:main5}
Take any $n \in \N$, and let $\bC^n \subset \R^{n+1}$ be a minimizing quadratic hypercone.  Let $\{S_\lambda\}_\lambda$ be the leaves of the Hardt-Simon foliation of $\bC$, so that $S_0 = \bC$.  Given any $\eps > 0$, there is a $\delta(\bC,\eps)$ so that the following holds.  Let $g$ be a $C^3$ metric on $B_1 \subset \R^{n+1}$ satisfying $|g - \geucl|_{C^3(B_1)} \leq \delta$.  Let $V$ be an integral stationary $n$-varifold in $(B_1, g)$ satisfying
\begin{gather}\label{eqn:main5-hyp}
d_H( \spt V \cap B_1, \bC \cap B_1) \leq \delta, \quad (1/2) \theta_\bC(0) \leq \theta_V(0, 1/2), \quad \theta_V(0, 1) \leq (3/2)\theta_\bC(0).
\end{gather}

Then we can find an $a \in \R^{n+1}$, $\lambda \in \R$, $q \in SO(n+1)$, satisfying
\begin{equation}\label{eqn:main5-concl1}
|a| + |q - Id| + |\lambda| \leq \eps,
\end{equation}
and a $C^2$ function $u : (a + q(S_\lambda)) \cap B_{1/2}(a) \to S_\lambda^\perp$ such that
\begin{equation}\label{eqn:main5-concl2}
\spt V \cap B_{1/4} = \graph_{a + q(S_\lambda)}(u) \cap B_{1/4}, \quad |x - a|^{-1} |u| + |\nabla u| + |x - a| |\nabla^2 u| \leq \eps .
\end{equation}

If $\bC^n \subset \R^{n+1}$ is a general smooth (away from $0$) strictly-stable and strictly-minimizing hypercone, then the same conclusion holds if one additionally assumes $\spt V$ lies to one side of $\bC$.
\end{theorem}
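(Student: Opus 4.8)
The plan is to argue by compactness and contradiction, mimicking in miniature the scheme behind Theorem~\ref{thm:diffeo}. Suppose the statement fails for some fixed $\bC$ and $\eps > 0$: then there are $\delta_i \downarrow 0$, $C^3$ metrics $g_i$ on $B_1$ with $|g_i - \geucl|_{C^3(B_1)} \le \delta_i$, and integral stationary $n$-varifolds $V_i$ in $(B_1, g_i)$ obeying \eqref{eqn:main5-hyp} with $\delta = \delta_i$, but admitting no $(a, \lambda, q, u)$ as in \eqref{eqn:main5-concl1}--\eqref{eqn:main5-concl2}. First I would apply Allard's compactness theorem to pass to a subsequence with $V_i \to V_\infty$ as varifolds in $B_1$, with $V_\infty$ integral and stationary with respect to the Euclidean metric. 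The Hausdorff bound in \eqref{eqn:main5-hyp} together with the lower density bound force $\spt V_\infty = \bC \cap B_1$; since $\reg \bC$ is connected the constancy theorem gives $V_\infty = m[\bC]$ for a positive integer $m$, and then $\theta_{V_i}(0,1) \le \tfrac32 \theta_\bC(0)$ forces $m = 1$. Thus $V_i \to [\bC]$ with multiplicity one; in particular $\spt V_i \to \bC$ locally in Hausdorff distance, and, the limit being a cone, $\theta_{V_i}(0, r) \to \theta_\bC(0)$ uniformly for $r$ in compact subsets of $(0,1)$.

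The core of the argument is to prevent ``rotation drift'' as one descends in scale. Fix a small $\eps_0 = \eps_0(\bC) > 0$, and for $i$ large let $\rho_i \in [0, 1/4]$ be the smallest radius such that, after a rotation $q_i \in SO(n+1)$, $V_i$ is the graph $\graph_{q_i \bC}(w_i)$ over $B_{1/2} \setminus B_{\rho_i}$ with weighted $C^2$-norm $\le \eps_0$; Allard's regularity theorem guarantees $\rho_i \le 1/4$ for $i$ large, and one checks $q_i \to \mathrm{Id}$, $w_i \to 0$ locally on $\bC \cap (B_{1/2}\setminus\{0\})$, and $\rho_i \to 0$ (if $\rho_i = 0$ along a subsequence we are done, with $a = \lambda = 0$). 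Since $\theta_{V_i}(0,\cdot)$ converges uniformly to $\theta_\bC(0)$ on $[\rho_0, 1/2]$ for each fixed $\rho_0 > 0$, and $V_i$ has weighted norm $\le \eps_0$ over $q_i \bC$ below scale $\rho_0$, the density drop of $V_i$ across each annulus $B_{1/2} \setminus B_\rho$, $\rho \in (\rho_i, 1/2)$, falls below the threshold of Theorem~\ref{thm:scone} once $i \gg 1$ (choosing $\eps_0$ small). Theorem~\ref{thm:scone} then says $V_i$ is graphical over a \emph{single} cone on each such annulus; letting $\rho \downarrow \rho_i$ and using that a small graph determines its cone, one gets a fixed cone over which $V_i$ is graphical on all of $B_{1/2}\setminus B_{\rho_i}$. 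Invoking the rigidity of $\bC$ modulo rotations — automatic in the first case from the quadratic structure — this cone is $q_i \bC$, so there is no rotation drift down to scale $\rho_i$.

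Next I would identify the leaf by examining $V_i$ inside $B_{\rho_i}$. Rescaling $V_i$ by $\rho_i^{-1}$ about $0$ and applying $q_i^{-1}$ produces minimal surfaces close to $\bC$ over ever-larger annuli which nonetheless ``leave'' the $\eps_0$-graph regime at unit scale; by the Allard--Almgren and Lojasiewicz--Simon asymptotics of \cite{simon} (for the quadratic area-minimizing cones one may instead read this off from \cite{me-luca} via \cite{HaSi}), the slowly-decaying part of the graph function splits into the rotational Jacobi fields — already pinned down, contributing $o(1)$ — and the Jacobi field $\partial_\lambda S_\lambda|_{\lambda = 0}$ tangent to the foliation, whose coefficient determines a leaf parameter $\lambda_i \to 0$ and translation $a_i \to 0$. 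The Lojasiewicz--Simon inequality is exactly what forces this coefficient to converge rather than drift through $\{S_\lambda\}_\lambda$, so that $V_i$ is genuinely asymptotic near $0$ to $a_i + q_i(S_{\lambda_i})$; matching this against the graph over $q_i\bC$ valid up to scale $\rho_i$ identifies the asymptotic cone of $a_i + q_i(S_{\lambda_i})$ with $q_i\bC$, so the single rotation $q_i$ serves throughout. Assembling the pieces gives $V_i \cap B_{1/4} = \graph_{a_i + q_i(S_{\lambda_i})}(u_i)$ with $|a_i| + |\lambda_i| + |q_i - \mathrm{Id}| \to 0$ and the weighted $C^2$-norm of $u_i$ tending to $0$, contradicting the assumed failure for $i \gg 1$. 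For a general smooth, strictly-stable, strictly-minimizing $\bC$ with $\spt V$ to one side, the foliation covers only that side, and I would use the strong maximum principle with the leaves $S_\lambda$ as barriers to confine $\spt V_i$ to the foliated region and to exclude limiting configurations the foliation does not see; the rest then proceeds as above.

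The main obstacle is precisely this drift-prevention: $\bC$ need not be ``integrable'' through rotations or through the foliation, so a priori $V_i$ could rotate slowly or slip between leaves from scale to scale. Ruling this out quantitatively is the content of Theorem~\ref{thm:scone}, whose proof rests on adapting Simon's polynomial/logarithmic decay estimates, and this is why a soft varifold-compactness argument does not by itself close the proof; a secondary point requiring care is the rigidity of $\bC$ modulo nearby minimal cones, which I would need in order to lock the rotation $q_i$ to act on $\bC$ itself.
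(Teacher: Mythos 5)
Your overall strategy — compactness + contradiction, a blow-up at a carefully chosen scale, Theorem~\ref{thm:scone} to prevent the cone from rotating across scales, and then classification of the blow-up as a translated Hardt--Simon leaf — is the same as the paper's. But there is a genuine gap in how you normalize the blow-up scale, and it is exactly the place where the paper has to work hardest.

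You define $\rho_i$ as the smallest radius for which $V_i$ is graphical over $q_i\bC$ \emph{centered at the origin} with small weighted $C^2$ norm, and then rescale about $0$ by $\rho_i^{-1}$. This conflates the translation degree of freedom with the leaf-parameter degree of freedom. Concretely, take $V_i = a_i + S_{\lambda_i}$ with $|a_i| \to 0$ but $|a_i| \gg |\lambda_i|^{1-\gamma}$ (where $\gamma$ is the Hardt--Simon decay exponent). The weighted graph norm over $\bC$ at $0$ is of order $|a_i|/|x|$, so your $\rho_i \approx |a_i|/\eps_0$ is controlled entirely by the translation, and the rescaled limit is a \emph{translated cone} $V' = [b + q(\bC)]$ with $|b| \approx \eps_0$, not a nontrivial leaf. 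Your Jacobi-field discussion (''the coefficient determines a leaf parameter $\lambda_i \to 0$ and translation $a_i \to 0$'') gestures at this but does not explain what to do when the blow-up is not a leaf. The paper avoids this by defining $\rho_i$ through a density (weak-cone$*$) condition that allows the annulus center $a_r$ to drift, rescaling about the drifting center $a_i = a_{\rho_i}$ rather than the origin, and then proving — using the fact that $\bC$ has no translational symmetries — that the rescaled limit cannot be a translated cone: if it were, the density ratios of $V_i'$ near $b$ would stay within $\beta''$ of $\theta_\bC(0)$ across further annuli, contradicting the minimality in the scale-invariant reformulation \eqref{eqn:simons-2}. This density-ratio argument for $\lambda \neq 0$ is the crux of the proof and has no counterpart in your proposal.

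Two smaller points. For identifying the blow-up as a translated leaf, the paper cites the classification results \cite{SiSo} / \cite[Corollary 3.7]{me-luca} (and \cite[Lemma 7.6]{simon:liousville} in the one-sided case) rather than re-deriving a Jacobi-field expansion; your description via ``slowly-decaying Jacobi fields'' points at the machinery underlying those classifications but is too imprecise to serve as a proof step. And your proposal to use the leaves $S_\lambda$ directly as barriers in the general strictly-minimizing/strictly-stable case is the right idea, but the clean statement you actually need is Simon's one-sided Liouville-type classification, which is what the paper invokes.
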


\section{Outline of proof}\label{sec:outline}

The key to our argument is the observation (by no means our own) that the set of densities of smooth stable minimal $7$-dimensional hypercones is a discrete set, which a direct consequence of the Lojasiewicz-Simon inequality \cite{simon} and standard compactness/regularity theory for stable minimal hypersurfaces.  At some level this is saying that the ``complexity'' of the singular models $\bC$ is quantized.  We prove our main decomposition Theorem \ref{thm:main3} by induction on the density (including multiplicity) of the cone $\bC$.

Our basic idea is very simple.  Unfortunately (fortunately?) there is a highly non-trivial complication, due to the nature of ``cone regions,'' but this point is not immediately obvious so let us ignore it for the moment.  Let us illustrate our idea by proving that if $M_i$ is a sequence of smooth, area-minimizing boundaries in $B_1$ converging to an area-minimizing cone $\bC$, then after passing to a subsequence all the $M_i \cap B_{1/2}$ are diffeomorphic to each other.  (So, everything is mulitplicity-one and converges smoothly on the regular part).

Let us suppose, towards a contradiction, that none of the $M_i \cap B_{1/2}$ are diffeomorphic.  Write $1 = \theta_0 < \theta_1 < \ldots$ for the densities of area-minimizing cones, and suppose $\theta_\bC(0) = \theta_k$.  If $k = 0$ then $\bC$ is the plane, and so our assertion follows by Allard's theorem.  Let us assume by induction our assertion holds for every $\theta_\bC(0) \leq \theta_{k-1}$.

The $M_i$ converge smoothly to $\bC \cap B_1$ away from $0$.  For any $a \in B_{1/2}$ we can consider the smallest radius $\rho$ so that $A_{1/2, \rho}(a) \cap M_i$ is a ``$\bC$-cone region.''  Let's be intentionally vague for the moment, and just think of this as an annulus where $M_i$ looks very close to a cone, which may be $\bC$ or may be some other $\bC'$ diffeomorphic to $\bC$.

Let us pick $\rho_i$ to be the least such radius among all possible $a \in B_{1/2}$, and pick $a_i$ realizing this point.  Smooth convergence to $\bC$ on compact subsets of $B_1 \setminus \{0\}$ implies $\rho_i \to 0$ and $a_i \to 0$.  Since all the $M_i \cap A_{1/2, \rho_i}(a_i)$, being $\bC$-cone regions, are diffeomorphic to $\bC \cap A_{1, 1/2}$, our contradiction hypothesis implies that none of the $M_i \cap B_{2\rho_i}(a_i)$ are diffeomorphic.

Now consider the rescaled surfaces $M_i' = \rho_i^{-1}(M_i - a_i)$.  After passing to a subsequence, we get convergence $M_i' \to M'$, an area-minimizer in $\R^8$.  By construction, $M' \cap A_{\infty, 1}(a)$ is a $\bC$-cone region, and so $M'$ is smooth outside $B_1$.  Any tangent cone at infinity of $M'$ is a cone $\bC'$ very near to $\bC$, having the same density $\theta_{\bC'}(0) = \theta_\bC(0)$.

On the other hand $M'$ cannot be smooth, because then $M_i' \to M'$ smoothly on compact sets, contradicting the fact that none of the $M_i' \cap B_2$ are diffeomorphic.  So there are (finitely-many) singularities of $M'$ in $B_1$.  I claim that any such singularity $x$ has density $\theta_{M'}(x) < \theta_\bC(0)$.  Otherwise, if we had $\theta_{M'}(x) \geq \theta_\bC(0)$, then monotonicity would imply that $M' = x + \bC'$.  But then, for $i$ large each $M_i \cap A_{1/2, \rho_i/2}(a_i + \rho_i x)$ would be a $\bC$-cone region, contradicting our choice of $\rho_i$ as the least possible such radius.

So in fact every singularity of $M'$ has density $\leq \theta_{k-1}$.  In a small ball $B_{r_x}(x)$ around every singularity $x$, $M'$ looks scale-invariantly close to a cone $\bC_x$ having density $\leq \theta_{k-1}$, and so by induction (if we take $r$ sufficiently small and pass to a subsequence in $i$), all the $M_i' \cap B_{r_x}(x)$ are diffeomorphic.  Since $M_i'$ converge smoothly to $M'$ away from the singular set, we deduce all the $M_i' \cap B_2$ are diffeomorphic.  This is a contradiction.

\vspace{3mm}

OK so what's wrong with that argument.  The catch is in exactly how we can define a  ``cone region,'' and how these are glued to the other parts of $M_i$.  When we do this contradiction argument, without using any additional quantitative estimates, we can only pass information between finitely-many scales, e.g. between scale $\rho$ and $\rho/2$.  You will find that, no matter how you define cone region (by density drop, by graphicality, etc.), you will have to allow the model cone to change with the radius.  This argument will only show that for a given $M = M_i$, for every radius $r \in [\rho_i, 1/2]$, there is a cone $\bC_r$ diffeomorphic to $\bC$ (in the sphere), and having the same density, so that $M \cap A_{r, r/8}(a_i)$ is a small graph over $a_i + \bC_r$.  In certain special cases, such as when $\bC$ is planar with possible multiplicity (\cite{white:compact}, \cite{BuSh}) or strictly-minimizing with multiplicity-one (\cite{me-luca}, \cite{wang1}), one can rule out the $\bC_r$ changing, since there are good barriers which pass information across scales by ``trapping'' $M$ to stay close to the original $\bC$.  For more general $\bC$ this is not (to my knowledge) available.

The changing cones starts to becomes a problem because, although the $M_i \cap A_{1, \rho_i}(a_i)$ are still all diffeomorphic to an annulus $\bC \cap A_{1, 1/2}$, we lose control over how this annulus is glued into the region $M_i \cap B_{\rho_i}(a_i)$.  Actually, for our one single step of induction, this is not strictly speaking an issue, because we could push all the ``rotation'' of the cones out to scale $\approx 1$.  Once you start attempting to induct however, this is a problem, since we need to be able to glue the cone regions at both ends.

Precisely, we require the following.  Let $\phi_{ij} : M_i \cap A_{1/2, \rho_i}(a_i) \to M_j \cap A_{1/2, \rho_j}(a_j)$ be our diffeomorphisms.  By our convergence and our choice of subsequence, there is a fixed cone $\bC'$ diffeomorphic to $\bC$ so that all the $M_i \cap A_{1, 1/2}$ are small graphs over $\bC \cap A_{1, 1/2}$, and all the $M_i' \cap A_{2, 1}$ are small graphs over $\bC' \cap A_{2, 1}$, and therefore we obtain diffeomorphisms $\psi_i : \bC\cap A_{1, 1/2} \to M_i \cap A_{1, 1/2}$, $\psi_i' : \bC' \cap A_{2, 1} \to M_i' \cap A_{2, 1}$, which are all very small perturbations of the identity.  To adequately construct diffeomorphisms from $M_i$ to $M_j$, we require that all the
\[
\psi_j^{-1} \circ \phi_{ij} \circ \psi_i : \bC \cap A_{1, 1/2} \to \bC \cap A_{1, 1/2}
\]
are isotopic, \emph{and} all the
\[
\psi_j'^{-1} \circ \eta_{a_j, \rho_j} \circ \phi_{ij} \circ \eta_{a_i, \rho_i}^{-1} \circ \psi_i' : \bC' \cap A_{2, 1} \to \bC' \cap A_{2, 1}
\]
are isotopic, where we write $\eta_{x, r}(z) = (z - x)/r$.

Let us illustrate this with two (would-be) examples.  In the first, take $\bC$ a hypercone in $\R^4$, and identify $S^3  \setminus \{pt \}$ with $\R^3$.  Think of the link $\bC \cap \del B_1$ as a genus-two surface in $\R^3$, that looks like the number $8$ speared onto the $z$-axis.  Suppose there is a smooth family of smooth, minimal cones $\bC_t$ for $t \in \R$, so that as $t$ increases the cross section $\bC_t \cap \del B_1$ keeps one handle fixed, and rotates counter-clockwise the other handle around the $z$-axis, and does so periodically in the sense that $\bC_{t+i} = \bC_t$ for every integer $i$.

Consider now the surface $M$ defined by $M \cap \del B_r = \bC_{-\sigma \log r} \cap \del B_r$ for some small $\sigma > 0$, so that $M \cap A_{1, 0}$ is a $\bC$-cone region.  The annuli $M_i := M \cap A_{1, e^{-i/\sigma}}$ are all diffeomorphic, and in fact $(e^{i/\sigma} M_i) \cap A_{1, 1/2} = (e^{j/\sigma} M_j) \cap A_{1, 1/2}$ for integers $i$, $j$, but for any two diffeomorphisms $\phi_{ij} : M_i \to M_j$, $\phi_{ij'} : M_i \to M_{j'}$ (with $j \neq j'$) which are isotopic as maps $A_{1, 1/2} \cap M_i \to A_{1, 1/2} \cap M_j \equiv A_{1, 1/2} \cap M_{j'} \equiv A_{1, 1/2} \cap M_1$, the rescaled diffeomorphisms
\[
\eta_{0, e^{-j/\sigma}} \circ \phi_{ij} \circ \eta_{0, e^{-i/\sigma}}^{-1}, \eta_{0, e^{-j'/\sigma}} \circ \phi_{ij'} \circ \eta_{0, -i/\sigma}^{-1} : M_1 \cap A_{1,1/2} \to M_1 \cap A_{1, 1/2}
\]
cannot also be isotopic.

The second example is simpler, but avoidable.  If the $M_i$ had only bounded index, then $\bC$ could be a multiplicity-two plane.  As the radius $r$ decreases in any fixed cone region $M_i \cap A_{1/2, \rho_i}(a_i)$, one could imagine the equatorial spheres $\bC_r \cap \del B_r$ rotating $180^o$, with the effect that the $M_i \cap \del B_{1/2}(a_i)$ is glued to $M_i \cap \del B_{\rho_i}(a_i)$ via the antipodal map.  In this case in fact the antipodal map and the identity map are the only two possibilities, so one could avoid this by passing to a further subsequence.

It turns out that the second example more or less captures the behavior if all the $\bC_r$ are legitimately rotations of $\bC$ (so $\bC$ is ``integrable through rotations'').  In this case, one can arrange for the $\phi_{ij}|_{\del B_r}$ to be small perturbations of a rotation and dilation, and so after passing to a further subsequence we get our required isotopies.  For more general cones $\bC$ this could fail, as illustrated by the first example.  

\vspace{3mm}

Instead, we can show that in fact the cones $\bC_r$ cannot change at all.  In Theorem \ref{thm:scone}, we adapt the Lojasiewicz-Simon decay-growth estimates \cite{simon} to prove that while the density drop is small, $M$ must stay graphical over the same cone, \emph{independent} of whatever happens in smaller regions.  The intuition is that a minimal graph over a cone $\bC$ must either grow or decay polynomially or logarithically (in $r$) from one scale to the next, and most importantly (as proved in \cite{simon}) if $u$ is not already 1-homogenous then there is a lower/upper bound to this growth/decay.  If $u$ starts very small but then becomes too big, the graph of $u$ will start ``eating'' up a definite amount of density at each scale drop.  We highlight that this decay-growth theorem requires no a priori knowledge of the singular set.  We also mention that the estimate holds in any dimension or codimension, and for any surface with (small) $C^2$ mean curvature.

So, a posteriori after having constructed the cone regions by our contradiction argument, we can use Theorem \ref{thm:scone} to show that they are legitimately graphs over a single cone.  This allows our original contradiction argument to go through.  A minor modification of the above argument can also deal with $M_i$ having bounded index.

\vspace{3mm}

So far, that deals with smooth $M_i$.  But of course the $M_i$ could be singular, and in this case it's not really clear what $M_i$ being diffeomorphic means.  To adequately capture both the regular and singular structure, we build \emph{global} diffeomorphisms $\phi_{ij} : B_{1/2} \to B_{1/2}$, which take $M_i$ to $M_j$.  Actually, the $\phi_{ij}$ will only be globally bi-Lipschitz maps, and be smooth diffeomorphisms away from the singular set.  Getting better regularity of the $\phi_{ij}$ at singular points seems to be more or less equivalent to knowing all the singular cones are integrable through rotations.

Section \ref{sec:param} is devoted to constructing these maps.  Multiplicity is a significant technical thorn here.  If everything were single-sheeted, the section would be trivial (since all our maps would be $C^1$ perturbations of the identity), but since we have no control over how much sheets are squished together, gluing the various maps together is significantly more technical (since our maps are only $C^0$ perturbations of $id$).  The key idea is that two $C^1$ diffeomorphisms $\R \to \R$, which coincide with the identity outside $[-1, 1]$, are trivially isotopic just by taking an appropriate convex combination of the two.  By extension, if two $C^1$ diffeomorphisms act by moving along the flowlines of some fixed, smooth vector field (having no closed orbits), and agree outside some small set, then these can be isotoped together along the flowlines.  Even if the vector fields are different, but very close in $C^1$, this will work.  We then build our parameterizations $\phi$ to have this structure.

\section{Preliminaries}\label{sec:prelim}

We will mostly work in $\R^8$.  If $U \subset \R^8$, then generally all norms, lengths, areas, volumes, etc. we use in $U$ are taken with respect to the Euclidean metric $\geucl$ unless otherwise specified.  Write $\overline{U}$ for the set-theoretic closure of $U$.  We write $B_r(a)$ for the (Euclidean) open ball in $\R^8$ centered in $a$, and we define the open annuli $A_{R, r}(a) = B_R(a) \setminus \overline{B_r}(a)$, $A_{R, 0}(a) = B_R(a) \setminus \{a\}$, and $A_{\infty, r}(a) = \R^8 \setminus \overline{B_r(a)}$.

We write $B_r(U) = \{ x : \dist(x, U) < r \}$ for the open $r$-tubular neighborhood of $U$, where $\dist(\cdot, U)$ the Euclidean distance to $U$.  Define $d_H(U,U')$ to be the Hausdorff distance, i.e. the least $r$ for which $U \subset B_r(U')$ and $U' \subset B_r(U)$.  Write $\haus^n$ for the $n$-dimensional Hausdorff measure, and $\omega_n = \haus^n(B_1 \subset \R^n)$ for the volume of the unit $n$-ball.  Let $\eta_{x, r}(y) = (y - x)/r$ be the translation/dilation map.

We write $v \cdot w$ to indicate the usual Euclidean inner product, and $|v| = (v\cdot v)^{1/2}$ for the usual Euclidean norm of a vector.  If $T(v_1, \ldots, v_k)$ is a mult-linear map to $\R^P$, we write $|T| = \sup_{|v_i| \leq 1} |T(v_1, \ldots, v_k)|$ for the operator norm.  If we need to make explicit a metric $g$, we will write $|v|_g = g(v, v)^{1/2}$ for the $g$-length of $v$.  If $\phi : (N, g) \to (N', g')$ is a $C^1$ map between Riemannian manifolds, then for $x \in N$ we define $|D\phi|_x| = \sup \{ |D\phi|_x(v)|_{g'} : v \in T_x N, |v|_{g} \leq 1 \}$.

Given a $C^1$ embedded $n$-submanifold $M$ in $\R^8$, we write $T M$, $T^\perp M$ for the tangent, normal bundles of $M \subset (\R^8, \geucl)$, and for each $x \in M$ we identity $T_x M$, $T_x^\perp M$ with a subspace of $(\R^8, \geucl)$ in the obvious fashion.  We write $u : M \to M^\perp$ for a section of the normal bundle $T^\perp M$, and $\nabla$ for the induced connection derivative on $T^\perp M$.  Given such a $u$, we write $\graph_M(u) := \{ x + u(x) : x \in M \}$.  Define the $C^k$ and $C^{k,\alpha}$ norms of $u$ w.r.t. $\nabla$.  If $M_i$ is a sequence of embedded submanifolds, we say $M_i \to M$ in $C^k(U)$ (with multiplicity $m \in \N$) if there is a set $U' \supset U$, and a sequence of $C^k$ functions $\{ u_{ij} : M \cap U' \to M^\perp \}_{j = 1}^m$, so that for all $i >> 1$ we have
\[
M_i \cap U = \cup_{j=1}^m \graph_M(u_{ij}), \quad |u_{ij}|_{C^k(M \cap U')} \to 0 \text{ as } i \to \infty.
\]
For $x \in M$ and $M$ as above, define the $C^k$ regularity scale $r_k(x, M)$ to be the largest radius $r$, so that for some $n$-plane $P$ we can find a $C^k$ function $v : (x + P) \cap B_r(x) \to P^\perp$ so that
\[
M \cap B_r(x) \subset \graph_{x+P}(v), \quad \sum_{i=0}^k r^{1-i} |D^i v| \leq 1.
\]
We define the global regularity scale $r_k(M) = \inf_{x \in M} r_k(x, M)$.

Let $g$ be a $C^2$ metric on $U$.  Write $B^g_r(x)$ for the ball of radius $r$ centered at $x$ with respect to the metric $g$, $\haus^7_g$ for the Hausdorff measure w.r.t. $g$, and $B^g_r(U) = \{ x : \dist_g(x, U) < r \}$ for the $g$-geodesic open $r$-tubular neighborhood around $U$.  If $M$ is an embedded $C^1$ hypersuface of $U$, we write $T^{\perp_g} M$ for the normal bundle of $M$ w.r.t. $g$, and identity each $T_x^{\perp_g} M$ with a subspace of $\R^8$.  Write $u : M \to M^{\perp_g}$ for a section of normal bundle, and $\nabla^g$ for the induced connection derivative on $T^{\perp_g}M$ induced by the Levi-Civita connection $\nabla^g$ of $g$.

If $|g - \geucl| \leq \delta$, then for every $x \in B_1$ and $r < (1-\delta)(1-|x|)$, we have
\begin{equation}\label{eqn:g-ball}
B^g_{(1-\delta)r}(x) \subset B_r(x) \subset B^g_{(1+\delta)r}(x).
\end{equation}
If $M$ is an embedded $C^1$ hypersurface of $U$, then we can express
\begin{equation}\label{eqn:haus-F}
\haus^7_g(M) = \int_M F(g(x), T_x M) d\haus^7(x)
\end{equation}
for some analytic functional $F(g, V)$ (c.f. \cite[Remark 1]{ScSi}).  We have $|F(g, V) - 1| \leq c |g - \geucl|$, and so we get
\begin{equation}\label{eqn:hausdorff}
(1-c\delta)\haus^7(M) \leq \haus^7_g(M) \leq (1+ c\delta)\haus^7(M).
\end{equation}
for some absolute constant $c$.  In the special case when $M = \{ (x, u(x)) : x \in U' \subset \R^7 \}$ for some Lipschitz function $u$ defined on a subset of $\R^7$, then we can rewrite \eqref{eqn:haus-F} as $F d\haus^7 = F_0(g(x, u(x)), Du|_x) dx$ for $F_0$ an analytic function satisfying $F(\geucl, z) = \sqrt{1+|z|^2}$.

\vspace{3mm}

We say a $C^2$ embedded hypersurface $M$ is minimal in $(U, g)$ if the first variation of \eqref{eqn:haus-F} vanishes at $M$, i.e. for all $X \in C^1_c(U, \R^8)$, we ask
\begin{align}
0 = \delta_g M(X) &:= \left. \frac{d}{dt} \right|_{t = 0} \haus^7_g(\phi_t(M)) \nonumber \\
&= \int_M \mathrm{div}_{M, g}(X) d\haus^7_g \label{eqn:first-var-g}
\end{align}
where $\phi(x, t) = x + t X$.  In the last equality $\mathrm{div}_{M, g}(X)|_x = \sum_i g(e_i, \nabla^g_{e_i} X)$, where $e_i$ is an $g$-ON-basis for $T_x M$ at $x$.  We write $\cM_7(U, g)$ for the space of $C^2$ embedded minimal hypersurface in $U$ such that $\sing M$ is a discrete set.

When $M = \{(x, u(x)) : x \in B_1 \}$ for some $C^2$ function $u : B_1 \subset \R^7 \to \R$, then by considering the structure of $F_0$, and provided $|u|_{C^1(B_1)} \leq 1$ and $|g - \geucl|_{C^1(B_1^7 \times [-2, 2])} \leq \delta$ for some absolute constant $\delta$, the $u$ will satisfy a PDE of the form
\[
a_{ij} D^2_{ij} u + b_i D_i u = f_i D_i g + f_8 D_8 g,
\]
where $a_{ij}, b_i, f_i, f_8$ are analytic functions of $g(x, u(x)), Dg|_{(x, u(x))}, Du|_x$ satisfying $1/2 \leq a_{ij} \leq 2$.  In particular, by standard elliptic theory if $g \in C^k$ then $u \in C^{k,\alpha}$ for every $\alpha \in (0, 1)$, and for every $\theta \in (0, 1)$ we have the estimate
\begin{equation}\label{eqn:higher-reg}
|u|_{C^{k,\alpha}(B_\theta)} \leq c( ||u||_{L^2(B_1)} + |Dg|_{C^{k-1}(B_1)}) , 
\end{equation}
where $c = c(\theta, \alpha, k, |u|_{C^{1,\alpha}(B_1)}, |Dg|_{C^{k-1}(B_1\times [-2, -2])})$.  Typically we will apply \eqref{eqn:higher-reg} in normal coordinates centered at some point on the graph of $u$, so the restrictions on $g$ and $u$ are never an issue.

\vspace{3mm}

For $M$ minimal in $(U, g)$, we say $M$ is stable in $(U, g)$ if the second variation of \eqref{eqn:haus-F} is non-negative at $M$.  In other words, we ask
\[
\delta^2_g M(X, X) := \left. \frac{d^2}{dt^t} \right|_{t = 0} \haus^7_g(\phi_t(M)) \geq 0 , \quad \forall X \in C^1_c(U, \R^8),
\]
where $\phi(x, t) = x + t X$.  It is well known that if $u = \pi^{\perp_g}_M(X) : M \to M^{\perp_g}$, then $\delta^2_g M(X, X) = Q_{M, g}(u, u)$ for $Q$ being the quadratic form in \eqref{eqn:Q}.  Here $\pi_M^{\perp_g}$ denotes the $g$-orthogonal projection onto $T^{\perp_g} M$.  Let us define $\mindex(M, U, g)$ to be the maximal dimension of subspace $V \subset C^1_c(M \cap U, M^{\perp_g})$ on which $Q_{M, g}$ is strictly negative definite, i.e. so that $Q_{M,g}(\phi, \phi) < 0$ for all $\phi \in V$.  Observe the trivial inclusion $\mindex(M, U', g) \leq \mindex(M, U, g)$ if $U' \subset U$.

We note that by \cite[Section 18]{neshan}, if $\mindex(M, U, g) = 0$ and $\haus^6(U \cap \overline{M} \setminus M) = 0$, then $U \cap \overline{M} \setminus M$ consists of isolated points, and hence by an elementary cutoff function argument $M$ is stable in $(U, g)$.  This motivates our definition of $\cM_7$.

If $M$ is closed (as sets) in $U$, then $Q_{M,g}$ can be written
\[
Q_{M,g}(u, u) = -\int_{M} g(u, \Delta_g u + |A_{M, g}|^2 u + Ric_g(u, \cdot)) d\haus^7_g =: - \int_M g(u, L_{M,g} u ) d\haus^7_g,
\]
where $\Delta_{g}$ is the connection Laplacian on $T^{\perp_g} M$.  $L_{M, g}$ is a self-adjoint elliptic operator on $C^2_c(M, M^{\perp_g})$.  In this case, for every $U' \subset\subset U$ with $C^1$ boundary intersecting $M$ transversely, $\mindex(M, U', g)$ is finite and consists of the number of negative eigenvalues for $L_{M, g}$ under Dirichlet zero boundary conditions on $M \cap U'$.

As discussed in the Introduction, singular minimal surfaces with finite index, in the sense that we've described above, arise naturally in various min-max constructions (see e.g. \cite{li1}, \cite{hiesmayr}, \cite{gaspar}).  We note that if $M$ is a minimal cone in $(\R^8, \geucl)$ having $\haus^6(\overline{M} \setminus M) = 0$, it's easy to check that either $M$ is stable, or $\mindex(M, \R^8, \geucl) = \infty$.  So any kind of finite index must be ``macroscopic.''  This can be made more precise below.

For any $x \in U$, we define the ``stability radius'' $r_s(x) \equiv r_s(x, M, U, g)$ to be the largest radius $r_s$ with the property that $B_{r_s}(x) \subset U$, and $\mindex(M, B_{r_s}(x), g) = 0$.  Essentially the same quantity, as well as the below Lemma \ref{lem:rs-bound}, appeared in \cite{song}.  Since it is very short we include a proof here for the readers' convenience.  We remark that \cite[Corollary 3.8]{wang1} has shown a converse to Lemma \ref{lem:rs-bound}, i.e. if $M \in \cM_7(U, g)$ and $r_s > 0$ everywhere in $U$ then $M$ has bounded index on compact subsets.
\begin{lemma}\label{lem:rs-bound}
Let $M$ be a $C^2$ embedded minimal hypersurface in $(U, g)$, for $U \subset \R^8$ open and $g$ a $C^2$ metric on $U$.  Suppose $\mindex(M, U, g) < \infty$ and $\haus^6(U \cap \overline{M} \setminus M) = 0$.  Then $r_s > 0$ on $U$, and (hence) $M \in \cM_7(U, g)$.
\end{lemma}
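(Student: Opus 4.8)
The plan is to show that every point $x \in U$ has positive stability radius; the conclusion $M \in \cM_7(U,g)$ then follows from the discussion preceding the lemma (the result of \cite[Section 18]{neshan} combined with the cutoff argument). So fix $x \in U$ and suppose for contradiction that $r_s(x) = 0$. Since $\mindex(M, U, g) < \infty$, choose a finite-dimensional subspace $V \subset C^1_c(M \cap U, M^{\perp_g})$ realizing the index, say $\dim V = \mindex(M, U, g) =: J$, with $Q_{M,g}$ negative definite on $V$; let $R_0 > 0$ be small enough that $\spt u \cap \overline{B_{R_0}(x)} = \emptyset$ for all $u$ in a basis of $V$ (possible since each $u$ has compact support in $M \cap U$, which misses $x$, using also that $x$ need not lie on $\overline M$).

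The key step is to extract infinitely many disjoint balls on which $Q_{M,g}$ detects a negative direction, contradicting finiteness of the index. Because $r_s(x) = 0$, for every $\rho \in (0, R_0)$ with $B_\rho(x) \subset U$ we have $\mindex(M, B_\rho(x), g) \geq 1$, so there is a nonzero $u_\rho \in C^1_c(M \cap B_\rho(x), M^{\perp_g})$ with $Q_{M,g}(u_\rho, u_\rho) < 0$. Now choose a sequence of radii $\rho_1 > \rho_2 > \cdots \to 0$ and a sequence of radii $s_k \to 0$ arranged so that the annular balls are disjoint: concretely, pick $\rho_{k+1} < s_k$, where $s_k$ is small enough that $\spt u_{\rho_k}$ does not meet $\overline{B_{s_k}(x)}$ (possible since $u_{\rho_k}$ has compact support in $M \cap B_{\rho_k}(x)$, hence its support is a compact subset of $M$ missing the point $x$). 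Then the supports $\spt u_{\rho_1}, \spt u_{\rho_2}, \ldots$ are pairwise disjoint, and disjoint from all $\spt u$ with $u$ in the chosen basis of $V$. Since $Q_{M,g}$ involves only first derivatives and pointwise terms (see \eqref{eqn:Q}), functions with disjoint supports are $Q_{M,g}$-orthogonal, so $Q_{M,g}$ is negative definite on $V \oplus \vecspan\{u_{\rho_1}, \ldots, u_{\rho_{J+1}}\}$, which has dimension $J + (J+1) > J$. This contradicts $\mindex(M, U, g) = J$. Hence $r_s(x) > 0$ for every $x \in U$.

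The only mild subtlety — and the part to be careful about — is the extraction of the disjoint supports: one must ensure at each step that the newly chosen $u_{\rho_{k+1}}$ (living in a small ball around $x$) and all previously chosen $u_{\rho_1}, \dots, u_{\rho_k}$ (whose supports are compact subsets of $M$ not containing $x$) can be simultaneously separated from each other and from the fixed basis of $V$. This is arranged by the inductive choice $\rho_{k+1} < s_k \leq \mathrm{dist}(x, \cup_{j \le k}\spt u_{\rho_j} \cup \bigcup_{u \in \mathrm{basis}(V)} \spt u)$, which is strictly positive because the right-hand side is a finite union of compact sets, each missing the point $x$. Note we never need the full partial regularity of $M$ near $x$, nor do we need $x \in \overline M$; if $x \notin \overline M$ then trivially $r_s(x) \geq \mathrm{dist}(x, \overline M) > 0$, and the argument above handles $x \in \overline M$. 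Finally, once $r_s > 0$ on all of $U$, near any $x$ the surface $M$ is stable in $B_{r_s(x)}(x)$, so by \cite[Section 18]{neshan} together with $\haus^6(\overline M \setminus M) = 0$ the set $\overline M \setminus M$ is locally a discrete set of points, i.e. $M \in \cM_7(U, g)$, completing the proof.
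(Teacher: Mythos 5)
Your argument for points $x \in \overline{M} \setminus M$ is essentially the paper's: use $r_s(x)=0$ to produce negative directions supported in arbitrarily small balls about $x$, note that each such support is a compact subset of $M$ and hence a positive distance from $x$, and iterate to get more negative directions with pairwise disjoint supports than the index allows. (The paper builds an infinite-dimensional negative subspace rather than a $(J+1)$-dimensional one, and your direct sum with $V$ is superfluous, but these are cosmetic differences.)

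There is, however, a genuine gap in the case $x \in M$, i.e.\ $x$ a regular point. You assert that the argument ``handles $x \in \overline{M}$,'' but the separation step relies on the claim that $\spt u_{\rho_k}$ is ``a compact subset of $M$ missing the point $x$.'' This is only true because $\spt u_{\rho_k} \subset M$ and $x \notin M$; when $x \in M$, a test function $u_\rho \in C^1_c(M \cap B_\rho(x), M^{\perp_g})$ may perfectly well have $x$ in its support, so no radius $s_k$ with $\overline{B_{s_k}(x)} \cap \spt u_{\rho_k} = \emptyset$ need exist and the inductive construction of disjoint supports breaks down. (The same issue infects your choice of $R_0$ separating $x$ from the basis of $V$.) One cannot salvage this by index counting: for a regular point the correct statement is that there are \emph{no} negative directions in small balls at all, which is what the paper proves directly via the Poincar\'e inequality on $M \cap B_r(x)$ for $r$ small (the first Dirichlet eigenvalue of a shrinking geodesic ball in the fixed $C^2$ surface $M$ blows up, so $Q_{M,g} \geq 0$ there). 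Adding that one sentence for regular points, and restricting your disjoint-support argument to $x \in \overline{M}\setminus M$, repairs the proof; the case $x \notin \overline{M}$ is trivial as you note.
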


\begin{proof}
If $x \in M$, then this follows trivially by the Poincare inequality in $B_r(x) \cap M$, for $r$ small.  Suppose $x \in \overline{M} \setminus M$ but $r_s(x) = 0$.  Then by our hypothesis and \cite{neshan} (as in our discussion above), we must have $\mindex(M, B_r(x), g) > 0$ for every $r > 0$ small.  We can therefore find a $C^1_c$ functions $u_r : M \cap B_r(x) \to M^{\perp_g}$ with $Q_{M, g}(u, u) < 0$ and $\spt u_r \cap \overline{M} \setminus M = \emptyset$.  But then for any such $r$ we can find an $r' > 0$ with $\spt u_r \cap B_{r'}(x) = \emptyset$, and thereby construct an infinite-dimensional subspace $V \subset C^1_c(M, M^{\perp_g})$ on which $Q_{M, g}$ is strictly negative definite.  This is a contradiction, so we must have $r_s(x) > 0$.
\end{proof}

\vspace{3mm}

It will be convenient to use the notion of varifold.  Recall that if $U \subset \R^8$, then an integral $7$-varifold $V$ in $(U, g)$ is a Radon measure on the Grassmanian bundle $G_7(TU) \equiv U \times \mathrm{Gr}(7, 8)$ that takes the form
\begin{equation}\label{eqn:int-varifold}
V(\phi(x, S)) = \int_{M_V} \phi(x, T_x M_V) \theta_V(x) d\haus^7_g(x)
\end{equation}
for some countably $7$-rectifiable set $M_V$, and some positive, integer-valued $\haus^7_g$-measurable function $\theta_V$.  The mass measure $\mu_V = \haus^7_g \llcorner \theta_V \llcorner M_V$ is the pushforward of $V$ under the projection map $G_7(TU) \to U$.  We write $\spt V = \spt \mu_V$, and define $\reg V$ to be the set of points $x \in \spt V$ having the property that for some $r > 0$, $\spt V \cap B_r(x)$ is a closed (as sets) embedded hypersurface of $B_r(x)$.  We define the closed set $\sing V = \spt V \setminus \reg V$.  If $V_i$, $V$ are $7$-varifolds in $\R^8$ and $V_i \to V$ as varifolds, we say $V_i \to V$ in $C^k(U)$ if for some $U' \supset U$ and for $i >> 1$, the $\spt V_i \cap U'$ and $\spt V \cap U'$ are $C^k$ embedded hypersurfaces, and $\spt V_i \to \spt V$ in $C^k(U)$.

If $M$ is a $C^1$ hypersurface of $U$, then $M$ induces a natural varifold $[M]_g$ by taking in \eqref{eqn:int-varifold} $M_V = M$ and $\theta_V = 1$.  Write $[M] \equiv [M]_{\geucl}$.  Given any $C^1$, proper map $f : (U, g) \to (U', g')$, we define the pushforward $f_\sharp V$ to be the integral varifold $f_\sharp V$ defined by
\[
(f_\sharp V)(\phi(x, S)) = \int \phi(f(x), Df|_x S) J_{S} f(x) dV(x, S),
\]
where, given any $g|_x$-ON basis $e_i$ for $S$, $J_{S}(x) = \det( g'|_{f(x)} (Df|_x(e_i), Df|_x(e_j) )^{1/2}$.  Equivalently, by the area formula, in the notation of \eqref{eqn:int-varifold} $M_{f_\sharp V} = f(M_V)$ and $\theta_{f_\sharp V}(x) = \sum_{y \in \theta^{-1}(x)} \theta_V(y)$.  Similarly, $f_\sharp [M]_g = [f(M)]_{g'}$.

$V$ is stationary in $(U, g)$ if the first variation $\delta_g V$ vanishes, i.e. if for all $U' \subset\subset U$ and $X \in C^1_c(U', \R^8)$, we have
\begin{equation}\label{eqn:first-var-V}
0 = \delta_g V(X) := \left. \frac{d}{dt} \right|_{t = 0} \mu_{(Id + tX)_\sharp V}(U') = \int_{U'} \mathrm{div}_{S, g}(X) dV(x, S),
\end{equation}
where $\mathrm{div}_{S, g}(X) = \sum_i g(e_i, \nabla^g_{e_i} X)$ for any $g|_x$-ON basis of $S$.  If $M$ is a minimal surface in $(U, g)$, then $[M]_g$ is stationary in $(U, g)$.

Let $V$ be a stationary integral varifold in $(B_1, g)$, and suppose $g$ is a $C^2$ metric satisfying $|g - \geucl| + |Dg| \leq \Gamma |x|$.  Then by \eqref{eqn:int-varifold}, \eqref{eqn:first-var-V} we have the inequality
\begin{align*}
|\delta_{\geucl}V(X)| &= \left| \int \mathrm{div}_{T_x M_V, \geucl}(X) d\mu_V \right| \\\
&\leq c \Gamma \int |x| |DX| + |X| d\mu_V(x) \quad \forall X \in C^1_c(B_1, \R^8),
\end{align*}
for $c$ an absolute constant.  Therefore by plugging in $X = \phi(|x|/\rho) x$ for $\phi$ being (a suitable smooth approximation for) $1_{(-\infty, 1]}$, we obtain the monotonicity
\begin{equation}\label{eqn:sharp-mono}
\int_{A_{r, s}} \frac{|\pi^\perp_V(x)|^2}{|x|^{7+2}} d\mu_V(x) \leq (1+ c \Gamma r)^{7+1} \theta_V(0, r) - (1+c\Gamma s)^{7+1} \theta_V(0, s).
\end{equation}
for all $0 < s \leq r \leq 1$, for $c$ an absolute constant, and for $\theta_V(x, r) = \omega_7^{-1} r^{-7} \mu_V(B_r(x))$ the density ratio of $V$.  Here $\pi^\perp_V(x)$ is the \emph{Euclidean} projection of the position vector $x$ onto the \emph{Euclidean} normal direction $T^\perp_x M_V$.  Of course if $g = \geucl$, then \eqref{eqn:sharp-mono} reduces to the usual sharp monotonicity formula.  We note also that, since $X$ is smooth near $0$ and $X(0) = 0$, \eqref{eqn:sharp-mono} continues to hold if $V$ is only stationary in $(B_1 \setminus \{0\}, g)$, provided $\mu_V(B_1 \setminus \{0\}) < \infty$, in which case $V$ extends to a stationary integral varifold in $(B_1, g)$.

Suppose now only $|g - \geucl|_{C^2(B_1)} \leq \delta$.  For each $x \in B_1$ and $r < (1-\delta)(1-|x|)$ we can find a $C^2$ coordinte transformation $z = \phi_x(y)$ in $B_r(x)$, so that the $(z^i)$ form a choice of normal coordinates of $g$ at $x$.  Applying the previous paragraph in this coordinate chart, and using \eqref{eqn:g-ball}, \eqref{eqn:hausdorff}, we obtain the monotonicity
\begin{equation}\label{eqn:monotonicity}
\theta_V(x, s) \leq (1+ c_0 \delta r) \theta_V(x, r)\quad \forall x \in B_1, 0 < s < r < (1-\delta) r,
\end{equation}
where $c_0$ is an absolute constant.  \eqref{eqn:monotonicity} implies that $\theta_V(x) := \lim_{r \to 0} \theta_V(x, r)$ exists, is upper-semi-continuous, and (since $V$ is integral) $\geq 1$ on $\spt V$.  Moreover, given any $\sigma < 1$ and $x \in \spt V \cap B_{\sigma}$, we have the Ahlfors regularity 
\begin{equation}\label{eqn:ahlfors}
\frac{1}{2} \leq \theta_V(x, r) \leq \frac{1+c_0}{(1-\sigma)^n} \mu_V(B_1), \quad 0 < r < \min\{ 1-|x|, \frac{1}{c_0 \delta} \}.
\end{equation}

\vspace{3mm}

We define $\cISV_7(U, g)$ to be the class of stationary integral $7$-varifolds $V$ in $(U, g)$ with the property that $\sing V$ is a discrete set of points.  Every such $V$ satisfies $\reg V \in \cM_7(U, g)$, and more generally has the following structure.
\begin{lemma}\label{lem:isv-decomp}
A varifold $V \in \cISV_7(U, g)$ if and only if for every $U' \subset \subset U$ there is a $k \in \N$ so that we can write
\begin{equation}\label{eqn:isv-decomp}
V \llcorner U' = \sum_{i=1}^k m_i [M_i]_g
\end{equation}
for $m_i \in \N$, $M_i \in \cM_7(U', g)$, and with $\overline{M_i}$ being disjoint.
\end{lemma}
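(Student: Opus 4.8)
The plan is to prove the two implications separately. The ``if'' direction is immediate: if $V\llcorner U' = \sum_{i=1}^k m_i[M_i]_g$ for every $U'\subset\subset U$, with $M_i\in\cM_7(U',g)$ and the $\overline{M_i}$ pairwise disjoint, then $V\llcorner U'$ is a finite integer combination of stationary integral varifolds, hence itself stationary and integral, and since $U'$ is arbitrary so is $V$ on all of $U$. The disjointness of the $\overline{M_i}$ means that near any point of $\bigcup_i M_i$ the support $\spt V$ is a single embedded $C^2$ sheet, so such points are regular; hence $\sing V\cap U'\subseteq\bigcup_i(\overline{M_i}\setminus M_i)$, a finite union of discrete sets, and therefore discrete in $U'$. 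As every point of $U$ lies in some such $U'$, $\sing V$ has no accumulation point in $U$, so $V\in\cISV_7(U,g)$.

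For the converse, fix $V\in\cISV_7(U,g)$ and $U'\subset\subset U$; then $S:=\sing V\cap\overline{U'}$ is finite, being a discrete subset of $U$ meeting the compact set $\overline{U'}$. First I would record that, by the constancy theorem applied in graph charts, over $\reg V$ the varifold $V$ is locally an integer multiple of the multiplicity-one varifold of $\reg V$, so $\theta_V$ is locally constant, integer-valued, and $\geq 1$ there. The essential structural ingredient is the local picture at an isolated singular point: for each $p\in S$, choosing $r_p$ with $B_{r_p}(p)\cap\sing V=\{p\}$, I would invoke the isolated-singularity regularity for $7$-dimensional minimal hypersurfaces recalled in the Introduction (\cite{AllAlm}, \cite{simon}) to write $\spt V\cap B_{r_p}(p)$ as a $C^1$ graph over a minimal cone $\bC_p$ whose link $\Gamma_p\subseteq S^7$ is a smooth, compact, embedded minimal hypersurface. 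Frankel's theorem (positive Ricci curvature on $S^7$ rules out disjoint closed minimal hypersurfaces) then forces $\Gamma_p$ connected, so $\spt V\cap(B_{r_p}(p)\setminus\{p\})$ is connected, and by constancy it carries a single multiplicity $m_p\geq 1$.

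Given this, I would construct the $M_i$ by grouping: partition the connected components of $\reg V\cap U'$ by the equivalence relation ``joined by a chain of components whose closures meet at common points of $S$'', and let $M_1,\dots,M_k$ be the unions of components in the distinct classes. Each $M_i$ is an open subset of $\reg V$, hence a $C^2$ embedded minimal hypersurface, with $\overline{M_i}\cap U'\setminus M_i\subseteq S$ finite, so $M_i\in\cM_7(U',g)$; the local picture shows that at most one component accumulates at each $p\in S$ and that it has multiplicity $m_p$, so each $M_i$ has a single constant multiplicity $m_i$, and distinct $M_i$ have disjoint closures (a common point would be either regular — impossible, since a neighbourhood of it in $\spt V$ is one component, lying in one class — or in $S$ — impossible by the equivalence). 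Finiteness of $k$ follows from a compactness argument: the set $\spt V\cap\overline{U'}\setminus\bigcup_{p\in S}B_{r_p/2}(p)$ is compact and contained in $\reg V$, so the $C^2$ regularity scale is bounded below on it and it meets only finitely many components of $\reg V$, while inside each $B_{r_p}(p)$ the support contains only the single sheet above. Finally, $\mu_V$ assigns zero measure to the finite set $S$ and equals $m_i\haus^7_g\llcorner M_i$ over each $M_i$ by constancy, giving $V\llcorner U'=\sum_{i=1}^k m_i[M_i]_g$. The one genuinely substantial step is the structural claim at isolated singularities — connectedness of $\spt V$ there and the resulting single multiplicity; this is where the dimension $n=7$ (smoothness of tangent-cone links) and the Frankel property of the round $S^7$ are essential, and everything else (constancy, the grouping, the finiteness count) is routine once it is in hand.
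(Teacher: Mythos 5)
Your ``if'' direction and the use of the constancy theorem for the multiplicities are fine and track the paper. The gap is in the local picture you invoke at each isolated singular point. You write $\spt V\cap B_{r_p}(p)$ as a $C^1$ graph over a minimal cone with smooth link, citing \cite{AllAlm} and \cite{simon}. But those theorems require a priori that the tangent cone be multiplicity-one with smooth cross-section (e.g.\ the minimizing or stable setting). The lemma is stated for arbitrary $V\in\cISV_7(U,g)$, i.e.\ a stationary integral $7$-varifold with discrete singular set and no stability, minimizing, or index assumption. In that generality the tangent cone at an isolated singularity need not be multiplicity-one, its link need not be smooth, and $V$ need not be $C^1$-asymptotic to it; so the Frankel argument and the ``one sheet of a single multiplicity'' conclusion are not justified. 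In particular, ruling out two distinct connected pieces of $\reg V$ whose closures touch only at $p$ is exactly the point at issue, and your argument does not reach it without the regularity you assumed.

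The paper circumvents this entirely: it takes $M_i$ to be the connected components of $\reg V\cap U'$ (so $\overline{M_i}\setminus M_i\subset\sing V$ is discrete for free), gets the integer multiplicity from the constancy theorem, gets finiteness from the Ahlfors-regularity bound \eqref{eqn:ahlfors}, and then obtains disjointness of the $\overline{M_i}$ from Ilmanen's strong maximum principle for codimension-one stationary varifolds --- a tool that works at the level of supports with no regularity hypothesis on the tangent cones. To repair your argument you should replace the isolated-singularity regularity step (and the resulting grouping/connectedness claim) with an application of Ilmanen's maximum principle; everything downstream of that then goes through.
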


\begin{proof}
Let us write $\reg V \cap U' = \cup_i M_i$ for each $M_i$ being a connected, disjoint, $C^2$, embedded hypersurface of $U'$.  Since $\cup_i (\overline{M_i} \setminus M_i) \subset \overline{\reg V} \setminus \reg V = \sing V$, each $\overline{M_i} \setminus M_i$ consists of a discrete set of points.  By the constancy theorem (\cite[Chapter 8, Theorem 4.1]{Sim}) $\theta_V$ is locally constant on each $M_i$, and hence $\theta_V|_{M_i} = m_i \in \N$.  By stationarity we get that each $M_i$ is minimal in $(U', g)$, and hence $M_i \in \cM_7(U', g)$.  By applying \eqref{eqn:ahlfors} to each stationary varifold $[M_i]_g$, we get that the collection $\{M_i\}_i$ is finite, and by the maximum principle of \cite{ilmanen}, the $\{\overline{M_i}\}_i$ are disjoint.  This proves the ``only if'' part of the Lemma; the ``if'' part is trivial.
\end{proof}

\begin{remark}[A note on scaling]\label{rem:scaling}
Let $B_R(a) \subset \R^8$ and $g$ be a $C^2$ metric on $B_R(a)$.  If $V \in \cISV_7(B_R(a), g)$, and we define $g' = g \circ \eta_{a, R}^{-1}$ and $V' = (\eta_{a, R})_\sharp V$ where we view $\eta_{a, R}$ as a map $(B_R(a), g) \to (B_1(0), g')$ (i.e. so \emph{not} an isometry), then $V' \in \cISV_7(B_1(0), g')$, and $\mu_{V'}(U) = R^{-n} \mu_V(\eta_{a, R}^{-1}(U))$ for all $U \subset B_1$.  We shall always interpret the rescaling $(\eta_{a, R})_\sharp V$ in this fashion.  Of course if $M \in \cM_7(B_R(a), g)$, then $(\eta_{a, R})_\sharp [M]_g = [\eta_{a, R}(M)]_{g'}$.
\end{remark}


\vspace{3mm}

There is good compactness theory for stationary integral varifolds, but for our purposes we will make use of the following compactness theory for $\cISV_7$.  Given $V \in \cISV_7(U, g)$ we define $\mindex(V, U, g) := \mindex(\reg V, U, g)$.

\begin{lemma}\label{lem:index-compact}
Let $g_i, g$ be $C^2$ metrics on $U$, such that $g_i \to g$ in $C^2(U)$.  Let $V_i \in \cISV_7(U, g_i)$ be a sequence satisfying
\begin{equation}\label{eqn:index-compact-hyp}
\sup_i \mu_{V_i}(U) < \infty, \quad \mindex(V_i, U, g) \leq I < \infty
\end{equation}
for some $I \in \N$.

Then after passing to a subsequence, we can find a $V \in \cISV_7(U, g)$, and at most $I$ points $\cI \subset \spt V \cap U$, so that: $V_i \to V$ as varifolds in $U$; $V_i \to V$ in $C^2$ on compact subsets of $U \setminus (\sing V \cup \cI)$; $\mindex(V, U, g) \leq I$; and $\mindex(V_i, B_r(y), g_i) \geq 1$ for every $r > 0$, $y \in \cI$, $i >> 1$.  Additionally, for any collection of disjoint balls $\{B_{r_\alpha}(x_\alpha)\}_\alpha \subset U$, we have
\begin{gather}\label{eqn:index-compact-concl1}
\sum_\alpha \mindex(V_i, B_{r_\alpha}(x_\alpha), g_i) \leq I \quad \forall i.
\end{gather}
\end{lemma}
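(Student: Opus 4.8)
\medskip

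\emph{Step 1 (the varifold limit).} The plan is the usual index-compactness argument, driven by the Schoen--Simon theory \cite{ScSi} on the stable part of the $V_i$ together with Lemmas \ref{lem:rs-bound} and \ref{lem:isv-decomp}. By weak-$*$ compactness of Radon measures on the Grassmann bundle and the area bound \eqref{eqn:index-compact-hyp}, pass to a subsequence with $V_i \to V$ as varifolds in $U$. Since each $V_i$ is stationary in $(U, g_i)$ and $g_i \to g$ in $C^2$, for every $X \in C^1_c(U, \R^8)$ one has
\[
|\delta_g V_i(X)| = |\delta_g V_i(X) - \delta_{g_i} V_i(X)| \le c\,|g - g_i|_{C^1(\spt X)}\!\int (|X| + |DX|)\,d\mu_{V_i} \longrightarrow 0,
\]
and varifold convergence gives $\delta_g V_i(X) \to \delta_g V(X)$, so $V$ is stationary in $(U, g)$. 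By Allard's integral-varifold compactness theorem \cite{All} (in the fixed $C^2$ background $(U, g)$, with \eqref{eqn:ahlfors} ruling out interior loss of mass), $V$ is integral. It remains to identify the concentration set $\cI$ and to upgrade $V$ to lie in $\cISV_7(U, g)$.

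\emph{Step 2 (the concentration set).} Fix a countable basis $\{B_\ell\}$ of balls with $\overline{B_\ell} \subset U$. As each $\mindex(V_i, B_\ell, g_i) \in \{0, \dots, I\}$, a diagonal argument produces a further subsequence along which $\mindex(V_i, B_\ell, g_i)$ is eventually constant, equal to some $m(B_\ell)$. Set $\cI := \{\, y \in U : m(B) \ge 1 \text{ for every basis ball } B \ni y \,\}$. If $\cI$ contained $I+1$ distinct points, choosing disjoint basis balls around them and using additivity of the index over disjoint opens would give $\mindex(V_i, \cdot, g_i) \ge I+1$ on their union for $i \gg 1$, contradicting the index hypothesis; hence $\#\cI \le I$. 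If $y \notin \cI$, some basis ball $B \ni y$ has $m(B) = 0$, so $\mindex(V_i, B, g_i) = 0$ for $i \gg 1$; if $y \in \cI$ and $r > 0$, picking a basis ball $B$ with $y \in B \subset B_r(y)$ gives $\mindex(V_i, B_r(y), g_i) \ge m(B) \ge 1$ for $i \gg 1$, which is the ``$\ge 1$ at $\cI$'' conclusion. Finally \eqref{eqn:index-compact-concl1} is immediate from additivity of the index over disjoint opens and the hypothesis.

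\emph{Step 3 (regularity, structure, and index of $V$).} On a ball $B_{r_y}(y)$ with $y \notin \cI$, for $i \gg 1$ the $\reg V_i$ are \emph{stable} minimal in $(B_{r_y}(y), g_i)$ with discrete (hence $\haus^6$-null) singular set, so by Schoen--Simon \cite{ScSi} in the critical dimension $7$: $V_i \to V$ in $C^2$ with multiplicity on compact subsets of $B_{r_y}(y) \setminus \sing V$, $\reg V$ is stable minimal there, and $\sing V \cap B_{r_y}(y)$ is discrete. Covering $U \setminus \cI$ gives $C^2$ convergence off $\sing V \cup \cI$ and $\sing V \cap (U \setminus \cI)$ discrete. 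Given a subspace $W \subset C^1_c(\reg V \cap (U \setminus \cI))$ on which $Q_{\reg V, g}$ is negative definite, its compact support avoids $\sing V \cup \cI$, so transplanting $W$ onto one sheet of $\reg V_i$ via the $C^2$ convergence yields $\mindex(\reg V_i, \Omega, g_i) \ge \dim W$ for $i \gg 1$, with $\Omega$ a neighbourhood of $\spt W$ disjoint from small balls $B_\epsilon(y)$, $y \in \cI$; combined with the ``$\ge 1$'' of Step 2 on each such ball and additivity, $\dim W \le I - \#\cI$, i.e.\ $\mindex(\reg V, U \setminus \cI, g) \le I - \#\cI$. Since removing a finite set of points changes neither $\reg V$ nor (by the zero $W^{1,2}$-capacity of a point in a $7$-manifold) its index, $\mindex(V, U, g) = \mindex(\reg V, U \setminus \cI, g) \le I$. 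The same transplantation gives $\mindex(\reg V, B_\rho(y) \setminus \{y\}, g) \le I - 1$ for $y \in \cI$ and $\rho$ small, so by Lemma \ref{lem:rs-bound} $\reg V \in \cM_7(B_\rho(y) \setminus \{y\}, g)$ and by Lemma \ref{lem:isv-decomp} $V$ is a finite sum $\sum_j m_j [M_j]_g$ near $y$; if some $q_k \in \sing V$ satisfied $q_k \to y$, rescaling $V$ about $y$ by $|q_k - y|$ and extracting a tangent cone would yield a stationary integral cone $\bC$ in $\R^8$ with $\mindex(\reg \bC, \R^8 \setminus \{0\}, \geucl) \le I-1 < \infty$ --- hence stable, hence (Lemma \ref{lem:rs-bound}) singular only at the vertex --- yet carrying a singular point on $\partial B_1(0)$ by upper semicontinuity of density, a contradiction. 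Thus $\sing V$ is discrete, $V \in \cISV_7(U, g)$, and all conclusions follow.

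\emph{Main obstacle.} I expect the real work to be in Step 3 near the concentration points: keeping an exact count of index across the disjoint regions about $\cI$ versus their stable complement, and excluding accumulation of $\sing V$ at a point of $\cI$. The latter --- through the tangent-cone argument, using crucially that a minimal hypercone in $\R^8$ of finite index is stable and so regular away from the vertex --- is the essential input beyond soft varifold compactness and Schoen--Simon. A minor point throughout is that the stability indices attached to the $V_i$ are most naturally computed with respect to $g_i$ rather than $g$; comparison with the fixed reference metric uses only $g_i \to g$ in $C^2$.
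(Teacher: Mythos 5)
Your proof is correct in its essentials and follows the same architecture as the paper's: identify the finite concentration set $\cI$ by local index counts and additivity over disjoint balls, apply Schoen--Simon on the stable complement to get $C^2$ subconvergence and the local structure of the limit, and bound $\mindex(V,U,g)$ by transplanting a would-be negative subspace onto the $V_i$, using the zero $W^{1,2}$-capacity of points in a $7$-manifold to reduce to test functions supported away from $\cI$. Two remarks. First, your Step 3 detour excluding accumulation of $\sing V$ at a point $y\in\cI$ via a blow-up is both unnecessary and, as written, soft: from ``$q_k \to y$ singular'' you cannot conclude the tangent cone ``carries a singular point on $\partial B_1$'' by upper semicontinuity of density alone, since the limit of the rescaled singular points could a priori be a higher-multiplicity \emph{regular} point of the cone; making this rigorous requires running Schoen--Simon sheeting on the rescaled sequence. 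The paper avoids this entirely: once $\mindex(V,U,g)\le I<\infty$ and $\haus^6(\sing V)=0$ (the singular set is countable by your own construction), Lemma \ref{lem:rs-bound} applied on the \emph{un}punctured balls gives $r_s>0$ everywhere and hence $V\in\cISV_7(U,g)$ directly --- you already invoke this lemma on the punctured ball, so the cleaner route is one line away. Second, your Step 1 reaches the global stationary integral limit via first-variation comparison plus Allard compactness, whereas the paper builds $V$ locally on $U\setminus\cI$ and extends across the finite set $\cI$ using \eqref{eqn:sharp-mono}; both are fine, though note that upgrading the Schoen--Simon $C^{1,\alpha}$ sheeting to the claimed $C^2$ convergence does require the Schauder estimate \eqref{eqn:higher-reg} and the $C^2$ convergence $g_i\to g$, which you should say explicitly.
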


\begin{proof}
Let $\cI$ consist of the points $y \in U$ with the property that $\limsup_i r_s(y, V_i) = 0$.  Trivially we have $\mindex(V_i, B_r(y), g_i) \geq 1$ for every $r > 0$, $y \in \cI$, $i >> 1$.  We first show $\#\cI \leq I$.  Suppose, on the contrary, we can find $\{y_1, \ldots, y_{I+1}\} \subset \cI$.  Then for $i >> 1$, we can find radii $r_i$ so that $\{B_{r_i}(y_j)\}_j \subset B_1$ are disjoint, and $C^1_c$ functions $u_{ij} : \reg V_i \cap B_{r_i}(y_j) \to \reg V_i^{\perp_{g_i}}$ so that $Q_{\reg V_i, g_i}(u_{ij}, u_{ij}) < 0$.  These functions are trivially linearly-independent, and form a strictly-negative-definite subspace for $Q_{\reg V_i, g_i}$, and hence $\mindex(V_i, U, g_i) \geq I+1$.  This is a contradiction.  An essentially verbatim argument shows \eqref{eqn:index-compact-concl1}.

Take any $x \in B_1 \setminus \cI$.  There is a radius $r = r_x > 0$ so that, after passing to a subsequence, $B_{2r}(x) \subset U$ and every $V_i$ is stable in $(B_r(x), g_i)$.  Each $\reg V_i \cap B_r(x)$ is orientable (being a set-theoretically-closed embedded hypersurface in the simply-connected region $B_r(x) \setminus \sing V_i$), and so by \cite[Theorems 1 and 2]{ScSi}, \eqref{eqn:haus-F}, and $C^0$ convergence $g_i \to g$, after passing to a subsequence we can find a $V \in \cISV_7(B_r(x), g)$ so that $[\spt V_i \cap B_r(x)]_{g_i} \to V$ as varifolds in $B_r(x)$, and $\spt V_i \to \spt V$ in $C^{1,\alpha}$ on compact subsets of $B_r(x) \setminus \sing V$ for every $\alpha \in (0, 1)$.  Passing to a further subsequence, \eqref{eqn:higher-reg} and our $C^2$ convergence $g_i \to g$ imply that $\spt V_i \to \spt V$ in $C^2$ on compact subsets of $B_r(x) \setminus \sing V$.  By \eqref{eqn:isv-decomp} and \eqref{eqn:ahlfors}, we can pass to yet a further subsequence, and modify the multiplicities of $V$, so that $V_i \to V$ as varifolds in $B_r(x)$.

The previous paragraph implies that there is an $V_x \in \cISV_7(B_r(x), g)$ so that $V_i \to V_x$ as varifolds in $B_r(x)$, and $V_i \to V_x$ in $C^2$ on compact subsets of $B_r(x) \setminus \sing V_x$.  Now by considering a countable subcover of $\{B_{r_x}(x) \}_{x \in U \setminus \cI}$, and applying a diagonalization argument, we can obtain an $V \in \cISV_7(U \setminus \cI, g)$ so that $V_i \to V$ as varifolds in $U \setminus \cI$, and in $C^2$ on compact subsets of $U \setminus (\sing V \cup \cI)$.  Lower-semi-continuity of mass implies $\mu_V(U \setminus \cI) < \infty$, so by \eqref{eqn:sharp-mono} $V$ extends to a stationary integral varifold in $(U, g)$, and $V_i \to V$ as varifolds in $U$.

We show $\mindex(V, U, g) \leq I$, which will by Lemma \ref{lem:rs-bound} imply $V \in \cISV_7(U, g)$.  Suppose, otherwise: there is a linear subspace $V \subset C^1_c(\reg V, \reg V^{\perp_g})$ of dimension $I+1$, so that $Q_{\reg V,g}(u, u) < 0$ for all $u \in V$.  Let $u_1, \ldots, u_{I+1}$ be a basis for $V$.  Then there is a $q < 0$ so that
\begin{equation}\label{eqn:index-1}
Q_{\reg V, g}(\sum_j \lambda_j u_j, \sum_j \lambda_j u_j) \leq q < 0 
\end{equation}
for all collections $\lambda_j$ satisfying $\sum_j \lambda_j^2 = 1$.

Take $\eta : \R \to \R$ be a fixed, smooth, increasing function satisfying
\[
\eta|_{(-\infty, 1/4]} \equiv 0, \quad \eta|_{[3/4, \infty)} \equiv 1, \quad |\eta'| \leq 10,
\]
and then for $\sigma > 0$ define $\phi_\sigma(x) = \eta(\sigma^{-1} \dist(x, \cI))$, and $u_{j,\sigma} = \phi_\sigma u_j$.  Given any $U' \subset\subset U \setminus \cI$, then $u_{i,\sigma} = u_i$ on $U'$ for $\sigma$ sufficiently small.  Since $\cI$ is discrete (codimension $7$), \eqref{eqn:ahlfors} implies $Q_{\reg V, g}(u_{j,\sigma}, u_{k,\sigma}) \to Q_{\reg V,g}(u_j, u_k)$ as $\sigma \to 0$, for any $j,k$.  We deduce (after taking $\sigma$ sufficiently small, and replacing $q$ in \eqref{eqn:index-1} with $q/2$), that it suffices to assume that the $u_i$ are supported away from $\cI$.

Now $C^2$ convergence $V_i \to V$ implies that we can find open sets $\spt u_j \subset W_j \subset U \setminus (\cI \cup \sing V)$, and $C^2$ functions $w_{ij} : W_j \cap \reg V \to \reg V^{\perp_g}$ such that
\[
\reg V_i \cap W_j \supset \graph_{\reg V}(w_{ij}) \cap W_j =: G_{ij}
\]
and $|w_{ij}|_{C^2} \to 0$ as $i \to \infty$.  Define $u_{ij} \in C^1_c(\reg V_i, V_i^{\perp_{g_i}})$ by setting $u_{ij}(y = x + w_{ij}(x)) = \pi_{G_{ij}}^{\perp_{g_i}}(u_j(x))$ for $y \in G_{ij}$, and $u_{ij} = 0$ else.  Here $\pi^{\perp_{g_i}}_{G_{ij}}|_y$ denotes the $g_i$-orthogonal projection to the normal space $T_y^{\perp_{g_i}} G_{ij}$.

Since $\mindex(V_i, U, g) \leq I$, and the $u_{ij}$ are linearly independent for $i >> 1$, we can find numbers $\lambda_{ij}$ so that $Q_{\reg V_i, g_i}(\sum_j \lambda_{ij} u_{ij}, \sum_j \lambda_{ij} u_j) \geq 0$, and (WLOG) $\sum_j \lambda_{ij}^2 = 1$.  Passing to a subsequence, we get $\lambda_{ij} \to \lambda_j$ for each $j$, and hence by $C^2$ convergence $V_i \to V$, $g_i \to g$, we get
\begin{equation*}
Q_{\reg V_i, g_i}(\sum_j \lambda_{ij} u_{ij}, \sum_j \lambda_{ij} u_{ij}) \to Q_{\reg V,g}( \sum_j \lambda_j u_j, \sum_j \lambda_j u_j) \geq 0.
\end{equation*}
However this contradictions \eqref{eqn:index-1}.  We must therefore have $\mindex(V, U, g) \leq I$.
\end{proof}

\section{Stable cones}

In this section we work in $\R^8$.  We let $\cC$ be the collection of stable minimal hypercones $\bC^7$ in $\R^8$ which are smooth, closed (as sets), embedded hypersurfaces away from $0$.  By \cite{ScSi}, \cite{neshan}, if $\bC \subset \R^8$ is a smooth, stable, embedded hypersurface and $\haus^6(\overline{\bC} \setminus \bC) = 0$, then $\bC \in \cC$.  Note that each link $\bC \cap \del B_1$ is necessarily orientable (as a closed, embedded, codimension-one submanifold of a simply-connected manifold) and connected (by the maximum principle, or more generally by Frankel's theorem).

Given $\Lambda > 0$, let $\cC_\Lambda \subset \cC$ denote the collection of $\bC \in \cC$ with $\theta_\bC(0) \leq \Lambda$.  Given $\bC \in \cC$, define $\cC(\bC)$ to be the collection of $\bC' \in \cC$ with property that $\theta_{\bC'}(0) = \theta_\bC(0)$, and there is a $C^2$ diffeomorphism $\phi : \del B_1 \to \del B_1$ such that $\phi(\bC \cap \del B_1) = \bC' \cap \del B_1$.  Of course the condition $\bC' \in \cC(\bC)$ is both symmetric and transitive.

Given $\bC \in \cC$ and function $u : \bC \cap A_{r, s}(0) \to \bC^\perp$, let us define the conical graph 
\[
G_\bC(u) \cap A_{r, s}(0) = \left\{ \frac{x + u(x)}{|x + u(x)|} |x| : x \in \bC \cap A_{r, s}(0) \right\}.
\]
If $a \in \R^8$ and $u$ is defined instead on $(a + \bC) \cap A_{r, s}(a)$, we define
\[
G_{a + \bC}(u) \cap A_{r, s}(a) = a + G_\bC(u( a + \cdot)) \cap A_{r, s}(0).
\]
When $|u|_{C^1}$ is small this notion of graph is effectively equivalent to the ``usual'' graph $\graph_{a + \bC}(u)$, but is more convenient to work with in the conical setting.  Similarly, if $\Sigma$ is a $C^2$ closed embedded hypersurface in $\del B_1$, and $v : \Sigma \to \Sigma^\perp$, then let us write
\[
G_\Sigma(v) = \left\{ \frac{\theta + v(\theta)}{\sqrt{1+|v|^2}} : \theta \in \Sigma \right\}
\]
for the spherical graph

The key facts we will use about $\cC$ are listed below.
\begin{theorem}\label{thm:cones}
The following are true.
\begin{enumerate}
\item \label{item:cones-1} Given any sequence $\bC_i \in \cC$ with $\sup_i \theta_{\bC_i}(0) < \infty$, there is a subsequence $i'$ and cone $\bC \in \cC$ so that $\bC_{i'} \cap \del B_1$ converges to $\bC \cap \del B_1$ smoothly with multiplicity-one, and moreover so that $\theta_{\bC_{i'}}(0) = \theta_\bC(0)$ for all $i'$.

\item \label{item:cones-2} The set of densities $\{ \theta_\bC(0) : \bC \in \cC \}$ forms a discrete set $1 = \theta_0 < \theta_1 < \theta_2 < \ldots $.

\item \label{item:cones-3} Given $\Lambda > 0$, there is an $\eps_0(\Lambda) > 0$ so that if $\bC, \bC' \in \cC_\Lambda$ satisfy
\[
d_H(\bC \cap \del B_1, \bC' \cap \del B_1) \leq \eps \leq \eps_0,
\]
then $\bC' \in \cC(\bC)$, and in fact we can write
\[
\bC' = G_\bC(v), \quad |x|^{1-k} |\nabla^k v| \leq c(\Lambda, k) \eps 
\]
for a smooth, $1$-homogenous function $v : \bC \setminus \{0\} \to \bC^\perp$.

\item \label{item:cones-4} Additionally, for $\bC \in \cC_\Lambda$ we have $r_3(\bC \cap \del B_1) \geq \eps_0$.
\end{enumerate}
\end{theorem}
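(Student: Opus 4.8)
The plan is to run the standard compactness-plus-Lojasiewicz package: prove item (1) first, in the strong form that includes the density equality; deduce item (2) from it immediately; and then obtain items (3) and (4) by contradiction using (1).

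\emph{Item (1).} Write $\Sigma_i := \bC_i \cap \del B_1 \subset S^7$. Since $\bC_i$ is a cone, $\theta_{\bC_i}(0) = (7\omega_7)^{-1}\haus^6(\Sigma_i)$, so a uniform bound on $\theta_{\bC_i}(0)$ is a uniform area bound on the links. Each $\bC_i$ is a stable minimal hypersurface in $A_{2,1/2}(0)$ with $\sing\bC_i \cap A_{2,1/2}(0) = \emptyset$, so the Schoen--Simon curvature estimates bound $|A_{\bC_i}|$ uniformly on $A_{3/2,2/3}(0)$; since $|x|\,|A_{\bC_i}|$ is constant along rays this bounds $|A_{\Sigma_i}|$ uniformly, and elliptic regularity for the minimal surface equation on $S^7$ promotes this to uniform $C^k$ bounds on $\Sigma_i$ for every $k$. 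Passing to a subsequence, $\Sigma_{i'} \to \Sigma_\infty$ smoothly (locally as $m$ sheets, some fixed $m \in \N$), where $\Sigma_\infty$ is a smooth closed embedded minimal hypersurface of $S^7$; the cone $\bC$ over $\Sigma_\infty$ is then minimal, smooth away from $0$, and stable (stability is preserved in the limit), so $\bC \in \cC$ by \cite{ScSi}, \cite{neshan}. To see $m = 1$: for $i'$ large the nearest-point projection $\Sigma_{i'} \to \Sigma_\infty$ is an $m$-to-$1$ local diffeomorphism of closed manifolds, hence a covering, and $\Sigma_{i'}$ lies embedded inside a product tubular neighbourhood $\Sigma_\infty \times (-\eps,\eps)$ (every link is two-sided in $S^7$); within each fibre the $m$ points of $\Sigma_{i'}$ have pairwise-distinct heights in $(-\eps,\eps)$ by embeddedness, so their rank (first through $m$th smallest) is a locally constant, hence globally defined, function on $\Sigma_{i'}$, and connectedness of $\Sigma_{i'}$ forces $m = 1$. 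Finally, for $i'$ large $\Sigma_{i'} = G_{\Sigma_\infty}(v_{i'})$ with $v_{i'} \to 0$ in $C^k$; the area $\cA$ of spherical graphs over $\Sigma_\infty$ is a real-analytic functional (the round metric is analytic) with critical point $v = 0$, so the Lojasiewicz-Simon inequality of \cite{simon} gives $|\cA(v) - \cA(0)|^{1-\gamma} \le C\|\nabla\cA(v)\|$ on a fixed neighbourhood; since $\Sigma_{i'}$ is minimal, $v_{i'}$ is a critical point, whence $\cA(v_{i'}) = \cA(0)$, i.e. $\haus^6(\Sigma_{i'}) = \haus^6(\Sigma_\infty)$, i.e. $\theta_{\bC_{i'}}(0) = \theta_\bC(0)$, for every $i'$ in a further subsequence.

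\emph{Items (2), (3), (4).} Item (2) is then immediate: if the density set accumulated below some $\Lambda$ we could choose $\bC_j \in \cC_\Lambda$ with pairwise-distinct densities, but (1) yields a subsequence along which $\theta_{\bC_j}(0)$ is constant --- absurd; and $\theta_0 = 1$ is attained only by hyperplanes, since $\theta_\bC(0) = 1$ forces $\bC$ regular at $0$ (Allard), hence a plane. Item (4): if $r_3(\bC_j \cap \del B_1) \to 0$ for some $\bC_j \in \cC_\Lambda$, pick $x_j \in \Sigma_j$ nearly realizing the infimum; by (1) a subsequence has $\Sigma_j \to \Sigma_\infty$ in $C^3$ with multiplicity one and $x_j \to x_\infty \in \Sigma_\infty$, so $r_3(x_j, \Sigma_j) \ge \frac{1}{2} r_3(x_\infty, \Sigma_\infty) > 0$ for $j$ large, a contradiction; this gives the uniform $\eps_0(\Lambda)$. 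The qualitative half of (3) uses the same scheme: if $\bC_j, \bC_j' \in \cC_\Lambda$ satisfy $d_H(\bC_j \cap \del B_1, \bC_j' \cap \del B_1) \to 0$ but $\bC_j' \notin \cC(\bC_j)$, then along subsequences $\Sigma_j \to \Sigma_\infty$ and $\Sigma_j' \to \Sigma_\infty'$ smoothly, and $d_H \to 0$ forces $\Sigma_\infty = \Sigma_\infty'$; for large $j$ both $\Sigma_j$ and $\Sigma_j'$ are then $C^k$-small normal graphs over this common limit, so $\Sigma_j'$ is a small graph over $\Sigma_j$ (a $C^2$ diffeomorphism of links) with equal density by (2), hence $\bC_j' \in \cC(\bC_j)$ --- a contradiction. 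For the quantitative half, (1) applied to the whole family gives $\sup_{\cC_\Lambda}\|A_{\bC \cap \del B_1}\|_{C^{k+1}} < \infty$, which together with the lower bound on $r_3$ from (4) lets one promote $d_H \le \eps \le \eps_0(\Lambda)$, by the standard graphical patching for hypersurfaces of bounded curvature and regularity scale, to $\bC' \cap \del B_1 = G_\Sigma(v_0)$ with $\|v_0\|_{C^0} \le c\eps$; since both links are minimal in $S^7$, $v_0$ solves a uniformly elliptic quasilinear equation close to the Jacobi equation on $\Sigma$, so Schauder bootstrapping yields $\|v_0\|_{C^k} \le c(\Lambda,k)\eps$, and extending $v_0$ $1$-homogeneously to $v : \bC \setminus \{0\} \to \bC^\perp$ gives $\bC' = G_\bC(v)$ with $|x|^{1-k}|\nabla^k v| \le c(\Lambda,k)\eps$.

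\emph{Main obstacle.} The two genuinely non-formal points are the multiplicity-one claim in (1), which I handle via the fibrewise-ordering-versus-connectedness argument above and which really uses embeddedness of the $\bC_i$, and the use of the Lojasiewicz-Simon inequality \cite{simon} in the sharp form ``a critical point near a critical point shares its energy,'' which is what forces the density \emph{equality} in (1) (and so the discreteness in (2)) rather than merely clustering the densities; everything else reduces to Schoen--Simon compactness and regularity together with routine elliptic estimates.
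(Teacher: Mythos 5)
Your overall architecture coincides with the paper's: subsequential smooth multiplicity-one convergence of the links, with multiplicity one forced by connectedness of the links together with two-sidedness inside the simply-connected sphere, and the density \emph{equality} (hence discreteness, hence items (2)--(4)) forced by the Lojasiewicz--Simon inequality applied at the limit link. Your treatments of the multiplicity-one step, of items (2) and (4), and of the bootstrapping in item (3) are all consistent with what the paper does (the paper compresses items (2)--(4) into ``follows in an obvious way'').

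There is, however, one step in your item (1) that fails as written: you invoke ``the Schoen--Simon curvature estimates'' to bound $|A_{\bC_i}|$ uniformly on $A_{3/2,2/3}(0)$. No such a priori pointwise curvature estimate exists for stable minimal hypersurfaces $M^7\subset\R^8$; this is precisely the borderline dimension in which stable hypersurfaces may be singular (the Simons cone $\bC^{3,3}$ is stable), and rescaled Hardt--Simon leaves give smooth stable hypersurfaces of bounded area with unbounded curvature. In particular you cannot a priori exclude that the links $\Sigma_i$ degenerate, i.e.\ that the limit cone acquires singular points inside the annulus, so the passage to uniform $C^k$ bounds and Arzel\`a--Ascoli is not justified at that stage. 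The correct substitute --- and what the paper actually does --- is to first apply varifold compactness together with the Schoen--Simon compactness/sheeting theorem to produce a stable limit cone $\bC$ with $\dim(\sing\bC)\leq 0$ and smooth convergence away from $\sing\bC$, and then to rule out singular points of $\bC$ away from the origin by dimension reduction: a tangent cone at such a point splits off the radial line and yields a singular stable minimal hypercone in $\R^7$, contradicting Simons' theorem. Only after this does one obtain smooth convergence of the links on the whole annulus, and the uniform curvature bounds along the subsequence are then a posteriori. With that replacement your item (1), and hence the remainder of your argument, goes through.
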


\begin{proof}
The main ingredient is the Lojasiewicz-Simon inequality of \cite{simon}.  Let $\Sigma$ be any smooth, closed, embedded minimal submanifold of $S^7$.  Then \cite[Theorem 3]{simon} implies there is a $\sigma(\Sigma, \mu) > 0$ and $\theta(\Sigma) \in (0,1/2)$ so that given any $C^{2,\mu}$ function $u : \Sigma \to \Sigma^\perp$ (for $\mu \in (0, 1)$) with $|u|_{C^{2,\mu}(\Sigma)} \leq \sigma$ we have the inequality
\[
| \haus^6(G_\Sigma(u)) - \haus^6(G_\Sigma(0))|^{1-\theta} \leq ||\cM(u)||_{L^2(\Sigma)},
\]
where $\cM(u)$ is the negative $L^2(\Sigma)$-gradient of $u \mapsto \haus^6(G_\Sigma(u))$ (see Appendix \ref{sec:decay}).  In particular, $G_\Sigma(u)$ is minimal in $S^7$ precisely when $\cM(u) = 0$, in which case we have $\haus^6(G_\Sigma(u)) = \haus^6(\Sigma)$.  Of course minimality of $\Sigma$ in $S^7$ is equivalent to minimality of the cone over $\Sigma$ in $\R^8$.

The secondary ingredient is smooth, multiplicity-one compactness for $\cC_\Lambda$.  Let us first note that every smooth, closed, minimal $6$-submanifold in $S^7$ is necessarily connected and orientable.  Given a sequence $\bC_i$ as in the Theorem, then by the usual varifold compactness, sheeting, and dimension reducing argument (e.g. \cite[Theorem 2]{ScSi} or Lemma \ref{lem:index-compact}) we can find a $\bC \in \cC$ and $m \in \N$ so that $\bC_i \to \bC$ smoothly with multiplicity $m$ away from $0$.  However since each $\bC_i \cap \del B_1$, $\bC \cap \del B_1$ is connected while $\del B_1 = S^7$ is simply-connected, we must have $m = 1$.  Theorem \ref{thm:cones} follows in an obvious way from these two ingredients.
\end{proof}

We require some auxilary Lemmas.  Our first shows how the sequence compactness of Theorem \ref{thm:cones} implies a topological compactness as well.
\begin{lemma}\label{lem:cover}
Take $\Lambda > 0$, and a mapping $\bC \in \cC_\Lambda \mapsto \delta_\bC \in (0, \infty)$ assigning to every cone in $\cC_\Lambda$ a positive ``radius.''  Then we can find a finite collection $\bC_1, \ldots, \bC_N \subset \cC_\Lambda$, so that for every $\bC \in \cC_\Lambda$ we have
\[
d_H( \bC \cap \del B_1, \bC_i \cap \del B_1) < \delta_{\bC_i}
\]
for some $i \in \{ 1, \ldots, N\}$.
\end{lemma}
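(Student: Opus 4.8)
The plan is to recognize this as a statement of \emph{compactness} of $\cC_\Lambda$ in the Hausdorff-distance topology on links, combined with the finite-subcover (Heine--Borel) property. First I would identify each cone $\bC \in \cC_\Lambda$ with its link $\bC \cap \del B_1$; this is legitimate because a cone is the union of the rays through its cross-section, so $\bC \mapsto \bC \cap \del B_1$ is injective on $\cC$. Set $\mathcal{L}_\Lambda = \{ \bC \cap \del B_1 : \bC \in \cC_\Lambda \}$, regarded as a subset of the metric space $\mathcal{K}(\del B_1)$ of compact subsets of $S^7 = \del B_1$ with the Hausdorff distance $d_H$. Part \eqref{item:cones-1} of Theorem \ref{thm:cones} says precisely that $\mathcal{L}_\Lambda$ is \emph{sequentially compact}: every sequence $\bC_i \in \cC_\Lambda$ has a subsequence whose links converge smoothly, hence in $d_H$, to the link of some $\bC \in \cC$, and the density identity $\theta_{\bC_{i'}}(0) = \theta_\bC(0) \le \Lambda$ forces $\bC \in \cC_\Lambda$. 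Since $\mathcal{K}(\del B_1)$ is a metric space, sequential compactness of $\mathcal{L}_\Lambda$ is equivalent to compactness.

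Next I would build the open cover. For each $\bC \in \cC_\Lambda$ set $V_\bC = \{ L \in \mathcal{L}_\Lambda : d_H(L, \bC \cap \del B_1) < \delta_\bC \}$, i.e. the trace on $\mathcal{L}_\Lambda$ of the open $d_H$-ball of radius $\delta_\bC$ about $\bC \cap \del B_1$. Each $V_\bC$ is open in $\mathcal{L}_\Lambda$, and the family $\{ V_\bC \}_{\bC \in \cC_\Lambda}$ covers $\mathcal{L}_\Lambda$, since $\bC \cap \del B_1 \in V_\bC$ (this is where $\delta_\bC > 0$ enters). By compactness there is a finite subcover $V_{\bC_1}, \ldots, V_{\bC_N}$, and unwinding the identification of cones with their links, this is exactly the claim: for every $\bC \in \cC_\Lambda$ there is $i \in \{1, \ldots, N\}$ with $d_H(\bC \cap \del B_1, \bC_i \cap \del B_1) < \delta_{\bC_i}$.

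I do not expect any serious obstacle here; the one point that deserves a moment's attention — and the reason a naive total-boundedness/$\eps$-net argument is insufficient — is that the ``balls'' $V_\bC$ have radii $\delta_\bC$ depending on the center, so one genuinely needs the finite-subcover property of a compact space, not merely an $\eps$-net for a single $\eps$. All the real content is therefore packaged in the compactness of $\mathcal{L}_\Lambda$, which part \eqref{item:cones-1} of Theorem \ref{thm:cones} provides; the rest is soft point-set topology. (One could instead argue by contradiction, inductively choosing $\bC_k \in \cC_\Lambda$ with $d_H(\bC_k \cap \del B_1, \bC_j \cap \del B_1) \ge \delta_{\bC_j}$ for all $j < k$ and extracting a convergent subsequence, but this version is clumsier because it gives no lower bound on $\delta_{\bC_k}$, so I would present the direct finite-subcover proof.)
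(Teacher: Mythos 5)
Your proof is correct and follows the same strategy as the paper's: compactness of the space of links (supplied by Theorem \ref{thm:cones}(\ref{item:cones-1})) together with the finite-subcover property applied to the cover by balls whose radii depend on the center. The only difference is packaging — the paper realizes $\cC_\Lambda$ as a closed subset of the metrized space of stationary integral varifolds on $S^7$ and then translates between varifold distance and Hausdorff distance via \eqref{eqn:cover1}, whereas you work directly with the Hausdorff metric on links, which is a mild streamlining since smooth multiplicity-one convergence of links immediately gives $d_H$-convergence.
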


\begin{proof}
Let $\cV$ be the space of stationary integral $6$-varifolds in $S^7$ (stationary with respect to the spherical metric), having total mass $\leq 2\Lambda$, and let this space have the topology induced by varifold convergence.  The usual compactness theorem for stationary varifolds \cite[Chapter 8, Theorem 5.8]{Sim} implies $\cV$ is sequentially compact, and if we include $\cC_\Lambda$ into $\cV$ by identifying $\bC$ with the varifold $[\bC \cap \del B_1]$, then by Theorem \ref{thm:cones} $\cC_\Lambda$ is closed in $\cV$.

Define the metric $d$ on $\cV$ as follows: fix a countably-dense subcollection $\{\phi_i \}_i \subset \{ \phi \in C^0(B_2) : |\phi|_{C^0(B_2)} \leq 1 \}$, and then given $V, W \in \cV$ set
\begin{equation}
d(V, W) = \sum_i 2^{-i} \left| \int \phi d\mu_V - \int \phi d\mu_W \right|.
\end{equation}
Since any $V \in \cV$ is uniquely determined by its mass measure $\mu_V$, since $\cV$ is sequentially compact, and since every $V \in \cV$ has $\mu_V(S^7) \leq 2\Lambda$, it's straightfoward to verify that the varifold topology on $\cV$ coincides with the topology induced by $d$.  Moreover, the monotonicity formula and sequential compactness imply: given any $\eps > 0$, there is a $\delta > 0$ so that for any $V, W \in \cV$:
\begin{equation}\label{eqn:cover1}
d(V, W) < \delta \quad \implies \quad d_H(\spt V, \spt W) < \eps .
\end{equation}
A converse relation holds if $V, W \in \cC_\Lambda \subset \cV$.

By \eqref{eqn:cover1} we can find an $\eps_\bC > 0$ for each $\bC$ so that
\begin{align}\label{eqn:cover2}
U_\bC &:= \{ \bC' \in \cC_\Lambda : d([\bC \cap \del B_1], [\bC' \cap \del B_1]) < \eps_\bC \} \\
&\subset \{ \bC' \in \cC_\Lambda : d_H(\bC \cap \del B_1, \bC' \cap \del B_1) < \delta_\bC \} 
\end{align}
(of course $\spt [\bC \cap \del B_1] = \bC \cap \del B_1$ since $\bC$ is smooth away from $0$).  So the $U_\bC$ form an open cover of $\cC_\Lambda \subset \cV$.  Since compactness and sequential compactness are equivalent for a metric space, and $\bC_\Lambda$ is closed in $\cV$, we can find a finite subcollection $U_{\bC_1}, \ldots, U_{\bC_N}$ which covers $\bC_\Lambda$.  The Lemma then follows by \eqref{eqn:cover1}.
\end{proof}

The second is a quantification of the fact that none of the cones in $\bC$ have any translational symmetry.
\begin{lemma}\label{lem:nearby-cones}
Given $\eps > 0$, $\Lambda > 0$ there is a $\beta(\Lambda, \eps)$ so that if $\bC, \bC' \in \cC_\Lambda$, and $a \in \R^8$ satisfy
\[
d_H( (a + \bC') \cap A_{1, 1/2}, \bC \cap A_{1, 1/2}) \leq \beta,
\]
then $|a| \leq \eps$, $d_H(\bC' \cap B_1, \bC \cap B_1) \leq \eps$, and $\theta_{\bC}(0) = \theta_{\bC'}(0)$.  If $\bC$ is planar, then we require $a \in {\bC'}^\perp$.
\end{lemma}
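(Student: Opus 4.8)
The plan is to argue by contradiction and compactness. If the lemma fails for some $\eps,\Lambda>0$, then there are $\beta_i\downarrow 0$, cones $\bC_i,\bC_i'\in\cC_\Lambda$, and vectors $a_i$ (with $a_i\in(\bC_i')^\perp$ whenever $\bC_i$ is planar, which presupposes that $\bC_i'$ is then a hyperplane through $0$) with $d_H((a_i+\bC_i')\cap A_{1,1/2},\bC_i\cap A_{1,1/2})\le\beta_i$ for which the conclusion fails for every $i$. Passing to a subsequence, one may assume one of the three failure alternatives --- $|a_i|>\eps$; or $d_H(\bC_i'\cap B_1,\bC_i\cap B_1)>\eps$; or $\theta_{\bC_i}(0)\ne\theta_{\bC_i'}(0)$ --- holds for all $i$, and that the $\bC_i$ are all planar or all non-planar. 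By Theorem \ref{thm:cones}\eqref{item:cones-1} one may also assume $\bC_i\cap\del B_1\to\bC_\infty\cap\del B_1$ and $\bC_i'\cap\del B_1\to\bC_\infty'\cap\del B_1$ smoothly with multiplicity one, for some $\bC_\infty,\bC_\infty'\in\cC_\Lambda$, with $\theta_{\bC_i}(0)=\theta_{\bC_\infty}(0)$ and $\theta_{\bC_i'}(0)=\theta_{\bC_\infty'}(0)$ for $i\gg1$; since these are cones, $\bC_i\to\bC_\infty$ and $\bC_i'\to\bC_\infty'$ in $d_H$ on compact sets and smoothly on compact subsets of $\R^8\setminus\{0\}$. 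The entire content is then to show $a_i$ is bounded with $a_\infty:=\lim a_i=0$ and $\bC_\infty=\bC_\infty'$; granting this, $\bC_i'\to\bC_\infty=\lim\bC_i$ in $d_H$ on $\overline{B_1}$ and $\theta_{\bC_i'}(0)=\theta_{\bC_i}(0)$ for $i\gg1$, contradicting all three alternatives.

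The main obstacle is proving $\sup_i|a_i|<\infty$. Suppose not, so $|a_i|\to\infty$ after a subsequence. If the $\bC_i$ are all planar, then $\bC_i'$ is a hyperplane through $0$ and $a_i\in(\bC_i')^\perp$, so $a_i+\bC_i'$ is a hyperplane at distance $|a_i|$ from $0$, which cannot meet $A_{1,1/2}$ --- contradicting that $(a_i+\bC_i')\cap A_{1,1/2}$ is $\beta_i$-close to the nonempty set $\bC_i\cap A_{1,1/2}$. So the $\bC_i$ are non-planar. By Theorem \ref{thm:cones}\eqref{item:cones-4} and $(-1)$-homogeneity one has $|A_{\bC_i'}(z)|\le C(\Lambda)/|z|$, hence $|A_{a_i+\bC_i'}(y)|\le C(\Lambda)/(|a_i|-1)\to0$ for every $y\in(a_i+\bC_i')\cap A_{1,1/2}$ (as $|y-a_i|\ge|a_i|-1$); since also $\theta_{[\bC_i']}(y-a_i,\rho)=\theta_{[\bC_i']}(\tfrac{y-a_i}{|y-a_i|},\tfrac{\rho}{|y-a_i|})\to1$ uniformly as $i\to\infty$ ($\bC_\infty'$ being smooth along $\del B_1$), Allard's theorem \cite{All} shows $(a_i+\bC_i')\cap A_{15/16,9/16}$ is, near each of its points, a single $C^2$ graph over a $7$-plane with $C^2$ norm $\to0$. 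Choosing $p\in\bC_\infty\cap A_{3/4,5/8}$ and $p_i\to p$ on $(a_i+\bC_i')\cap A_{1,1/2}$, combining this flatness with Hausdorff-closeness to $\bC_i\cap A_{1,1/2}\to\bC_\infty\cap A_{1,1/2}$ forces $\bC_\infty$ to coincide with a fixed $7$-plane near $p$; unique continuation for (real-analytic) minimal hypersurfaces then forces $\bC_\infty\setminus\{0\}$ to lie in that plane, so $\bC_\infty$ is planar and $\theta_{\bC_i}(0)=\theta_{\bC_\infty}(0)=1$, whence each $\bC_i$ is a density-one stable minimal cone --- a hyperplane by \cite{All} --- contradicting non-planarity. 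Hence $\sup_i|a_i|<\infty$ and, after a further subsequence, $a_i\to a_\infty$.

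With $a_i$ bounded, passing to the limit in the hypothesis gives $\spt(a_\infty+\bC_\infty')\cap A_{1,1/2}=\bC_\infty\cap A_{1,1/2}$ inside the open annulus. Since $\bC_\infty$ is smooth there: if $\bC_\infty'$ is non-planar its vertex $a_\infty$ cannot lie in $A_{1,1/2}$ (otherwise $a_\infty+\bC_\infty'$ would be singular there), and if $\bC_\infty'$ is planar then $a_\infty+\bC_\infty'$ is a hyperplane agreeing with the cone $\bC_\infty$ on an open annulus, so by unique continuation $\bC_\infty$ is planar too. Either way $\bC_\infty\setminus\{0\}$ and $(a_\infty+\bC_\infty')\setminus\{a_\infty\}$ are connected real-analytic minimal hypersurfaces agreeing on the nonempty open set $\bC_\infty\cap A_{1,1/2}$, so by unique continuation $\bC_\infty=a_\infty+\bC_\infty'$; thus $\bC_\infty$ is a cone with vertex both at $0$ and at $a_\infty$, hence invariant under translation by $a_\infty$. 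If $a_\infty\ne0$ this makes $\bC_\infty=\bC_0\times\R a_\infty$ a cylinder, whose singular set is not contained in $\{0\}$ unless $\bC_0$ --- and hence $\bC_\infty$ --- is a hyperplane; but in the remaining planar case $\bC_i=\bC_i'$ are hyperplanes through $0$, $a_i\in(\bC_i')^\perp$, $|a_i|<1$, and the equality $(a_\infty+\bC_\infty')\cap A_{1,1/2}=\bC_\infty\cap A_{1,1/2}$ of annular pieces of two parallel hyperplanes --- one through $0$, the other at distance $|a_\infty|$ along the common normal --- forces $a_\infty=0$ (compare affine spans, noting $\bC_\infty'\ni 0$). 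Either way $a_\infty=0$ and $\bC_\infty=\bC_\infty'$, which as noted contradicts all three failure alternatives and proves the lemma.

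To summarize, the genuinely delicate step is the boundedness of $a_i$: one must exclude the translated cones $a_i+\bC_i'$ drifting off to infinity while still shadowing $\bC_i$ in the fixed annulus. This works because far from its vertex $a_i+\bC_i'$ is quantitatively nearly flat (Theorem \ref{thm:cones}\eqref{item:cones-4}), so it could only shadow a \emph{planar} $\bC_\infty$, which the density quantization of Theorem \ref{thm:cones}\eqref{item:cones-2} rules out; once $a_i$ is bounded, the rest reduces to unique continuation for minimal cones together with the fact that a non-planar cone in $\cC$ has a unique vertex.
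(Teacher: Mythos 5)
Your proof is correct and follows essentially the same compactness-and-contradiction route as the paper: extract smoothly converging subsequences of cones via Theorem \ref{thm:cones}, rule out $|a_i|\to\infty$ by observing that a cone seen far from its vertex flattens out (which would force the non-planar limit $\bC$ to be planar), and then exploit the rigidity of the limiting identity $a+\bC'=\bC$ to force $a=0$. The only substantive difference is in that last step: the paper deduces $a=0$ from the strict density comparison $\theta_\bC(x)<\theta_\bC(0)$ for $x\neq 0$ on a non-planar cone, whereas you deduce it from the fact that a cone invariant under a nonzero translation splits as a cylinder with a singular line (hence must be a hyperplane); both arguments are valid.
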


\begin{proof}
Suppose, towards a a contradiction, the lemma failed: there was a sequence $\bC_i, \bC_i' \in \cC_l$, $a_i \in \R^8$, so that
\begin{equation}\label{eqn:nearby-1}
d_H( (a_i + \bC_i') \cap A_{1, 1/2}, \bC_i \cap A_{1, 1/2}) \to 0,
\end{equation}
but either $|a_i| > \eps$, $d_H(\bC_i \cap B_1, \bC_i' \cap B_1) > \eps$, or $\theta_{\bC_i}(0) \neq \theta_{\bC_i'}(0)$

Passing to a subsequence, as in Theorem \ref{thm:cones} we get smooth, multiplicity-one convergence $\bC_i \to \bC$, $\bC_i' \to \bC'$ for $\bC, \bC' \in \cC_\Lambda$.  By Theorem \ref{thm:cones}, $\theta_{\bC_i}(0) = \theta_\bC(0)$ and $\theta_{\bC_i'}(0) = \theta_{\bC'}(0)$ for $i >> 1$.  In particular, the $\bC_i$ are planar iff $\bC$ is planar.

Assume first $\bC$ is non-planar.  If we had $\limsup_i |a_i| = \infty$, then after passing to a subsequence Theorem \ref{thm:cones} implies $a_i + \bC_i'$ converges in $C^2$ to a plane on compact subsets of $\R^8$, which implies by \eqref{eqn:nearby-1} that $\bC_i$ converges to a plane also, which is a contradiction.  So we can assume $a_i \to a$ also.  From \eqref{eqn:nearby-1} we get $a + \bC' = \bC$, which implies $a = 0$ since $\theta_\bC(x) < \theta_\bC(0)$ for $x \neq 0$.  We therefore have $a = 0$, $\bC = \bC'$, which is a contradiction.

Assume $\bC$ is a plane.  We have
\begin{align}
|a_i| &= d(0, a + \bC')  \\
&\leq d_H ((a_i + \bC_i') \cap B_1, \bC \cap B_1) \\
&\leq c d_H( (a_i + \bC_i') \cap A_{1, 1/2}, \bC \cap A_{1, 1/2}) \to 0 ,
\end{align}
so $a_i \to 0$.  As before we deduce $\bC' = \bC$, which is a contradiction.  This proves Lemma \ref{lem:nearby-cones}.
\end{proof}

The third says that we can regraph over nearby cones.
\begin{lemma}\label{lem:regraph}
Given $\Lambda > 0$, there is a $\gamma(\Lambda)$ so that if $\bC, \bC' \in \cC_\Lambda$, $a \in \R^8$, $0 \leq \tau, \beta \leq \gamma$ are such that
\[
|a| \leq \tau, \quad d_H(\bC' \cap \del B_1, \bC \cap \del B_1) \leq \tau, 
\]
and $v : (a + \bC') \cap A_{1, 1/2}(a) \to \bC'^\perp$ is a $C^2$ function satisfying $|v|_{C^2} \leq \beta$ then we can write
\[
\left( G_{a + \bC'}(v) \cap A_{1, 1/2}(a) \right) \cap A_{1-a, 1/2+a}(0) = G_\bC(u) \cap A_{1-a, 1/2+a}(0),
\]
for $u : \bC \cap A_{1-a, 1/2 + a}(0) \to \bC^\perp$ a $C^2$ function satisfying $|u|_{C^2} \leq c(\Lambda)(\beta + \tau)$.
\end{lemma}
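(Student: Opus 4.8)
The plan is to exhibit the left-hand set as the image of a single $C^2$ map $F\colon \bC\cap A_{1,1/2}(0)\to\R^8$ that is a $c(\Lambda)(\tau+\beta)$-small perturbation of the inclusion, built by composing three small perturbations — the regraphing of $\bC'$ onto $\bC$ furnished by Theorem~\ref{thm:cones}, the translation by $a$, and the graph function $v$ — and then to apply the standard tubular-neighbourhood/implicit-function argument, using the uniform geometry of $\bC$, to rewrite $F(\bC\cap A_{1,1/2}(0))$ as a conical graph over $\bC$ with $C^2$ norm $\le c(\Lambda)(\tau+\beta)$. Concretely, I would first apply Theorem~\ref{thm:cones}\eqref{item:cones-3}, which is legitimate once $\gamma(\Lambda)\le\eps_0(\Lambda)$: this writes $\bC'=G_\bC(w)$ for a smooth $1$-homogeneous $w\colon\bC\setminus\{0\}\to\bC^\perp$ with $|x|^{1-k}|\nabla^k w|\le c(\Lambda,k)\tau$. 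Together with the regularity-scale bound $r_3(\bC\cap\del B_1)\ge\eps_0(\Lambda)$ of Theorem~\ref{thm:cones}\eqref{item:cones-4}, this fixes a finite atlas for $\bC\cap A_{1,1/2}(0)$ with charts and transition maps bounded in $C^3$ by $c(\Lambda)$, in which all subsequent maps are to be estimated; in particular the assignment $y\mapsto x(y):=|y|\,(y+w(y))/|y+w(y)|$ is a radius-preserving $C^3$ diffeomorphism of $\bC$ onto $\bC'$ that is $c(\Lambda)\tau$-close to the identity in $C^2$.

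Next, setting $\tilde v(q)=v(a+q)$, the defining convention gives $G_{a+\bC'}(v)\cap A_{1,1/2}(a)=a+\{\,|q|(q+\tilde v(q))/|q+\tilde v(q)|:q\in\bC'\cap A_{1,1/2}(0)\,\}$, so substituting $q=x(y)$ realizes this set as $F(\bC\cap A_{1,1/2}(0))$ with $F(y)=a+|x(y)|\,(x(y)+\tilde v(x(y)))/|x(y)+\tilde v(x(y))|$. Since $|a|\le\tau$, $|\tilde v|_{C^2}\le\beta$, and $x$ is $c(\Lambda)\tau$-close to the identity, $F$ is $c(\Lambda)(\tau+\beta)$-close to the identity in $C^2$ in the fixed atlas; hence $S:=F(\bC\cap A_{1,1/2}(0))$ is an embedded $C^2$ hypersurface lying in the $c(\Lambda)(\tau+\beta)$-tubular neighbourhood of $\bC$, with tilt and second fundamental form controlled by those of $\bC$ plus $c(\Lambda)(\tau+\beta)$.

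Finally I would convert $S$ to a conical graph. A cone is transverse to every sphere $\del B_t$ with an angle bound depending only on $r_3(\bC\cap\del B_1)$, so the conical fibres $\{|z|(z+n)/|z+n|:n\in T_z^\perp\bC\text{ small}\}$ foliate a uniform tubular neighbourhood of $\bC$ inside each $\del B_t$; for $\gamma(\Lambda)$ small this neighbourhood contains $S\cap\del B_t$ for every $t\in(1/2+|a|,1-|a|)$, and the implicit function theorem produces, for each $z\in\bC\cap A_{1-|a|,1/2+|a|}(0)$, a unique point of $S$ at radius $|z|$ on the fibre over $z$, hence a $C^2$ section $u(z)\in T_z^\perp\bC$ with $|u|_{C^2}\le c(\Lambda)(\tau+\beta)$. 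One then checks both inclusions in the claimed identity: the preimage $y$ of such a point satisfies $|x(y)|=|y|\in(t-|a|,t+|a|)\subset(1/2,1)$, so the point genuinely lies in $G_{a+\bC'}(v)\cap A_{1,1/2}(a)$; conversely every point of $S$ in $A_{1-|a|,1/2+|a|}(0)$ is reached this way, since $S$ is a full graph over $\bC\cap A_{1,1/2}(0)\supset\bC\cap A_{1-|a|,1/2+|a|}(0)$ and the conical graph preserves radii. The planar case ($\bC$, and therefore $\bC'$, a plane) is identical but simpler.

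I expect the main obstacle to be not any single estimate but the bookkeeping of the three composed perturbations, and above all pinning down the domain of $u$: one must verify that intersecting the (oversized) graph $S$ with the shrunken annulus $A_{1-|a|,1/2+|a|}(0)$ neither truncates the graph over $\bC\cap A_{1-|a|,1/2+|a|}(0)$ nor pushes any of its points outside the domain $A_{1,1/2}(a)$ of $v$ — which is exactly why the annulus is contracted by $|a|$ on each side, and the step requiring genuine care.
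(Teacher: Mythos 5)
Your proof is correct and follows exactly the route the paper intends: the paper's own proof is a one-line citation of Theorem \ref{thm:cones}\eqref{item:cones-3},\eqref{item:cones-4} together with the inverse function theorem, which is precisely the regraphing-plus-tubular-neighbourhood argument you carry out in detail. Your careful treatment of the domain bookkeeping (why the annulus must be shrunk by $|a|$ on each side) is the genuinely delicate point and you handle it correctly.
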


\begin{proof}
Follows by Theorem \ref{thm:cones}:(\ref{item:cones-3}),(\ref{item:cones-4}) and the inverse function theorem (c.f. Section \ref{sec:param}).
\end{proof}

\section{Cone regions}\label{sec:cone-region}

Here we define various notions of ``cone region,'' and prove a key smallness estimate in Theorem \ref{thm:scone}.  Ultimately, we will only be interested in strong-cone regions of hypersurfaces, but we will use cone regions, weak-cone regions, and their varifold formulations as intermediary constructions.  Since we want to keep track of multiplicity it is most useful to phrase these in terms of varifolds.

\begin{definition}[Weak cone region]\label{def:wcone}
Let $g$ be a $C^2$ metric on $B_R(a) \subset \R^8$, and $V$ an integral varifold in $(B_R(a), g)$.  Take $\bC \in \cC$, $m \in \N$, $\beta, \tau, \sigma \in [0, 1/4]$.  We say $V \llcorner (A_{R, \rho}(a), g)$ is a $(\bC, m, \beta, \tau, \sigma)$-weak-cone region\footnote{Let us emphasize that, although for ease of notation we write $A_{R, \rho}(a)$, the actual annulus in which $V$ resembles a cone will be some perturbation of $A_{R, \rho}(a)$ contained in $A_{R, \rho}(a)$.} if the following occurs: For every $r \in [\rho, (1-\sigma)R) \cap (0, \infty)$, there is an $a_r \in \R^8$ with $B_{r}(a_r) \subset B_R(a)$, a cone $\bC_r \in \cC(\bC)$, and $C^2$ functions $\{ u_{r, j} : (a_r + \bC_r) \cap A_{r, r/8}(a_r) \to \bC^\perp_r \}_{j=1}^m$ satisfying
\begin{equation}\label{eqn:def-cone-2}
r^{-1} |u_{r,j}| + |\nabla u_{r,j}|  + r|\nabla^2 u_{r,j}| \leq \beta, \quad j = 1, \ldots, m,
\end{equation}
so that $V$ is a multi-graph:
\begin{equation}\label{eqn:def-cone-1}
V \llcorner A_{r, r/8}(a_r) = \sum_{j=1}^m [G_{a_r + \bC_r}(u_{r,j}) \cap A_{r, r/8}(a_r)]_g , 
\end{equation}
and the density ratios are close to constant:
\begin{equation}\label{eqn:def-cone-3}
m\theta_\bC(0) - \beta \leq \theta_V(a_r, r) \leq m \theta_\bC(0) + \beta .
\end{equation}
Additionally, we ask that for all $\rho \leq r \leq s \leq \min\{ 2r, (1-\sigma)R\}$ we have
\begin{equation}\label{eqn:def-cone-tau}
|a_r - a_s| \leq \tau s , \quad d_H(\bC_r \cap B_1, \bC_s \cap B_1) \leq \tau.
\end{equation}
When there is ambiguity we may be explicit and write $a_r = a_r(V)$, $\bC_r = \bC_r(V)$ to specify the varifold in question.
\end{definition}

Provided $\beta$, $\tau$, $\sigma$ are sufficiently small, then after shrinking our radius and enlarging $\beta$, every weak-cone region is actually a ``cone region.''
\begin{definition}[Cone region]
In the notation of the above, we say $V \llcorner (A_{R, \rho}(a), g)$ is a $(\bC, m, \beta)$-cone region if for every $r \in [\rho, R] \cap (0, \infty)$ there is as $\bC_r \in \cC(\bC)$ and $C^2$ functions $\{ u_{r,j} : (a + \bC_r) \cap A_{r, r/8}(a) \to \bC_r^\perp\}_{j=1}^m$ so that \eqref{eqn:def-cone-2}, \eqref{eqn:def-cone-1}, \eqref{eqn:def-cone-3} hold with $a_r \equiv a$.
\end{definition}

A much more non-trivial theorem, and the main theorem of this section, says that provided $\beta$ is sufficiently small, every cone region is a ``strong-cone'' region (for some larger choice of $\beta$).
\begin{definition}[Strong cone region]
In the notation of the above, we say $V \llcorner (A_{R, \rho}(a), g)$ is a $(\bC, m, \beta)$-strong-cone region if there are $C^2$ functions $\{ u_j : (a + \bC)\cap A_{R, \rho/8}(a) \to \bC^\perp\}_{j=1}^m$ so that \eqref{eqn:def-cone-2}, \eqref{eqn:def-cone-1}, \eqref{eqn:def-cone-3} hold for every $r \in [\rho, R] \cap (0, \infty)$ with $a_r \equiv a$, $\bC_r \equiv \bC$, and $u_{r,j} \equiv u_j$.
\end{definition}

It will be convenient to also define cone regions for hypersurfaces:
\begin{definition}[Cone regions for $M$]
If $M$ is a $C^2$ hypersurface in $B_R(a)$, then in the notation of the above we say $M \cap (A_{R, \rho}(a), g)$ is a  $(\bC, m, \beta)$-(strong-)cone region if $[M]_g \llcorner (A_{R, \rho}(a), g)$ is a $(\bC, m, \beta)$-(strong-)cone region.
\end{definition}

A few remarks.  If $V \in \cISV_7(B_R(a), g)$, and $V \cap (A_{R, \rho}(a), g)$ is a $(\bC, m, \beta)$-cone region, then $V$ is regular in $A_{R, \rho}(a)$ by the maximum principle and the fact that $\dim(\sing V) = 0$, and any two graphs in \eqref{eqn:def-cone-1} either coincide or are disjoint.  If $\rho = 0$, then the maximum principle due to \cite{ilmanen} implies $V = m[\spt V]_g$ in $B_R(a)$, and $[\spt V]_g \cap (A_{R,0}(a), g)$ is a $(\bC, 1, \beta)$-cone region.  \eqref{eqn:def-cone-1} is just saying that $\spt V \cap A_{r, r/8}(a_r)$ splits into at most $m$ conical graphs, but for various reasons it is useful to keep track of the multiplicity of these graphs.

The following lemma proves that weak-cone regions are effectively cone regions, after recentering/rescaling, and makes explicit that weak-cone regions can be recentered, and that extending the recentered region to smaller inner radii is equivalent to extending the original cone region.  These last facts are important in our contradiction argument later on.
\begin{lemma}\label{lem:recenter}
Let $V \llcorner (A_{R, \rho}(a), g)$ be a $(\bC, m, \beta, \tau, \sigma)$-weak-cone region, for $\bC \in \cC_\Lambda$.  Assume $\sigma \in [0, 1/4]$, $\tau \leq \sigma/10$.  Then:

\begin{enumerate}
\item \label{item:aras} We have $|a_r - a_s| \leq 5\tau \max \{s, r\}$ for all $r, s \in [\rho, (1-\sigma)R]$.  Hence, the point
\[
a_\rho = \left\{ \begin{array}{l l} a_\rho & \rho > 0 \\ \lim_{r \to 0} a_r & \rho = 0 \end{array} \right.
\]
exists, and satisfies $|a_\rho - a| \leq 2\sigma R$.

\item \label{item:recenter} Take $R' \leq (1-2\sigma )R$.  Given any $r \in [\rho, (1-\sigma)R']$, and $\rho' \leq r$, then $V \llcorner (A_{R',\rho'}(a_r), g)$ is a $(\bC, m, \beta, \tau, \sigma)$-weak-cone region $\iff$ $V \llcorner (A_{R, \rho'}(a), g)$ is a $(\bC, m, \beta, \tau, \sigma)$-weak-cone region.  In particular, $V \llcorner (A_{R', \rho}(a_\rho), g)$ is a $(\bC, m, \beta, \tau, \sigma)$-weak-cone region.

\item \label{item:cone-region} There is a $\gamma(\Lambda) > 0$ so that if, additionally, $\beta, \tau \leq \gamma$, then $V \llcorner A_{(1-3\sigma) R, \rho}(a_\rho)$ is a $(\bC, m, c(\Lambda, m)(\beta + \tau))$-cone region.
\end{enumerate}

\end{lemma}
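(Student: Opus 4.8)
The plan is to treat the three parts in order; each reduces to the definitions plus one elementary ingredient — a dyadic chaining, a containment bookkeeping, and a scale-by-scale application of Lemma~\ref{lem:regraph} respectively.

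\emph{Part (1).} This uses only \eqref{eqn:def-cone-tau}. Given $\rho \le r \le s \le (1-\sigma)R$ with $r < s$, pick the integer $k \ge 0$ with $2^k r \le s < 2^{k+1} r$ and chain through the radii $r, 2r, 4r, \dots, 2^k r, s$; each consecutive pair $(r_j, r_{j+1})$ satisfies $r_j \le r_{j+1} \le \min\{2 r_j, (1-\sigma)R\}$, so \eqref{eqn:def-cone-tau} gives $|a_{r_j} - a_{r_{j+1}}| \le \tau r_{j+1}$, and summing the resulting geometric series yields $|a_r - a_s| \le \tau(2r + 4r + \cdots + 2^k r + s) \le 3\tau s \le 5\tau s$. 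When $\rho = 0$ this is a Cauchy estimate as $r \to 0$, producing $a_\rho$. For $|a_\rho - a| \le 2\sigma R$: the constraint $B_s(a_s) \subset B_R(a)$ built into Definition~\ref{def:wcone} forces $|a_s - a| \le R - s$, so letting $s \nearrow (1-\sigma)R$ and combining with $|a_\rho - a_s| \le 5\tau s \le (\sigma/2)R$ (using $\tau \le \sigma/10$) gives $|a_\rho - a| \le \sigma R + (\sigma/2)R < 2\sigma R$.

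\emph{Part (2).} The content here is that conditions \eqref{eqn:def-cone-2}, \eqref{eqn:def-cone-1}, \eqref{eqn:def-cone-3}, \eqref{eqn:def-cone-tau} are \emph{scale-local}: at radius $r'$ they constrain $V$ only on $A_{r', r'/8}(a_{r'})$ together with the density $\theta_V(a_{r'}, r')$, and the ambient annulus enters only through the admissible range of $r'$ and the containment $B_{r'}(a_{r'}) \subset (\text{ambient ball})$. I would prove the equivalence by transporting the witnessing data $\{a_{r'}, \bC_{r'}, u_{r',j}\}$ verbatim between the two formulations. For ``$\Leftarrow$'', restrict the data of $V \llcorner A_{R, \rho'}(a)$ to the range $r' \in [\rho', (1-\sigma)R')$ and check $B_{r'}(a_{r'}) \subset B_{R'}(a_r)$, which follows from $|a_{r'} - a_r| \le 5\tau (1-\sigma)R' \le (\sigma/2)R'$ (Part (1)) and $r' \le (1-\sigma)R'$. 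For ``$\Rightarrow$'', use the data of $V \llcorner A_{R', \rho'}(a_r)$ on $[\rho', (1-\sigma)R')$ and supplement it, on the remaining range $[(1-\sigma)R', (1-\sigma)R)$, by the data furnished by the standing hypothesis that $V \llcorner A_{R, \rho}(a)$ is a weak-cone region (note $\rho \le r \le (1-\sigma)R'$); the containments $B_{r'}(a_{r'}) \subset B_{R'}(a_r) \subset B_R(a)$ hold since $R - R' \ge 2\sigma R$ and $|a_r - a| < (3/2)\sigma R$. The one point needing a remark is compatibility of the two data families at the junction scale $(1-\sigma)R'$, so that \eqref{eqn:def-cone-tau} persists across it: since both describe the \emph{same} $V$ near that scale, the centers and cones may be taken to agree there (up to the slack in \eqref{eqn:def-cone-tau}) by uniqueness of the conical multi-graph decomposition — and in the applications the recentered data is being constructed, so this matching is automatic. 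The closing ``in particular'' is the case $r = \rho$ (or $r \to 0$), $R' = (1-2\sigma)R$.

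\emph{Part (3).} This is where the genuine (if still modest) content lies: a weak-cone region has models $a_{r'} + \bC_{r'}$ whose centers drift with the scale, while a cone region must have all models sharing the center $a_\rho$. The bridge is re-graphing, applied scale-by-scale via (the evident annular analogue of) Lemma~\ref{lem:regraph}. Fix $r' \in [\rho, (1-3\sigma)R]$ and rescale by $\eta_{a_\rho, r'}$: the multi-graph $V \llcorner A_{r', r'/8}(a_{r'}) = \sum_j [G_{a_{r'} + \bC_{r'}}(u_{r',j})]_g$ becomes a multi-graph over $(\tilde a + \bC_{r'}) \cap A_{1, 1/8}(\tilde a)$ with $\tilde a = (a_{r'} - a_\rho)/r'$ of norm $\le 5\tau$ by Part (1), and \eqref{eqn:def-cone-2} rescales to a $C^2$-bound $\le \beta$ on the graphing functions. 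Provided $\beta, \tau \le \gamma(\Lambda)$, with $\gamma$ chosen to absorb the factor $5$ and to meet the hypotheses of Lemma~\ref{lem:regraph} with $\bC = \bC' = \bC_{r'} \in \cC_\Lambda$, that lemma re-expresses each of the $m$ sheets as a conical graph over $\bC_{r'}$ centered at the origin with $C^2$-norm $\le c(\Lambda)(\beta + 5\tau)$; rescaling back yields functions $u'_{r',j} : (a_\rho + \bC_{r'}) \cap A_{r', r'/8}(a_\rho) \to \bC_{r'}^\perp$ obeying \eqref{eqn:def-cone-2} with $\beta$ replaced by $c(\Lambda, m)(\beta + \tau)$ and \eqref{eqn:def-cone-1} with $a_r \equiv a_\rho$, $\bC_r \equiv \bC_{r'}$. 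For \eqref{eqn:def-cone-3} recentered at $a_\rho$: $\mu_V(B_{r'}(a_\rho))$ and $\mu_V(B_{r'}(a_{r'}))$ differ by the $\mu_V$-mass of a shell of width $\lesssim \tau r'$ across which $V$ is a cone-like multi-graph, so by \eqref{eqn:ahlfors} and \eqref{eqn:monotonicity} the density ratio moves by $\lesssim_\Lambda \tau$, which the allowed error absorbs. The passage from $(1-\sigma)R$ to $(1-3\sigma)R$ leaves room both for the displacement $|a_\rho - a| \le 2\sigma R$ (so annuli centered at $a_\rho$ still lie in $B_R(a)$) and for the $O(\tau)$ distortion of $A_{r', r'/8}$ under re-graphing — where one recovers the outer part of the annulus from the weak-cone data at a slightly larger scale, using the slack already built into Definition~\ref{def:wcone}.

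I expect the only real obstacle to be the uniform bookkeeping in Part (3): applying Lemma~\ref{lem:regraph} with constants independent of $r'$ and $j$, matching the shifted annulus $A_{1,1/8}(\tilde a)$ to the standard domain of that lemma (harmless, since its inverse-function-theorem proof is insensitive to the precise annulus), and reconciling the density ratios as above. Parts (1) and (2) are, by contrast, bookkeeping directly on top of \eqref{eqn:def-cone-tau} and the definitions.
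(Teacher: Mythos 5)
Your proof is correct and follows essentially the same route as the paper's: dyadic chaining of \eqref{eqn:def-cone-tau} for (1), transporting the witnessing data and checking the ball containments $B_s(a_s)\subset B_{R'}(a_r)\subset B_R(a)$ for (2), and scale-by-scale re-graphing via Lemma~\ref{lem:regraph} plus a density comparison for (3). The only substantive deviation is the density bound in (3): the paper compares $\theta_V(a_\rho,r)$ with $\theta_V(a_{(1\pm c\tau)r},(1\pm c\tau)r)$ via nested ball inclusions rather than a shell-mass estimate, which sidesteps your appeal to \eqref{eqn:ahlfors} and \eqref{eqn:monotonicity} — note that $V$ is not assumed stationary in this lemma, though your shell bound still follows from the multi-graph structure alone.
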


\begin{proof}
Item \ref{item:aras} follows easily from \eqref{eqn:def-cone-tau} and \eqref{eqn:aar}.  We prove Item \ref{item:recenter}.  To prove the direction $\impliedby$ if will suffice to show that $B_{R'}(a_r) \subset B_R(a)$, and that for every $s \in [\rho', (1-\sigma)R']$ we have $B_s(a_s) \subset B_{R'}(a_r)$.  The first inclusion follows because
\begin{equation}\label{eqn:aar}
|a - a_r| \leq |a - a_{(1-\sigma)R}| + |a_{(1-\sigma)R} - a_r| \leq \sigma R + 5\tau(1-\sigma) R \leq 2\sigma R.
\end{equation} 
To prove the second inclusion, we observe for any $s$ as above that
\[
s + |a_s - a_r| \leq s + 5\tau \max\{ s, r\} \leq (1+5\tau) (1-\sigma)R' < R' .
\]

The direction $\implies$ is trivial if $\rho' \geq \rho$.  If $\rho' < \rho$, then it suffices to verify that $B_{s}(a_s) \subset B_R(a)$ for $s \in [\rho', \rho]$.  But by assumption and \eqref{eqn:aar} we have
\[
B_s(a_s) \subset B_{R'}(a_r) \subset B_R(a).
\]
This proves Item \ref{item:recenter}.

To prove Item \ref{item:cone-region}, we first observe that by Item \ref{item:aras}, \eqref{eqn:aar} and our assumption $\tau \leq \sigma/10$, we get for every $r \in [\rho, (1-3\sigma)R] \cap (0, \infty)$:
\begin{gather}
B_r(a_\rho) \subset B_{(1+10\tau)r}(a_{(1+10\tau)r}) \subset B_{R}(a), \label{eqn:recenter-1} \\
B_r(a_\rho) \supset B_{(1-5\tau)r}(a_{(1-5\tau)r}) \quad \text{ if } (1-5\tau)r \geq \rho,  \label{eqn:recenter-2} \\
\text{and} \quad (1+10\tau)r \leq (1-\sigma)R . \label{eqn:recenter-3}
\end{gather}
Therefore, ensuring $\tau(\Lambda), \beta(\Lambda)$ are sufficiently small, for every such $r$ we can apply Lemma \ref{lem:regraph} in each annulus $A_{r, r/2}(a_\rho)$ to get \eqref{eqn:def-cone-1}, \eqref{eqn:def-cone-2}.

To get the upper density bound \eqref{eqn:def-cone-3}, we can use \eqref{eqn:recenter-1}, \eqref{eqn:recenter-3} to get for $r$ as above:
\begin{align*}
\theta_V(a_\rho, r) \leq (1+10\tau)^7\theta_V(a_{(1+10\tau)r}, (1+10 \tau) r) \leq m \theta_\bC(0) + c(\Lambda, m) (\tau + \beta).
\end{align*}
The lower bound is similar, using \eqref{eqn:recenter-2}:
\begin{align*}
\theta_V(a_\rho, r) &\geq (1-5\tau)^7 \theta_V(a_{ \max\{ (1-5\tau) r, \rho\}}, \max\{ (1-5\tau)r, \rho \}) \\
&\geq m\theta_\bC(0) - c(\Lambda, m)(\beta + \tau).
\end{align*}
This proves Item \ref{item:cone-region}.
\end{proof}

\vspace{3mm}

We now work towards proving Theorem \ref{thm:scone}, which is a smallness estimate that implies cone regions are in fact strong-cone regions.  We first require a helper lemma which implies that small graphicality ``propogates'' to nearby scales.

\begin{lemma}\label{lem:cone-extend}
Given $\eps, \Lambda > 0$, there is a $\gamma(\Lambda)$ so that the following holds.  Let $\bC \in \cC_\Lambda$, $g$ be a $C^2$ metric on $B_1$, and $V$ an integral varifold in $(B_1, g)$.  Suppose $V \llcorner (A_{1, 1/2}, g)$ is a $(\bC, m, \beta)$-cone region for $\beta \leq \gamma$, and
\[
V \llcorner A_{1, 1/2} = \sum_{j=1}^m [G_\bC(u_j) \cap A_{1, 1/2}]_g, \quad |u_j|_{C^2} \leq \eps \leq \gamma.
\]
Then
\[
V \llcorner A_{1, 1/16} = \sum_{j=1}^m [G_\bC(u_j) \cap A_{1, 1/16}]_g, \quad |u_j|_{C^2} \leq c(\Lambda)(\eps + \beta).
\]
\end{lemma}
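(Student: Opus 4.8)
\emph{Proof proposal.} The plan is to propagate the ``graphical over the fixed cone $\bC$'' property \emph{outward} from the thin outer annulus $A_{1,1/2}$ to all of $A_{1,1/16}$, controlling at each stage how far the model cones $\bC_r$ provided by the cone-region hypothesis can have rotated away from $\bC$. Observe that $A_{1,1/16} = \bigcup_{r\in[1/2,1]} A_{r,r/8}$, and that already the three annuli at $r \in \{1,3/4,1/2\}$, namely $A_{1,1/8} \supset A_{1,1/2}$, $A_{3/4,3/32}$, and $A_{1/2,1/16}$, cover $A_{1,1/16}$ with consecutive overlaps $A_{1,1/8}\cap A_{3/4,3/32} = A_{3/4,1/8}$ and $A_{3/4,3/32}\cap A_{1/2,1/16} = A_{1/2,3/32}$, both non-degenerate. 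On each $A_{r,r/8}$ the definition of $(\bC,m,\beta)$-cone region gives $V\llcorner A_{r,r/8} = \sum_{j=1}^m [G_{\bC_r}(u_{r,j})]_g$ for some $\bC_r\in\cC(\bC)$, with $u_{r,j}$ obeying \eqref{eqn:def-cone-2}, \eqref{eqn:def-cone-3} and centered at $a_r\equiv 0$.

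The heart of the matter is a single propagation step. Suppose on an annulus $A$ meeting $A_{r,r/8}$ in a fixed non-degenerate sub-annulus $\Omega$ we already know $V\llcorner A = \sum_{j=1}^m[G_\bC(w_j)]_g$ with $|w_j|_{C^2}\le\eps'$. On $\Omega$, $\spt V$ is then simultaneously an $O(\eps')$-conical graph over $\bC$ and an $O(\beta)$-conical graph over $\bC_r$; comparing these two descriptions and using scale-invariance of cones (Hausdorff closeness on a fixed annulus upgrading to Hausdorff closeness of the links in $\del B_1$), I obtain $d_H(\bC_r\cap\del B_1, \bC\cap\del B_1)\le c(\Lambda)(\eps'+\beta)$. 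Theorem \ref{thm:cones}(\ref{item:cones-3}) then writes $\bC_r = G_\bC(v_r)$ for a $1$-homogeneous $v_r$ with $|x|^{1-k}|\nabla^k v_r|\le c(\Lambda,k)(\eps'+\beta)$. Composing the two small perturbations $v_r$ and $u_{r,j}$ --- equivalently, applying the inverse function theorem exactly as in Lemma \ref{lem:regraph}, rescaled via Remark \ref{rem:scaling} to the annulus $A_{r,r/8}$ --- I rewrite each sheet as a conical graph over $\bC$ itself: $V\llcorner A_{r,r/8} = \sum_{j=1}^m[G_\bC(\tilde u_{r,j})]_g$ with $|\tilde u_{r,j}|_{C^2}\le c(\Lambda)(\eps'+\beta)$. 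Finally, on the connected overlap $\Omega$ the two varifold descriptions $\sum_j[G_\bC(w_j)]_g$ and $\sum_j[G_\bC(\tilde u_{r,j})]_g$ coincide, and since every function involved is a small $C^1$ perturbation of $\bC$ the sheets are canonically identified, so after relabeling $\tilde u_{r,j} = w_j$ on $\Omega$; hence $\tilde u_{r,j}$ extends $w_j$.

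To conclude, I iterate this step finitely many (three) times along the subcover above: start from $A = A_{1,1/2}$, $w_j = u_j$, $\eps' = \eps$ (the hypothesis), apply the step with $r = 1$, then with $r = 3/4$ using the overlap $A_{3/4,1/8}$, then with $r = 1/2$ using $A_{1/2,3/32}$. At each stage $\eps'$ is replaced by at most $c(\Lambda)(\eps+\beta)$, so choosing $\gamma = \gamma(\Lambda)$ small enough keeps every $\eps'$ below the thresholds demanded by Theorem \ref{thm:cones}(\ref{item:cones-3}) and Lemma \ref{lem:regraph} throughout the fixed, finite iteration. Patching the consistently labeled graphs over the three annuli then yields $V\llcorner A_{1,1/16} = \sum_{j=1}^m[G_\bC(u_j)]_g$ with $|u_j|_{C^2}\le c(\Lambda)(\eps+\beta)$, as desired.

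I expect the main obstacle to be the propagation step, and within it the quantitative bound $d_H(\bC_r\cap\del B_1,\bC\cap\del B_1)\le c(\Lambda)(\eps'+\beta)$: this requires carefully juxtaposing the two graphical descriptions of $\spt V$ over the overlap and invoking the conical scale-invariance. A second, cleaner but genuinely delicate point --- worth care because $V$ is only assumed integral, not stationary, so unique continuation for the sheet functions is unavailable --- is verifying that the re-graphed sheets $\tilde u_{r,j}$ match the previously constructed ones consistently over the connected overlaps; this rests only on the elementary fact that sufficiently small $C^1$ conical graphs over a fixed cone are canonically recovered from the varifold they generate.
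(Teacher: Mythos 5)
Your proposal is correct and fills in exactly the detail the paper leaves implicit: the paper's proof of this lemma is the single sentence ``Follows from the definition of cone region and Lemma \ref{lem:regraph}.'' Your propagation scheme --- comparing $\bC_r$ with $\bC$ on overlapping annuli to deduce Hausdorff closeness of the links, then invoking Theorem \ref{thm:cones}(\ref{item:cones-3}) and the inverse-function-theorem regraphing of Lemma \ref{lem:regraph}, iterated over a finite chain --- is the natural unpacking of that sentence, and the constant-tracking under a bounded number of iterations is handled correctly.
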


\begin{proof}
Follows from the definition of cone region and Lemma \ref{lem:regraph}.
\end{proof}

\begin{theorem}\label{thm:scone}
Given $\eps, m, \Lambda > 0$, there are $\beta(\Lambda, m, \eps)$, $\delta(\Lambda, m, \eps)$ so that the following holds.  Let $g$ be a $C^3$ metric on $B_1$ such that $|g - \geucl|_{C^3(B_1)} \leq \beta$.  Take $\bC \in \cC_\Lambda$, $m \in \N$, $\rho \in [0, 1]$, and let $V \in \cISV_7(B_1, g)$ be such that $V \llcorner (A_{1, \rho}, g)$ is a $(\bC, m, \beta)$-cone region.  Suppose there are $C^2$ functions $\{ u_j : \bC \cap A_{1,1/2} \to \bC^\perp \}_j$ such that
\begin{equation}\label{eqn:scone-hyp}
V \llcorner A_{1, 1/2} = \sum_{j=1}^m [G_\bC(u_j) \cap A_{1, 1/2}]_g, \quad |u_j|_{C^2(\bC \cap A_{1, 1/2})} \leq \delta.
\end{equation}

Then the $u_j$ can be extended to $C^2$ functions on $\bC \cap A_{1, \rho/8}$, so that
\begin{equation}\label{eqn:scone-concl1}
V \llcorner A_{1, \rho/8} = \sum_{j=1}^m [G_\bC(u_j) \cap A_{1, \rho/8}]_g, \quad |x|^{-1} |u_j| + |\nabla u_j| + |x| |\nabla^2 u_j| \leq \eps .
\end{equation}
In particular, $V \cap (A_{1, \rho}, g)$ is a $(\bC, m, \eps)$-strong-cone region.

If $\theta_\bC(0) = 1$ then we additionally have the curvature estimate
\begin{equation}\label{eqn:planar-A-bound}
\int_{A_{1, \rho/8}} |A_{V, g}|^7 d\mu_V(x) \leq \eps
\end{equation}
where $|A_{V, g}|$ is the length of the second fundamental form of $\reg V \subset (B_1, g)$.
\end{theorem}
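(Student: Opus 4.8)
The plan is to reduce the statement to the Lojasiewicz--Simon decay/growth dichotomy for a single normal graph over the cone $\bC$. First, I would set up the situation: by Lemma \ref{lem:cone-extend}, the hypothesis \eqref{eqn:scone-hyp} and the cone-region structure propagate the smallness of the $u_j$ inward one dyadic scale at a time, so that the issue is not extendability per se but controlling the \emph{size} of the $u_j$ uniformly down to scale $\rho$. Because $V$ is a $(\bC,m,\beta)$-cone region, on each annulus $A_{r,r/8}$ the support $\spt V$ splits into $m$ ordered conical graphs over \emph{some} cone $\bC_r \in \cC(\bC)$; using Theorem \ref{thm:cones}:(\ref{item:cones-3}),(\ref{item:cones-4}) and Lemma \ref{lem:regraph}, as long as the graphs stay small we can regraph everything over the fixed $\bC$ itself and obtain $C^2$ functions $u_{r,1},\dots,u_{r,m} : \bC \cap A_{r,r/8} \to \bC^\perp$, which by the maximum principle are either disjoint or identical. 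The key quantity to track is the scale-invariant $L^2$ ``excess''
\[
E(r) = \sum_{j=1}^m \fint_{\bC \cap A_{2r, r/2}} \Big( \frac{|u_{r,j}|^2}{r^2} + |\nabla u_{r,j}|^2 \Big) \, d\haus^7 ,
\]
together with the density drop $\theta_V(0,2r) - \theta_V(0,r/2)$, which by \eqref{eqn:def-cone-3} and \eqref{eqn:monotonicity} is small and controls $\int_{A} |\pi^\perp_V(x)|^2/|x|^{9}\, d\mu_V$ from \eqref{eqn:sharp-mono}.

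The heart of the argument is the decay/growth estimate proved in Appendix \ref{sec:decay} (adapting \cite{simon}): for a minimal graph $u$ over a minimal cone $\bC$, if $u$ is not already $1$-homogeneous there is a quantitative dichotomy — from one scale to the next $E$ either decays by a definite factor, or grows by a definite factor, the growth/decay being at worst logarithmic. The crucial consequence is that $u$ cannot ``hover'': once $E(r)$ starts to increase it must keep increasing at a definite rate, and the associated density is consumed at a definite rate at each scale via \eqref{eqn:sharp-mono}. So I would argue by contradiction at the level of scales: let $\rho_0$ be the largest radius in $[\rho,1/2]$ at which $\sum_j |u_{\rho_0,j}|_{C^2} $ first reaches $\eps$ (if there is none we are done). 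On $[\rho_0,1]$ the graphs are $\le \eps$, hence genuinely graphical and minimal, so the Lojasiewicz--Simon machinery applies; the growth alternative at each dyadic step between $\rho_0$ and $1$ would force a total density drop bounded below by a positive constant $c(\Lambda,m,\eps)$ (summing the per-scale consumption), while the decay alternative keeps $E$ — and hence, via interior elliptic estimates \eqref{eqn:higher-reg} applied in normal coordinates at points of the graph, the full $C^2$ norm — comparable to its value at scale $1$, which is $\le C(\Lambda)\delta$ by \eqref{eqn:scone-hyp} and Lemma \ref{lem:cone-extend}. Choosing first $\beta$ small (so the density drop over $A_{1,\rho}$ is $< c(\Lambda,m,\eps)$ by \eqref{eqn:monotonicity}), then $\delta$ small, rules out the growth alternative ever occurring and forces $E(r) \lesssim \delta$ for all $r \in [\rho_0,1]$, whence $\sum_j|u_{\rho_0,j}|_{C^2} \le c(\Lambda)\delta < \eps$, contradicting the definition of $\rho_0$. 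Therefore $\rho_0$ does not exist, the $u_j$ extend to all of $A_{1,\rho/8}$ with the bound in \eqref{eqn:scone-concl1}, and $V\cap(A_{1,\rho},g)$ is a $(\bC,m,\eps)$-strong-cone region.

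For the final curvature bound \eqref{eqn:planar-A-bound} in the planar case $\theta_\bC(0)=1$: here the cone is an $m$-sheeted plane, so each sheet is $\graph_\bC(u_j)$ with $|u_j|_{C^2}$ small, and I would derive a Dini-type summability of the excess — the decay alternative in fact gives $\sum_{k} E(2^{-k}\rho_{\mathrm{base}}) \lesssim E(1) + (\text{density drop}) \lesssim \delta + \beta$ — and then bound $\int_{A_{2^{-k-1},2^{-k-3}}} |A_{V,g}|^7 d\mu_V$ by the excess at scale $2^{-k}$ using interior Schauder estimates \eqref{eqn:higher-reg} (the $C^{2,\alpha}$ norm of $u_j$ at scale $r$ is controlled by $E(r)^{1/2}$ plus $|Dg|_{C^1}$, and $|A|^7$ integrates dimensionally against $d\mu_V \sim r^7$); summing over $k$ gives \eqref{eqn:planar-A-bound} after taking $\beta,\delta$ small. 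The main obstacle throughout is the decay/growth dichotomy itself — making the Lojasiewicz--Simon argument of \cite{simon} work \emph{without} any a priori hypothesis on $\sing V$ inside $B_\rho$, i.e. genuinely only in the annulus — which is precisely what is isolated and proved in Appendix \ref{sec:decay}; everything else is bookkeeping with monotonicity, regraphing (Lemma \ref{lem:regraph}), and interior elliptic estimates.
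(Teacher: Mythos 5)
Your overall strategy is the paper's: regraph over the fixed cone, convert the density drop into analytic control via the monotonicity formula, feed this into the Lojasiewicz--Simon machinery of Appendix \ref{sec:decay}, and run a contradiction argument at the first scale where the bound $\eps$ is attained; the planar Dini argument at the end is also essentially what the paper does. But one step in the middle, as you state it, is not correct: the claimed dichotomy that from one scale to the next the excess ``either decays by a definite factor, or grows by a definite factor,'' so that $u$ ``cannot hover.'' This fails for a non-integrable cone: the critical points of the area functional on the link near $0$ are exactly the nearby cones $\bC'\in\cC(\bC)$, all of which have the \emph{same} density $\theta_{\bC'}(0)=\theta_\bC(0)$ (Theorem \ref{thm:cones}). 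A solution $v$ can therefore drift slowly to a nonzero critical point $\zeta$ with $|\zeta|\sim\eps$ and then hover there indefinitely, with the excess neither decaying nor growing and with \emph{no} further density consumed. Your per-scale density accounting only penalizes the transitions, and the total drift permitted by Cauchy--Schwarz from $\int\|\dot v\|^2\,dt\le c\beta$ is of order $(\beta\,|\log\rho|)^{1/2}$, which is unbounded as $\rho\to 0$. This drift scenario is precisely the ``rotating cones'' problem the theorem is designed to kill, so it cannot be absorbed into bookkeeping.

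The correct interface with the appendix is different: (i) the Hardt--Simon-type inequality \eqref{eqn:scone-2} converts the monotonicity bound on $\int|\pi^\perp_V(x)|^2|x|^{-9}d\mu_V$ into $\int\|\dot v\|^2\,dt\le c\beta$, which together with interior elliptic estimates yields the $(K,c\beta^{1/2})$-\emph{linear growth} hypothesis of Definition \ref{def:linear}; (ii) the Lojasiewicz inequality applied to $\|\cM(v(t))\|\le c\|\dot v\|_{L^2(\Sigma\times(t-1,t+1))}+c\beta e^{-2t}$ verifies the energy hypothesis \eqref{eqn:decay-hyp2}; Theorem \ref{thm:decay} then outputs the smallness $|v|_2^*\le c\delta^\alpha$ directly --- it is a smallness theorem under these two hypotheses, not a scale-by-scale dichotomy. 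Two further reductions you omit are genuinely needed: the Lojasiewicz constants depend on the link $\Sigma=\bC\cap\del B_1$, so to obtain $\beta,\delta$ depending only on $(\Lambda,m,\eps)$ one must first run the compactness/covering argument of Lemma \ref{lem:cover}; and one must normalize $g$ in normal coordinates so that $g(0)=\geucl$ and $Dg|_0=0$, since otherwise neither the form \eqref{eqn:sharp-mono} of the monotonicity formula nor the $e^{-2t}$ decay of the metric error terms in the PDE for $v$ (required by Items \ref{item:decay-1} and \ref{item:decay-3} of Section \ref{sec:decay}) is available.
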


\begin{remark}\label{rem:scone-stable}
If $V$ is stable in $(B_1, g)$, then in place of assuming that $V \llcorner (A_{1, \rho}, g)$ is a $(\bC, m, \beta)$-cone region, one can instead assume that
\begin{equation}\label{eqn:scone-rem-stable}
\theta_V(0, 1) \leq m \theta_\bC(0) + \beta, \quad \theta_V(0, \rho/16) \geq m \theta_\bC(0) - \beta.
\end{equation}
Compare to Theorem \ref{thm:scone-1}, Lemma \ref{lem:cone-extend-1}.
\end{remark}

\begin{remark}
In place of assuming \eqref{eqn:def-cone-3} in Theorem \ref{thm:scone} one could instead assume directly a bound of the form
\begin{equation}\label{eqn:scone-rem-ann}
\int_{A_{1, \rho/8}} |x|^{-9}|\pi_V^\perp(x)|^2 d\mu_V(x) \leq \beta .
\end{equation}
In this case it would suffice to know $V$ is only defined and stationary in the annulus $A_{1, \rho/8}$.
\end{remark}

\begin{remark}
By Remark \ref{rem:decay} (and considering the change of variables $t = \log r$) the same conclusion in Theorem \ref{thm:scone} holds if instead of \eqref{eqn:scone-hyp} we assume there is a $\rho' \in [\rho/4, 1]$ so that
\[
V \llcorner A_{\rho', \rho'/2} = \sum_{j=1}^m [G_\bC(u_j) \cap A_{\rho', \rho'/2}]_g
\]
for some $C^2$ functions $u_j$ satisfying $\rho'^{-1} |u_j| + |\nabla u_j| + \rho' |\nabla^2 u_j| \leq \delta$.
\end{remark}

\begin{remark}\label{rem:dini}
If $g(0) = \geucl$ and $Dg|_0 = 0$ then we additionally get the Dini-type estimate
\begin{equation}\label{eqn:scone-concl2}
\int_{ A_{1, \rho/8}} |x|^{-8} |\pi_V^\perp(x)| d\mu_V(x) \leq \eps, 
\end{equation}
which can be viewed as a quantitative version of the statement that tangent cones are unique.  Here (like in \eqref{eqn:sharp-mono}) $\pi^\perp_V|_x$ is the Euclidean orthogonal projection onto the Euclidean normal space $T_x^\perp V$.

One can recast \eqref{eqn:scone-concl2} as a $C^0$-Dini-type estimate in the following fashion.  In analogue to the planar Jones' $\beta$-numbers, let us define the ``conical $\beta$-numbers'' $\beta_\bC(V, a, r)$ by
\[
\beta_\bC(V, a, r) = \inf_{\bC' \in \cC(\bC)} r^{-1} d_H(\spt V \cap A_{r, r/8}(a), (a + \bC') \cap A_{r, r/8}(a)).
\]
With $\beta$, $V$ as in Theorem \ref{thm:scone}, there is a $\gamma(\bC) \geq 2$ so that if $V \llcorner (A_{1, \rho}, g)$ is a $(\bC, 1, \beta)$-strong-cone region, then $V$ satisfies the estimate
\begin{equation}\label{eqn:scone-dini}
\int_\rho^1 \beta_\bC(V, 0, r)^\gamma \frac{dr}{r} \leq \eps.
\end{equation}
Here $\gamma$ is the $C^0$-Lojacsiewicz exponent (\cite[(2.1)]{simon}), and if $\bC$ is integrable then one can take $\gamma = 2$.  In particular, if $\bC$ is planar then
\begin{equation}\label{eqn:scone-concl3}
\int_{A_{1,\rho/8}} |x|^{-5} |A_{V, g}|^2 d\mu_V(x) \leq \eps,
\end{equation}
which is a ``deeper'' reason for estimate \eqref{eqn:planar-A-bound}.  If $m \geq 2$, then by the maximum principle $V \llcorner A_{1, \rho/8}$ splits into $m$ varifolds $V_j$, each of which satisfies \eqref{eqn:scone-dini}.
\end{remark}

\begin{remark}
By using sharper linear estimates, it probably suffices to assume $g$ is only $C^2$ close to Euclidean (see \cite{simon-extrema}).
\end{remark}

\begin{proof}
I claim that it suffices to prove Theorem \ref{thm:scone} with $\delta$ depending on $(\bC, m, \eps)$.  Let us assume we can do this, and show how to get $\delta$ to depend only on $(\Lambda, m, \eps)$.  Let $c_2(\Lambda)$, $\gamma_2(\Lambda)$ be the constants from Lemma \ref{lem:regraph}, and $\eps_0(\Lambda)$ the constant from Theorem \ref{thm:cones}.  For each $\bC \in \cC_\Lambda$, apply (by assumption) Theorem \ref{thm:scone} with $\eps/(10 c_2)$ in place of $\eps$, to obtain a $\delta_\bC(\bC, m, \eps)$ and $\beta_\bC(\bC, m, \eps)$.  WLOG of course we can assume $\delta_\bC \leq \min(\eps, \gamma_2, \eps_0)$.  Now we can apply Lemma \ref{lem:cover} to obtain a finite collection $\bC_1, \ldots, \bC_\Lambda$, so that given any $\bC \in \cC_\Lambda$, we can find an $i$ so that
\begin{equation}\label{eqn:scone-30}
d_H(\bC \cap \del B_1, \bC_i \cap \del B_1) < \delta_{\bC_i}/(10 c_2) 
\end{equation}
Set $\delta = \min_{i = 1, \ldots, N} \delta_{\bC_i}/(10 c_2)$, $\beta = \min_{i=1, \ldots, N} \beta_{\bC_i}$.

Now take a general $\bC \in \cC_\Lambda$, and suppose \eqref{eqn:scone-hyp} holds with this $\bC$ and $\beta, \delta$ as chosen above.  Pick $\bC_i$ so that \eqref{eqn:scone-30} holds, and then by Lemma \ref{lem:regraph} we can write
\[
V \llcorner A_{1, 1/2} = \sum_{j=1}^m [G_{\bC_i}(\tilde u_j) \cap A_{1, 1/2}]_g, \quad |\tilde u_j|_{C^2} \leq 2 c_2 \delta_{\bC_i}/(10 c_2) \leq \delta_{\bC_i}.
\]
Moreover, by Theorem \ref{thm:cones} we get that $V \llcorner (A_{1, \rho}, g)$ is a $(\bC_i, m, \beta_\bC)$-cone region.  By our choice of constants we can apply Theorem \ref{thm:scone} to deduce
\[
V \llcorner A_{1, \rho/8} = \sum_{j=1}^m [G_{\bC_i}(\tilde u_j) \cap A_{1, \rho/8}]_g , \quad |x|^{-1} |\tilde u_j| + |\nabla \tilde u_j| + |x| | \nabla^2 \tilde u_j| \leq \eps/(10 c_2),
\]
and then finally apply Lemma \ref{lem:regraph} at each scale $r \in [\rho/4, 1]$ to get \eqref{eqn:scone-concl1}.  This proves $\delta$ can be made independent of $\bC$.


\vspace{3mm}

I next claim that it suffices to prove Theorem \ref{thm:scone} under the assumption that $g(0) = \geucl$, $Dg|_0 = 0$.  We prove this.  Let $V$, $\bC$, $g$ satisfy the hypotheses of Theorem \ref{thm:scone}.  By taking $\phi^{-1}$ to be a quadratic perturbation of a linear map, we can find a smooth diffeomorphism $\phi : B_{1/2} \to B_1$ satisfying
\begin{equation}\label{eqn:scone-40}
|\phi(x) - x| \leq c \beta |x|, \quad |D\phi - Id| + |D^2 \phi| \leq c \beta,
\end{equation}
so that the pullback metric $g' = \phi^* g$ satisfies
\[
g'(0) = \geucl, \quad Dg'|_0 = 0, \quad |g' - \geucl|_{C^3(B_{1/2})} \leq c \beta,
\]
where $c$ is an absolute constant.

From Theorem \ref{thm:cones}, \eqref{eqn:scone-40}, and the inverse function theorem, ensuring $\beta(\Lambda)$ is sufficiently small, if $V' = (\phi^{-1})_\sharp V \llcorner B_{1/2}$ then $V' \in \cISV_7(B_{1/2}, g')$, $V' \llcorner (A_{1/2, 2\rho}, g')$ is a $(\bC, m, c(\Lambda) \beta)$-cone region, and (using Lemma \ref{lem:cone-extend}) we can find $C^2$ functions $\{ u_j' : \bC \cap A_{1/2, 1/4} \to \bC^\perp \}_{j=1}^m$ so that
\[
V' \llcorner A_{1/2, 1/4} = \sum_{j=1}^m [G_\bC(u_j') \cap A_{1/2, 1/4}]_{g'}, \quad |u_j'|_{C^2(\bC \cap A_{1/2, 1/4})} \leq c(\Lambda)(\delta + \beta) .
\]

For any $\eps' > 0$, ensuring $\beta(\Lambda, m, \eps')$, $\delta(\Lambda, m, \eps')$ are sufficiently small, we can apply Theorem \ref{thm:scone} to the rescaled varifold/metric $(\eta_{0, 1/2})_\sharp V'$, $g' \circ \eta_{0, 1/2}^{-1}$ to deduce
\[
V' \llcorner A_{1/2, \rho/4} = \sum_{j=1}^m [G_\bC(u_j') \cap A_{1/2, \rho/4}]_{g'}, \quad |x|^{-1} |u_j'| + |\nabla u_j'| + |x| |\nabla^2 u_j'| \leq \eps'.
\]
We can again apply the inverse function theorem and Lemma \ref{lem:cone-extend} to deduce \eqref{eqn:scone-concl1} with $c(\Lambda)(\eps' + \beta)$ in place of $\eps$.  Provided $\eps'(\Lambda, \eps)$, $\beta(\Lambda, \eps)$ are sufficiently small, this proves \eqref{eqn:scone-concl1}.

If $\theta_\bC(0) = 1$, then since $\phi$ is an isometry we obtain from the previous paragraph
\[
\int_{A_{1/4, \rho/2}} |A_{V, g}|^7 d\mu_V \leq \eps' .
\]
\eqref{eqn:planar-A-bound} then follows by \eqref{eqn:def-cone-2}, provided $\eps' \leq \eps/2$ and $\beta(m, \eps)$ is sufficiently small.  This proves my second reduction.

\vspace{3mm}

We now prove Theorem \ref{thm:scone} with $\bC$ fixed, and $\delta$, $\beta \leq \delta$ depending on $(\bC, m, \eps)$, and under the assumption that $g(0) = \geucl$, $Dg|_0 = 0$.  Note this implies that
\begin{equation}\label{eqn:scone-41}
|g|_x - \geucl| \leq \beta |x|^2, \quad |Dg|_x| \leq \beta |x|.
\end{equation}
We note also that there is no loss in assuming $\eps(\bC)$ is as small as we like.  Let $\rho_*$ be the least radius for which we have
\begin{equation}\label{eqn:scone1}
V \llcorner  A_{1, \rho_*} = \sum_{j=1}^m [G_\bC(u_j) \cap A_{1, \rho_*}]_g, \quad |x|^{-1} |u_j| + |\nabla u_j| + |x| |\nabla^2 u_j| \leq \eps .
\end{equation}
By Lemma \ref{lem:cone-extend}, ensuring $\delta(\Lambda, \eps, m)$, $\beta(\Lambda, \eps, m)$ are sufficiently small, we can assume $\rho_* \leq \max(e^{-6}, \rho/8)$.  Note also that Lemma \ref{lem:cone-extend} and our restriction $\beta \leq \delta$ imply we can assume
\begin{equation}\label{eqn:scone-43}
|u_j|_{C^2(\bC \cap A_{1, e^{-2}})} \leq c(\Lambda) \delta.
\end{equation}

We will suppose $\rho_* > \rho/8$, and derive a contradiction.  By the maximum principle and our singular set bound we can assume the graphs $G_\bC(u_j) \cap A_{1, \rho_*}$ are disjoint, and in particular each graph $G_\bC(u_j) \cap A_{1, \rho_*}$ is stationary in $(A_{1, \rho_*}, g)$.  By this and by Lemma \ref{lem:cone-extend}, it will suffice to show that for any $0 < \eps' \leq \eps$ of our choosing, we can pick any one $u = u_j$ and get the bound
\begin{equation}\label{eqn:scone-10}
|x|^{-1} |u| + |\nabla u| + |x| |\nabla^2 u| \leq \eps' \quad x \in \bC \cap A_{e^{-1}, e^3 \rho_*},
\end{equation}
provided $\delta(\bC, m, \eps')$, $\beta(\bC, m, \eps')$ are sufficiently small.  Our strategy is to use the decay-growth Theorem \ref{thm:decay}, to show small bounds on $u$ must persist as long as the density bound \eqref{eqn:scone-hyp} holds.  We will use notation from \cite{simon}; see also Appendix \ref{sec:decay}.

Let $\Sigma = \bC \cap \del B_1$, and write $\hat\nabla$ for the connection on $T^\perp \Sigma \subset TS^7$.  Write $r = |x|$, and define the change of variables $t = -\log r$, $v(t, \theta) = r^{-1} u(r\theta)$, so that $v$ is a $C^2$ function on $\Sigma \times (0, T_*)$ for $T_* = -\log \rho_*$.  Write $\dot v \equiv \del_t v$.  Recall the norms $|v|_k^*(t)$ (defined in Section \ref{sec:decay}), and $||v(t)|| = ||v(t, \cdot)||_{L^2(\Sigma)}$.  We have
\begin{equation}\label{eqn:scone-11}
\frac{1}{c} |v|_2^*(t) \leq \sup_{x \in e^{-t} \Sigma } |x|^{-1} |u| + |\nabla u| + |x| |\nabla^2 u| \leq c |v|_2^*(t)
\end{equation}
for every $t$, and some absolute constant $c$.  Define the function $G : \Sigma \times (\rho_*, 1) \to \R^8$ by setting
\[
G(\theta, r) = \frac{r \theta + u(r\theta)}{\sqrt{1+ r^{-1} |u(r\theta)|^2}} \equiv \frac{r \theta + r v}{\sqrt{1+ |v|^2}}
\]

Provided $\eps(\Sigma)$ is sufficiently small, then for any $0 \leq a < b \leq T_*$ we can write
\begin{align*}
\cF_{a, b}(v) &:= \haus^7_g(G_\bC(u) \cap A_{e^{-b}, e^{-a}}) \\
&= \int_a^b \int_\Sigma e^{-7t}( F(\theta, v, \hat\nabla v, \dot v) + H(\theta, t, v, \hat\nabla v, \dot v)) d\theta dt,
\end{align*}
where the integrand is really just the Jacobian of the map $(\theta, t) \mapsto G(\theta, e^{-t})$ w.r.t. to the metric $g$, and $e^{-7t} F$ is the Jacobian w.r.t. $\geucl$.  By embedding $T^\perp \Sigma , T\Sigma \subset T\R^8$, one can think of $F(\theta, z, p, q)$, resp. $H(\theta, t, z, p, q)$, as being defined on $\Sigma \times \R^8 \times \R^{64} \times \R^8$, resp. $\Sigma \times \R \times \R^8 \times \R^{64} \times \R^8$ (c.f. \cite{simon-extrema}).

$F$, $H$ satisfy the following properties: $F(\theta, z, p, q)$ is smooth (and analytic in $z, p, q$) and independent of $g$, and takes the form
\[
F(\theta, z, p, q)^2 = (1+ |q|^2 + q^2 \cdot z^2 \cdot F_1(z)) E(\theta, z, p)^2 + q^2 \cdot p^2\cdot  F_2(\theta, z, p),
\]
for $F_1, F_2$ smooth, and $E(\theta, z, p) \equiv F(\theta, z, p, 0)$; the functional
\[
\cE(v(t)) := \int_\Sigma E(\theta, v, \hat\nabla v) \equiv e^{6t} \haus^6(G_\bC(u) \cap \del B_{e^{-t}})
\]
satisfies the convexity and analyticity hypotheses of \cite[(1.2), (1.3)]{simon}, and has the form
\[
E(\theta, z, p)^2 = 1 + z^2 \cdot E_1(\theta, z, p) + p^2 \cdot E_2(\theta, z, p)
\]
for smooth functions $E_1, E_2$; $H(\theta, t, z, p, q)$ is $C^3$ in the $(\theta, t, z)$ variables, smooth in the $(p, q)$ variables, and from \eqref{eqn:scone-41} $H$ satisfies the bounds
\[
\sum_{\substack{0 \leq k \leq 3 \\ 0 \leq i + j \leq 1}} |D^k D_p^i D_q^j H| \leq c(\Sigma) \beta e^{-2t},
\]
here $D$ being the full derivative on $\Sigma \times \R \times \R^8 \times \R^{64} \times \R^8$.  In the above we are abusing notation slightly: when we write $F_2$ (e.g.) we really mean a collection of functions $F_{2 ijkl}^{\alpha \beta}(\theta, z, p)$ defined on $\Sigma \times \R^8 \times \R^{64}$ so that
\[
q^2 \cdot p^2 \cdot F_2(\theta, z, p) \equiv \sum_{\substack{i, j, k, l = 1, \ldots, 8 \\ \alpha, \beta = 1, \ldots, 8}} q^i q^j p^k_\alpha p^l_\beta F_{2ijkl}^{\alpha \beta}(\theta, z, p)
\]

A straightforward computation then shows that stationarity of $v$ for the area functional $\cF_{a, b}$ implies that $v$ solves a PDE of the form
\begin{gather}\label{eqn:scone-5}
-\ddot v + 7 \dot v - \cM(v) - \cR_1(v) - \cR_2(v) = f
\end{gather}
where, as in Section \ref{sec:decay}, $\cM = -\mathrm{grad} \cE$ in the $L^2(\Sigma)$ sense; $\cR_1$ is independent of $g$ and has the form of Section \ref{sec:decay}:Item \ref{item:decay-2}; and $\cR_2$, $f$ are as in Section \ref{sec:decay}:Items \ref{item:decay-1}, \ref{item:decay-3}, except with $c(\Sigma) \beta e^{-2t}$ in place of $\delta e^{-\eps t}$.

As exploited in \cite{simon}, for any $\delta' > 0$ provided $\beta(\Sigma, \delta')$ and $\eps(\Sigma, \delta')$ are sufficiently small then both $v$ and $w = \dot v$ solves an PDE of the form
\begin{gather}\label{eqn:scone-1}
-\ddot w + 7 \dot w - L w + e_1 \cdot \hat\nabla^2 w + e_2 \cdot \hat\nabla \dot w + e_3 \ddot w + e_4 \cdot \hat\nabla w + e_5 \dot w + e_6 w = f'
\end{gather}
for $e_i(x, t)$, $f'$ being $C^1$ functions (different for $v$ and $w$) having the estimates $|e_i|_1^*(t) \leq \delta'$, $|f'|_1^*(t) \leq c(\Sigma) \beta e^{-2t}$, and $L$ being the (elliptic) linearization of $\cM$ at $0$.  In fact, for us $L = \hat \Delta_\Sigma + |A_\Sigma|^2 + 6$ where $A_\Sigma$ is the second fundamental form of $\Sigma \subset S^7$, and $\hat \Delta_\Sigma$ the connection Laplacian on $T^\perp \Sigma$.  Ensuring $\delta'(\Sigma)$ is small, we can apply standard elliptic theory to get
\begin{gather}\label{eqn:scone-4}
|v|_2(t)^* \leq c(\Sigma) ( ||v||_{L^1(\Sigma \times (t-1,t+1))} + \beta e^{-2t} ) \quad t \in [1, T_*-2],
\end{gather}
and the same for $\dot v$.

\vspace{3mm}

The key that allows us to apply Theorem \ref{thm:decay} to $v$ is the ``Hardt-Simon'' type inequality (c.f. \cite{Simon1}): provided $\eps(\Sigma)$ is sufficiently small, then we have
\begin{equation}\label{eqn:scone-2}
|\dot v(t, \theta)|^2 \leq 2 e^{2t} |\pi^\perp_V(G(\theta, e^{-t}))|^2 \leq 4 |\dot v(t, \theta)|^2 , \quad \forall t \in (0, T_*), \theta \in \Sigma,
\end{equation}
where $\pi^\perp_V$ is the Euclidean projection onto the Euclidean normal direction $T^\perp_{G(\theta, r)} V$.  This follows by first observing that $r \mapsto G(r, \theta)$ is a curve in $\reg V$, and hence 
\[
-\frac{\pi^\perp_V(r \del_r v)}{\sqrt{1+|v|^2}} = (1-(r \del_r v) \cdot (G/r))\pi^\perp_V(G/r),
\]
and then second observing that we have the bound $|\pi^\perp V - \pi^\perp_{T_{r \theta} \bC}| \leq c(\Sigma) \eps$.

Using the monotonicity \eqref{eqn:sharp-mono}, \eqref{eqn:scone-41}, our hypothesis on $g$, and observing that the Jacobian of $(r, \theta) \mapsto G(\theta, r)$ is $\geq (1-c(\Sigma) (\eps + \beta))r^6$, we can use \eqref{eqn:scone-2} to estimate:
\begin{align*}
\int_{t}^{s} ||\dot v(t)||^2 dt 
&\leq 2 \int_{e^{-s}}^{e^{-t}} \int_\Sigma r^{-3} |\pi^\perp_V(G(r, \theta))|^2 d\theta dr \\
&\leq 4 \int_{A_{e^{-s}, e^{-t}} \cap V} \frac{|\pi^\perp_V(x)|^2}{|x|^{7+2}} d\mu_V(x) \\
&\leq c(\Lambda) \beta,
\end{align*}
provided $\eps(\Sigma), \beta(\Sigma)$ are sufficiently small, and $0 < s < t \leq T_* - 3$

We deduce that, for $s, t$ as above, we have
\begin{equation}\label{eqn:scone-3}
||v(t) - v(s)|| \leq \int_s^t ||\dot v(t)|| dt \leq c(\Lambda) \beta^{1/2} |t - s|^{1/2} .
\end{equation}
Therefore, combining \eqref{eqn:scone-3} with estimates \eqref{eqn:scone-4}, we obtain for any $s, t \in [1, T_* - 3]$ the bounds
\begin{align}
|v|_2^*(t)
&\leq c(\Sigma) ( \sup_{t' \in (t-1,t+1)} ||v(t)|| + \beta) \nonumber \\
&\leq c(\Sigma) |v|_2^*(s) + c(\Sigma) \beta^{1/2} \max\{ 1, |t - s| \}, \label{eqn:scone-7}
\end{align}
and hence $v$ has $(c(\Sigma), c(\Sigma) \beta^{1/2})$-linear growth on $\Sigma \times [1, T_* - 3]$.

On the other hand, we can use \eqref{eqn:scone-5}, the $C^1$ Lojasiewicz-Simon inequality for $\cE$ \cite[(2.2)]{simon}, and \eqref{eqn:scone-4} to get
\begin{align}
|\cE(v(t)) - \cE(0)|^{1-\theta}
&\leq ||\cM(v(t))|| \nonumber \\
&\leq c(\Sigma) |\dot v|_1^*(t) + c(\Sigma) \beta e^{-2t} \nonumber \\
&\leq c(\Sigma) ||\dot v||_{L^2(\Sigma \times (t-1, t+1) )} + c(\Sigma) \beta e^{-2t} \nonumber \\
&\leq c(\Sigma) \beta^{1/2}, \label{eqn:scone-6}
\end{align}
for any $t \in [1, T_* - 3]$.  Here $\theta(\Sigma) \in (0, 1/2)$ is the Lojasiewicz exponent as in \cite[Theorem 3]{simon}.

Our hypothesis \eqref{eqn:scone-43}, and \eqref{eqn:scone-7}, \eqref{eqn:scone-6}, imply that provided $\delta(\Sigma)$, $\beta(\delta, \Sigma)$ are sufficiently small, we can apply Theorem \ref{thm:decay} to deduce that
\begin{equation}\label{eqn:scone-20}
|v|_2^*(t) \leq c(\Sigma) \delta^\alpha \quad \forall t \in [1, T_* - 3], 
\end{equation}
for some $\alpha(\Sigma) \in (0, 1/2)$.  In particular, by \eqref{eqn:scone-11}, ensuring $\delta(\Sigma, \eps')$ is small, we deduce \eqref{eqn:scone-10}.  From Theorem \ref{thm:decay} we also have the bound
\begin{equation}\label{eqn:scone-22}
\int_1^{T_* - 3} ||\dot v(t)|| dt \leq c(\Sigma) \delta^{\alpha/2}
\end{equation}
which together with \eqref{eqn:scone-20}, \eqref{eqn:scone-2} implies \eqref{eqn:scone-concl2}.

Using a similar computation to \eqref{eqn:scone-6}, the $C^0$ Lojasiewicz-Simon inequality \cite[(2.1)]{simon} implies there is a $\gamma(\Sigma) \geq 2$, so that for every $t \in [1, T_*-3]$ we can find a $\zeta_t \in C^{2}(\Sigma, \Sigma^\perp)$ solving $\cM(\zeta_t) = 0$ so that
\begin{align*}
||v(t) - \zeta_t||^\gamma 
&\leq 2 ||\cM(v(t))|| \\
&\leq c(\Sigma) ||\dot v||_{L^1(\Sigma \times (t-1,t+1))}  + c(\Sigma) \beta e^{-2t} \\
&\leq c(\Sigma) \int_{t-1}^{t+1} ||\dot v(t)|| dt + c(\Sigma) \beta e^{-2t}  .
\end{align*}
Here $\gamma$ is the Lojasiewicz exponent, and we note that if $\bC$ is ```integrable'' then one can take $\gamma = 2$ (see e.g. \cite[Page 80]{simon:maps}).

If $2 \leq i \leq T_* - 7$, then by \eqref{eqn:scone-3}, the above, and \eqref{eqn:scone-20} (since $\gamma \geq 2$), we get
\[
\sup_{t \in [i, i+3]} ||v(t) - \zeta_i||^\gamma \leq c(\Sigma) \int_{i-1}^{i+4} ||\dot v(t)|| dt + c(\Sigma) \beta e^{-2t} .
\]
Setting $\zeta_i = 0$ if $i \leq 2$ or $i > T_* - 7$, we deduce there are numbers $\{\eps_i\}_i$ so that every integer $i \in (0, T_*)$ and for every $t \in [0, T_*) \cap [i-3,i]$, we have
\begin{align}\label{eqn:scone-23}
||v(t) - \zeta_i||^\gamma \leq \eps_i, \quad \sum_i \eps_i \leq c(\Sigma) \delta^{\alpha/2}.
\end{align}

When $\bC$ is planar, this in particular implies there are planes $\bC_i$ so that
\[
\left( e^i d_H(G_\bC(u) \cap A_{e^{-i}, e^{-i-3}}, \bC_i \cap A_{e^{-i}, e^{-i-3}})\right)^2 \leq \eps_i,
\]
for $i, \eps_i$ as in \eqref{eqn:scone-23}.  Hence, ensuring $\beta$ is sufficiently small, by \eqref{eqn:higher-reg} we deduce that for every $r \in [2\rho, 1/2]$ we have the bound 
\begin{align*}
\sup_{G_\bC(u) \cap A_{r, r/8}} r^2|A|^2 
&\leq c |g - \geucl|^2_{C^0(B_{2r})} + cr^2 |Dg|^2_{C^1(B_{2r})} + c\eps_i  \\
&\leq c \beta r^2 + c \eps_i, 
\end{align*}
for $c$ an absolute constant, and $A$ be the second fundamental form of $G_\bC(u) \subset (B_1, g)$.  Together with \eqref{eqn:scone-23}, \eqref{eqn:ahlfors}, this implies
\[
\int_{A_{1/2, \rho/4}} |x|^{-5} |A|^2 d\mu_V \leq c(m)(\beta + \delta^{\alpha/2}) ,
\]
which combined with \eqref{eqn:def-cone-2} and Theorem \ref{thm:cones} gives \eqref{eqn:scone-concl3}, \eqref{eqn:planar-A-bound} provided $\delta(m, \eps), \beta(m, \eps)$ are chosen sufficiently small.
\end{proof}

\section{Decomposition}\label{sec:diffeo}

Here is the core of our argument.  We show that a minimal surface lying sufficiently close to a stable minimal cone with multiplicity must break up into a ``cone decomposition,'' consisting of a controlled number of ``smooth regions'' (multi-graphs over a smooth model surface) and a controlled number of ``strong cone regions'' (multi-graphs over cones).  We already defined cone regions in the previous section.  We first define our smooth models/regions and cone decompositions.  In this section we work in $\R^8$.

A smooth model is an entire minimal hypersurface-with-multiplicity $S$ in $\R^8$ along with a choice of disjoint balls, so that $S$ is entirely smooth outside of these balls.  The multiplicities of $S$ specify the number of graphs we expect to see over each component.  Each hole more or less corresponds to a would-be singularity or point of index concentration.
\begin{definition}[Smooth model]\label{def:model}
Given $\Lambda, \gamma \geq 0$, $\sigma \in (0, 1/3)$, we say that $(S, \bC, \{ (\bC_\alpha, B_{r_\alpha}(y_\alpha) )\}_\alpha )$ is a $(\Lambda, \sigma, \gamma)$-smooth model if $S \in \cISV_7(\R^8, \geucl)$ and satisifes $\theta_S(0, \infty) \leq \Lambda$, $\bC \in \cC_\Lambda$, $\{B_{2r_\alpha}(y_\alpha)\}_\alpha$ is a finite collection of disjoint balls in $B_{1-3\sigma}$, and $\{ \bC_\alpha\}_\alpha$ is a corresponding finite collection of cones in $\cC_\Lambda$, such that the following is satisfied:
\begin{enumerate}
\item \label{item:def-model-1} There is a decomposition
\begin{equation}\label{eqn:def-model-1}
S = m_1 [S_1] + \ldots + m_k[S_k],
\end{equation}
where $m_i \in \N$, and $S_i$ are smooth, disjoint, closed (as sets), embedded minimal hypersurfaces in $\R^8 \setminus \cup_\alpha y_\alpha$.

\item \label{item:def-model-2} There is an $m \in \N$ satisfying $m \theta_\bC(0) \leq \Lambda$ so that $S \llcorner (A_{\infty, 1}, \geucl)$ is a $(\bC, m, \gamma)$-strong-cone region.

\item \label{item:def-model-3} For each $\alpha$, there is a $j$ so that $\spt S \cap A_{2r_\alpha, 0}(y_\alpha) = S_j \cap A_{2r_\alpha, 0}(y_\alpha)$ and $S_j \cap (A_{2r_\alpha, 0}(y_\alpha), \geucl)$ is a $(\bC_\alpha, 1, \gamma)$-strong-cone region.
\end{enumerate}

Often we will refer to the smooth model as simply $S$, where the existence of the associated $\bC, \{(\bC_\alpha, B_{r_\alpha}(y_\alpha))\}_\alpha$ is implicitly understood.
\end{definition}

Up to scaling, any $S \in \cISV_7(\R^8, \geucl)$ with $\theta_S(0, \infty) \leq \Lambda$ and $\mindex(S, \R^8, \geucl) < \infty$ is a smooth model for some choice of balls $\{B_{r_\alpha}(y_\alpha)\}_\alpha$.  In some sense the main content of this definition is the choice of balls, as pinning down a scale by which to measure $S$, which is made precise in the following definition.
\begin{definition}
Given a smooth $(\Lambda, \sigma, \gamma)$-model $S$, we let $\eps_S$ be the largest number $ \leq \min\{ 1, \min_\alpha \{r_\alpha\}\}$ for which the map $G_S : T^\perp(\cup_j S_j) \to \R^8$ defined by $G_S(x, v) = x + v$ is a diffeomorphism from $\{ (x, v) \in T^\perp(\cup_j S_j) : x \in B_2 \setminus \cup_\alpha B_{r_\alpha/8}(y_\alpha), |v| < 2\eps_S \}$ onto its image, and satisfies $|DG_S|_{(x, t)} - Id| \leq \eps_S^{-1} |t|$.  Note that $\eps_S$ is always positive.
\end{definition}

A smooth region is a multi-graph over $S$, with $C^2$ norm small at the scale of $\eps_S$.
\begin{definition}[Smooth region]
Given a smooth model $S$, $g$ a $C^2$ metric on $B_R(a) \subset \R^8$, and $V$ an integral varifold in $(B_R(a), g)$, we say $V \llcorner (B_R(a), g)$ is a $(S, \beta)$-smooth region if for each $i = 1, \ldots, k$ there are $C^2$ functions $\{ u_{ij} : S_i \to S_i^\perp \}_{j=1}^{m_i}$ so that
\[
\left( (\eta_{a, R})_\sharp V \right) \llcorner B_1 \setminus \cup_\alpha B_{r_\alpha/4}(y_\alpha) = \sum_{i=1}^k \sum_{j=1}^{m_i} [\graph_{S_i}(u_{ij}) \cap B_1 \setminus \cup_\alpha B_{r_\alpha/4}(y_\alpha)]_{g\circ \eta_{a, R}^{-1}},
\]
and
\[
|u_{ij}|_{C^2(S_i)} \leq \beta \eps_S, \quad \forall i, j.
\]
\end{definition}

The ``global'' structure of one of our minimal surfaces/stationary varifolds is captured by a strong-cone decomposition, which is essentially a finite collection of strong-cone regions and smooth regions which fit together to cover all of $B_R(x)$.  The total number of regions is controlled, and the smooth models can only come from some (finite) predefined pool of models $\cS$.  Similar structures have appeared in many other contexts, sometimes called bubble trees (e.g. for harmonic maps \cite{parker}) or neck decompositions (e.g. for limits of spaces with bounded Ricci \cite{naber-jiang}).

\begin{definition}[Cone decomposition]\label{def:decomp}
Take $\theta, \gamma, \beta \in \R$, $\sigma \in (0, 1/3)$, $N \in \N$.  Let $g$ be a $C^2$ metric on $B_R(x) \subset \R^8$, $V$ be an integral varifold in $(B_R(x), g)$, and let $\cS = \{ S_s \}_s$ be a finite collection of $(\theta, \sigma, \gamma)$-smooth models.  A $(\theta, \beta, \cS, N)$-cone decomposition of $V \llcorner (B_R(x), g)$ consists of integers $N_S, N_C$, satisfying $N_C + N_S \leq N$, points $\{x_a\}_a, \{x_b\}_b \subset B_R(x)$, radii $\{R_a \geq 2 \rho_a\}_a$, $\{ R_b\}_b$, indices $\{ s_b \}_b$, integers $\{1 \leq m_a \leq [\theta]\}_a$, and cones $\{\bC_a\}_a \subset \cC$, where $a = 1, \ldots, N_C$, $b = 1, \ldots, N_S$, such that:
\begin{enumerate}
\item Every $V \llcorner (B_{R_b}(x_b), g)$ is a $(S_{s_b}, \beta)$-smooth region, and every $V \llcorner (A_{R_a, \rho_a}(x_a), g)$ is a $(\bC_a, m_a, \beta)$-strong-cone region.

\item There is either a smooth region $B_{R_b}(x_b)$ with $R_b = R$, $x_b = x$, or a strong-cone region $A_{R_a, \rho_a}(x_a)$ with $R_a = R$, $x_a = x$.

\item If $B_{R_b}(x_b)$ is a smooth region, with associated smooth model $(S, \bC, \{(\bC_{\alpha}, B_{r_{\alpha}}(y_{\alpha}))\}_\alpha) \in \cS$, then for each $\alpha$ there we can find a radius $R_{b,\alpha}$ and point $x_{b,\alpha}$ satisfying
\[
|x_{b,\alpha} - y_{\alpha}| \leq \beta R_b r_{\alpha}, \quad \frac{1}{2} \leq \frac{R_{b,\alpha}}{R_b r_{\alpha}} \leq 1+\beta,
\]
so that there is either a strong-cone region $A_{R_a, \rho_a}(x_a)$ with $R_a = R_{b,\alpha}$, $x_a = x_{b,\alpha}$, or another smooth region $B_{R_{b'}}(x_{b'})$ with $R_{b'} = R_{b,\alpha}$, $x_{b'} = x_{b,\alpha}$.

\item If $A_{R_a, \rho_a}(x_a)$ is a $(\bC_a, m_a, \beta)$-strong-cone region, and $\rho_a > 0$, then there is either a smooth region $B_{R_b}(x_b)$ with $R_b = \rho_a$, $x_b = x_a$, or another cone region $A_{R_{a'}, \rho_{a'}}(x_{a'})$ with $R_{a'} = \rho_a$, $x_{a'} = x_a$.  If $\rho_a = 0$, then $\theta_{\bC_a}(0) > 1$.
\end{enumerate}
\end{definition}

Later on we will be primarily interested in cone decompositions for hypersurfaces.  We define them in terms of the associated (multiplicity-one) varifold.
\begin{definition}[Smooth region/Cone decomposition for $M$]
Given a $C^2$ hypersurface $M \subset B_R(a)$, and $g$ a $C^2$ metric on $B_R(a)$, we say $M \cap (B_R(x), g)$ is a $(S, \beta)$-smooth region or admits a $(\theta, \beta, \cS, N)$-cone decomposition if $[M]_g \llcorner (B_R(a), g)$ is a $(S, \beta)$-smooth region or admits a $(\theta, \beta, \cS, N)$-cone decomposition (respectively).
\end{definition}

Recall that Theorem \ref{thm:cones} gives an enumeration of $\{\theta_\bC(0) : \bC \in \cC\} = \{\theta_i\}_i$.  Let us enumerate the set of densities-with-multiplicity as $\{ m \theta_i : m \in \N, i = 0, 1, \ldots \} = \{ \tilde \theta_i \}_i$, for $1 = \tilde\theta_0 < \tilde\theta_1 < \ldots$.  We prove our decomposition theorem $(\dagger_{l, I})$ by induction on both the density-with-multplicity $\tilde\theta_l$ and index $I$.
\begin{theorem}[$\dagger_{l, I}$]\label{thm:diffeo}
Given $l, I \in \N$, $0 < \beta \leq \gamma \leq 1$, $\sigma \in (0, \frac{1}{100(I+1)}]$, there are constants $\delta_{l, I}$, $N$, and a finite collection of $(\tilde \theta_l, \sigma, \beta)$-smooth models $\{S_s\}_s = \cS$, all depending (only) on $(l, I, \gamma, \beta, \sigma)$, so that the following holds.

Let $g$ be a $C^3$ metric on $B_1$ satisfying $|g - \geucl|_{C^3(B_1)} \leq \delta_{l, I}$.  Take $V \in \cISV_7(B_1, g)$ with $\mindex(V, B_1, g) \leq I$, $\bC \in \cC$, $m \in \N$, such that $m\theta_\bC(0) \leq \tilde\theta_l$.  Suppose that
\begin{equation}\label{eqn:diffeo-hyp1}
d_H( \spt V \cap B_1, \bC \cap B_1) \leq \delta_{l, I},
\end{equation}
and
\begin{equation}\label{eqn:diffeo-hyp2}
(m - 1/2)\theta_\bC(0) \leq \theta_V(0, 1/2), \quad \theta_V(0, 1) \leq (m+1/2) \theta_\bC(0).
\end{equation}

Then there is a radius $r \in (1-20 (I+1) \sigma, 1)$, so that $V \llcorner (B_r, g)$ admits a $(\tilde \theta_l, \beta, \cS, N)$-strong-cone decomposition.  If $I = 0$ then one can take $r = 1-5\sigma$.

Additionally, each $S_s$ satisfies $\mindex(S_s) \leq I$.  If $V$ is area-minimizing,\footnote{In the sense that $V$ is the varifold associated to some mass-minimizing current.  This statement is a direct consequence of the nature of the proof, and the compactness for mass-minimizing currents \cite[Chapter 7, Theorem 2.4]{Sim}.} then one can assume all the $S_s$ appearing in its cone decomposition are area-minimizing also.
\end{theorem}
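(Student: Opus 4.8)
The plan is to prove $(\dagger_{l,I})$ by a double induction: an outer induction on the density-with-multiplicity level $l$, and, for each fixed $l$, an inner induction on the index bound $I$; both are powered by Theorem \ref{thm:cones}, which makes the set of densities of cones in $\cC$ discrete. The base of the outer induction is $l=0$: then $m\theta_\bC(0)\le\tilde\theta_0=1$ forces $m=1$ and $\bC$ a hyperplane, and Allard's theorem \cite{All} --- fed with \eqref{eqn:diffeo-hyp1}, \eqref{eqn:diffeo-hyp2} and the monotonicity \eqref{eqn:monotonicity} --- shows $V$ is a single multiplicity-one $C^{1,\alpha}$ (hence $C^2$, by \eqref{eqn:higher-reg}) graph over a hyperplane on $B_{1-\sigma}$, which is itself a $(\tilde\theta_0,\sigma,\beta)$-smooth model with no holes, so $V\llcorner B_{1-5\sigma}$ is a single $(S,\beta)$-smooth region. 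Within the inductive step there is a parallel reduction when $\bC$ is a hyperplane but $m\ge2$ and $I=0$: then $V$ is stable in $(B_1,g)$, so the Schoen--Simon sheeting theorem \cite{ScSi} with \eqref{eqn:higher-reg} makes $V$ an $m$-sheeted $C^2$ multigraph over a hyperplane, again a single smooth region over a trivial model. So in the main step I may assume $\bC$ is non-planar, or ($\bC$ planar, $m\ge2$, $I\ge1$).

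Granting $(\dagger_{l',I'})$ for every $(l',I')$ preceding $(l,I)$, I first fix the finite pool $\cS$ and the integer $N$, \emph{before} choosing $\delta_{l,I}$. Call $S$ \emph{admissible} if $S\in\cISV_7(\R^8,\geucl)$ has $\theta_S(0,\infty)\le\tilde\theta_l$, $\mindex(S)\le I$, is a $(\bC,m,\beta)$-strong-cone region in $A_{\infty,1}$ for some $\bC\in\cC$, $m\in\N$, and near each of a finite disjoint family of balls $B_{r_\alpha}(y_\alpha)\subset B_{1-3\sigma}$ is a $(\bC_\alpha,1,\beta)$-strong-cone region. Each hole $y_\alpha$ is a point of $\sing S$ or of index concentration, so rescaling $S$ about $y_\alpha$ into $B_1$ gives data governed by a pair strictly preceding $(l,I)$ --- its density-with-multiplicity drops, or it carries a unit of index localized away from the others so the remaining index drops --- and the inductive $(\dagger_{l',I'})$ decomposes $S$ near every hole with a bounded number of pieces. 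The crucial finiteness is that the set of admissible $S$, modulo scaling and $C^2_{loc}$-closeness at the intrinsic scale $\eps_S$, is compact; this follows by combining Lemma \ref{lem:index-compact} (which controls $S$ off finitely many concentration points), Theorem \ref{thm:cones} (smooth multiplicity-one compactness and density discreteness of $\cC$) and the topological compactness Lemma \ref{lem:cover}. A finite sub-collection $\cS$ therefore models every admissible $S$, and $N$ is the sum of the piece-counts over $\cS$ plus a fixed amount for the macroscopic region. If $V$ is area-minimizing one admits only area-minimizing $S$, using compactness of mass-minimizing currents \cite[Ch.~7]{Sim}.

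With $\cS,N$ fixed I obtain $\delta_{l,I}$ by contradiction, following Section \ref{sec:outline}. If none works there are $g_n\to\geucl$ in $C^3(B_1)$, $\bC_n\in\cC$, $m_n\in\N$ with $m_n\theta_{\bC_n}(0)\le\tilde\theta_l$, and $V_n\in\cISV_7(B_1,g_n)$ with $\mindex(V_n,B_1,g_n)\le I$, $d_H(\spt V_n,\bC_n)\to0$, obeying \eqref{eqn:diffeo-hyp2}, for which $V_n\llcorner B_r$ has no $(\tilde\theta_l,\beta,\cS,N)$-cone decomposition for any $r\in(1-20(I+1)\sigma,1)$. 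Passing to a subsequence, Theorem \ref{thm:cones} gives $\bC_n\cap\partial B_1\to\bC\cap\partial B_1$ smoothly with $\theta_{\bC_n}(0)=\theta_\bC(0)$ and $m_n\equiv m$ (and by the reductions $\bC$ is non-planar, or $\bC$ planar with $m\ge2$, $I\ge1$); Lemma \ref{lem:index-compact} gives $V_n\to m[\bC\cap B_1]$ (using \eqref{eqn:diffeo-hyp2}), a set $\cI$ of $\le I$ concentration points, and $C^2_{loc}$-convergence off $\{0\}\cup\cI$. Among all balls $B_\rho(y)\subset B_{1/2}$ for which, after recentering via Lemma \ref{lem:recenter}, $V_n\llcorner A_{R,\rho}(y)$ is a weak-cone region over a member of $\cC(\bC)$, pick $B_{\rho_n}(a_n)$ with least inner radius; smooth convergence forces $\rho_n\to0$, $a_n\to0$. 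Rescale $V_n':=(\eta_{a_n,\rho_n})_\sharp V_n$ and pass to a further subsequence: $V_n'\to V'\in\cISV_7(\R^8,\geucl)$ in $C^2_{loc}$ off $\sing V'\cup\cI'$, and --- crucially, by Theorem \ref{thm:scone} applied dyadic annulus by dyadic annulus and chained across scales --- $V'\llcorner A_{\infty,1}$ is a $(\bC,m,\beta)$-strong-cone region over the \emph{single fixed} cone $\bC$. If $V'$ were entirely smooth it would be admissible and $C^2_{loc}$-convergence would exhibit a cone decomposition of $V_n'$ near its core, contradicting the choice of $V_n$. Otherwise, at each $x\in(\sing V'\cup\cI')\cap B_1$ one has $\theta_{V'}(x)<m\theta_\bC(0)=\tilde\theta_l$ (if $\theta_{V'}(x)=m\theta_\bC(0)$ then monotonicity forces $V'=m[x+\bC]$ near $x$, so $V_n'$ is a small conical multigraph over $x+\bC\in\cC(\bC)$ down to scales $\to0$, a cone ball of inner radius $o(\rho_n)$ for $V_n$ --- contradicting minimality of $\rho_n$); then $(\dagger_{l',I'})$ for an earlier $l'$ --- with \eqref{eqn:index-compact-concl1} bounding the number of disjoint concentration balls by $I$ and the inner $I$-induction handling those localizations --- decomposes $V'$ near $x$. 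Gluing the macroscopic strong-cone region to these finitely many sub-decompositions along the common spheres, recentering at each level (Lemma \ref{lem:recenter}), and using Theorem \ref{thm:scone} once more to promote cone regions of $V_n'$ to strong-cone regions, produces a $(\tilde\theta_l,\beta,\cS,N)$-cone decomposition that descends to $V_n'$ and hence to $V_n\llcorner B_r$ for $n\gg1$; peeling the at most $I+1$ boundary-adjacent scales at which index concentrates, each costing $O(\sigma)$ of radius, gives $r\in(1-20(I+1)\sigma,1)$, with $r=1-5\sigma$ when $I=0$. This contradicts the choice of $V_n$.

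The principal obstacle is exactly the one isolated in Section \ref{sec:outline}: the contradiction/compactness scheme transfers information only between comparable scales, so by itself it produces, for each $V_n$ and each $r\in[\rho_n,1/2]$, merely \emph{some} cone $\bC_r\in\cC(\bC)$ of the same density over which $V_n$ is a small conical multigraph on $A_{r,r/8}(a_n)$ --- and absent further input the $\bC_r$ could rotate through a family of (possibly pathological, non-integrable) cones, destroying the gluing needed to iterate the induction. This is defeated by Theorem \ref{thm:scone}, whose Lojasiewicz--Simon decay/growth mechanism shows that while the density drop stays small the surface remains graphical over one fixed cone, so a posteriori every cone region is a strong-cone region over a single $\bC$. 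The remaining, more routine difficulty is the uniform bookkeeping --- checking that every hole of every admissible model, and every singular or concentration point of every rescaled limit, strictly lowers $l$ or the localized index, so that the recursion terminates with a piece-count bounded by one $N=N(l,I,\gamma,\beta,\sigma)$ --- for which the recentering properties of Lemma \ref{lem:recenter} are what let the macroscopic cone region and the inductively-supplied sub-decompositions fit together without spawning extra pieces.
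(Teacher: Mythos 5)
Your architecture is recognizably the paper's (blow up at the least weak-cone radius, use Theorem \ref{thm:scone} to freeze the cone, double induction on $(l,I)$), but two steps as written do not go through.

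First, you fix the pool $\cS$ and the count $N$ \emph{before} the contradiction argument by asserting that the class of admissible smooth models, modulo scaling, is compact ``at the intrinsic scale $\eps_S$,'' so that a finite subcollection $\beta$-models every admissible $S$. This does not follow from Lemma \ref{lem:index-compact}, Theorem \ref{thm:cones} and Lemma \ref{lem:cover}. A sequence of admissible $S_i$ converges, after Lemma \ref{lem:index-compact}, only as varifolds, with $C^2$ convergence away from $\sing S_\infty \cup \cI$; the limit can acquire singularities or concentration points at locations that were regular for every $S_i$, the holes $B_{r_{\alpha,i}}(y_{\alpha,i})$ need not converge to holes of $S_\infty$, and $\eps_{S_i}$ can degenerate. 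Deciding whether, for large $i$, $S_i$ is an $(S_\infty,\beta)$-smooth region together with a controlled decomposition near the newly formed degeneracies is precisely the content of $(\dagger_{l,I})$, so the compactness you invoke is essentially circular. In the paper the finiteness of $\cS$ and the bound on $N$ are \emph{outputs} of the induction: each contradiction subsequence contributes exactly one new model (its blow-up limit), the inductive hypotheses contribute finitely many more, and the recursion has depth bounded in terms of $(l,I)$.

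Second, your dichotomy at points of $\sing V' \cup \cI'$ is incorrect. You claim $\theta_{V'}(x) < \tilde\theta_l$ for every such $x$, excluding equality on the grounds that $V_n'$ would then be a conical multigraph down to scales tending to $0$, contradicting minimality of $\rho_n$. But if $\theta_{V'}(x) = m\theta_\bC(0)$ then $V' = m[x+\bC']$ only as a varifold; if index concentrates at some $y' \in \cI'$ with $y' \neq x$, the $C^2$ convergence $V_n' \to V'$ fails near $y'$, the graphicality condition \eqref{eqn:def-cone-1} can fail on every annulus containing $y'$, and the weak-cone region of $V_n'$ does \emph{not} extend below radius $1$ --- so there is no contradiction with the minimality of $\rho_n$, and your density induction has nothing to bite on, since the density at the apex has not dropped below $\tilde\theta_l$. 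This is exactly the paper's Subcase 1A: one shows instead that the concentration set cannot sit only at the apex (with the extra subtlety that for planar $\bC'$ a single concentration point can be hidden inside the inner hole of the annulus by recentering, so one must rule out only $\#\cI' \le 1$), and then one recurses into the index-dropping Cases 2 and 3 of the same $(\dagger_{l,I})$ rather than into $(\dagger_{l-1,I})$. Without this branch the recursion does not terminate, and it is also where the loss of radius $r \in (1-20(I+1)\sigma,1)$ is actually accounted for, rather than by the informal ``peeling of boundary-adjacent scales'' you describe.
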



\begin{remark}
We emphasize that $\delta_{l, I}$ depends also on $\gamma, \beta, \sigma$.  We write only the indices $l, I$ as these are the integers we induct over.
\end{remark}

\begin{remark}\label{rem:diffeo-A-bound}
By the curvature estimate \eqref{eqn:planar-A-bound}, if all the cones appearing in the cone decomposition of $V$ are planar, then $\sing V = \emptyset$ and we have the bound
\[
\int_{B_r} |A_{V, g}|^7 d\mu_V(x) \leq c(l, \cS, N).
\]
\end{remark}

\begin{remark}
We will often use Theorem \ref{thm:diffeo} at some different scale $B_r(x)$.  Let us mention explicitly that if $g$ is a $C^3$ metric on $B_r(x) \subset \R^8$ such that $|g - \geucl|_{C^3(B_r(x))} \leq \delta$, then $g \circ \eta_{x, r}^{-1} \equiv g(x + r \cdot )$ is a $C^3$ metric on $B_1$ satisfying $| g \circ \eta_{x, r}^{-1} - \geucl|_{C^3(B_1)} \leq \max\{1, r\} \delta$.
\end{remark}

\begin{remark}
The cone decomposition constructed by Theorem \ref{thm:diffeo} has the additional properties: the outermost region is always a smooth region, and no two cone regions are ``stacked'' directly inside each other (so you cannot have large annulus consisting of numerous cone regions where $V$ looks like a rotating cone).
\end{remark}

\begin{proof}
Fix $l, I$.  We shall prove $(\dagger_{l, I})$ by induction and by contradiction.  We shall choose our constants as we go along, but it may help the reader to keep in mind:
\[
\beta'' << \tau << \beta' << \beta \leq \gamma << \sigma < 1. 
\]
Note there is no loss in assuming $\beta(l, \sigma)$ is as small as we like.

Suppose $(\dagger_{l, I})$ fails.  Then there are sequences $\delta_i \to 0$, $C^3$ metrics $g_i$, $V_i \in \cISV_7(B_1, g_i)$, $\bC_i \in \cC$, $m_i \in \N$, so that $g_i$, $V_i$ satisfies the hypotheses of $(\dagger_{l, I})$ with $\delta_i$, $g_i$, $m_i$, $\bC_i$ in place of $\delta, g, m, \bC$, but with the property that for any finite collection $\cS'$ of $(\tilde\theta_l, \sigma, \beta)$-smooth models, and any $N' \in \N$, there is an $i_0$ such that $V \llcorner (B_r, g_i)$ does not admit a $(\tilde\theta_l, \beta, \cS', N')$-strong-cone decomposition for all $i > i_0$, $r \in (1-20 (I+1) \sigma, 1)$.

Passing to a subsequence, by Theorem \ref{thm:cones} we can assume that $\bC_i \to \bC \in \cC$ smoothly with multiplicity-one away from $0$, and $\theta_{\bC_i}(0) = \theta_\bC(0)$ for all $i$.  By Lemmas \ref{lem:index-compact}, \ref{lem:isv-decomp}, and our hypotheses, passing to a further subsequence, there are at most $I$ points $\cI \subset \bC \cap B_1$ so that $V_i \to m[\bC]$ as varifolds in $B_1$, in $C^2$ on compact subsets of $\cI \cup \sing \bC$.  Note that since $m \theta_\bC(0) \leq \tilde \theta_l$, we have $\max\{ m, \theta_\bC(0) \} \leq \tilde \theta_l$.

We first note that if $l = 0$, corresponding to $m[\bC] = [\bC]$ being a multiplicity-one plane, and $I$ is arbitrary, then provided $i >> 1$ \cite{All} proves implies each $V_i \llcorner (B_{1-5\sigma}, g_i)$ is a $(S, \beta)$-smooth region, for $S$ being the smooth model $([\bC], \bC, \emptyset)$.  Similarly, if $\bC$ is planar, $I = 0$, and $l = m$ is arbitrary, then since $\reg V$ is stable in $(B_1, g_i)$ \cite{ScSi} implies $V_i \llcorner (B_{1-5\sigma}, g_i)$ is a $(S, \beta)$-smooth region for $i >> 1$, where $S$ is the smooth model $(m[\bC], \bC, \emptyset)$.  In either case we get that $V_i \llcorner (B_{1-5\sigma}, g_i)$ admits a $(\tilde \theta_l, \beta, \{S\}, 1)$-cone decomposition, which is a contradiction.

By our inductive hypothesis we can therefore assume that $(\dagger_{l', I})$ holds for every $l' < l$, $I$ arbitrary, and also that $(\dagger_{l, I'})$ holds for any $I' < I$.

\vspace{3mm}

We break into three cases.

\textbf{Case 1: $\cI \subset \{0\}$.} For each $i$, let $\rho_i$ be the smallest number such that $V_i \llcorner (A_{1, \rho_i}(0), g_i)$ is a $(\bC, m, \beta'', \tau, \sigma)$-weak-cone region, for $\beta'', \tau$ to be chosen later depending only on $(l, \beta, \gamma)$.  Let $a_i = a_{\rho_i}(V_i)$ be the annulus center at radius $\rho_i$ (as in Definition \ref{def:wcone}).  By our varifold convergence $V_i \to m[\bC]$, and $C^2$ convergence on compact subsets of $B_1 \setminus \{0\}$, we have $a_i \to 0$ and $\rho_i \to 0$.

Ensuring $\beta''(l, \beta')$, $\tau(l, \beta', \sigma)$ are sufficiently small, Lemma \ref{lem:recenter} implies each $V_i \llcorner (A_{1-3\sigma, \rho_i}(a_i), g_i)$ is a $(\bC, m, \beta')$-cone region.  Ensuring $\beta'(l, \beta)$ is sufficiently small, and $i >> 1$, we can apply Theorem \ref{thm:scone} to deduce that each $V_i \llcorner (A_{1-3\sigma, \rho_i}(a_i), g_i)$ is a $(\bC, m, \beta)$-strong-cone region.

First suppose $\rho_i = 0$ for infinitely-many $i$.  For each such $i >> 1$, the maximum principle of \cite{ilmanen} implies $V_i \llcorner B_{1-4\sigma} = m[M_i]_{g_i}$ for some $M_i \in \cM_7(B_{1-4\sigma}, g_i)$.  If $\theta_\bC(0) = 1$, corresponding to $m[\bC]$ being a multiplicity-$m$ plane, then $[M_i]_{g_i} \to [\bC]$ in $B_{1-4\sigma}$ and so by \cite{All} we deduce $V_i \llcorner (B_{1-5\sigma}, g_i)$ is a $(S, \beta)$-smooth region for $S$ the smooth model $(m[\bC], \bC, \emptyset)$.  So infinitely-many $V_i \llcorner (B_{1-5\sigma}, g_i)$ admit a $(\tilde \theta_l, \beta, \{S\}, 1)$-cone decomposition, which is a contradiction.

If $\theta_\bC(0) > 1$, define the smooth model $(S = [\bC], \bC, \{(\bC, B_{1/10})\})$, and let $r = 1-5\sigma$.  By $C^2$ convergence, $V_i \llcorner (B_r, g_i)$ is a $(S, \beta)$-smooth region for $i >> 1$.  Since $|a_i| \leq \beta r/10$ and $V_i \llcorner (A_{r/10, \rho_i}(a_i), g_i)$ is a $(\bC, m, \beta)$-strong-cone region for every $i >> 1$, we deduce that infinitely-many $V_i \llcorner (B_r, g_i)$ admit a $(\tilde \theta_l, \beta, \{S\}, 2)$-strong-cone decomposition.  This is a contradiction.

So we must have $\rho_i > 0$ for all $i >> 1$.  Define the rescaled varifolds $V_i' = (\eta_{a_i, \rho_i})_\sharp V_i$, and rescaled metrics $g_i' = g_i \circ \eta_{a_i, \rho_i}^{-1}$.  Then for any $R > 2$ and $i >> 1$, $V_i' \in \cISV_7(B_R(0), g_i')$, and by Lemma \ref{lem:recenter}(\ref{item:recenter}) and scale-invariance (Remark \ref{rem:scaling}),  we have that
\begin{equation}\label{eqn:inf-cone}
1 = \inf \{ \rho : V_i' \llcorner (A_{R, \rho}(0), g_i') \text{ is a $(\bC, m, \beta'', \tau, \sigma)$-weak-cone region} \}.
\end{equation}

Of course we also have that $a_1(V_i') = 0$, this being the center of the annular region of $V_i'$ at radius $1$.  Moreover, for any $R > 2$ and $i >> 1$, each $V_i' \llcorner (A_{R, 1}, g_i')$ is a $(\bC, m, \beta)$-strong-cone region.  Finally, note that we continue to have $C^3$ convergence $g_i' \to \geucl$ on compact subsets of $\R^8$.

Monotonicity \eqref{eqn:monotonicity} and the fact $a_i \to 0$ imply that $\theta_{V_i'}(0, R) \leq (m + o(1)) \theta_\bC(0)$ for any particular $R > 0$, where here $o(1) \to 0$ as $i \to \infty$.  Trivially we also have $\mindex(V_i', B_R, g_i') \leq I$ for every $R > 0$.  Therefore, by Lemma \ref{lem:index-compact}, we can find an $V' \in \cISV_7(\R^8, \geucl)$ with $\theta_{V'}(0, \infty) \leq m \theta_\bC(0)$, and at most $I$ points $\cI' \subset \spt V'$, so that $V_i' \to V'$ as varifolds in $\R^8$ and in $C^2$ on compact subsets of $\R^8 \setminus (\sing V' \cup \cI')$.  Provided $\beta(l)$ is sufficiently small, Arzela-Ascoli, Theorem \ref{thm:cones}, and \eqref{eqn:higher-reg} imply that $V' \llcorner (A_{\infty, 1}, \geucl)$ is a $(\bC, m, \beta)$-strong-cone region, and $V_i' \to V'$ in $C^2$ on compact subsets of $\R^8 \setminus \overline{B_{1/8}}$.  In particular $\sing V' \cup \cI' \subset \overline{B_{1/8}}$ is a finite set.

I claim that any tangent cone to $V'$ at infinity takes the form $m[\bC']$ for some $\bC' \in\cC(\bC)$.  To see this, take any $R_i \to \infty$, and suppose $(\eta_{0, R_i})_\sharp V' \to V'' \in \cISV_7(\R^8)$ as varifolds.  Since (by Lemma \ref{lem:index-compact}) $\mindex(V'', \R^8, \geucl) < \infty$ and $V''$ is a cone, $V''$ is stable and hence $V'' = m''[\bC'']$ for some $\bC'' \in \cC$ and $m'' \in \N$ satisfying $m''\theta_{\bC''}(0) \leq m \theta_\bC(0) \leq \tilde \theta_l$.  We can assume that $m''[\bC''] \llcorner (A_{\infty, 0}, \geucl)$ is a $(\bC, m, \beta)$-strong-cone region, and hence
\[
d_H(\bC'' \cap B_1, \bC \cap B_1) \leq \beta, \quad m'' \theta_{\bC''}(0) \geq m \theta_\bC(0) - \beta.
\]
Ensuring $\beta(l)$ is sufficiently small, Theorem \ref{thm:cones} implies $\bC'' \in \cC(\bC)$, and hence $\theta_{\bC''}(0) = \theta_\bC(0)$ and $m'' = m$.  This proves my claim.

We break into two subcases.

\textit{Subcase 1A: $\theta_{V'}(a) \geq \tilde\theta_l$ for some $a \in \spt V'$.}  In this case, by the monotonicity formula we must have $V' = m [a + \bC']$ for some $\bC' \in \cC(\bC)$.  If $\bC$ is planar, there is no loss in assuming $a \in \bC^\perp$.

Since each $V_i' \llcorner (A_{4, 1}(0), g_i')$ is a $(\bC, m, \beta'', \tau, \sigma)$-weak-cone region, for every $r \in [1, 4]$ there are $a_{r,i} \equiv a_r(V_i') \in \R^8$, $\bC_{r, i} \in \cC(\bC)$, and $C^2$ functions $\{u_{r, i, j} : (a_{r, i} + \bC_{r, i}) \cap A_{r, r/8}(a_{r, i}) \to \bC_{r, i}^\perp \}_{j=1}^m$ such that
\begin{equation}\label{eqn:shrink3}
V_i' \llcorner A_{r, r/8}(a_{r, i}) = \sum_{j=1}^m [G_{a_{r, i} + \bC_{r, i}}(u_{r, i}) \cap A_{r, r/8}(a_{r, i})]_{g_i'},
\end{equation}
and
\begin{equation}\label{eqn:shrink4}
r^{-1} |u_{r, i, j}| + | \nabla u_{r, i, j}| + r|\nabla^2 u_{r,i,j}| \leq \beta'' \quad \forall i, j, r.
\end{equation}
Moreover, by construction we have $a_{1, i} = 0$.

It follows by \eqref{eqn:shrink3}, \eqref{eqn:shrink4}, Ahlfors regularity \eqref{eqn:ahlfors}, and varifold convergence $V_i' \to [a + \bC']$, that for every $r \in [1, 4]$ and $i >> 1$, we have
\[
d_H( (a + \bC') \cap A_{r/2, r/4}(a_{r, i}), (a_{r,i} + \bC_{r, i}) \cap A_{r/2, r/4}(a_{r,i})) \leq 8 \beta''.
\]
(Note the $\beta''$ instead of $\beta'$!)  By Lemma \ref{lem:nearby-cones} (applied in the ball $B_{r/2}(a_{r, i})$), ensuring $\beta''(l, \tau)$ is sufficiently small, we deduce that
\begin{equation}\label{eqn:nearby-0}
|a| \leq 2^{-10} \tau \leq 2^{-4}, \quad |a - a_{r,i}| \leq 2^{-10} \tau r , \quad d_H(\bC', \bC_{r,i}) \leq 2^{-10} \tau,
\end{equation}
for each $r \in [1, 4]$, and all $i >> 1$.

I claim that $\cI' \not\subset \{a\}$ if $\theta_{\bC'}(0) > 1$, and $\# \cI' \geq 2$ if $\theta_{\bC'}(0) = 1$.  Let us prove this.  If $\theta_{\bC}(0) > 1$ and $\cI' \subset \{a\}$ then set $a' = a$ and $r' = 1/2$.  If, on the other hand, $\theta_\bC(0) = 1$ and $\cI' = \{y'\} \subset \overline{B_{1/8}} \cap (a + \bC')$, then set $a' = a + 2^{-4}\tau (y' - a)$, $r' = 1-2^{-5}\tau < 1$.  In this case we get
\begin{equation}\label{eqn:nearby}
|a' - y'| < r'/8, \quad |a' - a| \leq 2^{-6} \tau, \quad |a' - a_{r,i}| \leq 2^{-5} \tau .
\end{equation}
Either way $a' + \bC' = a + \bC'$.

In either case, by assumption and our choice of $a', r'$, we can find for all $i >> 1$ $C^2$ functions $\{ u_{i,j}' : (a + \bC') \cap A_{1, r'/8}(a') \to \bC'^\perp \}_{j=1}^m$ with the properties that
\begin{equation}\label{eqn:shrink1}
V_i' \llcorner A_{1, r'/8}(a') = \sum_{j=1}^m [G_{a' + \bC'}(u_{i,j}') \cap A_{1, r'/8}(a')]_{g_i'} ,
\end{equation}
and
\begin{equation}\label{eqn:shrink2}
|u_{i,j}'| + |\nabla u_{i,j}'| + |\nabla^2 u_{i,j}'| \leq \beta''/2.
\end{equation}
On the other hand, since $V_i' \to [a' + \bC']$ as varifolds and $\theta_{\bC'}(0) = \theta_\bC(0)$, we have $|\theta_{V_i'}(a', r) - m \theta_{\bC}(0)| \leq \beta''$ for all $i >> 1$ and $r \in [1/2, 4]$.  This, together with \eqref{eqn:shrink3}, \eqref{eqn:shrink4}, \eqref{eqn:nearby}, \eqref{eqn:shrink1}, \eqref{eqn:shrink2}, implies that $V_i' \llcorner (A_{4, r'}(0), g_i')$ is a $(\bC, m, \beta'', \tau, \sigma)$-weak-cone region, contradicting \eqref{eqn:inf-cone}.  This proves my claim.

If $\theta_{\bC'}(0) > 1$, we have $\cI' \not\subset \{a\}$, and $\cI' \subset B_{1/4}(a)$, and so we can apply Case 2 of $(\dagger_{l, I})$ to the translated/dilated sequences $\eta_{a, 1/4}(V_i')$, $g_i' \circ \eta_{a, 1/4}^{-1}$.  We obtain a finite collection $\cS'$ of $(\tilde\theta_l, \sigma, \beta)$-models, an integer $N'$, and a radii $r''_i \in (\frac{4}{5} \cdot\frac{1}{4}, \frac{1}{4})$ (recall our assumption on $\sigma$), so that every $V_i' \llcorner (B_{r''_i}(a), g_i')$ (for $i >> 1$) admits a $(\tilde\theta_l, \beta, \cS', N')$-strong-cone decomposition.

If $\theta_{\bC'}(0) = 1$, we have $\#\cI' \geq 2$ and $\cI' \subset B_{1/4}(a)$, and so we can apply Case 3 of $(\dagger_{l, I})$ as in the previous paragraph to deduce the same conclusion.

By Lemma \ref{lem:regraph} and \eqref{eqn:nearby-0}, for $\tau(\gamma, l)$ sufficiently small, we can write 
\[
(a + \bC') \cap A_{\infty, 1/2} = G_{\bC'}(u') \cap A_{\infty, 1/2}
\]
for $u' : \bC' \cap A_{\infty,1/2} \to \bC'^\perp$ a $C^2$ function satisfying $|x|^{-1} |u'| + |\nabla u'| + |x| |\nabla^2 u'| \leq \gamma$.  Letting $S' = m [ a + \bC']$, then this and our requirement $|a| \leq 1/16$ imply $(S', \bC, \{ (\bC, B_{1/4}(a))\})$ is a $(\tilde \theta_l, \sigma, \gamma)$-smooth model.  $C^2$ convergence to $m[a + \bC']$ means that for $i >> 1$, every $V_i' \llcorner (B_1, g_i')$ is a $(S', \beta)$-smooth region.

Combining the above two paragraphs, we get that every $V_i' \llcorner (B_1, g_i')$, and hence $V_i \llcorner (B_{\rho_i}(a_i), g_i)$, admits a $(\tilde \theta_l, \beta, \{ S' \} \cup \cS', 1 + N')$-strong-cone decomposition.  If we let $S$ be the smooth model as constructed in the $\rho_i = 0$ case, then like we argued before we deduce that $V_i \llcorner (B_{1-5\sigma}(0), g_i)$ admits a $(\tilde\theta_l, \beta, \{S, S'\} \cup \cS', 3 + N')$-strong-cone decomposition.  This is a contradiction, and completes the proof of Case 1A.

\textit{Subcase 1B: $\theta_{V'}(x) \leq \tilde\theta_{l-1}$ for all $x \in \spt V'.$}  Write $\sing V' \cup \cI' = \{y_1', \ldots, y_{d'} '\}$.  By Lemma \ref{lem:rs-bound} and our subcase hypothesis, for each $\alpha = 1, \ldots, d'$ there is a tangent cone to $V'$ at $y_\alpha'$ of the form $m_\alpha [\bC_\alpha']$, where $\bC_\alpha' \in \cC$ and $m_\alpha \theta_{\bC_\alpha'}(0) \leq \tilde\theta_{l-1}$.  By Lemmas \ref{lem:rs-bound}, \ref{lem:index-compact}, Theorem \ref{thm:scone}, Remark \ref{rem:scone-stable}, and the maximum principle of \cite{ilmanen}, we can choose radii $r_\alpha$ so that the following holds: 
\begin{enumerate}
\item the collection $\{B_{2r_\alpha}(y_\alpha') \}_\alpha \subset B_{1/2}(0)$ are disjiont;
\item for each $\alpha$, $\spt V' \cap (A_{2r_\alpha, 0}(y_\alpha), \geucl)$ is a $(\bC_\alpha', 1, \gamma)$-strong-cone region;
\item for each $\alpha$, we have
\[
d_H(\spt V' \cap B_{r_\alpha}(y_\alpha'), (y_\alpha' + \bC_\alpha') \cap B_{r_\alpha}(y_\alpha') \leq \delta_{l-1, I}/2,
\]
and
\[
(m_\alpha - 1/4) \theta_{\bC_\alpha'}(0) \leq \theta_{V'}(y_\alpha, r_\alpha/2), \quad \theta_{V'}(y_\alpha, r_\alpha) \leq (m + 1/4) \theta_{\bC_\alpha'}(0) 
\]
\end{enumerate}
Here $\delta_{l-1, I} = \delta_{l-1, I}(l-1, I, \beta, \sigma)$ is the constant from $(\dagger_{l-1, I})$.

For each $\alpha$, we can therefore apply $(\dagger_{l-1, I})$ to the sequences $(\eta_{y_\alpha, r_\alpha})_\sharp V_i'$, $g_i' \circ \eta_{y_\alpha, r_\alpha}^{-1}$.  We obtain a finite collection $\cS'$ of $(\tilde\theta_l, \sigma, \beta)$-smooth models, an integer $N'$, and radii $r_{\alpha,i}' \in (\frac{4}{5} r_\alpha, r_\alpha)$, so that each $V_i' \llcorner (B_{r_{\alpha,i}'}(y_\alpha'), g_i')$ admits a $(\tilde\theta_l, \beta, \cS', N')$-strong-cone decomposition, for $i >> 1$.

Recall that $V' \llcorner (A_{\infty, 1}, \geucl)$ is a $(\bC, m, \beta)$-strong-cone region, and $\beta \leq \gamma$.  By our choice of $\alpha$, we get that $(S = V', \bC', \{ (\bC_\alpha', B_{r_\alpha}(y_\alpha')) \}_\alpha)$ is a $(\tilde\theta_l, \sigma, \gamma)$-smooth model.  $C^2$ convergence implies $V_i' \llcorner (B_1, g_i')$ is a $(S, \beta)$-smooth region provided $i >> 1$.

It follows that each $V_i' \llcorner (B_1, g_i')$ admits a $(\tilde\theta_l, \beta, \{ S \} \cup \cS', 1 + N')$-strong-cone decomposition.  The proof of Case 1B now continues as in Case 1A.


\textbf{Case 2: $\cI \not\subset \{0\}$ and $\theta_\bC(0) > 1$.}  Let $\cI \setminus \{0\} = \{y_1, \ldots, y_{d'} \}$, which is non-empty by assumption.  We can choose a radius $r \in (1-20\sigma (I+1), 1-5\sigma)$ with the property that $\max \{ ||y_\alpha| - r|| : y_\alpha \in \cI \} \geq 5\sigma$.

Set $r_0 = \min \{ |y_\alpha| : y_\alpha \in \cI \}/4$, and $y_0 = 0$, and then for each $\alpha = 1, \ldots, d'$ with $|y_\alpha| < r$, choose a radius $r_\alpha$ so that:
\begin{enumerate}
\item the collection $\{B_{2r_\alpha}(y_\alpha)\}_\alpha \subset A_{r-4\sigma, 2r_0}(0)$ are disjoint;
\item for each $\alpha$, if we set $\bC_\alpha = T_{y_\alpha} \bC$ to be the tangent plane of $\bC$, then $\bC \cap (A_{2r_\alpha, 0}(y_\alpha), \geucl)$ is a $(\bC_\alpha, 1, \gamma)$-strong-cone region;
\item for each $\alpha$, we have 
\[
d_H( \bC \cap B_{r_\alpha}(y_\alpha) , (y_\alpha + \bC_\alpha) \cap B_{r_\alpha}(y_\alpha)) \leq \delta_{l-1, I}/2, 
\]
and
\[
m - 1/4 \leq \theta_{\bC}(y_\alpha, r_\alpha/2), \quad \theta_\bC(y_\alpha, r_\alpha) \leq m+1/4.
\]
\end{enumerate}
Note that every $m \theta_{\bC_\alpha}(0) = m < m \theta_\bC(0)$, and so in fact $m \theta_{\bC_\alpha}(0) \leq \tilde\theta_{l-1}$.

For $i >> 1$ we can apply $(\dagger_{l-1, I})$ to find a finite collection $\cS'$ of $(\tilde\theta_l, \sigma, \beta)$-smooth models, an integer $N'$, and radii $r_{\alpha,i}' \in (\frac{4}{5} r_\alpha, r_\alpha)$, so that every $V_i \llcorner (B_{r'_{\alpha,i}}(y_\alpha), g_i)$ admits a $(\tilde\theta_l, \beta, \cS', N')$-strong-cone decomposition.

On the other hand, since $\mindex(V_i, B_{r_0}(0), g_i) \leq I-1$ for all $i >> 1$, we can apply $(\dagger_{l, I-1})$ to find another finite collection $\cS''$, an integer $N'' \in \N$, and radii $r'_{0,i} \in (\frac{4}{5} r_0, r_0)$, so that each $V_i \llcorner (B_{r'_{0,i}}(0), g_i)$ admits a $(\tilde\theta_l, \beta, \cS'', N'')$-strong-cone decomposition.

Now observe that $(m[\bC], \bC, \{ (\bC_\alpha, B_{\frac{r_\alpha}{r}}(\frac{y_\alpha}{r})) \}_{\alpha=0}^{d'})$ is a $(m\theta_\bC(0), \sigma, \gamma)$-smooth model, and each $V_i \llcorner (B_r, g_i)$ is a $(S, \beta)$-smooth region for $i >> 1$.

Combining all of the above, if we take $\cS = \{ S \} \cup \cS' \cup \cS''$, $N = I N' + N'' + 1$, then every $V_i \llcorner (B_r, g_i)$ admits a $(\tilde\theta_l, \beta, \cS, N)$-strong-cone decomposition.  This is a contradiction, and finishes the proof of Case 2.


\textbf{Case 3: $\cI \not\subset \{0\}$ and $\theta_\bC(0) = 1$.} In this case, since $\sing \bC = \emptyset$, $V_i \to m[\bC]$ in $C^2$ on compact subsets of $B_1 \setminus \cI$.  Pick a radius $r \in (1-20\sigma (I+1), 1-5\sigma)$ with the property that $\max \{ ||y_\alpha| - r|| : y_\alpha \in \cI \} \geq 5\sigma$.  Let $\cI' = \cI \cap B_r = \{1, \ldots, d'\}$.

For each $y_\alpha \in \cI'$, choose a radius $r_\alpha$ so that all the $\{B_{2r_\alpha}(y_\alpha)\}_\alpha \subset B_{r-4\sigma}$ are disjoint.  Define the rescaled varifolds $V^\alpha_i =  (\eta_{y_\alpha, r_\alpha})_\sharp V_i$ and metrics $g_i^\alpha = g_i \circ \eta_{y_\alpha, r_\alpha}^{-1}$.  Then for each $\alpha, i$ we have $\mindex(V^\alpha_i, B_1, g_i^\alpha) \leq I$, and $V^\alpha_i$ converge to $m[\bC]$ as varifolds in $B_1$, and converge to $\bC$ in $C^2$ on compact subsets of $B_1 \setminus \{0\}$.  We can therefore apply Case 1 of $(\dagger_{l, I})$ to each sequence to deduce the existence of a finite collection $\cS'$ of $(\tilde\theta_l, \sigma, \beta)$-smooth models, and an integer $N'$, and radii $r'_{\alpha,i} \in (\frac{4}{5}r_\alpha, r_\alpha)$, so that every $V_i \llcorner (B_{r'_{\alpha,i}}(y_\alpha), g_i)$ admits a $(\tilde\theta_l, \beta, \cS', N')$-strong-decomposition whenever $i >> 1$.

An important note: if $\# \cI \geq 2$, then by Lemma \ref{lem:index-compact} and our choice of balls $B_{r_\alpha}(y_\alpha)$, we have $\mindex(V^\alpha_i, B_1, g_i^\alpha) \leq I-1$ for each $\alpha = 1, \ldots, d'$ and $i >> 1$.  We can therefore apply Case 1 of $(\dagger_{I-1, l})$ to the sequence $V^\alpha_i$ rather than Case 1 of $(\dagger_{l, I})$, to deduce the same result.

Now $(S = m[\bC], \bC, \{ (\bC_\alpha = \bC, B_{\frac{r_\alpha}{r}}(\frac{y_\alpha}{r}))\}_{\alpha=1}^{d'})$ is a $(m, \sigma, 0)$-smooth model.  By $C^2$ convergence of $V_i$ to $m[\bC]$ away from $\cI$, and our choice of $r$, the $V_i \llcorner (B_r, g_i)$ are $(S, \beta)$-smooth regions for $i >> 1$.

Therefore, we deduce that $V_i \llcorner (B_r(0), g_i)$ admits a $(\tilde\theta_l, \beta, \{ S\} \cup \cS', I N' + 1)$-strong-cone decomposition for all $i >> 1$.  This is a contradiction, and completes the proof of Case 3. \qedhere

\end{proof}

\section{Parameterization}\label{sec:param}

In this section we construct ``parameterizations'' for our cone regions.  We show that every $M$ admitting a cone-decomposition can be realized as the image under some Lipschitz map of one of finitely-many $\{M_v\}_v$, depending only on the parameters of the decomposition.  In this section we work in $\R^8$.

This section is annoying technical, due almost entirely to multiplicity.  If all our $M$s were single-sheeted (e.g. as when $M$ is an area-minimizing boundary), then all our parameterizing maps would be $C^1$ perturbations of the identity, and this section would be more or less trivial.  Unfortunately, maps $\phi$ taking multi-sheeted $M_1$ to $M_2$ are only $C^0$ perturbations of the identity, with the $C^1$ norm depending on the relative gaps between in $M_1$ versus $M_2$.  So to glue various parameterizations together we cannot use ``soft'' methods, but must rely much more explicitly on the exact structure of these maps.

We cannot get a priori control on $D\phi^{-1}$ in Theorem \ref{thm:param} for two reasons.  Intuitively: the first is that within a ``class'' of $M$ having the same kind of cone decomposition, the inner radius of cone regions can be made arbitrarily small, but the outer radius has an upper bound; the second is that sheets can be made arbitrarily close together, but not arbitrarily far apart.  For an example of the first behavior, consider scalings of a leaf of the Hardt-Simon foliation associated to an area-minimizing hypercone.  For an example of the second, simply consider two planes converging to each other.

Getting global regularity of $\phi$ better than Lipschitz seems to be more or less equivalent to knowing that all the cones you see are integrable through rotations.  Even knowing this, getting better a priori bounds on $\phi$ requires sharper decay estimates in Theorem \ref{thm:scone}.  In certain circumstances this is known.  See Remark \ref{rem:c1alpha}.

\vspace{3mm}

It will be convenient to introduce some further notation.  For each $\bC \in \cC_\Lambda$ fix a choice of unit normal $\nu_\bC(x)$, and write $B^*_\eps(\bC) = \{ x \in \R^8 : \mathrm{dist}(x, \bC) < \eps |x| \}$ for the conical $\eps$-neighborhood around $\bC$.  Let us abuse notation slightly, and define the function $G_\bC : \bC \times \R \to \R^8$ by
\[
G_\bC(x, t) = \frac{ x + t \nu_\bC(x)}{|x + t\nu_\bC(x)|} |x|.
\]
By Theorem \ref{thm:cones}, after shrinking $\eps_0(\Lambda)$ as necessary, if $U_\bC = \{ (x, t) \in \bC \times \R : |t| < 2\eps_0 |x| \}$, then $G_\bC$ restricts to a smooth diffeomorphism from $U_\bC$ onto its image, and we have $|G_\bC(x, t)| = |x|$, and
\begin{gather}
|G_\bC(x, t) - (x + t \nu_\bC(x))| \leq |t|^2 , \quad |DG_\bC|_{(x, t)} - Id| \leq |t|/\eps_0, \label{eqn:GC1}\\
(1-|x|^{-2} t^2) |t| \leq d(G_\bC(x, t), \bC) \leq |t|, \label{eqn:GC2}
\end{gather}
for all $(x, t) \in U_\bC$.  In the second inequality we identify $T_x\bC \times \R$ with $\R^8$ via $(v, t) \leftrightarrow v + t \nu_\bC(x)$.

Let $X_\bC$ be the vector field in $G(U_\bC) \supset B^*_{\eps_0}(\bC)$ defined by $X_\bC = DG_\bC(\del_t)$, so that $y(t) = G_\bC(x, t)$ is a solution to the ODE: $\dot y(t) = X_\bC(y(t))$, $y(0) = x$.  After shrinking $\eps_0(\Lambda)$ possibly further we can assume that for $x \in B_{\eps_0}^*(\bC)$, we have
\begin{gather}\label{eqn:X-prop}
X_\bC(x) \perp x, \quad 0.99 \leq |X_\bC| \leq 1, \quad |x||DX_\bC| + |x|^2|D^2 X_\bC| \leq 1/\eps_0.
\end{gather} 
Let us henceforth fix $\eps_0(\Lambda)$ so that the previous discussion holds, for any $\bC \in \cC_\Lambda$.

Given a smooth vector field $X$ defined on some open set $U$, we let $F_X(x, t)$ be the flow of $X$, i.e. the map defined by $\del_t F_X(x, t) = X(F_X(x, t))$, $F_X(x, 0) = x$.  We say a mapping $\phi : U \to U$ preserves the flowlines of $X$ if $\phi(x) = F_X(x, t_\phi(x))$ for some real-valued function $t_\phi$.

Given $R > \rho > 0$, $a \in \R^8$, $\eps \leq \eps_0$, we say $\phi|_{A_{R, \rho}(a)}$ is a $(\bC, \eps)$-map if $\phi$ is a $C^1$ diffeomorphism $A_{R, \rho}(a) \to A_{R, \rho}(a)$, $\phi = id$ outside $a + B^*_{\eps}(\bC)$, and $\phi$ preserves the flowlines of $X_\bC(\cdot - a)$.

\vspace{3mm}

Below is our main theorem of this section.  We remind the reader that all distances, derivatives, inner products, etc. are taken with respect to $\geucl$ unless explicitly stated otherwise.
\begin{theorem}[Parameterizing cone decompositions]\label{thm:param}
Given $\Lambda , \eps > 0, \sigma \in (0, 10^{-2}]$, there are constants $\beta(\Lambda, \sigma, \eps)$, $\gamma(\Lambda, \sigma, \eps)$ so that the following holds.  Take $\cS$ be any finite collection of $(\Lambda, \sigma, \gamma)$-smooth models, $N \in \N$, and $\cG$ any set of $C^3$ metrics on $B_1$.  Then we can find finite collections $\{g_v\}_v \subset \cG$, $\{ M_v \in \cM_7(B_1, g_v) \}_v$, $\{\bC_v\}_v \subset \cC_\Lambda$, and an increasing function $C_{\cS, N, \cG} : [0, 1) \to \R$, with the properties
\[
\sing M_v \subset B_{1-\sigma}, \quad d_H( M_v \cap A_{1, 1-\sigma}, \bC_v \cap A_{1, 1-\sigma}) \leq \eps, 
\]
and so that: given any $g \in \cG$ and $M \in \cM_7(B_1, g)$ with $M \cap B_1$ admitting a $(\Lambda, \beta, \cS, N)$-strong-cone decomposition, then we can find a $v$ and a local bi-Lipschitz map $\phi : B_1 \to B_1$ such that the following holds true:
\begin{enumerate}
\item $\phi(\overline{M_v} \cap B_1) = \overline{M} \cap B_1$, $\phi(\sing M_v \cap B_1) = \sing M \cap B_1$; \label{item:param1}
\item $\phi|_{B_1 \setminus \sing M_v}$ is a $C^2$ diffeomorphism onto its image; \label{item:param2}
\item $\Lip(\phi|_{B_r}) \leq C_{\cS, N, \cG}(r)$ for any $r < 1$; \label{item:param3}
\item $\phi|_{A_{1,1-\sigma}}$ is a $(\bC_v, \eps)$-map; \label{item:param4}
\item $\sing M \subset B_{1-\sigma}$ and $d_H( M \cap A_{1, 1-\sigma}, \bC_v \cap A_{1, 1-\sigma}) \leq \eps$. \label{item:param5}
\end{enumerate}
If $M$ is area-minimizing, then one can assume that the $M_v$ is area-minimizing also.
\end{theorem}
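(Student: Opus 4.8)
The proof combines a compactness argument, producing the finite collections $\{g_v\}_v\subset\cG$, $\{M_v\}_v$, $\{\bC_v\}_v$, with an explicit region-by-region construction of the parameterizing map $\phi$. The key structural device throughout is that every building block of $\phi$ will \emph{move points along the flowlines of a fixed smooth vector field} — the field $X_\bC$ of \eqref{eqn:X-prop} in cone regions, a fixed unit normal field in smooth regions — so that each local piece is determined by a single real-valued displacement function and can be interpolated using the elementary fact that two $C^1$ diffeomorphisms agreeing near an endpoint are canonically isotopic. First note that a $(\Lambda,\beta,\cS,N)$-strong-cone decomposition (Definition \ref{def:decomp}) carries only finitely much combinatorial data: a rooted tree with at most $N$ nodes recording the nesting of smooth and strong-cone regions, a model from the finite set $\cS$ at each smooth node, and a multiplicity $m_a\le\lfloor\Lambda\rfloor$ together with — since by Theorem \ref{thm:cones}(\ref{item:cones-1})--(\ref{item:cones-3}) $\cC_\Lambda$ is sequentially compact with its diffeomorphism type locally constant and its densities discrete — a topological type $\cC(\bC_a)$ at each cone node. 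Only finitely many ``profiles'' $P$ occur; it suffices to treat a fixed $P$ and produce a finite collection of pairs $(g_v,M_v)$, $g_v\in\cG$, $M_v\in\cM_7(B_1,g_v)$ of profile $P$, handling all $M$ of profile $P$, then take the union over $P$.

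\textbf{Extracting the models.} Fix $P$ and suppose no finite collection works; then one may build a sequence $(g_i,M_i)$ of profile $P$ with $(g_i,M_i)$ not parameterized, in the sense of the theorem, by any of $(g_1,M_1),\dots,(g_{i-1},M_{i-1})$. Passing to subsequences, $g_i\to g_\infty$ in $C^2(B_1)$, and — recentering/rescaling each tree node by its own center and radius (Lemma \ref{lem:recenter}), applying Lemma \ref{lem:index-compact}, Lemma \ref{lem:isv-decomp} and Theorem \ref{thm:cones} at each node — the $M_i$ converge node-by-node: in $C^2$ to a multi-graph over the relevant model at each smooth node, and to a strong-cone region over the relevant cone at each cone node. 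Passing to a further subsequence, the ``expansion parameter'' of each node — the inner radius of a cone node, or the minimal mutual separation of the graph-sheets of a smooth node — either converges to a positive limit or decreases to $0$ (in the latter case the maximum principle of \cite{ilmanen} collapses that node and everything beneath it in the limit). Then for $j>i\gg1$ the pairs $(g_i,M_i)$ and $(g_j,M_j)$ are node-by-node $C^2$-small perturbations of the same limit data, and in every node the expansion parameter of $(g_i,M_i)$ either is comparable to, or dominates, that of $(g_j,M_j)$. The construction below produces a bi-Lipschitz $\phi$ with $\phi(\overline{M_i}\cap B_1)=\overline{M_j}\cap B_1$ and $\Lip(\phi|_{B_r})$ controlled; the only degeneration of the Lipschitz bound — ``stretching'' a cone annulus or pulling apart collapsed sheets — is precisely the direction excluded by our choice of which expansion dominates. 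This contradicts the construction of the sequence, so a finite collection does suffice, and the union over the finitely many $P$ gives $\{(g_v,M_v)\}_v$; each $\bC_v$ is the asymptotic cone of the outermost (smooth) node of $M_v$, and $\sing M_v\subset B_{1-\sigma}$, $d_H(M_v\cap A_{1,1-\sigma},\bC_v\cap A_{1,1-\sigma})\le\eps$ hold by Definition \ref{def:model}(\ref{item:def-model-2}) and Definition \ref{def:decomp} once $\beta,\gamma$ are small relative to $\sigma,\eps$. If $M$ is area-minimizing then the models in its decomposition are area-minimizing (addendum to Theorem \ref{thm:diffeo}) and limits of mass-minimizing currents are mass-minimizing (\cite[Chapter 7, Theorem 2.4]{Sim}), so $M_v$ may be taken area-minimizing.

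\textbf{Building $\phi$ from the decomposition.} Given $M$ of profile $P$ and its matched $M_v$, one builds $\phi$ by descending the decomposition tree. In a cone node, $M$ and $M_v$ are strong-cone regions over \emph{one and the same} fixed cone $\bC$ (this rigidity of the cone, rather than a rotating family, is exactly the output of Theorem \ref{thm:scone}); here $\phi$ is taken to preserve the flowlines of the rescaled field $X_\bC$, to equal the identity outside the conical neighborhood $B^*_{\eps}(\bC)$, and to carry the graph-sheets of $M_v$ onto those of $M$ while radially matching $M_v$'s (possibly much shorter) annulus onto $M$'s — since, in the $\log r$ variable, radial reparameterization keeping the outer radius fixed and shortening the inner radius has derivative bounded by a constant depending only on $\cS,N$, this is a forward-Lipschitz map with no control on its inverse, as in the Hardt-Simon example. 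In a smooth node, after the ambient rescaling pinning the scale $\eps_S$, one fixes a smooth unit normal field to each component $S_i$ of the model and lets $\phi$ move the sheets of $M_v$ to those of $M$ along the corresponding normal flowlines; both sheet families being $C^2$-small graphs over $S$ at scale $\eps_S$, this is again forward-Lipschitz, with only $D\phi^{-1}$ degenerating when sheets of $M$ nearly coincide. The terminal nodes are cone regions with inner radius $0$ (where $M$, $M_v$ equal their cones locally by the maximum principle, so $\phi$ sends singular point to singular point) or hole-free smooth nodes, handled as above; this gives $\phi(\overline{M_v}\cap B_1)=\overline{M}\cap B_1$ and $\phi(\sing M_v)=\sing M$, with $\phi$ a $C^2$ diffeomorphism off $\sing M_v$.

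\textbf{The main obstacle: gluing across collars.} The technical heart is stitching the regional maps together across the collars where a parent region meets a child, because $M$ may have sheets arbitrarily close together, so the regional maps are in general only $C^0$ perturbations of the identity and naive partition-of-unity gluing destroys injectivity. One instead exploits the flowline structure: in each collar the surface is, after rescaling, a small multi-graph over a single cone $\bC$, so both adjacent pieces can be arranged to preserve the flowlines of nearby copies of $X_\bC$ and to be determined there by displacement functions, and they are glued by interpolating the displacement functions and the vector fields across the collar — a $C^2$, flowline-preserving, injective interpolation precisely because it reduces to interpolating $C^1$ diffeomorphisms of $\R$ that agree near one endpoint. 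Iterating over the at most $N$ nodes yields a global local bi-Lipschitz $\phi:B_1\to B_1$, $C^2$ away from $\sing M_v$, with $\Lip(\phi|_{B_r})$ bounded in terms of $\cS,N,\cG$ and $r$ (the $r$-dependence coming from the shrinking room for the outermost collar as $r\to1$). Finally, near scale $1$ the configuration is a small graph over $\bC_v$ — because the outermost region is a smooth region whose model is, by Definition \ref{def:model}(\ref{item:def-model-2}), a small graph over $\bC_v$ in $A_{1,1/8}$ — so $\phi|_{A_{1,1-\sigma}}$ is a $(\bC_v,\eps)$-map and $d_H(M\cap A_{1,1-\sigma},\bC_v\cap A_{1,1-\sigma})\le\eps$, $\sing M\subset B_{1-3\sigma}\subset B_{1-\sigma}$, once $\beta,\gamma$ are chosen small relative to $\sigma,\eps$. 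The gluing step, in all its multiplicity-laden detail, is the crux; everything else is bookkeeping.
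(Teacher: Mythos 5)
Your proposal is correct and follows essentially the same route as the paper: a contradiction/compactness argument to extract the finite collections $\{(g_v,M_v)\}_v$, region-by-region parameterizations that move points along the flowlines of $X_\bC$ (or a fixed normal field in smooth regions), and a gluing step that interpolates the displacement functions along nearby flowlines to overcome the multiplicity obstruction — these are precisely Lemmas \ref{lem:smooth-map}, \ref{lem:cone-map}, \ref{lem:glue} and the structure of the paper's proof. The only difference is organizational (you enumerate finitely many combinatorial ``profiles'' of the decomposition tree and run one compactness argument per profile, whereas the paper inducts on $N$ by peeling off the outermost region), and this does not change the substance of the argument.
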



\begin{remark}\label{rem:c1alpha}
If all the singularities of $M$ are modelled on cones of a certain type, for example quadratic cones or other special cones being ``integrable through rotations,'' then one can arrange $\phi$ in Theorem \ref{thm:param} to be globally $C^{1,\alpha}$, though without any a priori $C^{1,\alpha}$ estimates.  When $\bC$ is planar (as in \cite{white:compact}), or when $\bC$ is a minimizing quadratic cone in Euclidean space (as in \cite{me-luca}), then I would expect one could get $C^{1,\alpha}$ estimates on the $\phi$.  See Remark \ref{rem:cone-c1alpha}.  
\end{remark}

\begin{remark}
The restriction $r < 1$ in Item \ref{item:param3} is purely technical.  The reason is that the Lipschitz norm of $\phi$ depends on distance between sheets in each $M_v$.  In a cone decomposition this distance is bounded away from zero inside $B_1$, but could tend to $0$ as you approach $\del B_1$.
\end{remark}

Before proving Theorem \ref{thm:param} we require several technical Lemmas.  The first two construct parameterizations for a single smooth or cone region.  The third is a gluing Lemma, that allows us to glue parameterizations between two regions.

\begin{lemma}[Smooth region parameterization]\label{lem:smooth-map}
Given $\Lambda, \eps > 0$, $\sigma \in (0, 10^{-2}]$, there are $\gamma(\Lambda, \sigma, \eps)$, $\beta(\Lambda, \eps)$ so that the following holds.  Let $m \in \N$, and $(S, \bC, \{(\bC_\alpha, B_{r_\alpha}(y_\alpha))\}_\alpha)$ be a $(\Lambda, \sigma, \gamma)$-smooth model.  Let $M_1, M_2 \in \cM_7(B_1, g)$ for any $C^2$ metric $g$, and suppose that each $M_l \cap (B_1, g)$ is a $(S, \beta)$-smooth region, $l = 1, 2$.

Write $O = B_1 \setminus \cup_\alpha B_{r_\alpha/4}(y_\alpha)$.  Then we can find a $C^2$ diffeomorphism $\phi : O \to O$ such that
\begin{gather}
\phi(M_1) = M_2, \quad \phi|_{A_{1, 1-\sigma}} \text{ is a $(\bC, \eps)$-map, } \\
\phi|_{A_{r_\alpha, r_\alpha/4}(y_\alpha)} \text{ is a $(\bC_\alpha, \eps)$-map, for each $\alpha$,}
\end{gather}
and
\begin{gather}
|D\phi|_{C^0(O \cap B_r)} \leq c(S, M_1, r) < \infty \quad \forall r < 1.
\end{gather}
\end{lemma}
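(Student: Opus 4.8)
The plan is as follows. By definition of a $(S,\beta)$-smooth region, on the rescaled picture $(\eta_{0,1})_\sharp M_l$ (here $R=1$, $a=0$, so no rescaling is needed) we have, for each $l=1,2$ and each $i=1,\dots,k$, $C^2$ functions $\{u_{ij}^{(l)}:S_i\to S_i^\perp\}_{j=1}^{m_i}$ with $|u_{ij}^{(l)}|_{C^2(S_i)}\le\beta\eps_S$ such that over $O'=B_1\setminus\cup_\alpha B_{r_\alpha/4}(y_\alpha)$ the surface $M_l$ is the union of the graphs $\graph_{S_i}(u_{ij}^{(l)})$. First I would use the tubular-neighborhood diffeomorphism $G_S:\{(x,v)\in T^\perp(\cup_j S_j):x\in B_2\setminus\cup_\alpha B_{r_\alpha/8}(y_\alpha),|v|<2\eps_S\}\to\R^8$ from the definition of $\eps_S$ to identify a neighborhood of $\cup_j S_j$ in $O'$ with (a subset of) the normal bundle; this turns the problem into one purely about the normal coordinate $v$. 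In this coordinate, $M_l$ consists of $m_i$ disjoint sheets $\{v=u_{ij}^{(l)}(x)\}$ over each $S_i$, and I want a diffeomorphism $\phi$ carrying the $\{u_{ij}^{(1)}\}$-sheets to the $\{u_{ij}^{(2)}\}$-sheets fiberwise.

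The key step is the one-dimensional observation stressed in Section \ref{sec:param}: for each fixed $x$, I have $m_i$ ordered points $u_{i1}^{(1)}(x)<\cdots<u_{im_i}^{(1)}(x)$ in the interval $(-2\eps_S,2\eps_S)$ and likewise $m_i$ ordered points for the target, all lying in $(-\beta\eps_S,\beta\eps_S)$, and I build an explicit, orientation-preserving, piecewise-affine-then-smoothed diffeomorphism $\psi_x$ of the fiber $(-2\eps_S,2\eps_S)$ sending the first tuple to the second, equal to the identity outside $(-\beta\eps_S',\beta\eps_S')$ for a slightly larger $\beta'$. Doing this smoothly in $x$ (by interpolating the breakpoints) produces a $C^2$ fiber-preserving diffeomorphism $\psi$ of the tube; its $C^0$ norm of $D\psi$ is controlled by $\eps_S$ and $\beta$ alone, but its \emph{transverse} derivative blows up like the reciprocal of the minimal gap $\min_{i,j}\dist(\graph u_{ij}^{(1)},\graph u_{i,j+1}^{(1)})$ inside each fixed $B_r$, $r<1$, which is positive but depends on $S,M_1,r$ — this is exactly the estimate $|D\phi|_{C^0(O\cap B_r)}\le c(S,M_1,r)$ and why no bound up to $\del B_1$ is claimed. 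Pulling $\psi$ back through $G_S$ and extending by the identity away from the tube gives $\phi$ on all of $O$.

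It remains to arrange the boundary normalizations. Near $\del B_1$, i.e. in $A_{1,1-\sigma}$, the smooth-model condition (\ref{item:def-model-2}) says $S$ is there a $(\bC,m,\gamma)$-strong-cone region, so both $M_1,M_2$ restricted to $A_{1,1-\sigma}$ are small conical multigraphs over $\bC$; I redo the fiberwise construction above but along the flowlines of $X_\bC$ (using the coordinates $G_\bC$ from the start of Section \ref{sec:param}, valid since $\gamma\le\eps_0$), which makes $\phi|_{A_{1,1-\sigma}}$ flowline-preserving and equal to the identity outside $B^*_\eps(\bC)$ — i.e.\ a $(\bC,\eps)$-map — provided $\gamma(\Lambda,\sigma,\eps)$ is small enough. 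Identically, condition (\ref{item:def-model-3}) gives that in each $A_{r_\alpha,r_\alpha/4}(y_\alpha)$ the region $S$ is a $(\bC_\alpha,1,\gamma)$-strong-cone region (a single sheet), so the same flowline construction along $X_{\bC_\alpha}$ makes $\phi|_{A_{r_\alpha,r_\alpha/4}(y_\alpha)}$ a $(\bC_\alpha,\eps)$-map. One has to be slightly careful that the "interior" fiberwise map and these three "boundary" flowline maps can be chosen to agree on the overlaps: this is handled by only invoking the nontrivial fiber rearrangement where the gap structure genuinely differs, using a partition of $O$ subordinate to $\{A_{1,1-\sigma/2}(0)\}$, $\{A_{2r_\alpha,r_\alpha/4}(y_\alpha)\}$, and the bulk, and noting the flows of $X_\bC$, $X_{\bC_\alpha}$ and the normal-bundle fibration of $G_S$ all coincide up to errors $O(\gamma)$ on these collars (by \eqref{eqn:GC1}, \eqref{eqn:X-prop}), so the convex-combination-along-flowlines trick from the outline glues them. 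The main obstacle is precisely this bookkeeping — ensuring the several locally-defined fiber diffeomorphisms are globally compatible and jointly $C^2$ while keeping the Lipschitz bound independent of the sheet separation except through $c(S,M_1,r)$ — rather than any single hard estimate.
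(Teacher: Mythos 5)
Your core construction is the same as the paper's: put $M_1,M_2$ into fiber coordinates over $\cup_i S_i$, order the sheets by the maximum principle, and move sheet to sheet via the canonical one\nobreakdash-dimensional rearrangement of Lemma \ref{lem:monof}, with the transverse derivative controlled only by the minimal sheet gap of $M_1$ in $B_r$ (hence the constant $c(S,M_1,r)$). Where you diverge is in how the $(\bC,\eps)$- and $(\bC_\alpha,\eps)$-map normalizations are achieved. You build \emph{several} local fiberwise maps — one in the bulk using the normal fibration $G_S$, one near $\del B_1$ along $X_\bC$, one near each hole along $X_{\bC_\alpha}$ — and then glue. The paper instead interpolates the \emph{vector fields first}: it replaces the normal field $X_i = DG_i(\del_t)$ by a single field $Y_i$ that equals $X_i$ in the bulk, $X_\bC$ on $A_{1,1-\sigma}$, and $X_{\bC_\alpha}$ on $A_{2r_\alpha,r_\alpha/5}(y_\alpha)$ (a cutoff interpolation, legitimate because the fields are $O(\gamma)$-close in $C^1$ there), takes flow coordinates $H_i$ of $Y_i$, and performs the Lemma \ref{lem:monof} rearrangement \emph{once} in those coordinates. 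The resulting $\phi_i$ is then automatically flowline-preserving for $X_\bC$ and $X_{\bC_\alpha}$ on the relevant collars, and no gluing of separately constructed diffeomorphisms is needed. Your route buys nothing here and costs you the compatibility bookkeeping you yourself identify as the main obstacle.

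That bookkeeping is where your sketch has a genuine soft spot. A literal ``convex combination along flowlines'' of two fiber-preserving diffeomorphisms built over \emph{different} (though $C^1$-close) fibrations does not obviously remain a diffeomorphism, and — more importantly — does not send the $M_1$-sheets \emph{exactly} onto the $M_2$-sheets on the overlap: each of the two maps matches the sheets exactly in its own coordinates, but a pointwise interpolation of the maps only matches them approximately. The correct fix is not to interpolate the maps but to interpolate their fiber-coordinate representations through the canonical model function $f_{\{a_j\},\{b_j\}}$ of Lemma \ref{lem:monof} (which sends $u^{(1)}_{ij}\mapsto u^{(2)}_{ij}$ exactly for \emph{any} admissible interpolant), after re-expressing both maps and both surfaces in a single interpolated fibration. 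This is precisely the content of the paper's Gluing Lemma \ref{lem:glue}; if you insist on your local-then-glue organization you must invoke (or reprove) it, whereas the paper's $Y_i$-interpolation makes the issue disappear for this particular lemma. With that step either replaced by the vector-field interpolation or carried out via Lemma \ref{lem:glue}, your argument is correct.
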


\begin{proof}
Let $\eps' = \eps'' \min\{ \eps_S, \eps/32 \}$ for some constant $\eps''(\Lambda) \leq  \eps_0(\Lambda)/32$ to be determined later, and ensure $\gamma \leq \min\{\eps_0, \eps\}/32$.  Let $O' = B_1 \setminus \cup_\alpha B_{r_\alpha/5}(y_\alpha)$.

Write $S \llcorner B_2 \setminus \cup_\alpha B_{r_\alpha/8}(y_\alpha) = m_1[S_1] + \ldots + m_k [S_k]$ as in the Definition \ref{def:model} of smooth model.  Each $S_i$ is orientable, being a smooth, closed (as sets) codimension-1 submanifold of the simply-connected region $B_{2} \setminus \cup_\alpha B_{r_\alpha/8}(y_\alpha)$.  We can therefore choose a normal vector $\nu_i$ on $S_i$ so that $|\nu_i - X_\bC| \leq c(\Lambda) \gamma$ on $S_i \cap A_{1+\sigma, 1-3\sigma}$.  After replacing $X_{\bC_\alpha}$ with $-X_{\bC_\alpha}$ if necessary, we have also $|\nu_i - X_{\bC_\alpha}(\cdot - y_\alpha)| \leq c(\Lambda) \gamma$ on $S_i \cap A_{2r_\alpha, r_\alpha/8}(y_\alpha)$, for each $\alpha$.

Consider the map $G_i : S_i \cap B_{1+\sigma} \times (-2\eps_S, 2\eps_S) \to \R^8$ by $G_i(x', t) = x' + t \nu_i(x')$.  By definition of the $\eps_S$, $G_i$ is a smooth diffeomorphism onto its image $O_i$, and the images $O_i$ are disjoint.  Provided $\gamma(\Lambda, \sigma)$ is sufficiently small, we have $O_i \supset B_{2\eps_S}(S_i) \cap O'$.

Define the smooth vector field $X_i = DG_i(\del_t)$ on $O_i$, so that $t \mapsto G_i(x', t)$ is the flow of $X_i$.  Let $\xi(t)$ be a smooth, increasing function satisfying
\[
\xi|_{(-\infty, 1-2\sigma]} \equiv 0, \quad \xi|_{[1-\sigma, \infty)} \equiv 1, \quad |\xi'| \leq 10/\sigma,
\]
and then define $Y_i$ on $B_{2\eps'}(S_i) \cap O'$ by
\[
Y_i(x) = \left\{ \begin{array}{l l} 
 (1-\xi(|x|)) X_i(x) + \xi(|x|) X_\bC(x) & x \in A_{1, 1-3\sigma} \\
 \xi( \frac{|x - y_\alpha|}{2r_\alpha}) X_i(x)  + (1-\xi(\frac{|x - y_\alpha|}{2r_\alpha})) X_{\bC_\alpha}(x - y_\alpha) & x \in A_{2r_\alpha, r_\alpha/5}(y_\alpha) \\
 X_i(x) & otherwise
 \end{array} \right. 
\]

For $Z_i = X_i, Y_i$, we have
\begin{equation}\label{eqn:smooth-map1}
\sup_{B_{2\eps'}(S_i) \cap A_{1,1-3\sigma}} |Z_i - X_\bC| + |DZ_i - DX_\bC| \leq c\gamma, 
\end{equation}
and for each $\alpha$, 
\begin{equation}\label{eqn:smooth-map2}
\sup_{B_{2\eps'}(S_i) \cap A_{2r_\alpha, r_\alpha/5}(y_\alpha)} |Z_i - X_{\bC_\alpha}(\cdot - y_\alpha)| + r_\alpha |DZ_i - DX_{\bC_\alpha}(\cdot - y_\alpha)| \leq c\gamma, 
\end{equation}
where $c = c(\Lambda, \sigma)$.  And of course $X_i = Y_i$ in $B_{1-3\sigma} \setminus \cup_\alpha B_{r_\alpha}(y_\alpha)$.  From the above and \eqref{eqn:X-prop}, provided $\gamma(\Lambda, \sigma)$ is sufficiently small, the flow $F_{X_i}(x', t)$ is defined for all $|t| < \eps'$, $x' \in S_i \cap O'$.

Let $H_i : S_i \cap O' \times (-\eps', \eps') \to \R^8$ be the smooth map given by $H_i(x', t) = F_{X_i}(x', t)$.  Recalling our restriction $\eps' \leq \eps_0 \min_\alpha r_\alpha/16$, \eqref{eqn:smooth-map1}, \eqref{eqn:smooth-map2}, \eqref{eqn:X-prop} imply
\begin{gather}
|H_i - G_i| \leq c(\Lambda, \sigma) \gamma |t|, \quad |DH_i - DG_i| \leq c(\Lambda, \sigma)\gamma \label{eqn:smooth-map3} \\
0.98|t| \leq d(G_i(x', t), S_i) \leq |t| \label{eqn:smooth-map4} \\
B_{\eps'/2}(S_i) \cap O' \subset H_i( S_i \cap O \times (-\eps', \eps')) \subset B_{2\eps'}(S_i) \cap O'. \label{eqn:smooth-map5}
\end{gather}
In particular, after identitying $S_i \cap O' \times \R$ with $T^\perp S_i$ via $\nu_i$, we have $|DH_i - Id| \leq \eps'' + c(\Lambda, \sigma)\gamma$.  Ensuring $\gamma(\Lambda, \sigma)$ and $\eps''(\Lambda)$ are sufficiently small, we get that $H_i$ is a smooth diffeomorphism onto its image.

By considering the map $H_i^{-1} \circ G_i$, ensuring $\beta(\eps, \eps'')$, $\gamma(\Lambda, \sigma)$ are sufficiently small, and recalling the definition of smooth region, for each $p = 1, 2, i = 1, \ldots, k, j = 1, \ldots, m_k$, we can find $C^2$ functions $u_{pij} : S_i \to (-\eps', \eps')$ satisfying
\begin{gather}
M_p \cap O' = \cup_{i=1}^k \cup_{j=1}^{m_i} \{ H_i(x', u_{pij}(x')) : x' \in S_i \cap O' \}, \label{eqn:smooth-map9} \\
|u_{pij}| \leq (1+c \gamma) \beta \eps_S \leq \eps'/10, \quad |u_{pij}|_{C^1} \leq c(\eps'' + \gamma + \beta \eps_S) \leq 1, \label{eqn:smooth-map10}
\end{gather}
for $c = c(\Lambda, \sigma)$.  Since each $M_p$ is multiplicity-one, by the maximum principle we can assume that
\begin{gather}\label{eqn:smooth-map8}
u_{pi1} < u_{pi2} < \ldots < u_{pi m_k} \quad \forall p = 1, 2; i = 1, \ldots, k.
\end{gather}

For each $i = 1, \ldots, k$, define the $C^2$ diffeomorphism $g_i : S_i \cap O \times (-\eps', \eps') \to S_i \cap O \times (-\eps, \eps)$ by
\[
g_i(x', t) = g_i(x', f_{\{ u_{1ij}(x') \}_j, \{u_{2ij}(x')\}_j}(t) ),
\]
where $f_{\{a_j\}_j, \{b_j\}_j}(t)$ is the function from Lemma \ref{lem:monof} with $m_i$ in place of $k$, and $\eps'/10$ in place of $\eps$.  Then $g_i$ is the identity for $|t| > \eps/5$ and $g_i(x', u_{1ij}(x')) = (x', u_{2ij}(x'))$ for every $j = 1, \ldots, m_i$.  If we let
\[
D_r = \min \{ u_{1ij}(x') - u_{1ij'}(x') : j < j', x' \in S_i \cap B_r \cap O \},
\] 
then $D_r > 0$ for all $r < 1$ (by \eqref{eqn:smooth-map8} and our definition of $O'$), and it follows by Lemma \ref{lem:monof} and \eqref{eqn:smooth-map10} that $|Dg_i| \leq c(\Lambda, \sigma, D_r) < \infty$ on $B_r$.

Define $\phi_i : O \to O$ by setting
\[
\phi_i(x) = H_i \circ g \circ H_i^{-1}  \text{ if } x \in B_{\eps'/2}(S_i) \cap O, \quad \phi_i(x) = x \text{ otherwise.}
\]
By \eqref{eqn:smooth-map4}, \eqref{eqn:smooth-map5} and our construction of $g_i$, $\phi_i$ is a well-defined $C^2$ diffeomorphism, satisfying
\[
\phi_i(\overline{M_1} \cap B_{\eps'}(S_i) \cap O) = \overline{M_2} \cap B_{\eps'}(S_i) \cap O, \quad |D\phi_i| \leq c(\Lambda, \sigma, M_1).
\]

Now set $\phi = \phi_1 \circ \cdots \circ \phi_k$.  We have $\phi( M_1 \cap O) = M_2\cap O$ by disjointness of the $O_i$, and $\phi|_{A_{1,1-\sigma}}$, $\phi|_{A_{r_\alpha, r_\alpha/4}(y_\alpha)}$ are $(\bC, \eps)$-, $(\bC_\alpha, \eps)$-maps by our construction and our choice of $\gamma, \eps'$.
\end{proof}

\begin{lemma}[Cone region parameterization]\label{lem:cone-map}
Take $\Lambda, \eps > 0$, $\beta \geq 0$.  Let $\bC \in \cC_\Lambda$, $\rho \in [0, 1)$, $m \in \N$.  For $i = 1, 2$, $j = 1, \ldots, m$ let $u_{ij} : \bC \cap A_{1, \rho} \to \R$ be $C^1$ functions satisfying
\[
|x|^{-1} |u_{ij}| + |\nabla u_{ij}| \leq \beta \leq \eps/4 \leq \eps_0/8, \quad u_{i1} < u_{i2} < \ldots < u_{im} \text{ for } i = 1, 2.
\]
Write $M_i = \cup_{j=1}^m G_\bC(u_{ij} \nu_\bC) \cap A_{1, \rho}$.

Then we can find a $C^1$ diffeomorphism $\phi : A_{1, \rho} \to A_{1,\rho}$ satisfying
\begin{gather}
\phi(M_1) = M_2, \quad |\phi(x)| = |x|, \quad |D\phi|_{C^0(A_{r, \rho/r})} \leq c(\Lambda, M_1, r) \text{ for every $r < 1$} \label{eqn:cone-map-concl1} \\
\text{$\phi|_{A_{1, \rho}}$ is a $(\bC, \eps)$-map.} \label{eqn:cone-map-concl2}
\end{gather}

If $m = 1$, then in fact we have $|D\phi - Id| \leq c(\Lambda) \frac{\beta}{\eps}$.  If the $u_{ij}$ are $C^k$, then $\phi$ is $C^k$ also.
\end{lemma}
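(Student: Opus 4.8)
The plan is to carry out everything in ``flowline coordinates'' adapted to $\bC$, in which each $M_i$ becomes literally a union of $m$ strictly ordered graphs, and then to build $\phi$ by deforming the first family of graphs onto the second one fibre-by-fibre along the flowlines of $X_\bC$, using the one-dimensional gluing function of Lemma \ref{lem:monof}. Recall that $G_\bC(x,t)=F_{X_\bC}(x,t)$ restricts to a diffeomorphism of $U_\bC=\{(x,t)\in\bC\times\R:|t|<2\eps_0|x|\}$ onto its image, that $|G_\bC(x,t)|=|x|$, that $d(G_\bC(x,t),\bC)\le|t|$ and $|DG_\bC|_{(x,t)}-Id|\le|t|/\eps_0$ by \eqref{eqn:GC1}--\eqref{eqn:GC2}, that $X_\bC\perp x$ by \eqref{eqn:X-prop}, and that $B^*_{\eps_0}(\bC)\subset G_\bC(U_\bC)$. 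By hypothesis $M_i\cap A_{1,\rho}=\{G_\bC(x,u_{ij}(x)):x\in\bC\cap A_{1,\rho},\ 1\le j\le m\}$, and since $|u_{ij}(x)|\le\beta|x|\le(\eps/4)|x|$ and $\eps\le\eps_0/2$, both $M_1$ and $M_2$ sit well inside the chart $\{|t|<\eps_0|x|\}$.

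Next I would construct the fibre maps. For $x\in\bC\cap A_{1,\rho}$ write $r=|x|$ and set
\[
f_x(t)=r\,f_{\{u_{1j}(x)/r\}_j,\ \{u_{2j}(x)/r\}_j}(t/r),
\]
where $f_{\{a_j\}_j,\{b_j\}_j}$ is the map of Lemma \ref{lem:monof} with $m$ in place of $k$ and parameter $\eps/3$; this is admissible because $|u_{ij}(x)/r|\le\beta\le\eps/4<\eps/3$ and $u_{i1}/r<\dots<u_{im}/r$. Then $f_x$ is a $C^1$ diffeomorphism of $\R$ (of class $C^k$ if the $u_{ij}$ are), with $f_x(u_{1j}(x))=u_{2j}(x)$ for all $j$, equal to the identity for $|t|\ge(2\eps/3)r$, mapping $[-(2\eps/3)r,(2\eps/3)r]$ onto itself, and jointly $C^1$ (resp.\ $C^k$) in $(x,t)$ since $x\mapsto|x|$ and $x\mapsto u_{ij}(x)/|x|$ are $C^1$ (resp.\ $C^k$) and $f_{\{a_j\}_j,\{b_j\}_j}(s)$ is smooth in all its arguments. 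Define $\phi:A_{1,\rho}\to A_{1,\rho}$ by $\phi(G_\bC(x,t))=G_\bC(x,f_x(t))$ on $\{G_\bC(x,t)\in A_{1,\rho}:|t|<\eps_0|x|\}$ and $\phi=id$ elsewhere; since $2\eps/3\le\eps_0/3<\eps_0$ the two rules agree on the overlap $\{(2\eps/3)|x|\le|t|<\eps_0|x|\}$, so $\phi$ is a well-defined $C^1$ (resp.\ $C^k$) map.

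Then I would verify the conclusions. That $\phi(M_1)=M_2$ is immediate from $f_x(u_{1j}(x))=u_{2j}(x)$, and $|\phi(y)|=|y|$ holds because $|G_\bC(x,t)|=|x|$ is independent of $t$. Since $\phi$ preserves every sphere $\partial B_s$ and on it is the conjugate by $G_\bC$ of the fibrewise bijection $(x,t)\mapsto(x,f_x(t))$, it is a bijection of $A_{1,\rho}$; invertibility of $D\phi$ and $C^1$-regularity of $\phi^{-1}$ follow from $\partial_tf_x>0$ and the inverse function theorem, so $\phi$ is a $C^1$ diffeomorphism. It preserves the flowlines of $X_\bC$, as $\phi(F_{X_\bC}(x,t))=F_{X_\bC}(x,f_x(t))=F_{X_\bC}(y,f_x(t)-t)$ for $y=G_\bC(x,t)$; and it equals the identity outside $B^*_\eps(\bC)$, since if $y=G_\bC(x,t)\notin B^*_\eps(\bC)$ then $|t|\ge d(y,\bC)\ge\eps|y|=\eps|x|>(2\eps/3)|x|$, whence $f_x(t)=t$ (while $\phi=id$ by definition wherever the chart does not reach). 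Thus $\phi|_{A_{1,\rho}}$ is a $(\bC,\eps)$-map, giving \eqref{eqn:cone-map-concl2}. For the derivative bound on $A_{r,\rho/r}$, which $\phi$ maps to itself: when $\rho>0$ the closure of $\bC\cap A_{r,\rho/r}$ is a compact subset of $\bC\cap A_{1,\rho}$, so $D_r:=\min_j\min_x(u_{1j}(x)-u_{1,j-1}(x))/|x|>0$ depends only on $M_1$ and $r$; Lemma \ref{lem:monof} bounds $|\partial_tf_x|$ together with the derivatives of $f_x$ in its parameters in terms of $D_r$, while $|\nabla(u_{ij}/|x|)|\le 2\beta/|x|$ is bounded on $A_{r,\rho/r}$ and $|DG_\bC-Id|\le\eps_0^{-1}|t|$, so $|D\phi|\le c(\Lambda,M_1,r)<\infty$. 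Finally, when $m=1$, Lemma \ref{lem:monof} with $k=1$ gives $|\partial_tf_{a_1,b_1}-1|+|\partial_{a_1}f|+|\partial_{b_1}f|\le c|b_1-a_1|/\eps\le c\beta/\eps$, which combined with $|DG_\bC-Id|\le\eps_0^{-1}|t|\le c(\Lambda)\beta$ yields $|D\phi-Id|\le c(\Lambda)\beta/\eps$; and if the $u_{ij}$ are $C^k$ then so is $\phi$ by the joint regularity of $f_x$.

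The hard part will not be any single estimate but confirming that the fibrewise recipe patches into a globally $C^1$ diffeomorphism of $A_{1,\rho}$: one needs $f_x\equiv id$ on a full, uniformly-sized neighbourhood of the locus where one switches to $\phi=id$, and one needs the support of $f_x$ to stay strictly inside the chart $U_\bC$ for every $x$ simultaneously. This is precisely why the construction is done in $X_\bC$-flow coordinates, where $B^*_\eps(\bC)$ has the near-product description $|t|\lesssim\eps|x|$ coming from \eqref{eqn:GC2}, and why the scale hypotheses $\beta\le\eps/4\le\eps_0/8$ are imposed: they force all the relevant inclusions to be strict. A secondary nuisance, and the reason the Lipschitz bound is only asserted on the shrunken annuli $A_{r,\rho/r}$, is that the sheet gaps of $M_1$ may collapse as one approaches $\partial B_\rho$ (or the origin when $\rho=0$), so no bound on $D\phi$ is available there.
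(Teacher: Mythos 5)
Your construction is essentially the paper's: conjugate a fibrewise application of Lemma \ref{lem:monof} by the flow chart $G_\bC$, check that the modification is supported in $\{|t|<c\eps|x|\}\subset U_\bC\cap (G_\bC^{-1}(B^*_\eps(\bC)))$ so that it glues to the identity, and read off \eqref{eqn:cone-map-concl1}, \eqref{eqn:cone-map-concl2} from the stated bounds on $f_{\{a_j\},\{b_j\}}$ together with \eqref{eqn:GC1}--\eqref{eqn:GC2} and the positive sheet-gap $D_r$ on compact subannuli. All of that is correct and matches the paper for $m\ge 2$.

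The gap is in your last step, the refined estimate $|D\phi-Id|\le c(\Lambda)\beta/\eps$ when $m=1$. You assert that ``Lemma \ref{lem:monof} with $k=1$ gives $|\partial_tf_{a_1,b_1}-1|+|\partial_{a_1}f|+|\partial_{b_1}f|\le c|b_1-a_1|/\eps$,'' but the lemma states no such thing: its conclusions are $\tfrac{9}{10}\underline{M}\le\partial_tf\le 11\overline{M}$ with $\underline{M}\le 1/3$ and $\overline{M}\ge 3$ \emph{by definition} (the min and max are capped at $1/3$ and $3$), and $|\partial_{b_j}f|\le c$ for an \emph{absolute} constant $c$. Even when $a_1=b_1$ these bounds only place $\partial_tf$ in $[3/10,33]$, so they cannot force $D\phi$ near the identity, and the $\beta/\eps$ dependence you want is invisible at the level of the lemma's statement. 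To close this you would have to reopen the proof of Lemma \ref{lem:monof} and verify that for $k=1$ the transition slopes $m_0,m_1$ differ from $1$ by $O(|b_1-a_1|/\eps)$ and that the interpolation $\underline{m}$ loses only $O(|s_1-s_2|)$; you do not do this. The paper sidesteps the issue entirely: for $m=1$ and $\beta\le 10^{-3}\eps$ it replaces $f_{\{a\},\{b\}}$ by the explicit shift $g(x,t)=(x,\,t+(u_{21}(x)-u_{11}(x))\,\eta(4\eps^{-1}|x|^{-1}(t-u_{11}(x))))$, for which $|Dg-Id|\le c\beta/\eps$ is immediate. Since this refined estimate is what Remark \ref{rem:extend-lip} uses to extend $\phi$ bi-Lipschitz across the origin when $\rho=0$, you should either adopt the paper's separate $m=1$ construction or explicitly prove the sharper version of Lemma \ref{lem:monof} you are invoking.
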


\begin{remark}\label{rem:extend-lip}
If $\rho = 0$, the maximum principle \cite{ilmanen} implies $m = 1$, so provided $\beta/\eps$ is sufficiently small (depending only on $\Lambda$), $\phi$ extends to a bi-Lipschitz map $B_1 \to B_1$, satisfying
\[
(1-c(\Lambda) \beta/\eps)|x - y| \leq |\phi(x) - \phi(y)| \leq (1+ c(\Lambda) \beta/\eps) |x - y| \quad \forall x, y \in B_1.
\]
\end{remark}

\begin{remark}
It's tempting to think the $\phi$ in Lemma \ref{lem:cone-map} should also satisfy an estimate like $|x| |D^2 \phi| \leq c(\Lambda, M_1)$, provided the $u_{ij}$ satisfy a similar estimate $|x| |\nabla^2 u_{ij}| \leq \beta$.  This is certainly true if $m = 1$, but when $m \geq 2$ this seems to be false.  The issue is that second derivatives of the function $f_{\{a_j\}_j, \{b_j\}_j}$ from Lemma \ref{lem:monof} depend on the spacing of \emph{both} the $\{a_j\}_j$ and the $\{b_j\}_j$ (as opposed to the first derivative which depends only on the spacing of the $\{a_j\}_j$).
\end{remark}

\begin{proof}
Let $f_{\{a_j\}_j, \{b_j\}_j}(t)$ be the function from Lemma \ref{lem:monof}, with $m$ in place of $k$, $\eps/4$ in place of $\eps$, and let $U = \{ (x, t) \in \bC \cap A_{1, \rho} \times \R : |t| < 2\eps_0|x| \}$.  If $m \geq 2$, or $\beta \geq 10^{-3} \eps$, define $g : U \to U$ by
\[
g(x, t) = (x, |x| f_{\{ |x|^{-1} u_{1j}(x)\}_j, \{ |x|^{-1} u_{2j}(x) \}_j}(|x|^{-1} t ) ).
\]
If $m = 1$ and $\beta \leq 10^{-3}\eps$, then take $\eta(t)$ to be the function as in the proof of Lemma \ref{lem:monof}, and then instead set
\[
g(x, t) = (x, t + (u_{21}(x) - u_{11}(x)) \eta( 4 \eps^{-1} |x|^{-1} (t - u_{11}(x)) )).
\]
Then by Lemma \ref{lem:monof} $g$ is a $C^1$ diffeomorphism $U \to U$, satisfying
\begin{gather*}
g(x, t) = (x, t) \text{ for } |t| \geq \frac{\eps}{2} |x|, \quad |D g|_{C^0(U \cap A_{r, \rho/r})} \leq c(\Lambda, M_1, r) \text{ for all $r < 1$}, \\
g(x, u_{1j}(x)) = (x, u_{2j}(x)) \text{ for each } j = 1, \ldots, m .
\end{gather*}
If $m = 1$, then $|Dg - Id| \leq c(\Lambda) \beta/\eps$ on all of $U$.

Now define $\phi$ by
\[
\phi(x) = \left\{ \begin{array}{l l} G_\bC \circ g \circ G_\bC^{-1} & x \in A_{1,\rho} \cap G_\bC(U) \\ x & x \in A_{1,\rho} \setminus G_\bC(U) \end{array} \right. . \qedhere
\]
\end{proof}

\begin{remark}\label{rem:cone-c1alpha}
If $\rho = 0$ and $\bC$ is ``integrable through rotations'' (e.g. if $\cC(\bC)$ consists of rotations of $\bC$), then provided $\beta(\bC, \eps)$, $\eps(\Lambda)$ are sufficiently small, instead of \eqref{eqn:cone-map-concl1}, \eqref{eqn:cone-map-concl2} we can arrange $\phi$ to instead satisfy:
\begin{gather}
\text{$\phi$ is a $C^2$ diffeomorphism $A_{1,0} \to A_{1,0}$}, \quad \phi(M_1) = M_2, \quad |\phi(x)| = |x|,  \label{eqn:c1alpha-1} \\
\phi \text{ extends in a $C^{1,\alpha}$ fashion to $B_1$, for some $\alpha(\bC) > 0$}, \label{eqn:c1alpha-2} \\
\text{$\phi|_{A_{1, 1/8}}$ is a $(\bC, \eps)$-map}. \label{eqn:c1alpha-3}
\end{gather}
By using this $\phi$ in the proof of Theorem \ref{thm:param} instead of the $\phi$ generated by Lemma \ref{lem:cone-map}, one can get the map in Theorem \ref{thm:param} to be globally $C^{1,\alpha}$.

We outline how to do this.  \cite{AllAlm} (see also \cite[Lemma 1]{Simon1}) implies that for such $\bC$ there are constants $\alpha(\bC), \delta(\bC) \in (0, 1)$ so that if $M \in \cM_7(B_1, g)$, $M \cap (A_{1, 0}, g)$ is a $(\bC, 1, \delta)$-strong-cone region, and $|g - \geucl|_{C^3(B_1)} \leq \delta$, then there is a $\bC' \in \cC(\bC)$ being a rotation of $\bC$ such that
\[
M \cap A_{1, 0} = G_{\bC'}(u) \cap A_{1, 0}, \quad |x|^{-1} |u| + |\nabla u| + |x| |\nabla^2 u| \leq c(\bC) \delta |x|^\alpha,
\]
for some $C^2$ function $u : \bC' \cap A_{1, 0} \to \bC'^\perp$.

Now take $M_1, M_2$ satisfying the hypotheses of Lemma \ref{lem:cone-map} with this $\bC$, and $\beta(\bC,\eps)$, $\eps(\Lambda)$ to be chosen.  Combining the previous paragraph with Lemma \ref{lem:cone-map}, provided $\beta(\bC,\eps)$ is small we deduce there are $\bC_1, \bC_2$ rotations of $\bC$ and $C^2$ diffeomorphisms $\phi_1, \phi_2 : A_{1, 0} \to A_{1, 0}$, so that $\phi_i(\bC_i \cap A_{1,0}) = \spt M_i \cap A_{1,0}$, $|D\phi_i|_x - Id| \leq c(\bC) \beta |x|^\alpha$, and $\phi_i|_{A_{1, 0}}$ is a $(\bC_i, \eps)$-map.  In particular each $\phi_i$ extends to a $C^{1,\alpha}$ map on $B_1$ with $D\phi_i|_0 = Id$.  Let $P$ be any rotation taking $\bC_1$ to $\bC_2$, with $|P - Id| \leq c(\bC) \beta$.

On the other hand, we can also apply Lemma \ref{lem:cone-map} directly to obtain a $C^2$ diffeomorphism $\phi$ satisfying \eqref{eqn:cone-map-concl1}, \eqref{eqn:cone-map-concl2}.  Since $d_H(\bC_i \cap \del B_1, \bC \cap \del B_1) \leq c(\bC) \beta$, using Theorem \ref{thm:cones} and Lemma \ref{lem:glue}, ensuring $\eps(\Lambda)$ and $\beta(\Lambda, \eps)$ are sufficiently small, we can obtain a $C^2$ diffeomorphism $\psi : A_{1, 0} \to A_{1, 0}$ satisfying
\[
\psi|_{A_{1, 1/8}} = \phi, \quad \psi_{A_{1/16, 0}} = \phi_2 \circ P \circ \phi_1^{-1}, \quad |\psi(x)| = |x|, \quad |D\psi| \leq c(\bC).
\]
This $\psi$ will satisfy \eqref{eqn:c1alpha-1}, \eqref{eqn:c1alpha-2}, \eqref{eqn:c1alpha-3}.

Unfortunately, integrability is in general an open condition, so it's not clear if one can take $\beta$ to depend only on $\Lambda$, rather than the specific integrable cone $\bC$.  Of course, the only \emph{known} stable $7$-dimensional hypercones are the quadratic cones, which are all integrable through rotations, and one could obviously arrange $\beta$ so that the previous argument works with these cones.  In fact I believe there are no known minimal hypercones which are not integrable through rotations.  At any rate it would be interesting to understand this better.
\end{remark}

\begin{lemma}[Gluing Lemma]\label{lem:glue}
Given $\Lambda, \eps > 0$, $\sigma \in (0, 10^{-2}]$, there are constants $\delta_1(\Lambda, \sigma)$, $\delta'_1(\Lambda, \eps, \sigma)$, $\eps_1(\Lambda) \leq \eps_0/4$ so that provided $\eps \leq \eps_1$ the following holds.  Let $\bC \in \cC_\Lambda$, $m \in \N$.  For $i = 1, 2, j = 1, \ldots, m$, take $C^1$ functions $u_{ij} : \bC \cap A_{1+\sigma, 1-4\sigma} \to \R$ satisfying
\[
|u_{ij}|_{C^1} \leq \eps/10, \quad u_{i1} < u_{i2} < \ldots < u_{im} ,
\]
and then set $M_i = \cup_{j=1}^m G_\bC(u_{ij} \nu_\bC) \cap A_{1+\sigma, 1-4\sigma}$.

Let $O_1, O_2 \supset A_{1+\sigma, 1-4\sigma}$ be open sets in $\R^8$.  For $l = 1, 2$, let $T_l : O_l \to \R^8$, $\phi_l : O_l \to O_l$ be $C^1$ diffeomorphisms such that
\begin{gather*}
|T_l - id|_{C^1(O_l)} \leq \delta'_1, \quad T_l(O_l) \supset A_{1+\sigma, 1-4\sigma}, \\
T_l(\phi_l(M_1 \cap O_l)) = M_2 \cap T_l(O_l), \quad \phi_l = id \text{ outside } B_{\eps/10}(\bC) .
\end{gather*}
Assume additionally there are smooth vector fields $X_l$ on $O_l \cap B_{2\eps}(\bC)$ satisfying
\[
|X_l - X_\bC|_{C^1(B_{2\eps}(\bC) \cap O_l)} \leq \delta_1,
\]
so that each $\phi_l$ preserves the flowlines of $X_l$.

Define
\begin{gather*}
O = (O_1 \setminus B_{1-2\sigma}) \cup (O_2 \cap B_{1-\sigma}), \\
O' = (T_1(O_1) \setminus B_{1-2\sigma}) \cup (T_2(O_2) \cap B_{1-\sigma}).
\end{gather*}
Then we can find a $C^1$ diffeomorphism $h : O \to O'$ with the properties that:
\begin{gather*}
h|_{O \setminus B_{1-0.5\sigma}} = T_1 \phi_1, \quad h|_{O \cap B_{1-2.5\sigma}} = T_2 \phi_2, \\
h(M_1 \cap O) = M_2 \cap O', \quad |Dh| \leq c(\Lambda, \sigma, M_1, |D\phi_1|_{C^1(O_1)}, |D\phi_2|_{C^1(O_2)}).
\end{gather*}
If the $u_{ij}$, $T_l$, $\phi_l$ are $C^k$, then $h$ is $C^k$ also.
\end{lemma}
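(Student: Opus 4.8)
\textbf{Proof proposal for Lemma \ref{lem:glue}.}

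The plan is to interpolate between the two diffeomorphisms $T_1\phi_1$ and $T_2\phi_2$ using a radial cutoff, exploiting the fact that both maps differ from the identity only inside the conical neighborhood $B_{2\eps}(\bC)$, and that inside this neighborhood both $\phi_l$ move points along flowlines of vector fields $X_l$ which are $C^1$-close to the fixed $X_\bC$. First I would record the structural description of each composed map $T_l\circ\phi_l$: since $|T_l-\mathrm{id}|_{C^1}\le\delta_1'$, away from $B_{\eps/10}(\bC)$ we have $T_l\phi_l=T_l$, a small perturbation of the identity; inside $B_{2\eps}(\bC)$, $\phi_l$ is captured by a single real-valued ``flow-time'' function $t_{\phi_l}$ via $\phi_l(x)=F_{X_l}(x,t_{\phi_l}(x))$. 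The key observation (the one the paper flags as the whole point of this section) is that two maps which each move points along flowlines of nearby vector fields, and which agree outside a small set, can be isotoped together by linearly interpolating the flow-times: define, on $B_{2\eps}(\bC)\cap O$, the vector field $X = \xi\, X_2 + (1-\xi)\,X_1$ where $\xi=\xi(|x|)$ is a smooth cutoff with $\xi\equiv 1$ on $B_{1-2.5\sigma}$ and $\xi\equiv 0$ outside $B_{1-0.5\sigma}$ (so $|\xi'|\le c/\sigma$); then $X$ is still $C^1$-close to $X_\bC$ provided $\delta_1(\Lambda,\sigma)$ is small, so its flow $F_X$ is well-defined and $F_X(x,\cdot)$ stays in the relevant neighborhood for $|t|$ up to order $\eps$.

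Next I would define the candidate map $h$. Outside $B_{2\eps}(\bC)$ (where everything is a small $C^1$-perturbation of the identity, living in the transition region), set $h(x) = T_1(x) + \xi(|x|)\big(T_2(x)-T_1(x)\big)$ — a straight convex combination, which is a $C^1$ diffeomorphism onto its image because $|T_l-\mathrm{id}|_{C^1}\le\delta_1'$ with $\delta_1'(\Lambda,\eps,\sigma)$ small relative to $\sigma$. Inside $B_{2\eps}(\bC)$, set $h(x) = T\big(F_X(x,\,t(x))\big)$ where $t(x) = \xi(|x|)\,t_{\phi_2}(x) + (1-\xi(|x|))\,t_{\phi_1}(x)$ and $T = T_1 + \xi(T_2-T_1)$ as before. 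On the overlap of these two prescriptions (near $\partial B_{2\eps}(\bC)$, where $\phi_l=\mathrm{id}$ and hence $t_{\phi_l}=0$, so $F_X(x,t(x))=x$) the two formulas agree, so $h$ is globally $C^1$. On $O\setminus B_{1-0.5\sigma}$ we have $\xi=0$ so $h=T_1\phi_1$; on $O\cap B_{1-2.5\sigma}$ we have $\xi=1$ so $h=T_2\phi_2$; these give the boundary-matching conclusions. The image condition $h(O)=O'$ follows by tracking that $h$ agrees with $T_1\phi_1$ resp.\ $T_2\phi_2$ near the respective portions of $\partial O$.

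Then I would verify $h(M_1\cap O)=M_2\cap O'$. This is where one uses that $M_1,M_2$ are unions of $m$ conical graphs $G_\bC(u_{ij}\nu_\bC)$, that each $T_l\phi_l$ takes $M_1\cap O_l$ to $M_2$, and crucially that flowing along $X_\bC$ (or anything $C^1$-close) preserves the radial coordinate $|x|=r$, so on each sphere $\partial B_r$ the maps restrict to flows moving the $m$ ordered points $u_{1j}(x)\nu_\bC$ to $u_{2j}(x)\nu_\bC$. Because $t_{\phi_l}$ is determined on $M_1$ by the requirement of carrying the $j$-th sheet of $M_1$ to the $j$-th sheet of $M_2$, and this requirement is \emph{independent of $l$} (it only references $u_{1j},u_{2j}$, not the ambient extension), the interpolated flow-time $t(x)=\xi t_{\phi_2}+(1-\xi)t_{\phi_1}$ restricted to $M_1$ still carries sheet $j$ to sheet $j$: on $M_1$, $t_{\phi_1}(x)=t_{\phi_2}(x)$, so $t(x)$ equals this common value regardless of $\xi$. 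Hence $h(M_1\cap O)\subset M_2$, and surjectivity onto $M_2\cap O'$ follows from invertibility of $h$. Finally, the Lipschitz bound: on the ``$T$-interpolation'' piece $|Dh|$ is controlled by $1+c\delta_1'/\sigma$, absolutely bounded; on the ``flow'' piece, $|Dh|$ is controlled by $|DT|$, by $|DF_X|$ (bounded in terms of $|DX|\le c(\Lambda)/\eps$ and the flow time $\le c\eps$, hence absolutely bounded in $\Lambda$), and by $|Dt|$, which involves $|D t_{\phi_l}|$ — and $|D t_{\phi_l}|$ is comparable to $|D\phi_l|$ near the cone (this is the quantity that, like in Lemma \ref{lem:cone-map}, can blow up as sheets pinch, hence the dependence of the bound on $|D\phi_l|_{C^1(O_l)}$), plus the $c/\sigma$ from $\xi'$. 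The higher regularity statement ($C^k$ inputs give $C^k$ output) is immediate from the explicit formulas.

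The main obstacle I expect is making the two piecewise definitions of $h$ genuinely agree to first order on the overlap region $\{|x|\in(1-2.5\sigma,1-0.5\sigma)\}\cap\partial B_{2\eps}(\bC)$ — i.e.\ checking that the ``convex combination of the $T_l$'s'' formula and the ``$T\circ F_X(\cdot,t(\cdot))$'' formula patch $C^1$-smoothly, which requires that on $B_{2\eps}(\bC)\setminus B_{\eps/10}(\bC)$ both $\phi_l$ are exactly the identity (so $t_{\phi_l}\equiv 0$ there, not merely small) and hence $F_X(x,t(x))=x$ identically on that shell; one must set up the neighborhoods so this ``collar'' is nonempty, which is why $\eps/10<2\eps$ matters. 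A secondary technical point is confirming $F_X(x,t(x))$ stays inside the common domain $O_1\cap O_2$ (or at least inside where both $X_l$ are defined) for all relevant $x$, which follows from $\eps\le\eps_1(\Lambda)$ small and the flow-time bound $|t|\le c\eps$, together with the estimate \eqref{eqn:GC2}-type control on how far flowing moves a point from $\bC$.
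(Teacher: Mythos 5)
Your overall architecture matches the paper's: interpolate $T_1,T_2$ by a radial cutoff, interpolate the vector fields $X_1,X_2$ into a single field $X$ still $C^1$-close to $X_\bC$, and use the flowline structure to splice $\phi_1$ and $\phi_2$ inside the conical tube. But the central step — the \emph{exact} identity $h(M_1\cap O)=M_2\cap O'$ — breaks in your scheme. You write $\phi_l(x)=F_{X_l}(x,t_{\phi_l}(x))$, set $t=\xi t_{\phi_2}+(1-\xi)t_{\phi_1}$, define $h=T\circ F_X(\cdot,t(\cdot))$, and justify sheet-matching by claiming that on $M_1$ one has $t_{\phi_1}=t_{\phi_2}$. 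That claim is false for three separate reasons: (i) $\phi_l$ carries $M_1$ to $T_l^{-1}(M_2)$, not to $M_2$, and $T_1^{-1}(M_2)\neq T_2^{-1}(M_2)$ in general, so the two maps do not even have the same target on each flowline; (ii) the flow-times are measured along \emph{different} vector fields $X_1\neq X_2$, so even reaching the same target point would take different times; and (iii) most fundamentally, even if $t_{\phi_1}(x)=t_{\phi_2}(x)=t_0$ held on $M_1$, the point $F_X(x,t_0)$ lies on the flowline of the \emph{interpolated} field, which is a different curve from the flowlines of $X_1$ or $X_2$, so it need not lie on $T^{-1}(M_2)$ at all. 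Your construction therefore maps $M_1$ only to something $O(\delta_1+\delta_1')$-close to $M_2$, and the lemma demands exact equality — approximate matching cannot be repaired by a perturbation argument here because the whole point of the gluing is to produce a genuine diffeomorphism taking one surface onto the other.

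The paper's proof avoids this by one extra layer: it uses the flow of the single interpolated field $X$ to build Fermi-type coordinates $G(x',t)=F_X(x',t)$ over $\bC$, in which $M_1$ and $T^{-1}(M_2)$ become exact graphs $\{t=u_j^G(x')\}$ and $\{t=u_j^T(x')\}$. In these coordinates each of $T^{-1}T_l\phi_l$ is a fiberwise diffeomorphism $t\mapsto t_l(x',t)$ satisfying $t_l(x',u_j^G)=u_j^T$ exactly, and one also has the canonical fiberwise map $f$ from Lemma \ref{lem:monof} with the same exact interpolation property. Convex combinations (in the $x'$-variable) of increasing functions of $t$ that all send $u_j^G(x')$ to $u_j^T(x')$ are still increasing and still send $u_j^G$ to $u_j^T$ — this is what makes the spliced map carry $M_1$ onto $M_2$ on the nose. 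If you want to salvage your write-up, replace the interpolation of flow-times $t_{\phi_l}$ by the interpolation of the fiberwise representations $t_l$ (computed in the coordinates of the \emph{common} field $X$), routed through the model map $f$ of Lemma \ref{lem:monof} in the middle zone; the rest of your argument (the collar where $\phi_l=\mathrm{id}$, the $C^1$ patching, the Lipschitz bookkeeping with its dependence on sheet separation) then goes through essentially as you describe.
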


\begin{remark}
Of course the same Lemma holds true if $X_\bC$ is replaced with $-X_\bC$.
\end{remark}

\begin{proof}
Take $\eps \leq \eps_0/4$.  For ease of notation write $U = (\bC \cap A_{1, 1-3\sigma}) \times (-\eps, \eps)$.  Fix $\xi(t)$ to be a smooth, increasing function satisfying
\[
\xi|_{(-\infty, 1-2\sigma]} = 0, \quad \xi|_{[1-\sigma, \infty)} = 1, \quad \xi' \leq 10/\sigma.
\]
Define $T : A_{1+\sigma, 1-4\sigma} \to \R^8$ by $T(x) = \xi(|x|) T_1(x) + (1-\xi(|x|)) T_2(x)$.  Then for $\delta'(\sigma)$ sufficiently small, $T$ is a $C^1$ diffeomorphism onto its image satisfying
\[
|T - id|  \leq \delta', \quad |DT - Id| \leq c(\sigma) \delta'.
\]
Observe that from our hypotheses on $\phi_l$ and our construction of $T$, we have
\begin{gather}\label{eqn:glue1}
T_1 \phi_1 = T \text{ on }  A_{1, 1-\sigma} \setminus B_{\eps/10}(\bC), \quad T_2 \phi_2 = T \text{ on }  A_{1-2\sigma, 1-3\sigma} \setminus B_{\eps/10}(\bC) .
\end{gather}

Define on $B_{2\eps}(\bC) \cap A_{1+\sigma, 1-4\sigma}$ the smooth vector field
\[
X(x) = \xi(|x|) X_1(x) + (1-\xi(|x|)) X_2(x).
\]
Then we have
\[
|X - X_\bC|_{C^1(B_{2\eps}(\bC) \cap A_{1+\sigma, 1-4\sigma})} \leq c(\sigma) \delta.
\]
By the above and \eqref{eqn:X-prop}, provided $\delta(\sigma)$ is sufficiently small, the flow $F_X(x', t)$ of $X$ exists for all $|t| < \eps$ and $x' \in \bC \cap A_{1,1-3\sigma}$.

Define $G : U \to \R^8$ by $G(x', t) = F_X(x', t)$.  Then, ensuring $\eps(\Lambda)$ and $\delta(\Lambda, \sigma)$ are small, $G$ is a smooth diffeomorphism onto its image satisfying:
\begin{gather}
|G - G_\bC| \leq c \delta |t|, \quad |DG - DG_\bC| + |D (G^{-1} \circ G_\bC) - Id| \leq c \delta, \label{eqn:glue2} \\
(1-c \delta) |x'| \leq |G(x', t)| \leq (1+c\delta)|x'| \label{eqn:glue3} \\
0.98|t| \leq d(G(x', t), \bC) \leq 1.02|t|, \label{eqn:glue4}
\end{gather}
for $c = c(\Lambda, \sigma)$, and
\begin{gather}\label{eqn:glue5}
B_{0.9\eps}(\bC) \cap A_{1-0.1\sigma, 1-2.9\sigma} \subset G(U) \subset B_{1.1\eps}(\bC) \cap A_{1+0.1\sigma, 1-3.1\sigma},
\end{gather}
Similarly, ensuring $\delta'(\eps, \sigma)$ is small, then the diffeomorphism $TG \equiv T \circ G$ satisfies (with $c = c(\Lambda, \sigma)$)
\begin{gather}
|T G - G_\bC| \leq c (\delta|t| + \delta'), \quad | D(T G) - G_\bC| + | D( G^{-1} T^{-1} G_\bC) - Id| \leq c (\delta + \delta') \label{eqn:glue6} \\
B_{0.8\eps}(\bC) \cap A_{1-0.2\sigma, 1-2.8\sigma} \subset (T G)(U) \subset B_{1.2\eps}(\bC) \cap A_{1+0.2\sigma, 1-3.2\sigma}. \label{eqn:glue7}
\end{gather}

By considering the maps $G^{-1} G_\bC$ and $G^{-1} T^{-1} G_\bC$, ensuring $\delta'(\eps, \Lambda, \sigma)$, $\delta(\Lambda, \sigma)$ are small, then \eqref{eqn:glue2}-\eqref{eqn:glue7} imply we can find $C^1$ functions $u^G_j , u^T_j : U \to \R$ ($j = 1, \ldots, m$) satisfying
\begin{gather}
M_1 \cap G(U) = \cup_{j=1}^m \{ G(x', u^G_j(x')) : x' \in U \} , \nonumber \\
|u^G_j| \leq (1+c \delta) \eps/10 \leq \eps/9, \quad |u^G_j|_{C^1(U)} \leq c(\beta + \delta) \leq 1 , \label{eqn:glue8} \\ 
u^G_1 < u^G_2 < \ldots < u^G_m ,  \nonumber 
\end{gather}
and
\begin{gather}
M_2 \cap TG(U) = \cup_{j=1}^m \{ TG(x', u^T_j(x')) : x' \in U \} , \nonumber \\
|u^T_j| \leq (1+c\delta) \eps/10 + c \delta' \leq \eps/9, \quad |u^T_j|_{C^1(U)} \leq c( \beta + \delta + \delta') \leq 1, \label{eqn:glue9} \\
u^T_1 < u^T_2 < \ldots < u^T_m, \nonumber
\end{gather}
where $c = c(\Lambda, \sigma)$.

By \eqref{eqn:glue1}, \eqref{eqn:glue3} (taking $\delta(\Lambda, \sigma)$ sufficiently small), \eqref{eqn:glue4}, and our hypotheses on $T_l$, $\phi_l$, we can define $C^1$ functions $t_1 : \bC \cap A_{1, 1-0.9\sigma} \times (-\eps, \eps) \to \R$, $t_2 : \bC \cap A_{1-2.1\sigma, 1-3\sigma} \to \R$ by
\[
(G^{-1} \circ T^{-1} \circ T_l \circ \phi_l \circ G)(x', t) = (x', t_l(x', t)) .
\]
The $t_l$ have the properties that: for each $x'$, $t_l(x', \cdot)$ is a diffeomorphism of $(-\eps, \eps)$ coinciding with $id$ for $|t| > \eps/9$; $t_l(x', u_j^G(x')) = u_j^T(x')$; and
\begin{gather}\label{eqn:glue10}
|\nabla t_l| + |\del_t t_l| \leq c(\Lambda, \sigma, |\phi_l|_{C^1(O_l)}).
\end{gather}

Let $\eta_1, \eta_2 : \R \to \R$ be a smooth functions satisfying
\begin{gather*}
\eta_1|_{(-\infty, 1-2.3\sigma]} \equiv 1, \quad \eta_1|_{[1-2.2\sigma, \infty)} \equiv 0, \quad 0 \leq -\eta_1' \leq 100/\sigma \\
\eta_2|_{(-\infty, 1-0.8\sigma]} \equiv 1, \quad \eta_2|_{[1-0.7\sigma, \infty)} \equiv 0, \quad 0 \leq -\eta_2' \leq 100/\sigma .
\end{gather*}
Define $f : U \to \R$ by
\[
f(x', t) = f_{ \{u_j^G(x')\}_j, \{ u_j^T(x')\}_j}(t)
\]
where $f_{\{a_j\}_j, \{b_j\}_j}(t)$ is the function from Lemma \ref{lem:monof} with $\eps/9$ in place of $\eps$, and $m$ in place of $k$.  Define $s : U \to \R$ by
\begin{gather*}
s(x', t) = \left\{ \begin{array}{l l}
 \eta_1(|x'|) t_2(x', t) + (1-\eta_1(|x'|)) f(x', t) &  1-3\sigma < |x'| < 1-2.2\sigma \\
 f(x', t) & 1-2.2\sigma \leq |x'| < 1-0.8\sigma \\
 \eta_2(|x'|) f(x', t) + (1-\eta_2(|x'|)) t_1(x', t) & 1-0.8\sigma \leq |x'| < 1 \end{array} \right. .
\end{gather*}
Then $s$ is $C^1$ (or $C^k$ if the $\phi_l$, $T_l$, $u_{ij}$ are $C^k$), and $s(x', \cdot)$ is a diffeomorphism of $(-\eps, \eps)$ for every $x'$, and $s(x',t)$ enjoys the properties:
\begin{gather*}
s = t_1 \text{ if } |x'| > 1-0.7\sigma, \quad s = t_2 \text{ if } |x'| < 1-2.3\sigma ,  \\
s(x', t) = t \text{ if } |t| \geq \eps/4 \\
s(x', u_j^G(x')) = u_j^T(x') \text{ for every } j = 1, \ldots, m.
\end{gather*}
If we let
\[
D = \inf \{ u_j^G(x') - u_{j'}^G(x') : j < j', x' \in \bC \cap A_{1-0.5\sigma, 1-2.5\sigma} \} > 0,
\]
then by \eqref{eqn:glue8}, \eqref{eqn:glue9}, \eqref{eqn:glue10} and Lemma \ref{lem:monof} we get
\[
|\nabla s| + |\del_t s|  \leq c(\Lambda, \sigma, D, |\phi_1|_{C^1(O_1)}, |\phi_2|_{C^1(O_2)}) < \infty.
\]

Let us define $g : U \to U$ by $g(x', t) = (x', s(x', t))$.  Trivially $g$ is a $C^1$ (or $C^k$) diffeomorphism of $U$.  We then define $h : O \to O'$ by 
\begin{align*}
h(x) = \left\{ \begin{array}{l l}
(T\circ G \circ g \circ G^{-1})(x) & x \in B_{0.9\eps}(\bC) \cap A_{1-0.1\sigma, 1-2.9\sigma} \\
T(x) & x \in A_{1-0.1\sigma, 1-2.9\sigma} \setminus B_{0.9\eps}(\bC) \\
T_1\phi_1(x) & x \in O \setminus B_{1-0.2\sigma} \\
T_2\phi_2(x) & x \in O \cap B_{1-2.8\sigma}  \end{array} \right. .
\end{align*}
It follows by \eqref{eqn:glue2}-\eqref{eqn:glue5} and our definition of $g$ that $\phi$ is well-defined, and satisfies the requirements of the Lemma.
\end{proof}

We are now set up to prove Theorem \ref{thm:param}.
\begin{proof}[Proof of Theorem \ref{thm:param}]
We prove Theorem \ref{thm:param} by induction on $N$ and by contradiction.  Suppose either $N = 1$, or by inductive hypothesis the Lemma holds with $N-1$ in place of $N$.  Suppose, towards a contradiction, the Lemma fails, for $\beta, \gamma, \eps$ for the moment arbitrarily fixed, but to be chosen later (we note that there is no loss in assuming $\eps(\Lambda, \sigma)$ is as small as we like).  Then there is a finite collection $\cS$ of $(\Lambda, \sigma, \gamma)$-smooth models, a set $\cG$ of $C^3$ metrics on $B_1$, sequences $g_i \in \cG$, $M_i \in \cM_7(B_1, g_i)$, $\bC_i \in \cC_\Lambda$, and an $r < 1$, so that each $M_i \cap B_1$ admits a $(\Lambda, \beta, \cS, N)$-strong-cone decomposition, but for every $j > i$ there is no local bi-Lipschitz map $\phi_{ij} : B_1 \to B_1$ satisfying $Lip(\phi_{ij}|_{B_r}) \leq j$, and making Theorem \ref{thm:param}(\ref{item:param1}),(\ref{item:param2}),(\ref{item:param4}),(\ref{item:param5}) true with $\phi_{ij}$, $M_i$, $\bC_i$, $M_j$ in place of $\phi$, $M_v$, $\bC_v$, $M$ (respectively).  Note that if we replace $i$ with any subsequence of $i$, then this new sequence satisfies the same contradiction hypotheses.

By hypothesis, for every $i$ there is a collection of at most $N$ smooth regions and strong-cone regions which fit together as per Definition \ref{def:decomp}.  After passing to a subsequence (and apply Theorem \ref{thm:cones}), we can assume that for all $i$ we have either: $M_i \cap (B_1, g_i)$ is a $(S, \beta)$-smooth region for some fixed smooth model $(S, \bC, \{(\bC_\alpha, B_{r_\alpha}(y_\alpha))\}_\alpha) \in \cS$; or $M_i \cap (A_{1, \rho_i}, g_i)$ is a $(\bC, m, 2\beta)$-strong-cone region for some $\bC \in \cC_\Lambda$, $m \in \N$, $\rho_i \leq 1/2$.

We treat each case separately, but observe first that by definition of cone-/smooth-region, we have for each $i$:
\begin{gather*}
d_H(M_i \cap A_{1, 1-\sigma}, \bC \cap A_{1, 1-\sigma}) \leq c(\Lambda)(\beta + \gamma) \leq \eps, \\
\sing M_i \subset B_{1-\sigma},
\end{gather*}
provided we take $\beta(\Lambda, \sigma, \eps), \gamma(\Lambda, \sigma, \eps)$ sufficiently small.  So Theorem \ref{thm:param}(\ref{item:param5}) holds for $M_i$, $\bC$ in place of $M$, $\bC_v$ (resp.) for all $i$.

\textbf{Case 1:} Every $M_i \cap (B_1, g_i)$ is a $(S, \beta)$-smooth region.  Let $O = B_1 \setminus \cup_\alpha B_{r_\alpha/4}(y_\alpha)$.  Taking $\gamma(\Lambda, \sigma, \eps)$, $\beta(\Lambda, \eps)$ sufficiently small, we can apply Lemma \ref{lem:smooth-map} to obtain a $C^2$ diffeomorphism $\psi_i : O \to O$ satisfying
\begin{gather*}
\psi_i(\overline{M_1} \cap O) = \overline{M_i} \cap O, \quad |D\psi_i|_{C^0(O \cap B_r)} \leq c(\Lambda, \sigma, M_1, r) \text{ for any } r < 1 \\
\psi_i|_{A_{1,1-\sigma}} \text{ is a $(\bC, \eps)$-map}, \\
\psi_i|_{A_{r_\alpha, r_\alpha/4}(y_\alpha)} \text{ is a $(\bC_\alpha, \eps)$-map, for each $\alpha$.}
\end{gather*}
If $\{\alpha\} = \emptyset$, then $O = B_1$, and we deduce a contradiction.  This proves Case 1 when $N = 1$.

By our inductive hypothesis, passing to a further subsequence as necessary, we can find for every $\alpha$ a $g_\alpha \in \cG$, $\hat M_\alpha \in \cM_7(B_1, g_\alpha)$, $\hat \bC_\alpha \in \cC_\Lambda$, $x_{\alpha i} \in B_{\beta r_\alpha}(y_\alpha)$, $r_{\alpha i} \in (\frac{1}{2}r_\alpha, (1+\beta)r_\alpha)$, and local bi-Lipschitz maps $\hat\phi_{\alpha i} : B_1 \to B_1$, so that Theorem \ref{thm:param}(\ref{item:param1})-(\ref{item:param5}) hold with $\hat \phi_{\alpha i}$, $\hat M_\alpha$, $\hat \bC_\alpha$, $\eta_{x_{\alpha i}, r_{\alpha i}}(M_i)$, $C_{\cS, N-1, \cG}$ in place of $\phi$, $M_v$, $\bC_v$, $M$, $C_{\cS, N, \cG}$ (respectively).  We can moreover assume that the $\{r_{\alpha i}\}_i$ converge, and hence assume $1-\beta \leq \frac{r_{\alpha i}}{r_{\alpha 1}} \leq 1 + \beta$ for for all $i$.

By virtue of $M_i \cap (B_1, g_i)$ being a smooth region, if we ensure $\gamma(\Lambda)$, $\beta(\Lambda)$ are sufficiently small and recall that $\eps_S \leq \min_\alpha r_\alpha$, then we can write
\begin{equation}\label{eqn:param1}
M_i \cap A_{2r_\alpha, r_\alpha/4}(y_\alpha) = \cup_{j=1}^m G_{y_\alpha + \bC_\alpha}(u_{\alpha i j}) \cap A_{2r_\alpha, r_\alpha/8}(y_\alpha)
\end{equation}
for $C^2$ functions $u_{\alpha i j} : (y_\alpha + \bC_\alpha) \cap A_{2r_\alpha, r_\alpha/8}(y_\alpha) \to \bC_\alpha^\perp$ satisfying
\begin{equation}\label{eqn:param2}
r_\alpha^{-1} |u_{\alpha i j}| + |\nabla u_{\alpha i j}| + r_\alpha |\nabla^2 u_{\alpha i j}| \leq c(\Lambda)(\gamma + \beta) \leq \eps_1(\Lambda)/100
\end{equation}
where $\eps_1$ as in Lemma \ref{lem:glue}.

Ensure $\beta \leq 10^{-3} \sigma$, and let $\sigma' = 10^{-2} \sigma$, $r_{\alpha}' = \frac{1-2\sigma'}{1+5\sigma'} \min\{ r_\alpha, r_{\alpha 1} \}$.  Then we have
\[
A_\alpha := A_{(1+\sigma')r_\alpha', (1-4\sigma')r_\alpha'}(y_\alpha) \subset A_{r, (1-\sigma)r}(z),
\]
for any $z \in B_{\beta r_\alpha}(y_\alpha)$ and any $r \in ((1-10\beta) r_{\alpha 1}, (1+10\beta) r_{\alpha 1})$.  Note in particular this holds for $z = x_{\alpha i}$ and $r = r_{\alpha i}$.

From \eqref{eqn:param1},\eqref{eqn:param2}, Theorem \ref{thm:param}(\ref{item:param5}), and the bounds $|y_\alpha - x_{\alpha i}| \leq \beta r_\alpha$, $\frac{1}{2} \leq \frac{r_{\alpha i}}{r_\alpha} \leq \frac{3}{2}$, we get
\[
d_H(\bC_\alpha \cap B_1, \hat \bC_\alpha \cap B_1) \leq c(\Lambda)(\gamma + \beta + \eps).
\]
Recalling that $\eps_1 \leq \eps_0/4$ (for $\eps_1(\Lambda)$ as in Lemma \ref{lem:glue}, $\eps_0(\Lambda)$ as in the earlier discussion in this Section), and ensuring $\gamma, \beta, \eps$ are sufficiently small (depending only on $\Lambda, \sigma$), we deduce that both $X_{\hat \bC_\alpha}(x - x_{\alpha 1})$ and $X_{\bC_\alpha}(x - y_\alpha)$ are defined for $x \in B_{2\eps_1 r'_\alpha}(y_\alpha + \bC_\alpha) \cap A_\alpha$ and satisfy
\begin{align*}
&| X_{\hat \bC_\alpha}(x - x_{\alpha 1}) - X_{\bC_\alpha}(x - y_\alpha)| \label{eqn:param3} \\
&\quad + r_\alpha' |D X_{\hat \bC_\alpha}(x - x_{\alpha 1}) - D X_{\bC_\alpha}(x - y_\alpha)| \leq \delta_1(\Lambda, \sigma'). \nonumber
\end{align*}
Here $\delta_1(\Lambda, \sigma')$ are the constant from Lemma \ref{lem:glue}.  Finally, if we set $T_{\alpha i} = \eta_{x_{\alpha i}, r_{\alpha i}}^{-1} \circ \eta_{x_{\alpha 1}, r_{\alpha 1}}$, then provided we take $\beta(\Lambda, \sigma)$ small we get
\[
r_\alpha'^{-1} |T_{\alpha i} - id| + |DT_{\alpha i} - Id| \leq \delta_1'(\Lambda, \eps_1(\Lambda), \sigma') \text{ on } B_{2r'_\alpha}(y_\alpha),
\]
where $\delta_1'(\Lambda, \eps_1(\Lambda), \sigma')$ as in Lemma \ref{lem:glue}.

Now define the local bi-Lipschitz maps $\phi_{\alpha i} : B_{r_{\alpha 1}}(x_{\alpha 1}) \to B_{r_{\alpha i}}(x_{\alpha i})$ by
\[
\phi_{\alpha i}(x) = (\eta_{x_{\alpha i}, r_{\alpha i}})^{-1} \circ \hat \phi_{\alpha i} \circ \hat \phi_{\alpha 1}^{-1} \circ \eta_{x_{\alpha 1}, r_{\alpha i}}.
\]
The $\phi_{\alpha i}$ restrict to $C^2$ diffeomorphisms $A_{r_{\alpha 1}, (1-\sigma)r_{\alpha 1}}(x_{\alpha 1}) \to A_{r_{\alpha i}, (1-\sigma)r_{\alpha i}}(x_{\alpha i})$, and map $\spt M_1 \cap B_{r_{\alpha 1}}(x_{\alpha 1})$ to $\spt M_i \cap B_{r_{\alpha i}}(x_{\alpha i})$, and admit the bound $\Lip(\phi_{\alpha i}|_{B_{(1+\sigma')r_\alpha'}(y_\alpha)}) \leq C_{\cS, N-1, \cG}(\frac{1+\sigma'}{1+5\sigma'})$.  Moreover, from Theorem \ref{thm:param}(\ref{item:param4}) and our definition of $A_\alpha$ we have that
\begin{gather*}
T_{\alpha i}^{-1} \phi_{\alpha i} |_{A_\alpha} \text{ preserves the flow lines of $X_{\hat \bC_\alpha}(\cdot - x_{\alpha 1})$} , 
\end{gather*}
and
\begin{align*}
T_{\alpha i}^{-1} \phi_{\alpha i} |_{A_\alpha} = id \text{ outside } &B_{\eps r_\alpha}(x_{\alpha 1} + \hat \bC_\alpha) \\
&\subset B_{c(\Lambda)(\gamma + \beta + \eps) r_\alpha}(y_\alpha + \bC_\alpha) \\
&\subset B_{\eps_1(\Lambda) r_\alpha'/10}(y_\alpha + \bC_\alpha)
\end{align*}
provided $\gamma, \beta, \eps$ are sufficiently small (depending only on $\Lambda, \sigma$).

On the other hand, since $A_\alpha \subset A_{r_\alpha, r_\alpha/4}(y_\alpha)$, by construction $\psi_i|_{A_\alpha}$ preserves the flowlines of $X_{\bC_\alpha}(\cdot - y_\alpha)$, and $\psi_i|_{A_\alpha} = id$ outside $B_{\eps r_\alpha}(y_\alpha + \bC_\alpha) \subset B_{\eps_1(\Lambda) r_\alpha'/10}(y_\alpha + \bC_\alpha)$.

The above discussion implies we can apply Lemma \ref{lem:glue} to the maps $\psi_i$, $\phi_{\alpha i}$ at scale $B_{r_\alpha'}(y_\alpha)$ (with $\sigma'$ in place of $\sigma$), to obtain local bi-Lipschitz maps $\phi_i : B_1 \to B_1$ satisfying:
\begin{gather*}
\phi_i|_{B_1 \setminus \cup_\alpha B_{r_\alpha'}(y_\alpha)} = \psi_i , \quad \phi_i|_{B_{(1-3\sigma')r'_\alpha}(y_\alpha)} = \phi_{\alpha i} \\
\phi_i(\overline{M_1} \cap B_1) = \overline{M_i} \cap B_1, \quad \phi_i(\sing M_1) = \sing M_i \\
\phi_i|_{B_1 \setminus \sing M_1} \text{ is a $C^2$ diffeomorphism} \\
\quad \Lip(\phi_i|_{B_r}) \leq c(\Lambda, \sigma, M_1, C_{\cS, N-1, \cG}, r) \text{ for } r < 1 .
\end{gather*}
This is a contradiction, and therefore completes the proof of Case 1.

\textbf{Case 2:} Every $M_i \cap (A_{1,\rho_i}, g_i)$ is a $(\bC, m, 2\beta)$-strong-cone region.  Passing to a subsequence, we can assume that either $\rho_i = 0$ for all $i$, or $\rho_i > 0$ for all $i$.  By the maximum principle, we can write
\begin{gather}\label{eqn:param6}
M_i \cap A_{1, \rho_i/8} = \cup_{j=1}^m G_\bC(u_{ij} \nu_\bC) \cap A_{1,\rho_i}
\end{gather}
for $C^2$ functions $u_{ij} : \bC \cap A_{1,\rho_i/8} \to \R$ satisfying
\begin{gather}\label{eqn:param7}
|x|^{-1}|u_{ij}| + |\nabla u_{ij}| + |x| |\nabla^2 u_{ij}| \leq 2\beta, \quad u_{i1} < u_{i2} < \ldots < u_{im} .
\end{gather}
Let us ensure that $2\beta \leq \min\{ \eps / c(\Lambda), \eps_0(\Lambda)/8\}$ for $c(\Lambda) \geq 4$ any fixed constant sufficiently large so that Remark \ref{rem:extend-lip} applies.

Suppose first that $\rho_i = 0$ for all $i$.  By definition of (strong-)cone-decomposition and Theorem \ref{thm:cones}, necessarily $\theta_\bC(0) > 1$, and so $\sing M_i = \{0\}$.  We can use Lemma \ref{lem:cone-map}, Remark \ref{rem:extend-lip} to obtain bi-Lipschitz maps $\psi_i : B_1 \to B_1$, with $\psi_i = 0$, which restrict to $C^2$ diffeomorphisms $A_{1,0} \to A_{1,0}$, and satisfy
\begin{gather*}
\psi_i(\overline{M_1} \cap B_1) = \overline{M_i} \cap B_1, \quad \Lip(\psi_i) \leq c(\Lambda, M_1) , \\
\psi_i|_{A_{1, 1/8}} \text{ is a $(\bC, \eps)$-map.}
\end{gather*}
This is a contradiction, and proves Case 2 when $N = 1$.

Let us assume now $\rho_i > 0$ for all $i$.  Passing to a further subsequence, by our inductive hypothesis we can find a $\hat g \in \cG$, $\hat M \in \cM_7(B_1, \hat g)$, $\hat \bC \in \cC_\Lambda$, and local bi-Lipschitz maps $\hat \phi_i : B_1 \to B_1$ such that Theorem \ref{thm:param}(\ref{item:param1})-(\ref{item:param5}) hold with $\hat \phi_i$, $\hat M$, $\hat \bC$, $\eta_{0, \rho_i}(M_i)$, $C_{\cS, N-1, \cG}$ in place of $\phi_i$, $M_v$, $\bC_v$, $M$, $C_{\cS, N,\cG}$ (respectively).

Let $\phi_i = \eta_{0, \rho_i}^{-1} \circ \hat \phi_i \circ \eta_{0, \rho_1}$, so that $\phi_i$ are local bi-Lipschitz maps $B_{\rho_1} \to B_{\rho_i}$ satisfying
\begin{gather*}
\phi_i(\overline{M_1} \cap B_{\rho_1}) = \overline{M_i} \cap B_{\rho_i}, \quad \phi_i(\sing M_1 \cap B_{\rho_1}) = \sing M_i \cap B_{\rho_i} \\
\phi_i|_{B_{\rho_1} \setminus \sing M_1} \text{ is a $C^2$ diffeomorphism,} \\
\Lip(\phi_i|_{B_{r\rho_1}}) \leq \rho_1^{-1} C_{\cS, N-1,\cG}(r) \text{ for every $r < 1$, independent of $i$}, \\
\frac{\rho_1}{\rho_i} \phi_i|_{A_{\rho_1, (1-\sigma)\rho_1}} \text{ is a $(\hat\bC, \eps)$-map.}
\end{gather*}

Define $M_{m\bC}$ by
\[
M_{m\bC} = \cup_{j=1}^m G_\bC( \frac{\eps_0 j}{2m} |x| \nu_\bC)\cap A_{\infty, 0},
\]
so that $M_{m\bC}$ looks very close to a multiplicity-$m$ $\bC$.  From Lemma \ref{lem:cone-map}, we can find $C^2$ diffeomorphisms $\hat \psi_i : A_{1, \rho_i/8} \to A_{1, \rho_i/8} $ such that
\begin{gather*}
\hat \psi_i(M_{m\bC} \cap A_{1, \rho_i/8}) = M_i \cap A_{1, \rho_i/8}, \quad |D\hat\psi_i| \leq c(\bC, m) \\
\hat\psi_i|_{A_{1, \rho_i}} \text{ is a  $(\bC, \eps)$-map.}
\end{gather*}
Using the $\hat f$ from Lemma \ref{lem:monof}, we can find smooth increasing functions $f_i : [0, 1] \to [0, 1]$ satisfying
\begin{gather*}
f_i(t) = t \text{ if } t \in [1-2\sigma, 1] , \quad f_i(t) = \frac{\rho_i}{\rho_1} t \text{ if } t \in [0, (1+\sigma)\rho_1], \quad 0 <  f_i' \leq c(\rho_1) .
\end{gather*}
Let $F_i(x) = f_i(|x|) x/|x|$, and then each $F_i$ is a smooth diffeomorphism $B_1 \to B_1$ which fixes $M_{m\bC} \cap B_1$, coincides with the identity in $A_{1, 1-\sigma}$, coincides with $\frac{\rho_i}{\rho_1} id$ in $B_{(1+\sigma)\rho_1}$, and admits the uniform bound $|DF_i| \leq c(\rho_1)$ independent of $i$.

Define $\psi_i = \hat \psi_i \circ F_i \circ \hat \psi_1^{-1}$.  The $\psi_i$ are $C^2$ diffeomorphisms $A_{1, \rho_1/8} \to A_{1, \rho_i/8}$ satisfying:
\begin{gather*}
\psi_i(M_1 \cap A_{1,\rho_1/8}) = M_i \cap A_{1,\rho_i/8}, \quad |D\psi_i|_{C^0(A_{r, \rho_1/4})} \leq c(M_1, r) \text{ for } r < 1, \\
\psi_i|_{A_{1, 1-\sigma}} \text{ is a $(\bC, \eps)$-map},\quad F_i^{-1} \psi_i|_{A_{\rho_1, \rho_1/4}} \text{ is a $(\bC, \eps)$-map.}
\end{gather*}

Let $\sigma' = 10^{-2}\sigma$ and $r' = \frac{1}{1+2\sigma'} \rho_1$, so that $A := A_{(1+\sigma')r', (1-4\sigma') r'} \subset A_{\rho_1, (1-\sigma)\rho_1}$.  By \eqref{eqn:param6}, \eqref{eqn:param7}, and Theorem \ref{thm:param}(\ref{item:param5}), we have
\[
d_H(\hat \bC \cap A, \bC \cap A) \leq 2 (\eps + 2\beta) r',
\]
and hence by Theorem \ref{thm:cones} we can take $\eps(\Lambda, \sigma), \beta(\Lambda, \sigma)$ small to deduce
\begin{gather}\label{eqn:glue11}
|X_\bC - X_{\hat\bC}|_{C^1(B_{2\eps_1}(\bC) \cap A_{1, 1/2})} \leq \delta_1(\Lambda, \sigma') ,
\end{gather}
with $\eps_1$, $\delta_1$ being the constants from Lemma \ref{lem:glue}.

Write $\tilde M_i = \frac{\rho_1}{\rho_i} M_i$.  On $A$, we have that $F_i^{-1} \psi_i = \frac{\rho_1}{\rho_i} \circ \hat \psi_i \circ \frac{\rho_i}{\rho_1} \circ \hat\psi_i^{-1}$, and therefore
\begin{gather}
F_i^{-1} \psi_i(\tilde M_1 \cap A) = \tilde M_i \cap A, \quad |D (F_i^{-1} \psi_i)|_{C^0(A)} \leq c(M_1), \label{eqn:param8} \\
F_i^{-1} \psi_i|_A \text{ preserves the flowlines of $X_\bC$}, \label{eqn:param9} \\
F_i^{-1} \psi_i|_A \equiv id \text{ outside $B_{\eps r'}(\bC) \subset B_{\eps_1 r'/10}(\bC)$}, \label{eqn:param10}
\end{gather}
ensuring $\eps(\Lambda)$ is small.  Likewise, on $A$ we have $F_i^{-1} \phi_i = \rho_1 \circ \hat \phi_i \circ \frac{1}{\rho_1}$, and so \eqref{eqn:param8}, \eqref{eqn:param9}, \eqref{eqn:param10} hold for $F_i^{-1} \phi_i$ in place of $F_i^{-1} \psi_i$ and $C_{\cS, N-1,\cG}(\frac{1+\sigma}{1+2\sigma})$ in place of $c(M_1)$.

In light of \eqref{eqn:param6}, \eqref{eqn:param7} (being scale-invariant), \eqref{eqn:glue11}, and the previous paragraph, we can therefore apply Lemma \ref{lem:glue} to the maps $F_i^{-1} \psi_i$, $F_i^{-1} \phi_i$ at scale $B_{r'}$ (with $\sigma'$ in place of $\sigma$) to obtain a $C^2$ diffeomorphism $h$ on $A_{\rho_1, (1-\sigma)\rho_1}$ taking $\tilde M_1 = M_1$ to $\tilde M_i$, and satisfying
\[
h|_{A_{\rho_1, r'}} = F_i^{-1} \psi_i, \quad h|_{A_{(1-3\sigma')r', (1-\sigma)\rho_1}} = F_i^{-1} \phi_i, \quad |Dh| \leq c(\Lambda, \sigma, M_1, C_1).
\]

If we now define the local bi-Lipschitz maps $\Psi_i : B_1 \to B_1$ by
\[
\Psi_i = \left\{ \begin{array}{l l} \psi_i(x) & x \in A_{1, \rho_1} \\ F_i h (x) & x \in A_{\rho_1, (1-\sigma)\rho_1} \\ \phi_i(x) & x \in B_{(1-\sigma)\rho_1} \end{array}\right. ,
\]
then it follows that Theorem \ref{thm:param}(\ref{item:param1})-(\ref{item:param4}) hold with $\Psi_i$, $M_1$, $\bC$, $M_i$, $c(\Lambda, \sigma, M_1, C_{\cS, N-1,\cG}, r)$ in place of $\phi$, $M_v$, $\bC_v$, $M$, $C_{\cS, N,\cG}$.  This contradicts our choice of $M_i$, and completes the proof of Case 2. \qedhere

\end{proof}

\section{Global finiteness}\label{sec:global}

In this section we prove Theorem \ref{thm:main2}.  Here we shall work in $(N^8, g)$, for $N^8$ a closed Riemannian $8$-manifold with $C^3$ metric $g$.  We need to extend some of our definitions to be adapted to global geometry.  Recall that if $M$ is an embedded $C^2$ hypersurface of $(N, g)$, then $T^{\perp_g} M$ denotes the normal bundle of $M$ in $(N, g)$, and $\nabla^g$ is the connection induced by $g$ on $T^{\perp_g} M$.  We identity each $T_x^{\perp_g} M$ with a subspace of $T_x N$ in the obvious fashion.  We write $C^k(M, M^\perp)$ for the space of $C^k$ sections of this bundle, and define $|u|_{C^k(N, g)}$ in terms of $\nabla^g$.  Given $u \in C^k(M \cap U, M^{\perp_g})$, write $\graph_{M, g}(u) = \{ \exp_N(u(x)) : x \in U \cap M \}$.  We define convergence of hypersurfaces $M_i \to M$ in the same way as in $\R^8$, except with the notion of $\graph_{M, g}$ defined here.

In direct analogy with the set up of Section \ref{sec:prelim}, a $7$-varifold $V$ in $N$ is now a measure on the Grassmann bundle $G_7(TN)$.  We define $\cISV_7(N, g)$ to be the set of stationary integral $7$-varifolds $V$ in $(N, g)$ with $\sing V$ consisting of a discrete collection of points.  The various special cases and constructions (e.g. $[M]_g$, $f_\sharp V$, $\delta_g V$, etc.) extend in an obvious fashion to general $(N, g)$.  Lemmas \ref{lem:rs-bound}, \ref{lem:isv-decomp}, \ref{lem:index-compact} continue to hold for $(N, g)$ in place of $(U, g)$, but if one prefers one could break $N$ into finitely-many coordinate neighborhoods, and apply the Lemmas in each one.  Let us write $\theta_{V, g}(x, r) = \omega_7^{-1} r^{-7}\mu_V(B_r^g(x))$ for the density ratio of $V$ w.r.t. $g$-geodesic balls, and if $M$ is a $C^1$ embedded hypersurface, $\theta_{M, g}(x, r) = \theta_{[M]_g, g}(x, r)$.

\begin{proof}[Proof of Theorem \ref{thm:main2}]
One could prove this Theorem by first constructing a closed analogue of a cone decomposition, as we did with Theorem \ref{thm:main3}, but since we are only interested in the parameterization it will be less cumbersome to prove it directly by contradiction.

We first make a general note about minimal surfaces in $(N, g)$.  Since $N$ is closed, we can find numbers $R(N, g), \eta(N, g) \in (0, 1]$ so that if $g'$ is any $C^2$ metric on $N$ satisfying $|g - g'|_{C^2(N, g)} \leq \eta$, then every ball $(B_R(x), g')$ is isometric to some $(B_R(0) \subset \R^8, g'_x)$, for $g'_x$ a $C^2$ metric on $B_R(0)$ satisfying $|g'_x - \geucl|_{C^2(B_R(0))} \leq \min\{ c_0^{-1}, 10^{-2} \}$, $c_0$ as in \eqref{eqn:monotonicity}.  In particular, for $V \in \cISV_7(N, g')$ we have the bounds
\[
\theta_{V, g'}(x, r) \leq 2 \theta_{V, g'}(x, R) \leq c_2(N, g) \mu_V(N) \quad \forall x \in N, r \in (0, R).
\]
So by \eqref{eqn:ahlfors}, 
\begin{equation}\label{eqn:closed-3}
V \neq 0 \iff \mu_V(N) \geq \frac{1}{2c_2} \iff \spt V \neq \emptyset,
\end{equation}
and
\begin{equation}\label{eqn:closed-10}
\mu_V(N) \leq \Lambda \implies \theta_{V, g'}(x, r) \leq \Lambda' := c_2 \Lambda \quad \forall x \in N, \forall r < R.
\end{equation}

Let $\sigma = \frac{1}{100(1 + I)}$, and ensure $\eps \leq \min\{ \eps_1(\Lambda'), 10^{-2} \eps_0(\Lambda')\}$, for $\eps_1$ as in Lemma \ref{lem:glue} and $\eps_0$ as in the discussion of Section \ref{sec:param}.  We will choose constants $\delta', \beta, \eps > 0$ as we progress, along with several choices, which a posteriori can all be fixed, but to guide the reader our dependencies look like the following ($\to$ meaning ``depends on''):
\[
i_0 \to \eps_M \to \{B_{r_\alpha}(y_\alpha)\}_\alpha \to M, \delta', \beta, \eps \quad\text{and}\quad \delta' \to \beta \to \eps \to \sigma, \Lambda', I .
\]

Suppose the theorem fails.  Then for every $i$ large there is a set of $C^3$ metric $\cG_i$ on $N$ satisfying $|g' - g|_{C^3(N, g)} \leq 1/i$ for all $g' \in \cG_i$, such that we can find sequences $\{g_{ij}\}_j \subset \cG_i$, $\{M_{ij} \in \cM_7(N, g_{ij})\}_j$, satisfying $\haus^7_{g_{ij}}(M_{ij}) \leq \Lambda$, $\mindex(M_{ij}, N, g_{ij}) \leq I$, with the property that for every $j' > j$ there does not exist a bi-Lipschitz mapping $\phi : (N, g) \to (N, g)$ satisfying Theorem \ref{thm:main2}(\ref{item:main2-1})-(\ref{item:main2-2})-(\ref{item:main2-3}) with $M_{ij}$, $M_{ij'}$, $j'$ in place of $M_v$, $M$, $C$.  As before, this contradiction hypothesis is preserved if we replace $j$ with any subsequence of $j$.

If $M_{ij} = M_{ij'} = \emptyset$ then trivially such a $\phi$ exists, so by \eqref{eqn:closed-3} after discarding finitely-many terms in each sequence we must have $\haus^7_{g_i}(M_{ij}) \geq \frac{1}{2c_2}$ for all $i$, $j$.  After passing to subsequences in $j$, we can assume there are $C^2$ metrics $g_i$ (satisfying $|g_i - g|_{C^2(N, g)} \leq 1/i$) so that $g_{ij} \to g_i$ in $C^2(N, g)$, and (by Lemma \ref{lem:index-compact}) there are non-zero varifolds $V_i \in \cISV_7(N, g_i)$, and discrete sets $\cI_i$ consisting of at most $I$ points, so that $[M_{ij}]_{g_{ij}} \to V_i$ as varifolds in $(N, g)$, and in $C^2$ on compact subsets of $N \setminus (\sing V_i \cup \cI_i)$.  Moreover, we have $\mu_{V_i}(N) \leq \Lambda$, and $\mindex(V_i, N, g_i) \leq I$.

Trivially $g_i \to g$ in $C^2(N, g)$, so passing to a subsequence in $i$ we find a non-zero $V \in \cISV_7(N, g)$, and at most $I$ points $\cI$, so that $V_i \to V$ as varifolds in $(N, g)$, and in $C^2$ on compact subsets of $N \setminus (\sing V \cup \cI)$.  Moreover, since we cannot have points $x_i \in \sing V_i$ converging to a point in $\reg V \setminus \cI$, and $N$ is compact, after enlarging $\cI$ to consist of possibly $2I$ points, we can assume that for every $\tau > 0$ there is an $i_1$ so that $\sing V_i \cup \cI_i \subset B_\tau^g(\sing V \cup \cI)$ for all $i > i_1$.  $V$ satisfies $\mu_V(N) \leq \Lambda$, $\mindex(V, N, g) \leq I$.

Write $\sing V \cup \cI = \{y_1 , \ldots, y_d\} \subset \spt V$, and take $\delta', \eps, \beta > 0$ (for the moment) arbitrary.  By working in normal coordinates centered at each $y_\alpha$, by Lemmas \ref{lem:rs-bound}, \ref{lem:index-compact}, \eqref{eqn:monotonicity}, \eqref{eqn:closed-10}, Theorem \ref{thm:scone}, Remark \ref{rem:scone-stable}, and the maximum principle \cite{ilmanen}, we can choose radii $r_\alpha \in (0, 1/8)$, integers $m_\alpha$, and cones $\bC_\alpha \in \cC$, satisfying $m_\alpha \theta_{\bC_\alpha}(0) \leq \Lambda'$, so that:
\begin{enumerate}[label=A.\arabic*]
\item \label{item:ralpha-1} the balls $\{B^g_{8r_\alpha}(y_\alpha)\}_\alpha$ are disjoint, and each $(B^g_{8r_\alpha}(y_\alpha), g)$ is isometric via normal coordinates to (and will henceforth be identified with) $(B_{8r_\alpha}(0) \subset \R^8, g_\alpha)$ for some $C^3$ metric $g_\alpha$ satisfying $|g_\alpha - \geucl|_{C^3(B_{8r_\alpha}(0), \geucl)} \leq \delta'/2$;

\item \label{item:ralpha-2} $\spt V \cap (A_{8r_\alpha, 0}(0), g_\alpha)$ is a $(\bC_\alpha, 1, \beta/2)$-strong-cone region;

\item \label{item:ralpha-3} for each $\alpha$, we have
\begin{gather}
d_H(\spt V \cap B_{r_\alpha}, \bC_\alpha \cap B_{r_\alpha}) \leq (\delta'/2) r_\alpha \\
\theta_V(0, r_\alpha/2) \geq (m_\alpha - 1/4) \theta_{\bC_\alpha}(0), \quad \theta_V(0, r_\alpha) \leq  (m_\alpha + 1/4)\theta_{\bC_\alpha} .
\end{gather}
\end{enumerate}

Let $O = N \setminus \cup_\alpha B_{2r_\alpha}(y_\alpha)$, $O' = N \setminus \cup_\alpha B_{r_\alpha}(y_\alpha)$, and then we can write $V \llcorner O' = n_1 [S_1] + \ldots + n_p [S_p]$ for $n_s \in \N$, and the $S_s$ being a $C^2$, disjoint, closed (as sets), embedded minimal hypersurface in $(O', g)$.  For each $S_s$, define the map $G_s : T^{\perp_g} S_s \to N$ by $G_s(x, v) = \exp_N|_x(v)$.  By \ref{item:ralpha-2}, we can choose an $0 < \eps_M \leq \eps \min\{ 1, \min_\alpha r_\alpha\}$ so that each $G_s$ is a $C^2$ diffeomorphism from $\{ (x, v) \in T^{\perp_g} S_s : |v| < 4\eps_M \}$ onto its image $O_s$, and $O_s \supset B^g_{4\eps_M}(\spt V) \cap O$, and $G_s$ admits the bound $|DG_s| \leq 2$, and all the images $O_s$ are disjoint.

Let us ensure $\delta'(I, \Lambda', \beta, \sigma)$ is smaller than the constant $\delta_{l, I}(l, I, \gamma = \beta, \sigma)$ from Theorem \ref{thm:diffeo}, for $l$ chosen so that $\tilde\theta_l \geq \Lambda'$.  We can then fix an $i_0$, depending on $\delta', \eps_M, \beta, V, \{B_{r_\alpha}(y_\alpha)\}_\alpha$, and pass to a subsequence in $j$, so that for each $j$ we have:
\begin{enumerate}[label=B.\arabic*]
\item \label{item:i0-1} $M_{i_0 j} \subset B^g_{\eps_M}(\spt V)$;

\item \label{item:i0-2} in each $(B_{8r_\alpha}(0), g_\alpha)$, the metric $g_{i_0 j}$ satisfies $|g_{i_0 j} - \geucl|_{C^3(B_{8r_\alpha}(0), \geucl)} \leq \delta'$;

\item \label{item:i0-3} for every $\alpha$, $M_{i_0 j} \cap ( A_{8r_\alpha, r_\alpha/8}, g_\alpha)$ is a $(\bC_\alpha, m_\alpha, \beta)$-strong-cone region;

\item \label{item:i0-4} we have for every $\alpha$:
\begin{gather}
d_H(M_{i_0 j} \cap B_{r_\alpha}, \bC_\alpha \cap B_{r_\alpha}) \leq \delta' r_\alpha \\
\theta_{M_{i_0 j}, g_{i_0 j}}(0, r_\alpha/2) \geq (m_\alpha - 1/2) \theta_{\bC_\alpha}(0), \quad \theta_{M_{i_0 j}, g_{i_0 j}}(0, r_\alpha) \leq (m_\alpha + 1/2)\theta_{\bC_\alpha} ;
\end{gather}

\item for each $s = 1, \ldots, p$, there are $C^2$ functions $\{ u_{j s k} : S_s \to S_s^{\perp_g} \}_{k=1}^{n_s}$ so that
\[
M_{i_0 j} \cap O_s = \cup_{k=1}^{n_s} \{ G_s(x, u_{jsk}(x)) : x \in S_s \}, \quad |u_{jsk}|_{C^2(S_s)} \leq \eps_M.
\]
\end{enumerate}
Having fixed this $i_0$, let us for simplicity drop the $i$ notation, think only of the sequence $M_j \equiv M_{i_0 j}$ and $g_j \equiv g_{i_0 j}$.  We can forget all the other sequences.  We shall build bi-Lipschitz maps $\phi_j : (N, g) \to (N, g)$ with uniform Lipschitz constant taking $\overline{M_1} \to \overline{M_j}$, which will contradict our choice of $M_j$.

By \ref{item:i0-2}, \ref{item:i0-4}, and since every $r_\alpha \leq 1$, we can apply Theorem \ref{thm:diffeo} to each $[M_{j}]_{g_j}$ in each ball $(B_{r_\alpha}, g_{j})$, to deduce there is a finite collection of $(\Lambda', \sigma, \beta)$-smooth regions $\cS$ and an integer $K$ so that for every $\alpha$ and $j$, there is a radius $r_{\alpha j} \in (\frac{4}{5} r_\alpha, r_\alpha)$ so that $M_{j} \cap (B_{r_{\alpha j}}, g_j)$ admits a $(\Lambda', \beta, \cS, K)$-strong-cone decomposition.  From \ref{item:i0-3}, we deduce that each $M_{j} \cap (B_{2r_\alpha}, g_j)$ admits a $(\Lambda', \beta, \cS, K+1)$-strong-cone decomposition.

Assuming $\beta(\Lambda', \eps, \sigma)$ is small, we can apply Theorem \ref{thm:param}, pass to a subsequence in $j$, and obtain bi-Lipschitz maps $\phi_{\alpha j} : B_{2r_\alpha} \to B_{2r_\alpha}$, cones $\{\hat \bC_\alpha\}_\alpha \subset \cC_\Lambda'$, and an increasing function $C : [0, 1) \to \R$ (independent of $\alpha, j$) satisfying the properties:
\begin{enumerate}[label=C.\arabic*]
\item \label{item:phi-1} $\phi_{\alpha j}(\overline{M_1} \cap B_{2r_\alpha}) = \overline{M_j} \cap B_{2r_\alpha}$, and $\phi_{\alpha j}(\sing M_{1} \cap B_{2r_\alpha}) = \sing M_{j} \cap B_{2r_\alpha}$;

\item \label{item:phi-2} $\phi_{\alpha j}|_{B_{2r_\alpha} \setminus \sing M_{1}}$ is a $C^2$ diffeomorphism;

\item \label{item:phi-3} $\Lip_{g_\alpha}(\phi_{\alpha j}|_{B_{2r_\alpha \rho}}) \leq C(\rho)$ for every $\rho < 1$;

\item \label{item:phi-4} $\phi_{\alpha j}|_{A_{2r_\alpha, 2r_\alpha(1-\sigma)}}$ is a $(\hat\bC_\alpha, \eps)$-map;

\item \label{item:phi-5} $d_H(\hat \bC_\alpha \cap A_{2r_\alpha, 2r_\alpha (1-\sigma)}, M_{j} \cap A_{2r_\alpha, 2r_\alpha (1-\sigma)}) \leq 2r_\alpha \eps$ and $\sing M_{j} \cap B_{2r_\alpha} \subset B_{2r_\alpha(1-\sigma)}$.
\end{enumerate}
We now work towards gluing these $\phi_{\alpha j}$ together.

We first build our large-scale diffeomorphisms.  Define $g_{sj} : T^{\perp_g} S_s \to T^{\perp_g} S_s$ as follows.  Given any $x \in S_s$, there is a neighborhood $U \subset S_s$ containing $x$, in which the normal bundle $T^{\perp_g} ( U \cap S_s)$ is isometric to a trivialization $(U \cap S_s) \times \R \ni (x, t)$.  Under this identification, we can think of the $u_{jsk}$ as taking values in $(-4\eps_M, 4\eps_M)$, and so by the maximum principle and the smallness of the singular set of $M_j$ we can assume
\[
u_{js1} < \ldots < u_{js m_s} \quad \forall j = 1, 2, \ldots, \forall s = 1, \ldots, p.
\]

Let $f_{\{a_j\}_j, \{b_j\}_j}(t)$ be the function from Lemma \ref{lem:monof} with $m_s$ in place of $k$ and $\eps_M$ in place of $\eps$.  For $(x, t) \in (U \cap S_j ) \times \R \cong T^{\perp_g} (U \cap S_j)$, we define
\[
g_{sj}(x, t) = (x, f_{\{u_{1sk}(x)\}_k, \{u_{jsk}(x)\}_k}(t)),
\]
so that $g_{sj}(x, u_{1sk}(x)) = (x, u_{jsk}(x))$ for every $k = 1, 2, \ldots, m_j$.  Trivially, $g_{sj}$ is a $C^2$ diffeomorphism $(U \cap S_s) \times (-4\eps_M, 4\eps_M)$ which coincides with the identity for $|t| \geq 2\eps_M$.  By Remark \ref{lem:monof}, this definition is independent of our choice of trivialization, and so gives rise a diffeomorphism from $T^{\perp_g} S_s$ to itself.  By Item \ref{item:ralpha-3}, the number
\[
D = \inf \{ |u_{1sk}(x) - u_{1s k'}(x)| :  x \in S_s, k \neq k', s = 1, \ldots, p \} > 0,
\]
and hence by Lemma \ref{lem:monof} we have the uniform $C^1$ bound $|Dg_{sj}| \leq c(N, V, M_1)$.

Now define $\psi_j : (O, g) \to (N, g)$ by setting
\[
\psi_j(x) = \left\{ \begin{array}{l l} G_s^{-1} \circ g_{sj} \circ G_s & x \in O_s \\ x & x \not\in O_s \end{array} \right.  .
\]
By our construction and by \ref{item:i0-1}, $\psi_j$ is a well-defined $C^2$ diffeomorphism onto its image, satisfying
\[
\psi_j(M_1 \cap O) = M_j \cap \psi_j(O), \quad |\psi_j(x) - x| \leq 10 \eps_M, \quad |D\psi_j| \leq c(N, V, M_1).
\]

For each $\alpha$, set $S_\alpha' = \spt V \cap A_{8r_\alpha, r_\alpha}$.  Let $\nu_\alpha$ be a choice of unit normal for $S_\alpha'$.  By \ref{item:ralpha-2}, Theorem \ref{thm:cones} and our choice of $\eps_M$, provided $\delta'(\Lambda'), \beta(\Lambda')$ are sufficiently small, the function $G_\alpha : S_\alpha' \times (-4\eps_M, 4\eps_M) \to \R^8$ given by $G_\alpha(x, v) = \exp_{g_\alpha}|_x(t \nu_\alpha(x))$ is a well-defined $C^2$ diffeomorphism onto its image.  If we let $X_\alpha := DG_\alpha(\del_t)$, then after shrinking $\delta'(\Lambda'), \beta(\Lambda')$ and flipping $\nu_\alpha$ as necessary, $X_\alpha$ is a $C^2$ vector field on $B_{3\eps_M}(\bC_\alpha) \cap A_{4r_\alpha, 2r_\alpha}$ satisfying
\[
|X_\alpha - X_{\bC_\alpha}| + r_\alpha |DX_\alpha - X_{\bC_\alpha}| \leq c(\Lambda')(\beta + \delta').
\] 
Ensuring $\beta(\Lambda', \eps)$, $\delta'(\Lambda', \eps)$ are small, and recalling our restriction $\eps_M \leq \eps r_\alpha$, we can argue as in Lemma \ref{lem:smooth-map} and obtain $C^2$ diffeomorphisms $\psi_{\alpha j} : (A_{4r_\alpha, r_\alpha},g_\alpha) \to (B_{8r_\alpha}, g_\alpha)$ such that
\begin{gather}
\psi_{\alpha j}( M_1 \cap A_{4r_\alpha, r_\alpha}) = M_j \cap \psi_{\alpha j}(A_{4r_\alpha, r_\alpha}), \quad |D\psi_{\alpha j}| \leq c(N, V, M_1), \label{eqn:closed-1} \\
\psi_{\alpha j}|_{A_{4r_\alpha, 3r_\alpha}} = \psi_j, \quad \psi_{\alpha j}|_{A_{2r_\alpha, r_\alpha}} \text{ is a $(\bC_\alpha, \eps)$-map}. \label{eqn:closed-2}
\end{gather}
We use the $\psi_{\alpha j}$ to transition from our large-scale $\psi_j$, and the small-scale maps $\phi_{\alpha j}$ arising from the cone decompositions of $M_i$ in each ball $B_{2r_\alpha}$.

\vspace{3mm}

Let $\sigma' = 10^{-2} \sigma$, and $r_\alpha' = (1+5\sigma')^{-1} r_\alpha$.  Then we have
\[
A_\alpha := A_{2r_\alpha' (1+\sigma'), 2r_\alpha'(1-4\sigma')} \subset A_{2r_\alpha, 2r_\alpha(1-\sigma)}.
\]
\ref{item:ralpha-3}, \ref{item:phi-4}, \ref{item:phi-5} imply that $d_H(\bC_\alpha \cap B_1, \hat \bC_\alpha \cap B_1) \leq c(\Lambda')(\eps + \beta)$, and so by ensuring $\beta(\Lambda',\sigma), \eps(\Lambda',\sigma)$ are sufficiently small Theorem \ref{thm:cones} implies that both $X_{\bC_\alpha}$ and $X_{\hat \bC_\alpha}$ are defined on $B_{2\eps_1 r_\alpha'}(\bC_\alpha) \cap A_\alpha$ and satisfy
\[
|X_{\bC_\alpha} - X_{\hat \bC_\alpha}| + r_\alpha' |DX_{\bC_\alpha} - X_{\hat\bC_\alpha}| \leq \delta_1(\Lambda', \sigma') .
\]
Here $\eps_1(\Lambda')$, $\delta_1(\Lambda', \sigma')$ are the constants from Lemma \ref{lem:glue}.

Combined with \eqref{eqn:closed-1}, \eqref{eqn:closed-2}, \ref{item:phi-3}, \ref{item:phi-4}, \ref{item:i0-3}, and our choice of $A_\alpha$, we can therefore apply Lemma \ref{lem:glue} (at scale $B_{2r_\alpha'}$ and with $\sigma', \eps_1$ in place of $\sigma, \eps$) to obtain $C^2$ diffeomorphisms $\hat \phi_{\alpha j} : (A_\alpha, g_\alpha) \to (A_\alpha, g_\alpha)$ satisfying
\begin{gather*}
\hat \phi_{\alpha j}( M_1\cap A_\alpha) = M_j \cap A_\alpha, \quad |D\hat \phi_{\alpha j}| \leq c(N, V, M_1), \\
\hat \phi_{\alpha j}|_{A_{2r_\alpha'(1+\sigma'), 2r_\alpha'}} = \psi_{\alpha j}, \quad \hat \phi_{\alpha j}|_{A_{2r_\alpha'(1-3\sigma'), 2r_\alpha'(1-4\sigma')}} = \phi_{\alpha j} .
\end{gather*}

We now define $\phi_j$ by setting
\[
\phi_j(x) = \left\{\begin{array}{l l} 
\psi_j(x) & x \in N \setminus \cup_\alpha B_{3r_\alpha}(y_\alpha) \\ 
\psi_{\alpha j}(x) & x \in A_{4r_\alpha, 2r_\alpha'} \\
\hat \phi_{\alpha j}(x) & x \in A_\alpha \\
\phi_{\alpha j}(x) & x \in B_{2r_\alpha'(1-3\sigma')} \end{array} \right.
\]
By \ref{item:i0-1} and our construction, the $\phi_j$ are bi-Lipschitz maps $(N, g) \to (N, g)$ satisfying
\begin{gather*}
\phi_j(\overline{M_1}) = \overline{M_j}, \quad \phi_j(\sing M_1) = \sing M_j, \\
\phi_j|_{N \setminus \sing M_1} \text{ is a $C^2$ diffeomorphism}, \quad \Lip_g(\phi_j) \leq c(N, V, M_1).
\end{gather*}
This is a contradiction, and finishes the proof of Theorem \ref{thm:main2}.
\end{proof}


\section{Other corollaries}

Here we prove Theorems \ref{thm:main3} and \ref{thm:main4}.  Theorem \ref{thm:main3} is a trivial consequence of Theorems \ref{thm:diffeo}, \ref{thm:param}, but we include a proof for the sake of precision.  We work in $\R^8$.

\begin{proof}[Proof of Theorem \ref{thm:main3}]
We first note that the existence of $V$, $\cI$, and convergence $M_i \to V$, follows directly from Lemma \ref{lem:index-compact}.

A straightforward contradiction argument, using Theorem \ref{thm:cones}, Lemma \ref{lem:index-compact}, and the constancy theorem implies that provided $\delta(\Lambda, I, \sigma)$ is sufficiently small, we can find integers $m_i \in \{0, 1, \ldots\}$ so that $m_i \theta_\bC(0) \leq \Lambda$, and
\[
\theta_{[M_i]_{g_i}}(0, 1-\sigma) \leq (m_i + 1/2) \theta_\bC(0), \quad \theta_{[M_i]_{g_i}}(0, (1-\sigma)/2) \geq (m_i - 1/2) \theta_\bC(0).
\]
Passing to a subsequence, there no loss in assuming $m_i \equiv m$.  If $m = 0$, then provided we take $\delta'(\Lambda, \sigma)$ small \eqref{eqn:ahlfors} implies $M_i \cap B_{1-2\sigma} = \emptyset$ for all $i$ (and hence $\mu_V(B_{1-2\sigma}) = \emptyset$), so we can just take $r = 1-2\sigma$, $\phi_i \equiv id$.

By Theorem \ref{thm:diffeo}, choosing $\delta'(\Lambda, I, \beta, \sigma)$ small, we can find a finite collection $\cS$ of $(\Lambda, \beta, 5\sigma)$-smooth models, and a constant $K(\Lambda, I, \beta, \sigma)$, so that for all $i >> 1$ there are radii $r_i \in (1-200 \sigma(I+1), 1)$ so that $M_i \cap (B_{r_i}(0), g_i)$ admits a $(\Lambda, \beta, \cS, K)$-cone decomposition.  Passing to a subsequence, we can assume $r_i \to (1+\sigma)r$.

Trivially $\cS$ are also $(\Lambda, \beta, 3\sigma)$-smooth models, and so for every $i >> 1$ and $r' \in ((1-\sigma) r, (1+\sigma)r)$, each $M_i \cap (B_{r'}(0), g_i)$ admits a $(\Lambda, \beta, \cS, K)$-cone decomposition (though not explicitly stated, the outer-most region in the decomposition of Theorem \ref{thm:diffeo} is always a smooth region).  This proves the first assertion.

To prove existence of the $\phi_i$, we apply Theorem \ref{thm:param} at scale $B_{(1+\sigma)r}(0)$ (with $3\sigma$ in place of $\sigma$, $\eps > 0$ arbitrary), and deduce that after passing to a further subsequence, we can find a constant $C$, a cone $\bC \in \cC$, and local bi-Lipschitz maps $\phi_i : B_{(1+\sigma)r} \to B_{(1+\sigma)r}$ satisfying
\begin{gather}
\phi_i(\overline{M_1} \cap B_{(1+\sigma)r}) = \overline{M_i}, \quad \phi_i(\sing M_1 \cap B_{(1+\sigma)r}) = \sing M_i , \label{eqn:main3-1} \\
\Lip(\phi_i|_{B_r}) \leq C,  \label{eqn:main3-2} \\
\phi_i|_{A_{(1+\sigma)r, (1-2\sigma)r}} \text{ is a $(\bC, \eps)$-map}, \quad \sing M_i \subset B_{(1-2\sigma)r}. \label{eqn:main3-3}
\end{gather}
\eqref{eqn:main3-3} implies $|\phi_i(x)| = |x|$ for $x \in A_{(1+\sigma)r, (1-2\sigma)r}$, and so we can restrict $\phi_i|_{B_r}$ to obtain our required maps.

Passing to one last subsequence, we can assume there is a Lipschitz $\phi_\infty : B_r \to B_r$ so that $\phi_i \to \phi_\infty$ in $C^\alpha(B_r)$ for all $\alpha \in (0, 1)$.  Since $|\phi_i(x)| = |x|$ for $|x| \in ((1-2\sigma)r, r)$, we have for all $i$:
\begin{equation}\label{eqn:main3-4}
\max\{ |\phi_i^{-1}(x)|, |\phi_i(x)| \} \leq \max\{ (1-2\sigma) r, |x| \},  \quad |\phi_\infty(x)| \leq \max\{ (1-2\sigma)r, |x| \}.
\end{equation}
Varifold convergence $[M_i]_{g_i} \to V$ in $B_1$, \eqref{eqn:ahlfors}, and \eqref{eqn:main3-4} imply $\phi_\infty(\overline{M_1} \cap B_r) \subset \spt V \cap B_r$.  On the other hand, again from \eqref{eqn:ahlfors} and varifold convergence, if $x \in \spt V \cap B_r$, then there is a sequence $x_i \in \overline{M_1} \cap B_r$ so that $\phi_i(x_i) \to x$.  By \eqref{eqn:main3-4} and since $\overline{M_1}$ is closed, there is no loss in assuming that $x_i \to x' \in \overline{M_1} \cap B_r$.  The uniform Lipschitz bound \eqref{eqn:main3-2} then implies $\phi_\infty(x') = x$.  This proves $\phi_\infty(\overline{M_1} \cap B_r) = \spt V \cap B_r$.
\end{proof}

\begin{proof}[Proof of Theorem \ref{thm:main4}]
I first claim that given $\beta > 0$ there is an $\eta(\Lambda, \beta)$ so that if $V \in \cISV_7(B_{10}, \geucl)$ satisfies
\begin{equation}\label{eqn:eucl-1}
\theta_V(0, 10) \leq \Lambda, \quad \theta_V(0, 10) - \theta_V(0, \eta) \leq \eta, \quad \mindex(V, A_{10, \eta}, \geucl) = 0,
\end{equation}
then we can find $\bC \in \cC_\Lambda$, $m \in \N$ with $m \theta_\bC(0) \leq \Lambda$, and $C^2$ functions $\{ u_1, \ldots, u_m : \bC \cap A_{8,1} \to \bC^\perp \}$, so that
\begin{equation}\label{eqn:eucl-2}
V \llcorner A_{8,1} = \sum_{j=1}^m [G_\bC(u_j) \cap A_{8,1}], \quad |u_j|_{C^2} \leq \beta.
\end{equation}

Suppose otherwise: then there are sequences $\eta_i \to 0$, and $V_i \in \cISV_7(B_{10}, \geucl)$, satisfying \eqref{eqn:eucl-1}, but failing \eqref{eqn:eucl-2} for any $\bC \in \cC_\Lambda$, $m \in \N$ with $m \theta_\bC(0) \leq \Lambda$.  By Lemma \ref{lem:index-compact}, we can find a $V \in \cISV_7(B_{10}, \geucl)$ with $\theta_V(0, 10) \leq \Lambda$, $\mindex(V, B_{10}, \geucl) = 0$, so that $M_i \to M$ as varifolds in $B_{10}$, and $V_i \to V$ smoothly on compact subsets of $A_{10, 0}$.  By \eqref{eqn:sharp-mono}, $V$ is dilation-invariant and stable, and hence $V = m [\bC] \llcorner B_{10}$ for some $\bC \in \cC$, and $m \theta_\bC(0) = \theta_V(0, 10) \leq \Lambda$.  Smooth convergence then implies \eqref{eqn:eucl-2} holds for $i >> 1$, which is a contradiction.  This proves my first claim.

I next claim that given any $\beta , \delta > 0$, and $M \in \cM_7(\R^8, \geucl)$ satisfying \eqref{eqn:main4-hyp}, then we can find a positive $R_M \in \R$, a $\bC \in \cC_\Lambda$, and an $m \in \N$, satisfying $m \theta_\bC(0) \leq \Lambda$, so that the following is true:
\begin{gather}
d_H( \spt M \cap B_{R_M/2}, \bC \cap B_{R_M/2}) \leq \delta R_M/2 \label{eqn:eucl-3} \\
(m-1/2) \theta_\bC(0) \leq \theta_M(0, R_M/4)  \quad \theta_M(0, R_M/2) \leq (m + 1/2) \theta_\bC(0)  \label{eqn:eucl-4} \\
M \cap (A_{\infty, R_M}, \geucl) \text{ is a $(\bC, m, \beta)$-strong-cone region.} \label{eqn:eucl-5}
\end{gather}

By Lemma \ref{lem:index-compact} and \eqref{eqn:sharp-mono}, we can take $m[\bC]$ to be any (the) tangent cone at infinity for $M$.  More precisely, there are radii $R_i \to \infty$ so that $[\eta_{0, R_i,\sharp}(M)] \to m[\bC]$ as varifolds and in the local Hausdorff distance, and smoothly with multiplicity-$m$ on compact subsets of $\R^8 \setminus \{0\}$.  In particular, \eqref{eqn:eucl-3}, \eqref{eqn:eucl-4} hold for $R = R_i$ for any choice of $R_i$ large.

Choose $\beta'(\Lambda) \leq \beta(\Lambda)$ sufficiently small so that if $\bC', \bC'' \in \cC_\Lambda$ satisfy $d_H(\bC' \cap B_1, \bC'' \cap B_1) \leq 2\beta'$, then $\bC' \in \cC(\bC'')$ (as guaranteed by Theorem \ref{thm:cones}).  Choose $\eta(\Lambda, \beta')$ as in Claim 1, and now taking $R$ sufficiently large so that $\theta_M(0, \infty) - \theta_M(0, R) \leq \eta$, $\mindex(M, A_{\infty, R}, \geucl) = 0$, it follows from Claim 1 that $M \cap (A_{\infty, 8 R/\eta}, \geucl)$ is a $(\bC, m, \beta')$-cone region.  It then follows by Theorem \ref{thm:scone}, provided $\beta'(\Lambda, \beta)$ is sufficiently small, that $M \cap (A_{\infty, R_i}, \geucl)$ is a $(\bC, m, \beta)$-strong-cone region for all $i >> 1$.  This proves my second claim.

Let $\sigma = \frac{1}{100(I+1)}$, and let $2 \delta = \delta_{l, I}(l, I, \gamma = \beta, \sigma)$ be the constant from Theorem \ref{thm:diffeo}, with $l$ chosen so that $\tilde \theta_l \geq \Lambda$.  Let $R_M$ be the radius from Claim 2 with this choice of $\delta$, and $\beta > 0$ to be determined later.  It will suffice to prove Theorem \ref{thm:main4} with $\lambda = R_M^{-1}$.

Suppose, towards a contradiction, Theorem \ref{thm:main4} fails.  Then there is a sequence of $M_i \in \cM_7(\R^8, \geucl)$ satisfying \eqref{eqn:main4-hyp} and normalized so that $R_{M_i} = 1$, and a radius $\rho$, with the property that for any $i' > i$, there does not exist a local bi-Lipschitz $\phi : \R^8 \to \R^8$ satisfying
\begin{gather*}
\phi(\overline{M_i}) = \overline{M_{i'}}, \quad \phi(\sing M_i) = \sing M_{i'}\\
\phi|_{\R^8 \setminus \sing M_i} \text{ is a $C^2$ diffeomorphism}, \quad \Lip(\phi|_{B_\rho}) \leq i'.
\end{gather*}

By our normalization, we have for each $i$ an $m_i \in \N$, $\bC_i \in \cC_\Lambda$ satisfying $m_i \theta_{\bC_i}(0) \leq \Lambda$, so that \eqref{eqn:eucl-3}, \eqref{eqn:eucl-4}, \eqref{eqn:eucl-5} hold with $M_i$, $1$, $m_i$, $\bC_i$ in place of $M$, $R_M$, $m$, $\bC$.  By Theorem \ref{thm:cones}, after passing to a subsequence, we can assume there is a fixed $\bC \in \cC_\Lambda$, $m \in \N$ (with $m \theta_\bC(0) \leq \Lambda$), so that \eqref{eqn:eucl-3}, \eqref{eqn:eucl-4}, \eqref{eqn:eucl-5} hold with $M_i$, $1$, $2\delta$, $2\beta$ in place of $M$, $R_m$, $\delta$, $\beta$.

By our choice of $\delta$, we can therefore apply Theorem \ref{thm:diffeo} to each $M_i$ to deduce there is a finite set of $(\Lambda, \sigma, \beta)$-smooth models $\cS$, and a constant $K$, and radii $r_i \in (4/10, 1/2)$, so that each $M_i \cap (B_{r_i}, \geucl)$ admits a $(\Lambda, \beta, \cS, K)$-strong-cone decomposition.  From \eqref{eqn:eucl-5}, we get that each $M_i \cap (B_1, \geucl)$ admits a $(\Lambda, \beta, \cS, K+1)$-strong-cone decomposition.  Ensuring $\beta(\Lambda, \sigma, \eps)$ is sufficiently small and passing to a subsequence, from Theorem \ref{thm:param} we can find a cone $\hat \bC \in \cC_\Lambda$, and local bi-Lipschitz maps $\hat\phi_i : B_1 \to B_1$, satisfying Theorem \ref{thm:param}(\ref{item:param1})-(\ref{item:param5}) with $M_1, M_i, \hat\bC$ in place of $M_v, M, \bC_v$.

On the other hand, ensuring $\eps \leq \eps_0(\Lambda)/8$, we can apply Lemma \ref{lem:cone-map} to obtain $C^2$ diffeomorphisms $\psi_i : A_{\infty, 1/4} \to A_{\infty, 1/4}$ satisfying
\begin{gather*}
\psi_i(M_1 \cap A_{\infty, 1/4}) = M_i \cap A_{\infty, 1/4}, \quad \psi_i|_{A_{1, 1/4}} \text{ is a $(\bC, \eps)$-map }, \\
|D\psi_i|_{C^0(A_{R, 1/2})} \leq c(R, M_1)  \text{ for every $R \in (1/2, \infty)$}.
\end{gather*}
If $m = 1$, then Lemma \ref{lem:cone-map} implies $|D\psi_i| \leq C$ on all of $A_{\infty, 1/4}$.

Theorem \ref{thm:param}(\ref{item:param5}) and \eqref{eqn:eucl-5} imply that $d_H(\bC \cap B_1, \hat \bC \cap B_1) \leq c(\Lambda)( \beta + \eps)$.  Therefore, arguing like we did in the proof of Theorem \ref{thm:main2}, provided $\beta(\Lambda, \sigma), \eps(\Lambda, \sigma)$ are sufficiently small, we can apply Lemma \ref{lem:glue} to the maps $\hat\phi_i$, $\psi_i$, to obtain local bi-Lipschitz $\phi_i : \R^8 \to \R^8$ satisfying
\begin{gather*}
\phi_i(\overline{M_1}) = \overline{M_i}, \quad \phi_i(\sing M_1) = \sing M_i , \quad \phi_i|_{\R^8 \setminus \sing M_1} \text{ is a $C^2$ diffeomorphism}, \\
\Lip(\phi_i|_{B_R}) \leq C(R) \text{ for every $R \in \R$},  \\
\phi_i|_{B_{1-10\sigma'}} = \hat \phi_i, \quad \phi_i|_{\R^8 \setminus B_{1-\sigma'}} = \psi_i,
\end{gather*}
where $\sigma' = \sigma/100$, and $C : \R \to \R$ is an increasing function.  This is a contradiction.
\end{proof}

\end{comment}

\section{Appendix 1: a smallness estimate}\label{sec:decay}

In this section we prove an estimate which is a slight variation on Simon's asymptotic decay Theorem \cite{simon}.   Following \cite{simon}, we take $\Sigma$ a smooth closed $n$-dimensional surface, and $\cE$ a functional defined on $C^1$ functions $u$ given by
\[
\cE(u) = \int_\Sigma E(x, u, \nabla u),
\]
where $E$ satisfies the convexity and analyticity hypotheses of \cite[(1.2), (1.3)]{simon}.  We write $\cM(u)$ for the $-\mathrm{grad} \cE$ in the $L^2$ sense (i.e. so that
\[
\left. \frac{d}{dt} \right|_{t = 0} \cE(u + t \zeta) = - \int_\Sigma \cM(u) \zeta \quad \forall \zeta \in C^1(\Sigma)
\]
holds), and $L$ for the linearization of $\cM$ at $u = 0$.  As in \cite{simon}, we allow $u$ to take values in some vector bundle $V$ over $\Sigma$.

Let us recall some notation from \cite{simon}.  Let $u$ be a $V$-valued function on $\Sigma \times [0, T)$.  Write $\nabla$ for the connection derivative on $V$, and $\dot u \equiv \del_t u$.  Write $|u(t)|_0 = |u(t, \cdot)|_{C^0(\Sigma)}$, $||u(t)|| = ||u(t, \cdot)||_{L^2(\Sigma)}$, and 
\begin{align*}
|u|_1^*(t) &= |u(t)|_0 + |\nabla u(t)|_0 + |\dot u|_0 , \\
|u|_2^*(t) &= |u|_1^*(t) + |\nabla^2 u(t)|_0 + |\nabla \dot u(t)|_0 + |\ddot u(t)|_0 .
\end{align*}

We consider here $C^2$ $V$-valued functions $u$ on $\Sigma \times [0, T)$ solving the PDE
\begin{equation}\label{eqn:mainpde}
-\ddot u + m\dot u - \cM(u) - \cR_1(u) - \cR_2(u) = f(x, t),
\end{equation}
where $m > 0$, and:
\begin{enumerate}
\item \label{item:decay-1} $f(x, t)$ is a $C^2$ function satisfying $|f|_2^*(t) \leq \delta e^{-\eps t}$; 
\item \label{item:decay-2} $\cR_1$ has the form
\[
\cR_1(u) = (a_1 \cdot \nabla^2 u + a_2) \dot u + a_3 \cdot \nabla \dot u + a_4 \ddot u
\]
where $a_i$ are $C^2$ functions of $(x, t, u, \nabla u, \dot u)$ such that $a_i(x, t, 0, 0, 0) = 0$ for $i = 2, 3, 4$; 
\item \label{item:decay-3} $\cR_2$ has the form
\[
\cR_2(u) = b_1 \cdot \nabla^2 u + b_2 \cdot \nabla \dot u + b_3 \ddot u + b_4 \cdot \nabla u + b_5 \dot u + b_6 u,
\]
where $b_i$ are $C^2$ functions of $(x, t)$ such that $|b_i|_2^*(t) \leq \delta e^{-\eps t}$.
\end{enumerate}

The main content of our Theorem \ref{thm:decay} is that one can replace Simon's bounded exponential growth condition \cite[Definition 2]{simon} with a bounded linear growth condition (Definition \ref{def:linear}), at the expense of replacing a decay estimate with a smallness estimate.  Though the name suggests otherwise, our linear growth condition is not a strictly weaker notion than Simon's exponential growth condition -- only when $|u|$ is more-or-less larger than $\delta$ are they comparable.  Fortuitously, this is enough for our purposes.

The advantage is that the linear growth condition is effectively equivalent to smallness of density drop of a minimal surface $M$, \emph{without} any assumptions about the actual value of the density.  For example, the linear growth condition results directly from an assumption like $\sup_r |\theta_M(0, r) - \theta_\bC(0)| \leq \delta^2$.  In contrast, the exponential growth condition of \cite{simon} requires \emph{additionally} the inequality $\sup_r \theta_M(0, r) \geq \theta_\bC(0)$.  This second condition is too strong for our purposes, because in our annular cone regions we only know $M$ is varifold close to $\bC$, and $\theta_M(0, r)$ is close to $\theta_\bC(0)$.  In particular, $\theta_M(0, r)$ may be $> \theta_\bC(0)$ at some radii, but $< \theta_\bC(0)$ at others.

On the other hand, the stronger assumption of Simon also gives a stronger estimate.  The exponential growth condition allows \cite{simon} to rule out exponential growth of $u$ at any time, to deduce that $u$ exhibits only decay.  This also illustrates the asymmetry of the lower bound on $\theta_M(0, r)$: if one instead assumed $\sup_r \theta_M(0, r) \leq \theta_\bC(0)$, one could show $u$ exhibits only growth (corresponding to uniqueness the tangent cone of $M$ at infinity).

With our linear growth condition \eqref{eqn:def-linear}, we can only rule out exponential growth (or decay) when $|u| >> \delta$, and so our Theorem \ref{thm:decay} can only conclude a smallness of $u$.  $u$ may still exhibit both exponential growth and decay.  The linear growth condition is symmetric: smallness of $u$ propogates both forward and backward in time.  Of course, like \cite{simon}, we can also give a summability estimate \eqref{eqn:decay-concl2} for $||\dot u(t)||$, as the decay/growth behavior of $u$ is quantifiable.

\begin{definition}\label{def:linear}
Given $K \geq 1$, $\delta > 0$, we say a solution $u$ of \eqref{eqn:mainpde} on $\Sigma \times [0, T)$ has $(K, \delta)$-linear growth if
\begin{equation}\label{eqn:def-linear}
|u|^*_2(t) \leq \delta \max\{ 1, |t - s| \} + K |u|^*_2(s) 
\end{equation}
for every $t, s, \in [0, T)$.
\end{definition}

Our modification of \cite[Theorem 1]{simon} is the following:
\begin{theorem}\label{thm:decay}
Take $T_* \geq T_0 > 0$, $K > 0$.  There are $\alpha(E), \eps(E) \in (0, 1/2)$, $\delta_0(\eps, K, T_0, E, \cR_1) > 0$ so that the following holds.  Let $\delta \leq \delta_0$, and let $u$ be a $C^2$ solution to \eqref{eqn:mainpde} on $\Sigma \times [0, T_*)$ with $(K, \delta)$-linear growth, such that
\begin{gather}
|u|_2^*(t) \leq \delta \text{ for } t \in [0, T_0], \label{eqn:decay-hyp1} \\
\cE(u(t)) \geq \cE(0) - \delta \text{ for } t \in [0, T_*), \label{eqn:decay-hyp2}
\end{gather}
and Items 1-3 above hold for $f$, $\cR_1$, $\cR_2$ and all $(x, t) \in \Sigma \times [0, T_*)$.  Then
\begin{equation}\label{eqn:decay-concl}
|u|_2^*(t) < \delta^\alpha \quad \forall t \in [0, T_*),
\end{equation}
and
\begin{equation}\label{eqn:decay-concl2}
\int_0^{T_*} ||\dot u(t)|| dt \leq \delta^{\alpha/2}.
\end{equation}
\end{theorem}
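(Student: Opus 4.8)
\textbf{Proof plan for Theorem \ref{thm:decay}.}

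The plan is to mimic the structure of Simon's proof of his asymptotic decay theorem, but to track where the exponential-growth hypothesis is used and substitute the linear-growth hypothesis \eqref{eqn:def-linear} at each such point, paying the price of a smallness conclusion rather than a decay conclusion. First I would reduce to studying the ``energy-like'' quantity and the $L^2$-gradient flow structure: as in \cite[Section 3]{simon}, equation \eqref{eqn:mainpde} together with the $C^1$ Lojasiewicz--Simon inequality for $\cE$ (which holds by \cite[Theorem 3]{simon} under the convexity/analyticity hypotheses \cite[(1.2),(1.3)]{simon}) yields, after absorbing $\cR_1$ (using $a_i(x,t,0,0,0)=0$ for $i=2,3,4$ and smallness of $|u|_2^*$) and $\cR_2, f$ (using their $\delta e^{-\eps t}$ bounds), a differential inequality of the form
\[
- \frac{d}{dt}\big( (\cE(u(t)) - \cE(0))^{2\theta - 1} \big) \geq c \, \|\dot u(t)\| - c\,\delta e^{-\eps t}
\]
valid on any subinterval where $|u|_2^*$ stays below a fixed threshold $\eps_1(E)$, with $\theta \in (0,1/2)$ the Lojasiewicz exponent. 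Combined with the elementary interpolation estimate $|u|_2^*(t) \leq c\big(\sup_{|t'-t|\leq 1}\|u(t')\| + \delta e^{-\eps t}\big)$ coming from standard parabolic/elliptic estimates applied to the PDE \eqref{eqn:scone-1}-type form, this controls $|u|_2^*$ by the total $L^1$-in-time mass of $\|\dot u\|$ plus an exponentially small error.

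The core of the argument is a continuity/bootstrap argument on the ``good set'' $G = \{ \tau \in [0, T_*) : |u|_2^*(t) < \eps_1(E) \text{ for all } t \in [0,\tau] \}$. On $G$ the differential inequality above integrates to give $\int_0^\tau \|\dot u\| dt \leq c|\cE(u(0)) - \cE(0)|^{2\theta-1} + c\delta \leq c\delta^{\theta'}$ for some $\theta' > 0$ (using \eqref{eqn:decay-hyp1}, \eqref{eqn:decay-hyp2}, the hypothesis $\cE(u(t)) \geq \cE(0) - \delta$, and continuity of $\cE$ with $|u|_2^*(0)\leq\delta$). Feeding this back through interpolation gives $|u|_2^*(t) \leq c(\sup_{t-1\leq t'\leq t+1}\|u(t')\| + \delta)$ on $G$. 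The remaining task — and this is where linear growth replaces exponential growth — is to upgrade $\sup\|u\|$ to something of size $\delta^\alpha$. Here I would argue by contradiction at the first time $t_1$ (if any) where $|u|_2^*(t_1) = \delta^\alpha$ for a suitably chosen $\alpha \in (0, \theta')$: on $[0,t_1]$ we still have $|u|_2^* < \eps_1$, so the $\|\dot u\|$ integral bound applies, giving $\|u(t_1) - u(0)\| \leq \int_0^{t_1}\|\dot u\| \leq c\delta^{\theta'}$, hence $|u|_2^*(t_1) \leq c(\delta + \delta^{\theta'} + \delta) < \delta^\alpha$ once $\alpha < \theta'$ and $\delta$ is small — contradiction. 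Thus $G = [0, T_*)$ and \eqref{eqn:decay-concl} holds; \eqref{eqn:decay-concl2} then follows directly from the integrated differential inequality. The linear-growth hypothesis \eqref{eqn:def-linear} enters only to guarantee a priori that, before running this argument, $|u|_2^*$ cannot jump past $\eps_1$ instantaneously from small data: it provides the quantitative ``no sudden blowup'' control $|u|_2^*(t) \leq \delta \max\{1, |t-s|\} + K|u|_2^*(s)$ that makes the first-exit time $t_1$ well-defined and lets us start the bootstrap; it is the substitute for Simon's use of his exponential growth bound to discretize time and control the behavior across unit time-steps.

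The main obstacle I anticipate is the bookkeeping in the reduction to the clean differential inequality: one must check carefully that the nonlinear remainder $\cR_1$ (which involves $\nabla^2 u \cdot \dot u$ and hence is genuinely quadratic in the second derivatives) can be absorbed using only the $(K,\delta)$-linear-growth control on $|u|_2^*$ together with smallness, and that the elliptic estimates giving the interpolation bound are uniform in $T_*$ and depend on the data only through $(\eps, K, T_0, E, \cR_1)$ as claimed. The dependence of $\delta_0$ on $\cR_1$ (but not on $\cR_2, f, \delta$ beyond $\delta \leq \delta_0$) needs the structural hypothesis $a_i(x,t,0,0,0) = 0$ in an essential way, so I would isolate a lemma showing $\|\cR_1(u)\|_{L^2} \leq \omega(|u|_2^*)\,|u|_2^*$ for a modulus of continuity $\omega$ with $\omega(0)=0$ depending only on $\cR_1$, and similarly handle the higher norms needed for the $|\cdot|_2^*$ estimates. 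Once that lemma is in place the rest is a faithful transcription of \cite{simon}'s scheme with the one substitution described above. Everything else — the Lojasiewicz inequality, the parabolic regularity, Arzelà--Ascoli-type compactness if needed — is quoted from \cite{simon} or standard PDE theory.
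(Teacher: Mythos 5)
There is a genuine gap, and it is structural: you have transplanted the \emph{parabolic} Lojasiewicz--Simon argument onto an \emph{elliptic} equation. Equation \eqref{eqn:mainpde} contains $-\ddot u$, so $\cE(u(t))$ is not monotone in $t$ and the differential inequality $-\tfrac{d}{dt}\big((\cE(u(t))-\cE(0))^{2\theta-1}\big)\geq c\|\dot u(t)\|-c\delta e^{-\eps t}$ on which your whole plan rests is not derivable from the equation plus the Lojasiewicz inequality. (Also $2\theta-1<0$ for $\theta\in(0,1/2)$, so that exponent is wrong even formally.) Pairing \eqref{eqn:mainpde} with $\dot u$ gives $\tfrac{d}{dt}\big(\cE(u(t))-\tfrac12\|\dot u(t)\|^2\big)=-m\|\dot u(t)\|^2+(\text{errors})$: the monotone quantity involves $-\tfrac12\|\dot u\|^2$, and together with \eqref{eqn:decay-hyp2} it yields only an $L^2$-in-time bound $\int_0^{T_*}\|\dot u\|^2\,dt\leq c\delta$ (this is exactly \eqref{eqn:scone-3} in the application). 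That does not control $\int_0^{T_*}\|\dot u\|\,dt$ when $T_*$ is large, and the scenario it fails to exclude is precisely the dangerous one: a slow drift of $u(t)$ through a continuum of nearby critical points of $\cE$, with $\|\dot u\|$ pointwise tiny, $\cE$ essentially constant, but total variation of order $1$. Your bootstrap step $\|u(t_1)-u(0)\|\leq\int_0^{t_1}\|\dot u\|\leq c\delta^{\theta'}$ therefore has no justification; it is the conclusion of the theorem, not an input.

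The missing ingredient is Simon's growth theorem (\cite[Theorem 4]{simon}): one divides $[0,T_*)$ into intervals of length $R$ and classifies each as exponentially growing, exponentially decaying, or neutral for $u$, $\dot u$, and $u-\zeta$; the Lojasiewicz inequality gives the $L^1$-in-time bound only on the neutral block, while the growing/decaying blocks are summed geometrically. This is where the hypotheses you deemphasize actually do their work. The $(K,\delta)$-linear-growth condition is \emph{not} merely a ``no instantaneous jump'' safeguard: in the paper's proof it is used (i) to force the terminal index $j_2=l_2=k-1$, i.e.\ to rule out the solution sitting in the exponentially growing mode near $T_*$ while still reaching size $\delta^\alpha$ there (via the lower bound $|u|_2^*(t)\geq\delta^\alpha/(2K)$ on $[(k-3)R,T_*)$ combined with \cite[(5.4),(5.11)]{simon}), and (ii) to propagate bounds of the form $\delta^p$ between adjacent $R$-intervals, which in \cite{simon} is done with the exponential growth hypothesis. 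Your proposal, as written, never engages with the growing/neutral/decaying trichotomy, so the central analytic step is absent. The treatment of $\cR_1$, $\cR_2$, $f$ and the elliptic interpolation estimates in your plan is fine and matches the paper, but those are the routine parts.
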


\begin{remark}\label{rem:decay}
If $m < 0$, then Theorem \ref{thm:decay} continues to hold if instead of \eqref{eqn:decay-hyp2} we have $\cE(u(t)) \leq \cE(0) + \delta$ for all $t \in [0, T_*)$.
\end{remark}

\begin{proof}
The proof is essentially the same as Simon's.  First note that by changing variables $\tilde t = m t$ and replacing $E$ with $\frac{1}{m} E$, it suffices to consider the case when $m = 1$.  We shall highlight: first, where Simon's growth condition is used, and how it can be replaced with our linear growth assumption; and second, where $\cR_2$ can be handled.

The strategy is to assume (after shrinking $T_*$ if necessary), that
\begin{equation}\label{eqn:decay1}
|u|_2^*(t) < \delta^\alpha \quad \forall t \in [0, T_*), \quad \limsup_{t \to T_*} |u|_2^*(t) = \delta^\alpha.
\end{equation}
and then use this to show that in fact
\begin{equation}\label{eqn:decay2}
\limsup_{t \to T_*} |u|_2^*(t) < \delta^\alpha,
\end{equation}
thereby establishing a contradiction.  The choice of constants $\alpha$, $\eps$, $\eps_1$, $\eps_2$, $\eta$, $R$, $\delta$ will satisfy the same restrictions as in Simon, except we may take $R(\eta, \eps, E, K)$ larger than Simon, and $\delta(\eps, R, K, T_0, E, \cR_1)$ smaller.

As noted in \cite{simon}, if $\zeta$ is a solution to $\cM(\zeta) = 0$ with $|\zeta|_{C^2(\Sigma)} \leq \delta^{2\alpha}$, then by differentiating \eqref{eqn:mainpde} the functions $u$, $w = \dot u$, $v = u - \zeta$ all solve equations of the form
\begin{align}
-\ddot u + \dot u - Lu = e_1 \cdot \nabla^2 u + e_2 \cdot \nabla \dot u + e_3 \ddot u + e_4 \cdot \nabla u + e_5 \dot u + e_6 u + f' ,
\end{align}
where $L$ is the linearization of $\cM$ at $0$, and $e_i(t, x)$, $f'(t, x)$ are $C^1$ functions (different for each choice of $u, v, w$) satisfying $|e_j|_1^*(t) \leq c(E, \cR_1) \delta^\alpha$, $|f'|_1^*(t) \leq 100 \delta e^{-\eps t}$.  Therefore, ensuring $\delta(\eps, \eta, E, R, \cR_1)$ is small, we can apply the growth theorem \cite[Theorem 4]{simon} to each of $u$, $\dot u$, $u - \zeta$ to obtain integers
\[
1 \leq j_1 \leq j_2 \leq k-1, \quad 1 \leq k_1 \leq k_2 \leq k-1, \quad 1 \leq l_1 \leq l_2 \leq k-1 ,
\]
so that \cite[(5.9), (5.10), (5.11)]{simon} hold with $r_1, r_2$ replaced by $j_1, j_2$ (in the case of $u$), $k_1, k_2$ (in the case of $\dot u$), and $l_1, l_2$ (in the case of $u - \zeta$).

The first point where Simon uses his growth assumption is to show that $j_2 = l_2 = k-1$.  To show this using our linear growth assumption we argue as follows.  First note that provided $\delta(R, K)$ is sufficiently small, \eqref{eqn:decay1} and \eqref{eqn:def-linear} imply
\begin{equation}\label{eqn:decay3}
|u|_2^*(t) \geq \delta^\alpha/(2K) \quad \text{ whenever } (k-3)R \leq t < T_*.
\end{equation}
And then, if we had $j_2 < k-1$, using \eqref{eqn:decay3}, \cite[(5.4), (5.11)]{simon}, \eqref{eqn:decay1}, we would obtain the following contradiction for sufficiently large $R(E, K)$ and small $\delta(E, K)$:
\begin{align}
\delta^\alpha/(2K) 
&\leq |u|_2^*( (k-3/2) R) \\
&\leq c(E) ( \sup_{[(k-2)R, (k-1)R]} ||u(t)|| + \delta ) \\
&\leq c(E) ( e^{-(\eps_1 - \eps) R} \sup_{[(k-1)R, kR]} ||u(t)|| + \delta ) \\
&\leq c(E) ( e^{-(\eps_1 - \eps) R} \delta^\alpha + \delta).
\end{align}
So $j_2 = k-1$.  A similar argument shows we must also have $l_2 = k-1$, since $|\zeta|_{C^2(\Sigma)} \leq \delta^{2\alpha} << \delta^\alpha$.

The second (and only other) way in which exponential growth is used is to show that bounds of $||u(t)||$ or $|u|_2^*(t)$ in terms of $\delta^p$, for any particular $p \leq 1$, persist for nearby $t$.  This also follows from our linear growth assumption: if we have
\[
|u|_2^*(t) \leq C \delta^p \quad \text{or} \quad \sup_{[jR, (j+1)R]} ||u(t)|| \leq C \delta^p
\]
for some $C > 0$, $p \leq 1$, $t \in [0, T_*)$ or $0 \leq j < k-1$, then \eqref{eqn:def-linear} and \cite[(5.4)]{simon} imply
\[
|u|_2^*(s) \leq c(C, K, R) \delta^p, \quad \text{or} \quad \sup_{[j'R, (j'+1)R']} ||u(t)|| \leq c(C, K, R) \delta^p
\]
(respectively) for some constant $c(C, K, R)$, provided $|s - t| \leq 10R$, $|j - j'| \leq 6$, $0 \leq j' \leq k-1$, and $\delta(R, K, p)$ is sufficiently small.

The second remainder term $\cR_2$ can be dealt with in the same fashion as $f$.  Specifically, we have by \cite[(6.26)]{simon} and our bound $|u|_2^* \leq \delta^\alpha$ that
\[
||\cR_2(t)|| + ||f(t)|| \leq c(E) \delta^{1/2} ||\dot u(t)|| \quad \forall t \in (k_1 R, (k_2 - 1)R),
\]
which is used in applying \cite[Lemma 1]{simon} to obtain \cite[(6.34)]{simon}; and by \cite[(6.46)]{simon},
\[
||\cR_2((k_2+2)R)|| + ||f((k_2+2)R)|| \leq c(E) \delta^{1/2} \sup_{[(k_2+1)R, (k_2+3)R]} ||\dot u(t)||,
\]
which is used in the estimate for $||v(t)||$ on \cite[Page 557]{simon}.  The rest of the proof of estimate \eqref{eqn:decay-concl} proceeds as in \cite{simon}.

To prove \eqref{eqn:decay-concl2}, first note that we have by \eqref{eqn:decay-concl} the bound $||\dot u(t)|| \leq c(\Sigma) \delta^\alpha$ for all $t$.  Now by \cite[(6.31), (6.34)]{simon} we get
\[
\int_0^{(k_2 - 1)R} ||\dot u|| dt \leq c(\eps) \delta^{1/2} + c(E) \delta^\theta + c(E)\delta^\alpha.
\]
For $k_2 - 1 \leq j \leq k-2$, note that from \cite[(6.41)]{simon} we get $\sup_{[jR, (j+1)R]} ||\dot u(t)|| \leq c(\Sigma) e^{-(\eps_1 - \eps)(j - k + 1) R} \delta^\alpha$, and therefore
\[
\int_{(k_2 - 1)R}^{T_*} ||\dot u|| dt \leq c(E, R) \delta^\alpha \sum_{j=0}^{k-k_2} e^{-(\eps_1 - \eps)j} \leq c(E, R, \eps) \delta^\alpha .
\]
Ensuring $\delta(E, R, \eps)$ is sufficiently small, we obtain \eqref{eqn:decay-concl2}.
\end{proof}


\section{Appendix 2: a function}

In constructing our parameterization we used the following $1$-dimensional Lemma.
\begin{lemma}\label{lem:monof}
Given $\eps \in (0, 1]$, $k \in \N$, let $\cA_{\eps, k} = \{ (a_1, \ldots, a_k) \in \R^k : -\eps < a_1 < a_2 < \ldots < a_k < \eps \}$.  Then there is a smooth function $f \equiv f_{\{a_i\}, \{b_i\}}(t) : \cA_{\eps, k} \times \cA_{\eps, k} \times \R \to \R$ satisfying:
\[
f(a_i) = b_i, \quad f|_{\{ |t| \geq 2\eps\}} = t, \quad  \frac{9}{10 } \underline{M} \leq \del_t f \leq 11 \overline{M}
\]
and
\[
|\del_{a_j} f| \leq  c \overline{M}, \quad |\del_{b_j} f| \leq c ,
\]
where 
\[
\overline{M} = \max\left\{ \max_i \frac{|b_{i+1} - b_i|}{|a_{i+1} - a_i|}, 3 \right\}, \quad \underline{M} = \min \left\{ \min_i \frac{|b_{i+1} - b_i|}{|a_{i+1} - a_i|}, \frac{1}{3} \right\} ,
\]
and $c$ is an absolute constant.
\end{lemma}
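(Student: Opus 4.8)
# Proof proposal for Lemma 9.1

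The plan is to build $f$ explicitly by interpolating a piecewise-linear "model" with a fixed smooth cutoff/mollification scheme, so that all the claimed bounds follow from elementary estimates on the building blocks. First I would define the piecewise-linear function $L(t) = L_{\{a_i\},\{b_i\}}(t)$ which equals $t$ for $t \le a_1 - \eps$ and $t \ge a_k + \eps$ (say), is affine on each interval $[a_i, a_{i+1}]$ with $L(a_i) = b_i$, and is affine on the two "transition" intervals $[a_1-\eps, a_1]$ and $[a_k, a_k+\eps]$ chosen so that $L$ is continuous and increasing. One checks directly that the slopes of $L$ lie in $[\underline M, \overline M]$ on the middle intervals, and can be arranged to lie in a comparable range on the two outer transition intervals provided one is slightly careful (e.g. reserve a buffer, using that $|b_i|, |a_i| < \eps$ so $|L(a_1) - (a_1-\eps)| \le 3\eps$ and the interval has length $\eps$, giving slope $\le 4$; similarly bounded below). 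The key structural point is that $L$ is a monotone, Lipschitz, co-Lipschitz interpolant whose one-sided slopes are controlled by $\overline M$ and $\underline M$, and that $L$ depends on $(a_j, b_j)$ in a transparent piecewise-affine way.

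Next I would smooth $L$ to get $f$ by convolving in $t$ with a fixed smooth bump $\rho_\eta$ supported in $(-\eta, \eta)$ for a small absolute $\eta$ — \emph{but} this naive mollification destroys the exact interpolation conditions $f(a_i) = b_i$, so instead I would use a partition-of-unity construction adapted to the points $a_i$. Concretely: fix once and for all a smooth increasing "transition profile" $\eta_0 : \R \to [0,1]$ with $\eta_0 \equiv 0$ on $(-\infty, 1/4]$, $\eta_0 \equiv 1$ on $[3/4, \infty)$, and build $f$ on each interval $[a_i, a_{i+1}]$ as the affine map $t \mapsto b_i + \frac{b_{i+1}-b_i}{a_{i+1}-a_i}(t - a_i)$ for $t$ in the "core" $[a_i + \delta_i, a_{i+1} - \delta_i']$ (with $\delta_i$ a small fraction of $\min(a_{i+1}-a_i, a_i - a_{i-1})$), and near each node $a_i$ interpolate between the two adjacent affine pieces using $\eta_0$ rescaled to a small neighborhood of $a_i$; because both adjacent affine pieces agree at $a_i$ with value $b_i$, any convex combination of them also equals $b_i$ at $a_i$, so the interpolation conditions are preserved exactly. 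The outer transitions to $f(t)=t$ are handled the same way. Monotonicity and the slope bounds $\tfrac{9}{10}\underline M \le \partial_t f \le 11 \overline M$ come from the fact that $\partial_t f$ is at each point a convex combination of two adjacent slopes plus an $O(1/\ell_i)$-sized correction from $\eta_0'$ times the \emph{difference} of two affine pieces over the transition length $\ell_i$ — and since those two affine pieces differ by at most $O(\ell_i \overline M)$ over that window, the correction term is $O(\overline M)$, absorbed into the constants $9/10$ and $11$ by taking the transition lengths a small enough absolute fraction of the node spacings.

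For the parameter derivatives, I would differentiate the explicit formula in $a_j$ and $b_j$. The $b_j$-dependence enters only through the affine pieces $b_i + \frac{b_{i+1}-b_i}{a_{i+1}-a_i}(t-a_i)$ on the two intervals adjacent to $a_j$, each of which is linear in $b_j$ with coefficients bounded by $1$ (the factors $\frac{t-a_i}{a_{i+1}-a_i}$ and $\frac{a_{i+1}-t}{a_{i+1}-a_i}$ lie in $[0,1]$), plus the $\eta_0$-weights which are bounded by $1$; hence $|\partial_{b_j} f| \le c$ with $c$ absolute. The $a_j$-dependence is the more delicate one, since $a_j$ appears in denominators $a_{i+1}-a_i$ and in the transition-window scalings; differentiating produces terms of size $\frac{|b_{i+1}-b_i|}{|a_{i+1}-a_i|} \le \overline M$ (from differentiating the slope) and terms of size $\overline M$ from differentiating the $\eta_0$-rescalings (again using that the affine pieces differ by $O(\ell_i \overline M)$ over a window of length comparable to $\ell_i$), giving $|\partial_{a_j} f| \le c\,\overline M$. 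The main obstacle is bookkeeping: ensuring that when two nodes $a_j, a_{j+1}$ are very close the transition windows do not overlap and the derivative estimates do not blow up — this is handled by choosing each transition length to be, say, $\tfrac{1}{100}$ of the \emph{minimum} of the two neighboring gaps, so that differentiating the rescaling $t \mapsto (t-a_j)/\ell_j$ in $a_j$ contributes $1/\ell_j$ multiplied against a quantity of size $\ell_j \overline M$, and all scale-breaking factors cancel. Once the construction is pinned down with these choices, every bound in the statement follows from a short direct estimate, so I would present the construction carefully and then verify the four inequalities in turn without further subtlety.
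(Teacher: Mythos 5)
Your overall architecture (piecewise-affine interpolant, smooth transitions localized at the nodes with window lengths a fixed fraction of the neighboring gaps, interpolation conditions preserved because the pieces being blended agree at $a_i$) matches the paper's. But the specific transition mechanism — taking a convex combination $(1-\chi)g_{i-1} + \chi g_i$ of the two adjacent affine pieces through $(a_i,b_i)$ — does not deliver the lower bound $\del_t f \geq \tfrac{9}{10}\underline{M}$, and can even destroy monotonicity. Writing $g_{i-1}(t)=b_i+s_1(t-a_i)$, $g_i(t)=b_i+s_2(t-a_i)$, the derivative of the blend is
\[
(1-\chi)s_1+\chi s_2+\chi'(t)\,(s_2-s_1)(t-a_i).
\]
The last term has magnitude comparable to $|s_2-s_1|$ \emph{no matter how short the transition window is}: $|\chi'|\sim 1/\ell$ while $|t-a_i|\sim\ell$ on the window, so the two scale factors cancel. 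Your claim that this correction can be "absorbed into the constants $9/10$ and $11$ by taking the transition lengths a small enough absolute fraction of the node spacings" is therefore false — the term is scale-invariant. Moreover $|s_2-s_1|$ can be of order $\overline{M}$ while the convex-combination part near the $\chi=1/2$ region is only of order $\overline{M}$ as well, with the wrong sign available: e.g.\ $s_1=1$, $s_2=250$ gives a derivative around $125-249<0$ at the midpoint of a symmetric window. An $O(\overline{M})$ negative correction cannot be absorbed into a lower bound of $\tfrac{9}{10}\underline{M}$, since $\overline{M}/\underline{M}$ is unbounded over $\cA_{\eps,k}\times\cA_{\eps,k}$. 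One also cannot rescue this by concentrating $\chi'$ cleverly: forcing $\sup\chi'(t)|t-a_i|$ below $\underline{M}/(10\overline{M})$ would make the profile depend on the data, ruining the absolute constants in the $\del_{a_j}f$, $\del_{b_j}f$ bounds.

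The paper avoids this by interpolating the \emph{slope} rather than the function: it sets $\hat f(s_1,s_2,t)=m(s_1,s_2,t)\,t$ where $m$ transitions from $s_1$ to $s_2$ by passing through (a $1\%$ perturbation of) $\min\{s_1,s_2\}$ at $t=0$, using a bump $\eta$ with $t\eta'(t)\leq 0$. Then $\del_t\hat f=m+t\,\del_t m$, the vanishing $\hat f(0)=0$ gives the exact interpolation condition after recentering at $(a_i,b_i)$, and the sign condition on $\eta$ makes the correction $t\,\del_t m$ nonnegative except for an error of at most $\tfrac{1}{10}\min\{s_1,s_2\}$ — which is exactly where the $\tfrac{9}{10}$ comes from. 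You need this (or an equivalent device, e.g.\ prescribing a positive $\del_t f$ directly and arranging the transitions to be mass-preserving so that $\int_{a_i}^{a_{i+1}}\del_t f=b_{i+1}-b_i$ exactly) to close the argument; the rest of your outline, including the parameter-derivative bookkeeping, then goes through.
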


\begin{remark}\label{rem:monof}
We highlight two basic facts about $f$.  First, we have $f_{\{a_i\}, \{a_i\}}(t) = t$.  Second, if $\tilde a_i = -a_{k-i}$ and $\tilde b_i = -b_{k-i}$, and $\tilde f = f_{\{ \tilde a_i\}, \{ \tilde b_i\}}$, then we have $f_{\{a_i\}, \{b_i\}}(t) = -\tilde f(-t)$.
\end{remark}

\begin{proof}
Fix $\underline{m}, \overline{m} : \R^2 \to \R$ to be $1$-homogenous functions, smooth away from $0$, such that
\begin{align}
&\min\{ s_1, s_2 \} \leq \underline{m} (s_1, s_2) \leq (101/100) \min\{ s_1, s_2 \}, \quad \underline{m}(s, s) = s, \\
&\quad \overline{m}(s_1, s_2) \geq \max\{ s_1, s_2 \}.
\end{align}
Fix $\eta : \R \to \R$ a smooth function satisfying
\[
\eta|_{(-\infty, -1] \cup [1, \infty)} = 0, \quad \eta|_{[-1/4, 1/4]} = 1, \quad t \eta'(t) \leq 0, \quad |\eta'| \leq 10.
\]

Define $m : \R^3 \to \R$ by setting
\[
m(s_1, s_2, t) = \left\{ \begin{array}{l l} s_1 + \eta(t) ( \underline{m}(s_1, s_2) - s_1) & t < 0 \\ s_2 + \eta(t) (\underline{m}(s_1, s_2) - s_2) & t \geq 0 \end{array} \right. .
\]
Trivially $m$ is smooth on $\{ s_1, s_2 > 0 \} \times \R$, and in this domain $m$ satisfies
\[
m + |\del_t m| \leq 20 \max\{ s_1, s_2\}, \quad |\del_{s_i} m| \leq c, 
\]
where $c = \max |D \underline{m}|$ is an absolute constant.  $m$ is our transition gradient.

Define $\hat f(s_1, s_2, t) = m(s_1, s_2, t) t$.  Trivially $\hat f$ is smooth on $\{ s_1 , s_2 > 0\} \times \R$, and $\hat f$ satisfies
\[
\hat f|_{(-\infty, -1]} = s_1 t, \quad \hat f|_{[1, \infty)} = s_2 t, \quad \hat f(s_1, s_2, 0) = 0.
\]
A straightforward computation shows that
\[
(9/10) \min\{ s_1, s_2 \} \leq \del_t \hat f \leq 11 \max\{ s_1, s_2\}, \quad |\del_{s_i} \hat f| \leq c |t|,
\]
with $c$ an absolute constant.  $\hat f$ is our model transition function, between two linear functions of different positive gradients.

Take $\{a_i\}_{i=1}^k, \{b_i\}_{i=1}^k \in \cA_{\eps, k}$.  Let us define $a_0 = b_0 = -\frac{3}{2} \eps$, $a_{k+1} = b_{k+1} = \frac{3}{2} \eps$, and
\begin{align}
r_i &= \overline{m}\left( \frac{2}{a_{i+1} - a_i}, \frac{2}{a_i - a_{i-1}} \right) \quad i = 1, \ldots, k, \\
m_i &= \frac{b_{i+1} - b_i}{a_{i+1} - a_i}, \quad i = 0, \ldots, k.
\end{align}
Now we let
\[
f(\{a_i\}, \{b_i\}, t) = \left\{ \begin{array}{l l} 
\frac{\eps}{4} \hat f(1, m_0, \frac{4}{\eps} (t + \frac{3}{2}\eps)) - \frac{3}{2} \eps & t \leq a_1 - \frac{1}{r_1} \\
\frac{1}{r_i} \hat f(m_{i-1}, m_i, r_i (t - a_i) + b_i & a_i - \frac{1}{r_i} \leq t \leq a_i + \frac{1}{r_i} \\
m_i (t - a_i) + b_i & a_i + \frac{1}{r_i} \leq t \leq a_{i+1} - \frac{1}{r_{i+1}} \\
\frac{\eps}{4} \hat f(m_{k}, 1, \frac{4}{\eps}(t - \frac{3}{2}\eps)) + \frac{3}{2} \eps & t \geq a_k + \frac{1}{r_k} \end{array} \right. .
\]

Then one can readily check $f$ is smooth, well-defined, and satisfies
\[
(9/10) \min_i \{ m_i \} \leq \del_t f \leq 11 \max_i \{ m_i \}, \quad |\del_{a_j} f| \leq c \max_i \{ m_i\}, \quad |\del_{b_j} f| \leq c ,
\]
for any $j = 1, \ldots, k$, and for $c$ being an absolute constant.  This proves the Lemma. \qedhere
\end{proof}

\section{Appendix 3: quadratic cones} \label{sec:simons}

In this section we prove Theorem \ref{thm:main5}.  We shall work in $\R^{n+1}$, for $n \geq 7$.  We will want to use a slight variant of Theorem \ref{thm:scone}, the proof being virtually verbatim (in fact easier).  Let us state that first.

\begin{theorem}\label{thm:scone-1}
Let $\bC^n$ be a smooth (away from $0$), stationary cone in $\R^{n+k}$.  Given any $\eps > 0$, there are constants $\delta(\bC, k, \eps)$, $\beta(\bC, k, \eps)$ so that the following occurs.  Let $\rho \geq 0$, $g$ be a $C^3$ metric on $B_1$ satisfying $|g - \geucl|_{C^3(B_1)} \leq \beta$, and let $V$ be a stationary integral $n$-varifold in $(B_1, g)$.  Suppose there is a $C^2$ function $u : \bC \cap A_{1, 1/2} \to \bC^\perp$ so that
\begin{gather}
\spt V \cap A_{1, 1/2} = G_\bC(u), \quad |u|_{C^2} \leq \delta \\
\theta_V(0, 1) \leq \theta_\bC(0) + \beta, \quad \theta_V(0, \rho/2) \geq \theta_\bC(0) - \beta.
\end{gather}
Then $u$ can be extended to a $C^2$ function on $\bC \cap A_{1, \rho}$, so that
\[
\spt V \cap A_{1, \rho} = G_\bC(u) \cap A_{1,\rho}, \quad |x|^{-1} |u| + |\nabla u| + |x| |\nabla^2 u| \leq \eps.
\]
\end{theorem}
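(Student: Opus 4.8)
The plan is to mirror the proof of Theorem \ref{thm:scone}, which is already laid out in detail, but in the simpler setting where the density-drop hypothesis is imposed directly rather than deduced from a cone-region assumption, and where there is no multiplicity to keep track of (the surface is a single graph). So the proof should really be a pointer-proof: "same as Theorem \ref{thm:scone}, mutatis mutandis," as the excerpt itself suggests in its commented-out variant Theorem \ref{thm:scone-2}.

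First I would set $\rho_*$ to be the least radius for which we have a graph $\spt V\cap A_{1,\rho_*}=G_\bC(u)$ with the full scale-invariant $C^2$ bound $|x|^{-1}|u|+|\nabla u|+|x||\nabla^2 u|\le\eps$, and assume towards a contradiction that $\rho_*>\rho$. Here the role of Lemma \ref{lem:cone-extend}, which was used in Theorem \ref{thm:scone} to propagate small graphicality from $A_{1,1/2}$ to $A_{1,1/16}$ using the cone-region structure, must be replaced by an analogous propagation lemma that uses only the density-drop bound $\theta_V(0,1)\le\theta_\bC(0)+\beta$, $\theta_V(0,\rho/2)\ge\theta_\bC(0)-\beta$ together with $C^2$ smallness on one annulus. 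This is precisely the statement alluded to in Remark \ref{rem:scone-stable}: such a lemma follows from Allard's theorem \cite{All} (or, in the stable multiplicity case, Schoen–Simon \cite{ScSi}), since under these hypotheses $V$ is $\eps$-close as a varifold to the multiplicity-one cone on the annulus, small density drop plus monotonicity forces $|\pi_V^\perp(x)|$ to be $L^2$-small, and Allard gives $C^{1,\alpha}$ graphicality, which bootstraps via \eqref{eqn:higher-reg} to $C^2$. Establishing this helper lemma cleanly is the one genuinely new ingredient; everything downstream is a transcription.

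With $\rho_*$ fixed and the graph disjoint (hence each sheet stationary), I would then run the decay-growth machinery of \cite{simon} exactly as in the proof of Theorem \ref{thm:scone}: reduce to $\bC$ fixed and $g(0)=\geucl$, $Dg|_0=0$ by the quadratic-perturbation diffeomorphism trick; change variables $t=-\log r$, $v(t,\theta)=r^{-1}u(r\theta)$; write down the PDE \eqref{eqn:scone-5} for $v$ and differentiate to get the linearized equation \eqref{eqn:scone-1} for $v$ and $\dot v$; derive the Hardt–Simon-type inequality \eqref{eqn:scone-2} relating $|\dot v|$ to $|\pi_V^\perp(G)|$; feed the monotonicity \eqref{eqn:sharp-mono} together with the density-drop bound to get $\int\|\dot v\|^2\,dt\le c\beta$, hence $(c,c\beta^{1/2})$-linear growth of $v$ on the relevant $t$-interval; combine with the Lojasiewicz–Simon inequality \eqref{eqn:scone-6}; and invoke Theorem \ref{thm:decay} (the linear-growth version) to conclude $|v|_2^*(t)\le c\delta^\alpha$, which via \eqref{eqn:scone-11} contradicts the minimality of $\rho_*$ once $\delta$ is chosen small relative to $\eps$. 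The passage from $\bC$-dependent to $\bC$-dependent-only constants is exactly as in the first reduction of Theorem \ref{thm:scone}, using Lemma \ref{lem:cover} and Lemma \ref{lem:regraph}; but here $\bC$ is merely required to be smooth away from $0$ and stationary (not necessarily stable, and in arbitrary dimension/codimension), so I would keep the constants depending on $\bC$ and $k$ and not attempt the $\Lambda$-uniform version — this matches the statement of Theorem \ref{thm:scone-1} as given. I expect no serious obstacle beyond the helper lemma: the main thing to be careful about is that in codimension $k>1$ the functionals $F$, $H$, $\cE$ and the operators $\cM$, $L$, $\cR_1$, $\cR_2$ are vector-valued, but \cite{simon} and Appendix \ref{sec:decay} are already written to allow $u$ to take values in a vector bundle over $\Sigma=\bC\cap\del B_1$, so the argument goes through verbatim.
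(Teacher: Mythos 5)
Your proposal matches the paper's proof exactly: the paper's argument for Theorem \ref{thm:scone-1} is literally ``same as Theorem \ref{thm:scone}, except we use Lemma \ref{lem:cone-extend-1} in place of Lemma \ref{lem:cone-extend},'' where Lemma \ref{lem:cone-extend-1} is precisely the Allard-based propagation lemma you identify as the one new ingredient (the paper proves it by a compactness/contradiction argument using monotonicity and Allard's theorem, consistent with your sketch). You also correctly note that the constants are kept $\bC$-dependent rather than made uniform over a class of cones.
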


\begin{proof}
Same as Theorem \ref{thm:scone}, except we use Lemma \ref{lem:cone-extend-1} in place of Lemma \ref{lem:cone-extend}.
\end{proof}

\begin{lemma}\label{lem:cone-extend-1}
Let $\bC^n$ be a smooth (away from $0$), stationary cone in $\R^{n+k}$.  Given $\eps > 0$, there is a $\beta(\bC, k, \eps)$ so that the following occurs.  Let $g$ be a $C^3$ metric on $B_1$ satisfying $|g - \geucl|_{C^2(B_1)} \leq \beta$, and let $V$ be a stationary integral $n$-varifold in $(B_1, g)$ satisfying
\begin{gather}
\spt V \cap A_{1, 1/2} = G_\bC(u), \quad |u|_{C^2} \leq \beta, \label{eqn:cone-ext-1-hyp1} \\
\theta_V(0, 1) \leq \theta_\bC(0) + \beta, \quad \theta_V(0, 1/8) \geq \theta_\bC(0) - \beta. \label{eqn:cone-ext-1-hyp2}
\end{gather}
Then
\begin{equation}\label{eqn:cone-ext-1-concl}
\spt V \cap A_{1, 1/4} = G_\bC(u) \cap A_{1, 1/4}, \quad |u|_{C^2} \leq \eps.
\end{equation}
\end{lemma}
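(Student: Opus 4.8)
The plan is to run a compactness/contradiction argument, identical in spirit to the deduction of Theorem \ref{thm:cones} from the Lojasiewicz--Simon inequality and to the proof of Lemma \ref{lem:index-compact}. Suppose the Lemma fails: then for some fixed $\bC^n$, $k$, $\eps$ there are $\beta_i \to 0$, $C^3$ metrics $g_i$ on $B_1$ with $|g_i - \geucl|_{C^2(B_1)} \leq \beta_i$ (hence $g_i \to \geucl$ in $C^2$), and stationary integral $n$-varifolds $V_i$ in $(B_1, g_i)$ satisfying \eqref{eqn:cone-ext-1-hyp1} and \eqref{eqn:cone-ext-1-hyp2} with $\beta_i$ in place of $\beta$, but for which \eqref{eqn:cone-ext-1-concl} fails with the given $\eps$ for every choice of $C^2$ extension of $u_i$. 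In particular $|u_i|_{C^2(\bC \cap A_{1,1/2})} \leq \beta_i \to 0$, so on the annulus $A_{1,1/2}$ the $\spt V_i$ converge in $C^2$ to $\bC \cap A_{1,1/2}$.

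First I would extract a limit. The density bounds \eqref{eqn:cone-ext-1-hyp2} together with monotonicity \eqref{eqn:monotonicity} give a uniform mass bound $\mu_{V_i}(B_{1-o(1)}) \leq C(\bC,k)$, so by Allard-type compactness for stationary integral varifolds (e.g. \cite[Chapter 8, Theorem 5.8]{Sim}) we may pass to a subsequence with $V_i \to V$ as varifolds in $B_1$, for $V$ a stationary integral $n$-varifold in $(B_1, \geucl)$. From \eqref{eqn:cone-ext-1-hyp2} and upper-semicontinuity/lower-semicontinuity of the density ratios, $\theta_V(0,1) \leq \theta_\bC(0)$ and $\theta_V(0, r) \geq \theta_\bC(0)$ for $r$ slightly less than $1/8$, hence by monotonicity $\theta_V(0,r)$ is constant $= \theta_\bC(0)$ on that range; the sharp monotonicity formula \eqref{eqn:sharp-mono} (with $\Gamma = 0$) then forces $\pi_V^\perp(x) = 0$ a.e., so $V$ is a dilation-invariant cone on $B_{1/8}$, and since its density ratio equals that of $\bC$ on all of $(0,1)$, $V$ is conical on all of $B_1$. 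Moreover $C^2$ convergence on $A_{1,1/2}$ forces $\spt V \cap A_{1,1/2} = \bC \cap A_{1,1/2}$; by the constancy theorem and conicality, $V = q[\bC]$ for some positive integer $q$, and comparing densities gives $q = 1$, i.e. $V = [\bC] \llcorner B_1$.

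Next, I would upgrade the varifold convergence to smooth graphical convergence on an annulus strictly interior to $B_1$. Since $\bC$ is smooth and embedded away from $0$, Allard's regularity theorem \cite{All} applies at every point of $\spt V_i$ near $\bC \cap A_{1,1/4}$ for $i$ large (the density ratios are close to $1$ there and the tilt-excess is small by varifold convergence), giving that $\spt V_i \cap A_{1-o(1), 1/4}$ is a $C^{1,\alpha}$ graph over $\bC$ with norm $\to 0$; standard elliptic bootstrapping via \eqref{eqn:higher-reg} in normal coordinates promotes this to $C^2$ convergence. Thus for $i$ large one can extend $u_i$ to a $C^2$ function on $\bC \cap A_{1,1/4}$ with $\spt V_i \cap A_{1,1/4} = G_\bC(u_i) \cap A_{1,1/4}$ and $|x|^{-1}|u_i| + |\nabla u_i| + |x||\nabla^2 u_i| \leq \eps$, contradicting the failure of \eqref{eqn:cone-ext-1-concl}. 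This contradiction proves the Lemma.

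The main obstacle is making the passage from the conical limit back to uniform graphicality of the $V_i$ genuinely \emph{local} — i.e. not accidentally using information about $V_i$ inside $B_{1/4}$ other than the density pinching \eqref{eqn:cone-ext-1-hyp2}; one must check that Allard's theorem and the monotonicity formula only require stationarity of $V_i$ and the density ratio control, which they do, so this is routine but must be stated carefully. (The case $\rho = 0$ / small $\rho$, and the iteration of this Lemma across dyadic scales, is exactly the mechanism by which Theorem \ref{thm:scone-1} follows, mirroring how Lemma \ref{lem:cone-extend} feeds Theorem \ref{thm:scone}.)
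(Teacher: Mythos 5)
Your proposal is correct and is essentially the paper's proof: a compactness/contradiction argument in which the density pinching \eqref{eqn:cone-ext-1-hyp2} plus monotonicity forces the limit $V$ to be dilation-invariant on the annulus $A_{1,1/8}$, the $C^2$ graphical hypothesis on $A_{1,1/2}$ identifies it with $[\bC]$ there, and Allard's theorem then yields \eqref{eqn:cone-ext-1-concl} for large $i$. One small inaccuracy: the pinching only gives conicality of $V$ on the annulus $A_{1,1/8}$ (nothing is known about $\theta_V(0,r)$ for $r<1/8$), not "on $B_{1/8}$" or "on all of $B_1$" as you assert, but this overreach is harmless since the annular conicality is all that is used.
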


\begin{proof}
Proof by contradiction: otherwise, there are $\beta_i \to 0$, metrics $g_i$ such that $|g_i - \geucl|_{C^3(B_1)} \leq \beta_i$, and $V_i$ being stationary integral varifolds in $(B_1, g_i)$ such that \eqref{eqn:cone-ext-1-hyp1}, \eqref{eqn:cone-ext-1-hyp2} hold with $\beta_i$ in place of $\beta$, but \eqref{eqn:cone-ext-1-concl} fails.  Passing to a subsequence, we can assume $V_i \to V$ as varifolds for some stationary integral varifold $V$ in $(B_1, \geucl)$.  Since $\theta_V(0, 1) \leq \theta_\bC(0)$ and $\lim_{r \downarrow 1/8} \theta_V(0, r) \geq \theta_\bC(0)$, monotonicity implies $V$ is dilation-invariant in $A_{1, 1/8}$.  On the other hand, by \eqref{eqn:cone-ext-1-hyp1} we can assume that $V_i \to [\bC]$ in $C^2$ with multiplicity-one in $A_{1, 1/2}$.  So we must have $V \llcorner A_{1, 1/8} = [\bC] \llcorner A_{1, 1/8}$.  Allard's theorem implies \eqref{eqn:cone-ext-1-concl} holds for $i >> 1$, which is a contradiction.
\end{proof}

\vspace{3mm}

We could use the cone regions from Section \ref{sec:cone-region}, but to avoid clashes of notation and because everything in this proof is vastly easier, let us use instead the following simpler notion of cone regions.
\begin{definition}\label{def:cone-star}
Let $V$ be a varifold in $B_R(a)$.  We say $V \llcorner A_{R, \rho}(a)$ is a $(\bC, \beta, \tau)$-weak-cone$*$ region if for every $r \in (\rho/4, (1-10^{-2}) R]$, there is an $a_r \in B_R(a)$ with $B_r(a_r) \subset B_R(a)$, so that
\[
\theta_\bC(0) - \beta \leq \theta_V(a_r, r) \leq \theta_\bC(0) + \beta,
\]
and if $s \in (\rho/4, (1-10^{-2})R] \cap [r/2, 2r]$ then
\[
|a_r - a_s| \leq \tau \max\{ r, s \} .
\]

We say $V \llcorner A_{R, \rho}(a)$ is a $(\bC, \beta)$-cone$*$ region if for every $r \in (\rho/4, R]$ we have
\[
\theta_\bC(0) - \beta \leq \theta_V(a, r) \leq \theta_\bC(0) + \beta.
\]
\end{definition}

If we think of $\sigma = 10^{-2}$, $m = 1$, and replace $c(\Lambda, m)$ with $c(\bC)$, then Lemma \ref{lem:recenter} and its proof continue to hold for cone$*$ regions.  We are now prepared to prove Theorem \ref{thm:main5}.
\begin{proof}[Proof of Theorem \ref{thm:main5}]
We prove Theorem \ref{thm:main5} by contradiction.  We will choose our constants as we go along, and a posteriori they can all be fixed, but the reader should think of them like:
\[
\beta'' << \tau << \beta' << \eps' << \eps < 1.
\]

Suppose the theorem fails: then there is a sequence of numbers $\delta_i \to 0$, metrics $g_i$ such that $|g_i - \geucl|_{C^3(B_1)} \leq \delta_i$, and stationary integral varifolds $V_i$ in $(B_1, g_i)$, so that \eqref{eqn:main5-hyp} holds with $\delta_i$, $g_i$, $V_i$ in place of $\delta$, $g$, $V$, but \eqref{eqn:main5-concl2} fails for any $a, \lambda, q$ satisfying \eqref{eqn:main5-concl1}.  Passing to a subsequence, standard compactness for integral varifolds implies we can find a stationary integral varifold $V$ in $(B_1, \geucl)$, so that $V_i \to V$ as varifolds.  By our hypotheses and the constancy theorem we must have $V = [\bC]$, and so by Allard's theorem there is no loss in assuming $V_i \to [\bC]$ as varifolds in $B_1$ and in $C^2$ on compact subsets of $B_1 \setminus \{0\}$.

Let $\rho_i$ be the least number such that $V_i \llcorner A_{1, \rho_i}(0)$ is a $(\bC, \beta'', \tau)$-weak-cone$*$ region.  Let $a_i = a_{\rho_i}(V_i)$ as in Definition \ref{def:cone-star}, and then by our convergence $V_i \to [\bC]$ we have $a_i \to 0$, $\rho_i \to 0$.  By Lemma \ref{lem:recenter}, ensuring $\beta''(\bC, \beta')$, $\tau(\bC, \beta')$ are sufficiently small, each $V_i \llcorner A_{9/10, \rho_i}$ is a $(\bC, \beta')$-cone$*$ region, and hence by our $C^2$ convergence and by Theorem \ref{thm:scone-1}, ensuring $\beta'(\bC, \eps')$ is small and $i >> 1$, we can find $C^2$ functions $u_i : \bC \cap A_{9/10, \rho_i/2} \to \bC^\perp$ so that
\begin{equation}\label{eqn:simons-1}
\spt V_i \cap A_{9/10, \rho_i/2}(a_i) = a_i + G_\bC(u_i) \cap A_{9/10, \rho_i/2}, \quad |x|^{-1} |u_i| + |\nabla u_i| + |x| |\nabla^2 u_i| \leq \eps' \leq \eps.
\end{equation}

If $\rho_i = 0$ for infinitely-many $i$ then by \eqref{eqn:simons-1} we obtain a contradiction for $i$ large.  Suppose now every $\rho_i > 0$.  Define the rescaled varifolds $V_i' = (\eta_{a_i, \rho_i})_\sharp V_i$, and metrics $g_i' = g_i \circ \eta_{a_i, \rho_i}^{-1}$.  Then for every $R > 2$ and $i >> 1$, we have
\begin{equation}\label{eqn:simons-2}
1 = \inf \{ \rho : V_i' \llcorner A_{R, \rho}(0) \text{ is a $(\bC, \beta'', \tau)$-weak-cone$*$ region} \},
\end{equation}
and $V_i'$ is stationary in $(B_R, g_i')$, and $g_i' \to \geucl$ in $C^2$ on compact subsets of $\R^{n+1}$.

Monotonicity \eqref{eqn:monotonicity} implies that $\theta_{V_i'}(0, R) \leq \theta_\bC(0) + o(1)$, and so we can pass to a subsequence and obtain a stationary integral varifold $V'$ in $(\R^{n+1}, \geucl)$ satisying $\theta_{V'}(0, \infty) \leq \theta_\bC(0)$, so that $V_i' \to V'$ as varifolds.  From \eqref{eqn:simons-1}, Arzela-Ascoli, and standard elliptic estimates, provided $\eps'(\bC)$ is small, we can assume that
\begin{equation}\label{eqn:simons-3}
\spt V' \cap A_{\infty, 1/2} = G_\bC(u')\cap A_{\infty, 1/2}, \quad |x|^{-1} |u'| + |\nabla u'| + |x| |\nabla^2 u'| \leq \eps',
\end{equation}
and $V_i' \to V'$ in $C^2$ on compact subsets of $A_{\infty, 1/2}$.  We can also assume that $V' \llcorner A_{\infty, 1}$ is a $(\bC, \beta')$-cone$*$ region.

Together the previous two sentences imply that if $V''$ is a tangent cone of $V'$ at infinity, then $d_H(\spt V'' \cap B_1, \bC \cap B_1) \leq \eps'$, and $|\theta_{V''}(0) - \theta_\bC(0)| \leq \beta'$.  Ensuring $\eps'(\bC)$, $\beta'(\bC)$ is small, we deduce by integrability of $\bC$ that $V'' = [q(\bC)]$ for some rotation $q \in SO(n+1)$ satisfying $|q - Id| \leq c(\bC) \eps'$.  \cite{SiSo} or \cite[Corollary 3.7]{me-luca} then implies that $V' = [b + q(S_\lambda)]$ for some leaf $S_\lambda$ of the Hardt-Simon foliation, where we allow $S_0 = \bC$.  We aim to show $\lambda \neq 0$.

I claim there is an $\eta(\bC, \tau)$ so that if $a \in B_1$, $r \in [1/2, 4]$, and
\[
|\theta_\bC(a, r) - \theta_\bC(0)| \leq \eta,
\]
then $|a| \leq 2^{-10}\tau$.  Suppose otherwise: there are sequences $\eta_i \to 0$, $a_i \in B_2$, $r_i \in [1/2, 4]$ so that $|\theta_\bC(a_i, r_i) - \theta_\bC(0)| \leq \eta_i$, but $|a_i| \geq 2^{-10}\tau$.  Passing to a subsequence, we can assume $a_i \to a \in \overline{B_1}$, $r_i \to r \in [1/2, 2]$, and so by monotonicity we get $\theta_\bC(a, 2r) = \theta_\bC(0)$, and hence $\bC \cap A_{\infty, 2r}(a) = (a + \bC') \cap A_{\infty, 2r}(a)$ for some cone $\bC'$.  Since $\bC$ has no symmetries, we must have $\bC' = \bC$ and $a = 0$.  This is a contradiction.

Let us ensure $\beta'' \leq \eta(\bC, \tau)$, and $\tau \leq 2^{-4}$.  Assume that $V' = [b + q(\bC)]$.  By \eqref{eqn:simons-3}, $V'$ is regular outside $\overline{B_{1/2}}$, so we must have $b \in \overline{B_{1/2}}$.  I next claim that if $a_{i, r} = a_r(V_i')$, then for all $i >> 1$ and $r \in [1/2, 2]$ we must have
\[
|a_{i, r} - b| \leq 2^{-9} \tau.
\]
Note that by construction we have $a_{i, 1} = 0$, and (hence) $a_{i, r} \in B_{1/4}$.  If my second claim failed, then we could find a sequence $r_i \in [1/2, 2]$ so that $|a_{i, r_i}| \geq 2^{-8}\tau$.  Passing to a subsequence, we can assume $a_{i, r_i} \to a \in \overline{B_{1/4}}$ and $r_i \to r \in [1/2, 2]$, and hence $|\theta_{b + q(\bC)}(a, s) - \theta_\bC(0)| \leq \beta''$ for all $s > r$.  From my first claim (applied in the ball $B_1(b)$), we deduce that $|a - b| \leq 2^{-10}\tau$.  This proves my second claim.

Varifold convergence $V_i' \to V'$ implies $|\theta_{V_i'}(b, r) - \theta_\bC(0)| \leq \beta''$ for $i >> 1$ and $r \in [1/16, 2]$, and therefore combined with the previous paragraph we get that $V_i' \llcorner B_4(0) \setminus B_{1/2}$ is a $(\bC, \beta'', \tau)$-cone$*$ region for $i$ large.  This contradicts \eqref{eqn:simons-2}, so we must have $\lambda \neq 0$.

Now $V' = [b + q(S_\lambda)]$ is smooth, multiplicity-one, and hence by Allard's theorem $V_i' \to V'$ in $C^2$ with multiplicity-one.  Since $|b| \leq 1$ and $\spt V'' \cap B_1 \neq \emptyset$, we have an upper bound $|\lambda| \leq \lambda_0(\bC)$, and hence there is an $R(\bC, \eps')$ so that
\begin{equation}\label{eqn:simons-5}
(b + q(S_\lambda)) \cap A_{\infty, R} = G_{q(\bC)}(v'), \quad |x|^{-1} |v'| + |\nabla v'| + |x| |\nabla^2 v'| \leq \eps'.
\end{equation}
For $i >> 1$, by $C^2$ convergence we can find $u_i' : (b + q(S_\lambda)) \cap B_{2R} \to S_\lambda^\perp$ so that
\begin{equation}\label{eqn:simons-6}
\spt V_i' \cap B_{2R} = \graph_{b + q(S_\lambda)}(u_i'), \quad |x|^{-1} |u_i'| + |\nabla u_i'| + |x| |\nabla^2 u_i'| \leq \eps'.
\end{equation}

If we let $b_i + q(S_{\lambda_i}) = \eta_{a_i, \rho_i}^{-1}(b + q(S_\lambda))$, and ensure $\eps'(\bC, \eps)$ is sufficiently small, then \eqref{eqn:simons-1}, \eqref{eqn:simons-5}, \eqref{eqn:simons-6} imply \eqref{eqn:main5-concl1}, \eqref{eqn:main5-concl2} hold with $b_i, q, \lambda_i$ in place of $a_i, q, \lambda_i$.  This is a contradiction, and finishes the proof of Theorem \ref{thm:main5} for quadratic cones.

For general strictly-minimizing and strictly-stable $\bC^n \subset \R^{n+1}$, and $\spt V$ lying to one side of $\bC^n$, the proof proceeds in a similar but even simpler fashion.  Take $V_i$, $g_i$ the counter-example sequence as before, and then let $\rho_i$ be the least radius so that $V_i \llcorner A_{9/10, \rho_i}(0)$ is a $(\bC, \beta')$-cone$*$ region.  Our convergence $V_i \to [\bC]$ implies $\rho_i \to 0$, and ensuring $\beta'(\bC, \eps)$ is small, we can apply Theorem \ref{thm:scone-1} to deduce \eqref{eqn:simons-1} holds with $a_i = 0$.

Let $V_i' = (\eta_{0, \rho_i})_\sharp V_i$, and $g_i' = g \circ \eta_{0, \rho_i}^{-1}$.  Then for every $R > 2$ and $i >> 1$ we have
\begin{equation}\label{eqn:simons-12}
1 = \inf \{ \rho : V_i' \llcorner A_{R, \rho}(0) \text{ is a $(\bC, \beta')$-cone$*$ region} \} ,
\end{equation}
$V_i'$ is stationary in $(B_R(0), g_i')$, $\spt V_i'$ lies to one-side of $\bC^n$, and $g_i' \to \geucl$ in $C^2(B_R(0))$.

As before, by monotonicity and standard compactness we can pass to a subsequence and assume that $V_i' \to V'$, for $V'$ a stationary integral varifold in $(\R^{n+1}, \geucl)$ such that $\theta_{V'}(0, \infty) \leq \theta_\bC(0)$, and $\spt V'$ lies to one side of $\bC^n$.  \cite[Lemma 7.6]{simon:liousville} implies that $V' = [S_\lambda]$ for some $\lambda$.  If we had $\lambda = 0$, then we would have $\theta_{V_i'}(0, r) \to \theta_{V'}(0, r) = \theta_\bC(0)$ for every $r > 0$, and hence by monotonicity $V_i' \llcorner A_{4, 1/2}(0)$ would be a $(\bC, \beta')$-cone$*$ region for $i >> 1$, contradicting \eqref{eqn:simons-12}.  So we must have $\lambda \neq 0$, and then the proof proceed in the same way as above.
\end{proof}

\bibliographystyle{alpha}
\bibliography{refs}

\end{document}